\newtheorem{theorem}{Theorem}[section]
\newtheorem{lemma}[theorem]{Lemma}
\newtheorem{proposition}[theorem]{Proposition}
\theoremstyle{definition}
\newtheorem{definition}{Definition}[section]
\theoremstyle{definition}
\newtheorem{remark}{Remark}[section]
\theoremstyle{remark}
\numberwithin{equation}{section}
\def\stackrel#1#2{\mathrel{\mathop{#2}\limits^{#1}}}
\newcommand{\circlenum}[1]{\raisebox{.5pt}{\textcircled{\raisebox{-.9pt} {#1}}}}
\newcommand{\mathfigns}[2]{\vcenter{\hbox{\includegraphics[height=#1 cm]{#2}}}}
\newcommand{\mathfig}[2]{\quad\vcenter{\hbox{\includegraphics[height=#1 cm]{#2}}} \quad}
\newcommand{\mathfignum}[3]{\quad\stackrel{\circlenum{#3}}{\vcenter{\hbox{\includegraphics[height=#1 cm]{#2}}}} \quad}
\newcommand{\mathfigbox}[3]{\quad\stackrel{\framebox{#3}}{\vcenter{\hbox{\includegraphics[height=#1 cm]{#2}}}} \quad}
\newcommand{\mathfignumns}[3]{\stackrel{\circlenum{#3}}{\vcenter{\hbox{\includegraphics[height=#1 cm]{#2}}}} }
\begin{document}
\unitlength = 1mm
\begin{onehalfspace}

\title{Elliptic Associators and the LMO Functor}
\author{Ronen Katz}
\maketitle
\begin{doublespace}
\begin{abstract}
The elliptic associator of Enriquez can be used to define an invariant of tangles embedded in the thickened torus, which extends the Kontsevich integral. This construction by Humbert uses the formulation of categories with elliptic structures.

In this work we show that an extension of the LMO functor also leads to an elliptic structure on the category of Jacobi diagrams which is used by the Kontsevich integral, and find the relation between the two structures. We use this relation to give an alternative proof for the properties of the elliptic associator of Enriquez. Those results can lead the way to finding associators for higher genra.
\end{abstract}
\end{doublespace}

\newpage
\tableofcontents

\newpage\section{Introduction}

The Kontsevich integral was originally defined as an invariant of knots which returns values in a space of Jacobi diagrams (\cite{Kontsevich}). Its importance comes mainly from the fact that it is universal with respect to Vassiliev invariants (see also \cite{BarNatan}). A new, combinatorial, formulation of the Kontsevich invariant for framed knots and links was given in \cite{Le}. The idea of this definition is to break the link to elementary tangles and asign values to each of them separately.

The paper \cite{Le} already contains the extension of the Konstevich integral to tangles, which is defined more explicitly in \cite{Le2} and \cite{BarNatan4}. In this context it is convenient to consider all tangles as morphisms in the category $q\tilde{T}$ of non-associative framed tangles. This category has the structure of a ribbon category - a concept which encapsulates the above elementary tangles and the relations between them. The Kontsevich integral becomes a functor to a category of Jacobi diagrams $\mathcal{A}^\partial$, which also gets a structure of a ribbon category.

In \cite{Humbert}, the Kontsevich integral is further extended to an invariant of tangles embedded in a  thickened torus $\mathbb{T}\times I$. The category of those tangles is denoted $q\tilde{T}_1$, and is an extension of the ribbon category $q\tilde{T}$. The new ingredients in this category are two elementary tangles which go around the generators of $\pi_1(\mathbb{T})$. Those tangles are called beaks. The beaks, together with the relations between them and the other elementary tangles, is encapsulated in the concept of an elliptic structure. 

In order to define the extension of the Kontsevich integral to $q\tilde{T}_1$, one needs to find a category of Jacobi diagrams extending $\mathcal{A}^\partial$ such that this extension has an elliptic structure. This category is defined in \cite{Humbert} and denoted $\mathcal{A}_1$ - the category of elliptic Jacobi diagrams. To give this extension an elliptic structure, \cite{Humbert} uses the concept of an elliptic associator. An elliptic associator is a pair of elements in the exponent of $\hat{f}(A,B)$ - the completed free Lie algebra generated by $A$ and $B$, satisfying several identities. This associator, when mapped into $\mathcal{A}_1$, determines the value of the Kontsevich invariant on the beaks. A specific elliptic associator $e(\phi)$ is introduced in \cite{Calaque} and \cite{Enriquez}.

In a different direction, the Kontsevich integral was also used to define the LMO invariant, which is an invariant of closed $3$-manifolds which returns values in some spaces of Jacobi diagrams (\cite{Le3}). It was extended to a TQFT first in \cite{Murakami} and, several years later, in \cite{ChepteaLe}. A functorial variant of this construction was given in \cite{Cheptea}, called the LMO functor.

In this work we extend the LMO functor to an invariant of $3$-cobordisms with embedded tangles. Restricted to tangles embedded in the thickened torus, this invariant can be compared to the invariant of \cite{Humbert}. It turns out that those invariants \textbf{are not} equal. However, their values on the beaks are equivalent modulo a certain relation called the homotopy relation. We use this equivalence to give an alternative, more intuitive, proof that $e(\phi)$ is indeed an elliptic associator.

This work is divided into the following sections:

In section \ref{section_tangles} we review the definitions of ribbon categories and elliptic structures. As we explained, those concepts encapsulate the strucure of the categories of tangles $q\tilde{T}$ and $q\tilde{T}_1$, and give us a tool to define invariants of tangles.

In section \ref{section_jacobi} we review several variants of categories of Jacobi diagrams which will be used later, and explain the relations between them.

In section \ref{section_LMO} we define our extension of the LMO functor to the category of embedded tangles in $3$-cobordisms. For that purpose we explain how to represent a tangle embedded in a $3$-cobordism using a representing tangle in $D^2\times I$. We also give an explicit description of the beaks using those representing tangles, and verify their properties.

In section \ref{section_t} we introduce the Lie algebras $\textbf{t}_{1,n}$, and explain how their universal enveloping algebras are mapped into the spaces $\mathcal{A}_1^{<p}(\uparrow^n)$ of Jacobi diagrams. For $n=2,3$ we prove that a certain restriction of this map is actually an injection into a certain quotient of $\mathcal{A}_1^{<p}(\uparrow^n)$.

In section \ref{section_elliptic} we introduce the concept of elliptic associators and the elliptic associator $e(\phi)$ of \cite{Enriquez}. We prove our main theorem, which determines the relation between $e(\phi)$ and the extended LMO functor. We conclude by using this theorem to give a new proof that $e(\phi)$ is indeed an elliptic associator.

Our results can lead the way to extending the concept of elliptic associators to higher genra, and to finding specific such associators using the extended LMO functor.

\newpage\section{Categories of Tangles}
\label{section_tangles}
In this section we define the concepts of ribbon categories and elliptic structures, and introduce the categories of tangles which are the universal examples for those concepts.

\subsection{Ribbon Categories}
In this subsection we recall the definition of a ribbon category, and give the main example - the category of tangles in $D^2\times I$. The definitions are all taken from \cite{Humbert}.

Let $(\mathcal{C},\otimes,\textbf{1},a)$ be a non-associative monoidal category. Assume for simplicity that $\textbf{1}\otimes U=U\otimes \textbf{1}=U$ for any object $U\in \mathcal{C}$. For shortness we will denote $U\otimes V$ by $UV$. $a$ is a family of natural isomorphisms $a_{X,Y,Z}:(XY)Z\rightarrow X(YZ)$ for any objects $X,Y,Z\in\mathcal{C}$, satisfying the pentagon relation for any objects $X,Y,Z,W\in\mathcal{C}$:
$$a_{W,X,YZ}a_{WX,Y,Z}=(id_W\otimes a_{X,Y,Z})a_{W,XY,Z}(a_{W,X,Y}\otimes id_Z)$$

\textbf{A duality} on $\mathcal{C}$ is a rule that associates to each object $V$ an object $V^*$ and $2$ morphisms $b_V:\textbf{1}\rightarrow V\otimes V^*$ and $d_V:V^*\otimes V\rightarrow\textbf{1}$, satisfying:
$$(id_V\otimes d_V)a_{V,V^*,V}(b_V\otimes id_V)=id_V$$
$$(d_V\otimes id_{V^*})a^{-1}_{V^*,V,V^*}(id_{V^*}\otimes b_V)=id_{V^*}$$

\textbf{A braiding} on $\mathcal{C}$ is a family of natural isomorphisms $c_{U,V}:UV\rightarrow VU$ for any $2$ objects $U,V\in\mathcal{C}$, satisfying for any objects $U,V,W\in\mathcal{C}$:
$$c_{UV,W}=a_{W,U,V}(c_{U,W}\otimes id_V)a^{-1}_{U,W,V}(id_U\otimes c_{V,W})a_{U,V,W}$$
$$c_{U,VW}=a^{-1}_{V,W,U}(id_V\otimes c_{U,W})a_{V,U,W}(c_{U,V}\otimes id_W)a^{-1}_{U,V,W}$$
By convension we denote $c^{-1}_{U,V}:=(c_{V,U})^{-1}$.

\textbf{A twist} on a monoidal category with braiding is a family of natural isomorphisms $\theta_V:V\rightarrow V$ for any object $V\in\mathcal{C}$, satisfying:
$$\theta_{UV}=c_{V,U}c_{U,V}(\theta_U\otimes\theta_V)$$

\begin{definition}
\label{def_tangle}
\textbf{A ribbon category} is a monoidal category $(\mathcal{C},\otimes,\textbf{1},a)$ with duality, braiding and twist as above, satisfying, for any object $V\in\mathcal{C}$:
$$(\theta_V\otimes id_{V^*})b_V=(id_V\otimes\theta_{V^*})b_V$$
\end{definition}

The main example for a ribbon category is the category of framed oriented tangles, which we will now describe.

Let $D^2\subset \mathbb{R}^2$ be the unit disk. Denote by $b_n$ a sequence of some fixed $n$ points in the interior of $D^2$ (say, the $n$ points uniformly distributed along the segment $(-1,1)$ of the $x$ axis).
Let $m,n\ge 0$ be integers, and $\omega_s,\omega_t$ non-associative words in the symbols $\{+,-\}$ of lengths $m,n$, respectively. A \textbf{tangle} in $D^2\times I$ of type $(\omega_s,\omega_t)$ is an oriented $1$-manifold $\gamma$ embedded in $D^2\times I$, such that its only boundary points are $\gamma\cap (D^2\times \{0\})=b_m\times\{0\}$ and $\gamma\cap (D^2\times \{1\})=b_n\times\{1\}$, and such that the orientations of the tangle around the points of $b_m\times\{0\}$ and $b_n\times\{1\}$ correspond to the symbols of $\omega_s$ and $\omega_t$ (a ``$+$" symbol corresponds to a strand going ``up", i.e. in the positive direction of $I$, and a ``$-$" symbol corresponds to a strand going ``down"). The embedding of $\gamma$ should be piece-wise smooth and transverse to the horizontal surfaces $D^2\times \{t\}$ at all but a finite number of points. We also require the tangle to be vertical (i.e. of the form $\{z\}\times I$) near the boundary points.

A \textbf{framing} on a tangle $\gamma$ is the homotopy class relative to the boundary of a non-zero normal vector field on the smooth points of $\gamma$, such that the limit of this vector field at the non-smooth points is the same from both sides. The vectors based on the boundary points should all be parametrized as $(0,-1,0)$. In figures we will use the convention of the blackboard framing.

\begin{definition}
The category $q\tilde{T}$ is the category whose objects are non-associative words in $\{+,-\}$, and whose sets of morphisms $q\tilde{T}(\omega_s,\omega_t)$ are the sets of ambient isotopy classes of oriented framed tangles of type $(\omega_s,\omega_t)$.
\end{definition}

The concept of a ribbon category is designed to encapsulate the structure of $q\tilde{T}$. More specifically, we have the following proposition, which is easy to verify:

\begin{proposition}
$q\tilde{T}$ is a ribbon category, where we define the dual of $\omega=(\omega_1,...,\omega_n)$ to be $\omega^*=(-\omega_n,...,-\omega_1)$, and:

$$ a_{\omega_1,\omega_2,\omega_3}=
\begin{array}{ccc}
\omega_1 & (\omega_2 & \omega_3)\\
\includegraphics[height=20pt]{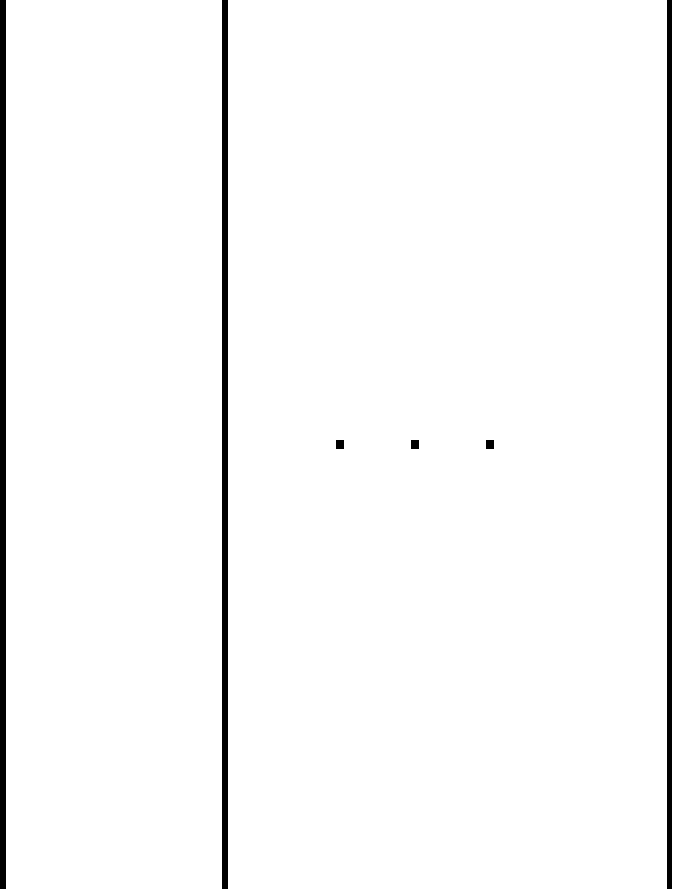} & \includegraphics[height=20pt]{section1_a.pdf} & \includegraphics[height=20pt]{section1_a.pdf}\\
(\omega_1 & \omega_2) & \omega_3\\
\end{array}
$$

$$ b_\omega=
\begin{array}{c}
\omega\qquad\qquad\quad \omega^*\\
\includegraphics[height=40pt]{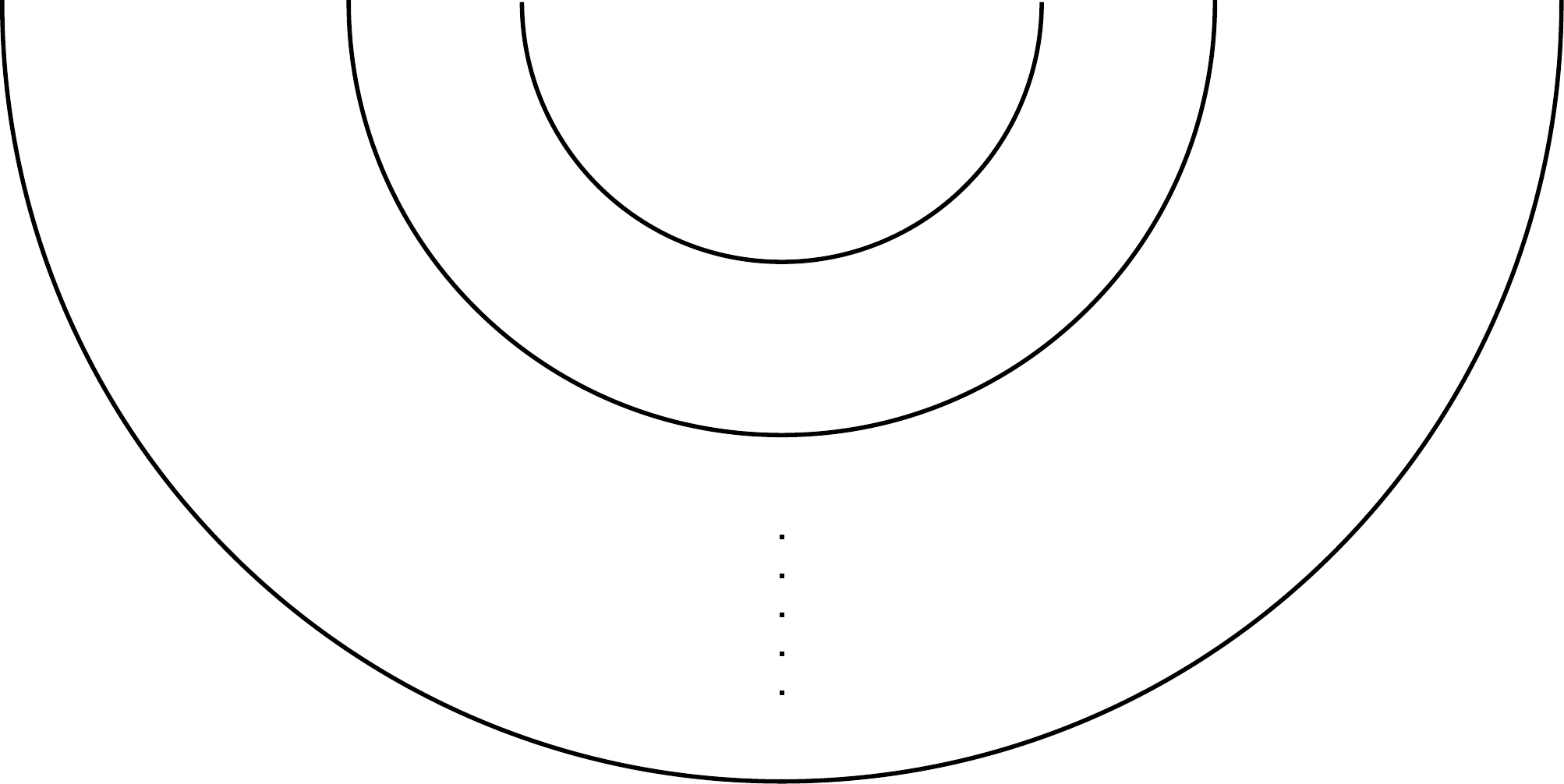}\\
\end{array}
\quad\quad\quad\quad
d_\omega=
\begin{array}{c}
\includegraphics[height=40pt]{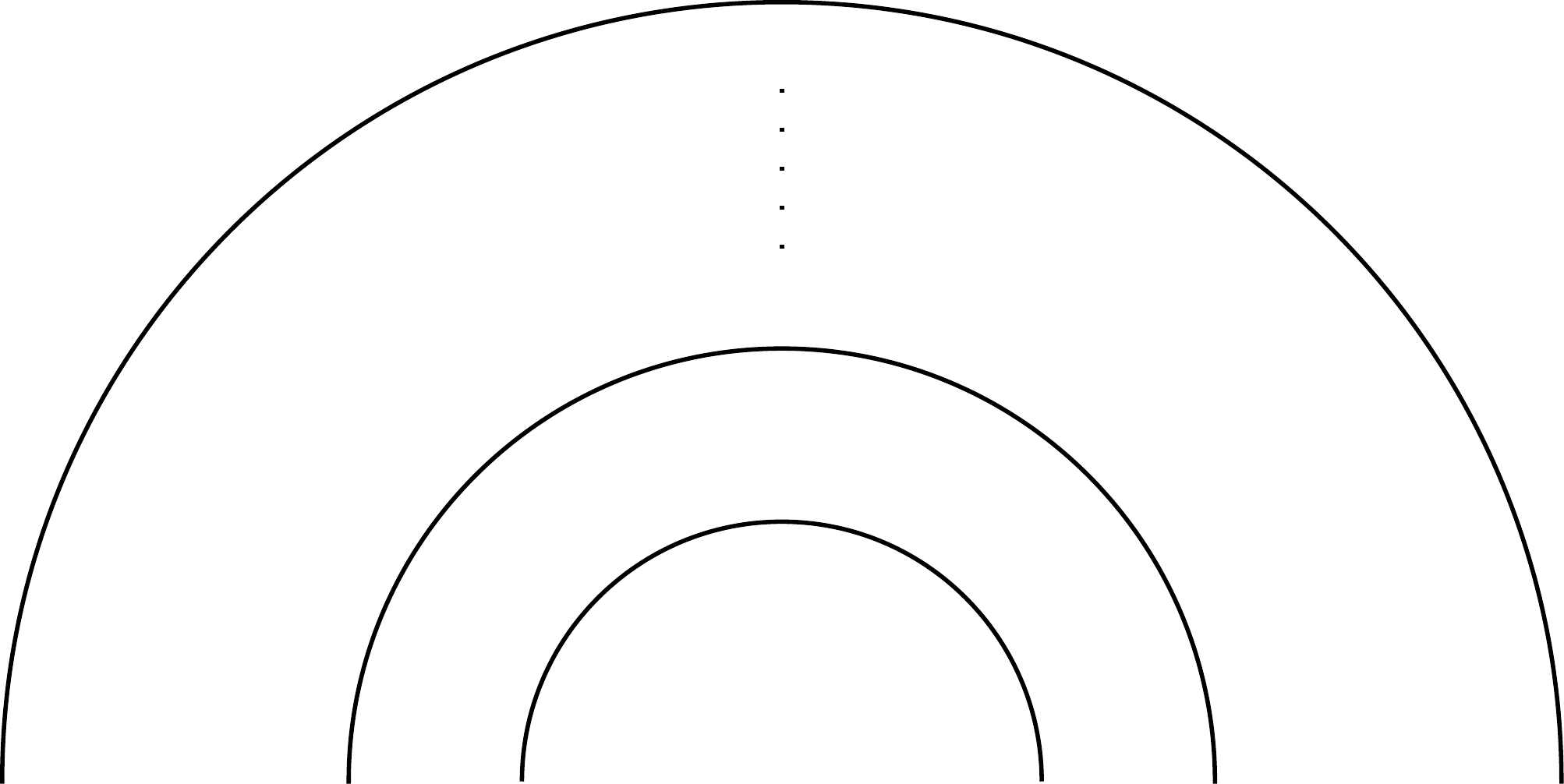}\\
\omega\qquad\qquad\quad \omega^*\\
\end{array}
$$

$$ c_{\omega_1,\omega_2}=
\begin{array}{c}
\omega_2 \qquad\quad \omega_1\\
\includegraphics[height=50pt]{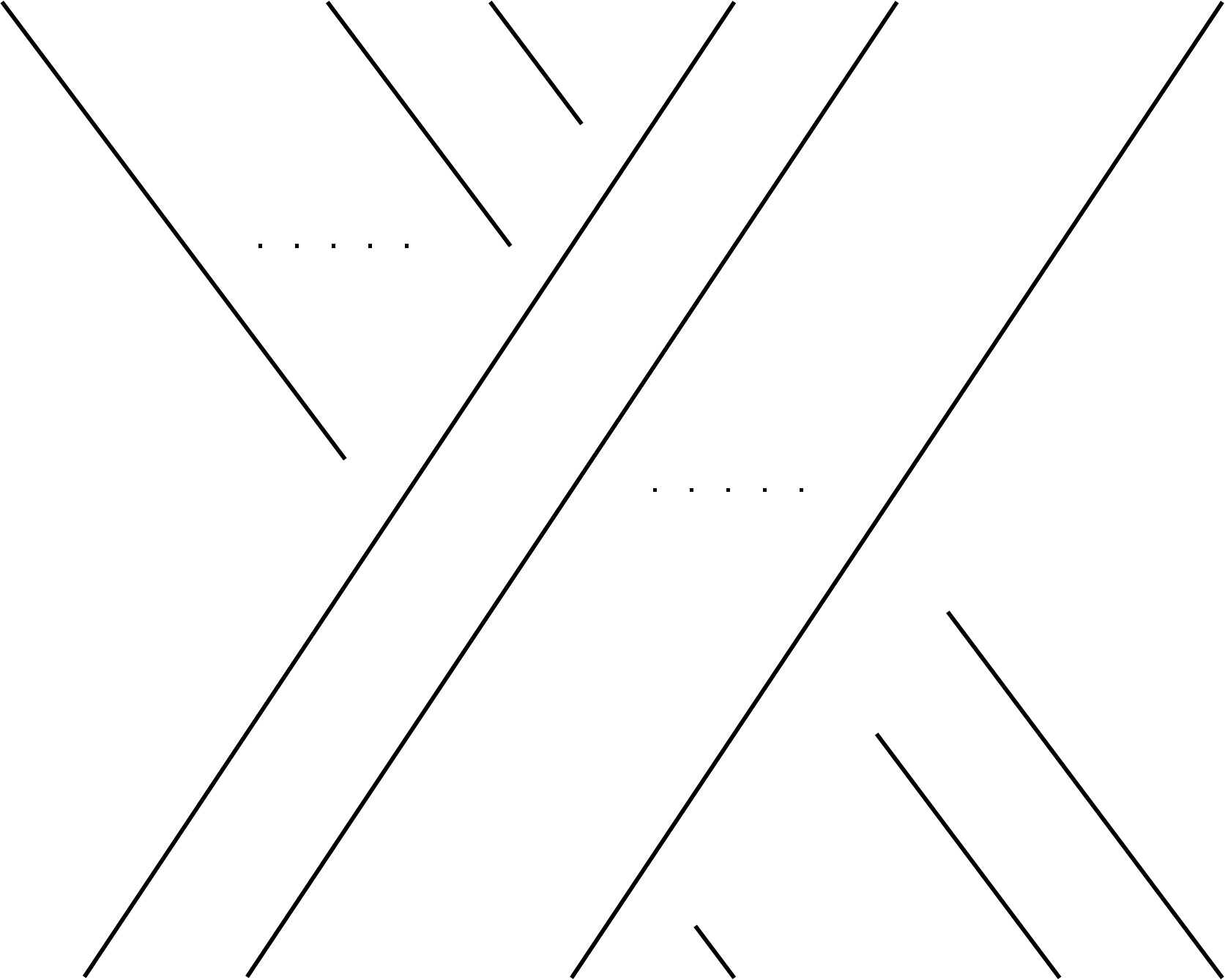}\\
\omega_1\qquad\quad\omega_2\\
\end{array}
\qquad\qquad
\theta_\omega=
\begin{array}{c}
\qquad\quad \omega\\
\includegraphics[height=70pt]{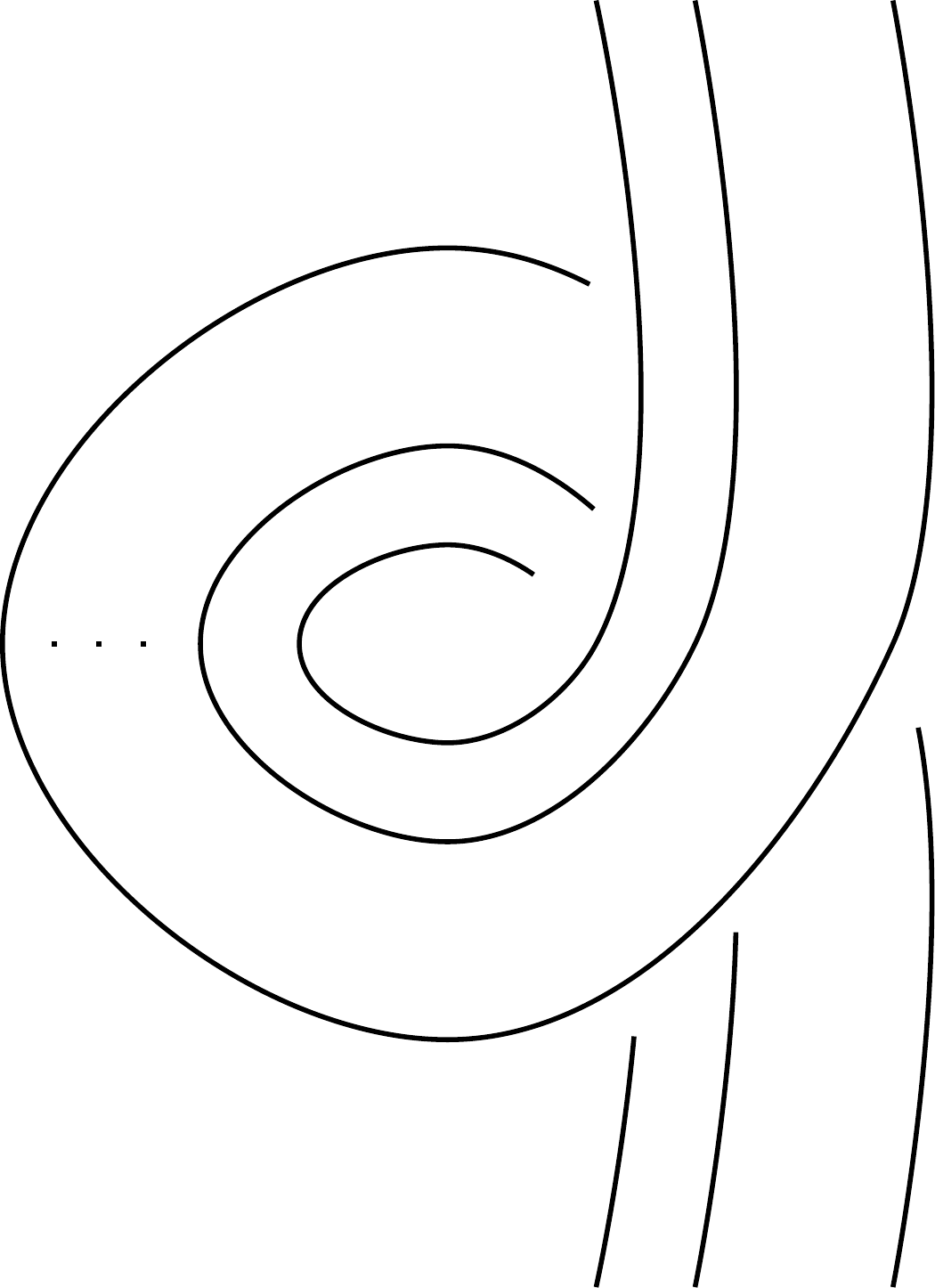}\\
\qquad\quad\omega\\
\end{array}
$$

\end{proposition}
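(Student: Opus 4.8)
The plan is to exploit the fact that, by construction, every structure morphism in the statement is given by an explicit framed tangle diagram, and that each morphism set $q\tilde{T}(\omega_s,\omega_t)$ consists of \emph{ambient isotopy classes} of such tangles. Consequently every axiom to be checked — the pentagon relation, the two snake identities, the two hexagon relations, the twist compatibility, and the ribbon identity, together with the naturality of $a$, $c$ and $\theta$ — translates into the assertion that two explicitly drawn framed tangles are ambient isotopic. The entire verification therefore reduces to exhibiting, for each axiom, an ambient isotopy of $D^2\times I$ carrying one side to the other, which in the diagrammatic language means a finite sequence of planar isotopies and \emph{framed} Reidemeister moves. First I would fix this dictionary precisely and record the framed moves (the second and third Reidemeister moves, and the framing-corrected first move in which a kink is replaced by a curl carrying one unit of twist) so that the blackboard-framing convention is unambiguous.

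Then I would dispatch the axioms in increasing order of the role played by the framing. The monoidal structure is juxtaposition of tangles with $\textbf{1}$ the empty word, and $a_{\omega_1,\omega_2,\omega_3}$ is the identity braid on the underlying points, merely relabelling the bracketing; hence both sides of the pentagon relation are the trivial vertical tangle on the same sequence of endpoints, and naturality of $a$ amounts to the obvious commutation of juxtaposition with rebracketing. The snake identities are the classical zig-zag moves: an $S$-shaped strand built from a cup $b_\omega$ and a cap $d_\omega$ is straightened by a planar isotopy. The two hexagon relations and the naturality of the braiding follow by sliding a block of strands across a positive crossing, an ambient isotopy realised by the second and third Reidemeister moves; here one must also check that the orientation data encoded by $\omega$ are respected, which is automatic since isotopy preserves orientations. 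Naturality of $\theta$ is the statement that a full twist can be pushed along a strand through any tangle, again a direct isotopy.

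The part where genuine care is needed, and which I expect to be the main obstacle, is the framing bookkeeping in the twist compatibility $\theta_{UV}=c_{V,U}c_{U,V}(\theta_U\otimes\theta_V)$ and in the ribbon identity $(\theta_V\otimes id_{V^*})b_V=(id_V\otimes\theta_{V^*})b_V$. These are exactly the identities that are \emph{false} for unframed tangles and become true only after the framing correction, so naive isotopies do not suffice and one must track the number of units of twist throughout. For the twist relation I would verify that rotating the combined block $UV$ by a full turn produces, after straightening, one twist on each factor together with the double crossing $c_{V,U}c_{U,V}$, confirming that the twist counts agree on both sides. For the ribbon identity I would show that sliding the twist from the $V$-strand of the cup $b_V$ around to the $V^*$-strand is an isotopy conserving the total framing, using the framing-corrected first Reidemeister move; this is precisely the compatibility singled out in Definition \ref{def_tangle}. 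Once these two framed identities are in hand the remaining verifications are routine, and the proposition follows.
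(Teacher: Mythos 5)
Your proposal is correct, and it fills in a verification that the paper itself omits entirely: the paper merely asserts the proposition is ``easy to verify'' and gives no argument, implicitly relying on exactly the kind of direct diagrammatic check you describe (reducing each axiom to an ambient isotopy of framed tangles, realised by planar isotopies and framed Reidemeister moves). Your identification of the twist compatibility and the ribbon identity as the only points where the framing bookkeeping is non-trivial is the right emphasis, and the rest of your reductions (trivial associator for the pentagon, zig-zag moves for the snake identities, strand-sliding for the hexagons and naturality) are the standard and correct ones.
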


In fact, the category $q\tilde{T}$ is the universal ribbon category, in the sense that there is a unique functor from it to any other ribbon category, preserving most of its properties. A precise formulation and proof of this theorem can be found in \cite{Shum}.

\subsection{Elliptic Structure}
We will now define the concept of elliptic structure. The definitions in this section are also taken from \cite{Humbert}.

Let $\mathcal{C}$ be a ribbon category, $\mathcal{C}_1$ any category, and $\{\cdot\}:\mathcal{C}\rightarrow\mathcal{C}_1$ a functor.
\begin{definition}
\label{def_elliptic}
An \textbf{elliptic structure} relative to $(\mathcal{C}\rightarrow\mathcal{C}_1)$ is a pair $(X,Y)$ of natural automorphisms of the functor $\{\cdot\otimes\cdot\}:\mathcal{C}\times\mathcal{C}\rightarrow\mathcal{C}_1$ (i.e. the composition of the tensor product of $\mathcal{C}$ with the given functor $\{\cdot\}$), satisfying the following identities for any objects $U,V,W\in\mathcal{C}$ (where for $Z=X$ or $Z=Y$ we denote $Z'_{U,V,W}:=\{a^{-1}_{U,V,W}\}Z_{U,VW}\{a_{U,V,W}\}$):
\begin{equation}
\label{elliptic1}
X_{UV,W}=X'_{U,V,W}\{c_{V,U}\otimes id_W\}X'_{V,U,W}\{c_{U,V}\otimes id_W\}
\end{equation}
\begin{equation}
\label{elliptic2}
Y_{UV,W}=Y'_{U,V,W}\{c^{-1}_{V,U}\otimes id_W\}Y'_{V,U,W}\{c^{-1}_{U,V}\otimes id_W\}
\end{equation}
\begin{equation}
\label{elliptic3}
Y_{U,V}X_{U,V}Y_{U,V}^{-1}X_{U,V}^{-1}=\{c_{V,U}c_{U,V}\}
\end{equation}
\begin{equation}
\label{elliptic4}
Y'_{U,V,W}\{c_{U,V}\otimes id_W\}X'_{V,U,W}\{c_{U,V}\otimes id_W\}=\{c_{V,U}\otimes id_W\}X'_{V,U,W}\{c^{-1}_{U,V}\otimes id_W\}Y'_{U,V,W}
\end{equation}
\end{definition}

We will now describe the main example for a category with an elliptic structure, which is the category of framed oriented tangles in the thickened torus.

Let $\mathbb{T}:=S^1\times S^1$ be the torus. We fix an embedding $D^2\subset\mathbb{T}$. This embedding also gives us an embedding of all the sets of points $b_n$ into $\mathbb{T}$. The torus $\mathbb{T}$ (minus an open neighborhood of a point at infinity) is depicted in figure \ref{torus}. This figure also shows the embedded disk $D^2$, and $2$ generators $x$ and $y$ of $\pi_1(\mathbb{T})$.

\begin{figure}[ht]

\centering
\includegraphics[width=12cm]{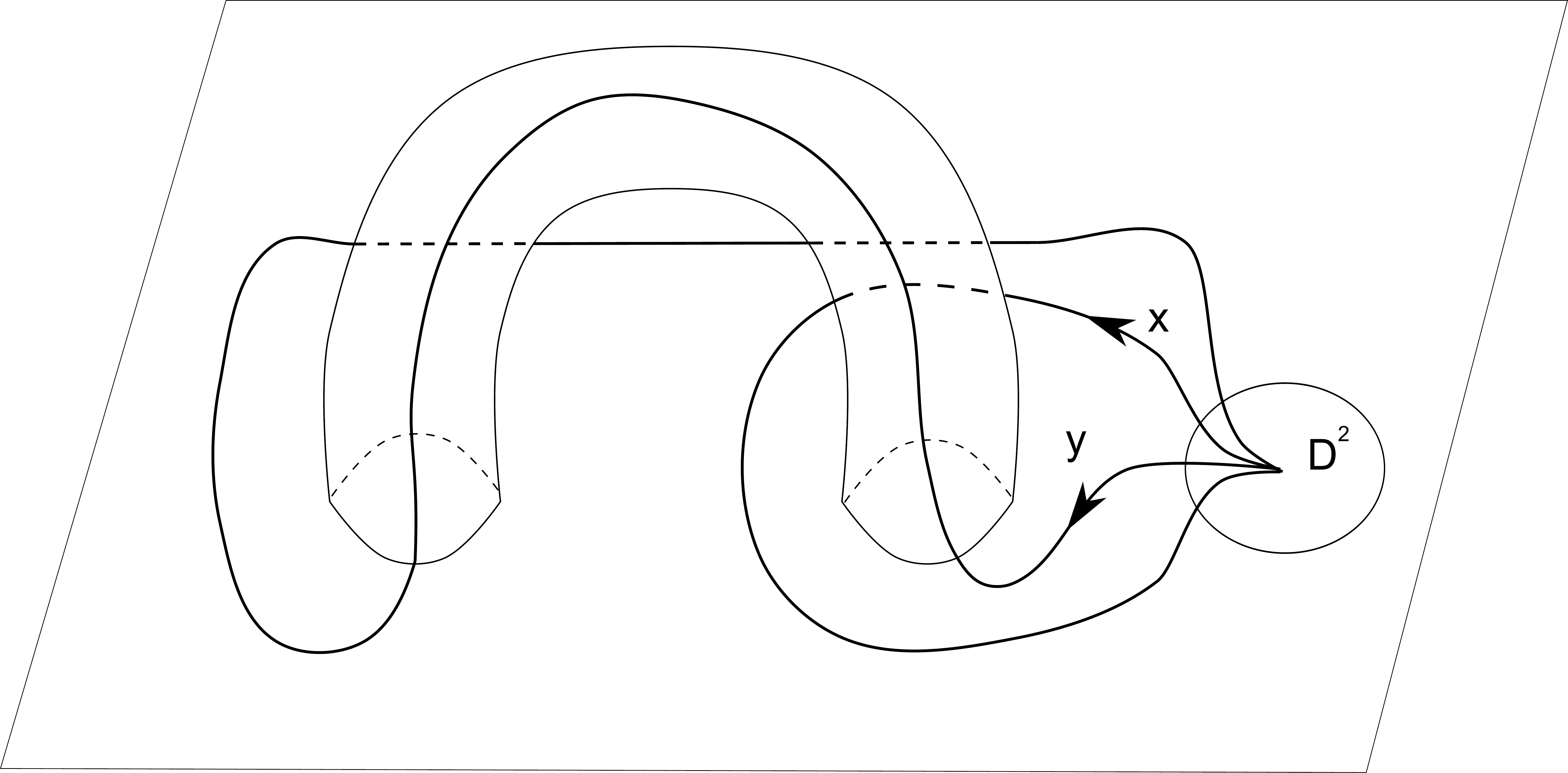}

\caption{The torus $\mathbb{T}$ with generators of $\pi_1$}
\label{torus}
\end{figure}

\label{def_category_tangles}

\begin{definition}
The category $q\tilde{T}_1$ of framed tangles in the thickened torus is defined as follows: The objects of $q\tilde{T}_1$ are non-associative words in $\{+,-\}$. For two such words $\omega_s,\omega_t$ the morphisms set $q\tilde{T}_1(\omega_s,\omega_t)$ is the set of ambient isotopy classes of oriented framed tangles in $\mathbb{T}\times I$ of type $(\omega_s,\omega_t)$. As in the definition of $q\tilde{T}$, the tangles are piece-wise smooth, transverse to the planes $\mathbb{T}\times\{t\}$ at all but a finite number of points, and vertical near the boundary points.
\end{definition}

There is an obvious functor $q\tilde{T}\rightarrow q\tilde{T}_1$, induced by the embedding $D^2\subset \mathbb{T}$. We want to describe an elliptic structure relative to this functor. In this context, the natural automorphisms $X_{\omega_1,\omega_2}$ and $Y_{\omega_1,\omega_2}$ act by composition with some invertible tangles in $q\tilde{T}_1((\omega_1)(\omega_2),(\omega_1)(\omega_2))$. We will now describe those tangles.

The tangent space at any point $p=(u,t)\in\mathbb{T}\times I$ can be decomposed as $T_{\mathbb{T}_u}\times\mathbb{R}$. If the point $u$ is in $D^2\subset\mathbb{T}$, a tangent vector at $p$ can be parametrized by $(x,y,t)$.

Let $\gamma:[0,1]\rightarrow\mathbb{T}$ be a smooth simple closed path, with a nowhere vanishing derivative. Assume that $\gamma(0)=\gamma(1)$ is the first (left) point of $b_2\subset \mathbb{T}$, and that $\gamma'(0)=c_1(-1,0)$ and $\gamma'(1)=c_2(1,0)$ for some constants $c_1,c_2$ (we will assume that the loops representing $x$ and $y$ from figure \ref{torus} have this property). We define the tangle $\tilde{\gamma}$ of type $(++,++)$ as follows:

The right strand is a constant strand at the second (right) point of $b_2$, with a framing parametrized constantly by $(0,-1,0)$.

The left strand is the tangle defined by $(\gamma(t),t)\in\mathbb{T}\times I$, and framed by the unique framing which has the following two properties: (A) it is parametrized by $(0,1,0)$ at $(\gamma(0),0)$ (thus it is actually not in $q\tilde{T}_1(++,++)$ as defined above), and (B) its parametrization has $t=0$ at all points (this parametrization is unique due to the nowhere vanishing derivative assumption).

By following closely the loops of figure \ref{torus} it can be seen that at $\tilde{\gamma}(1)$ this framing is parametrized by $(0,-1,0)$, for both $\gamma=x$ and $\gamma=y$.

Let $pt$ (=positive twist) and $nt$ (=negative twist) be the following tangles of type $(++,++)$: The right strand is constant, as in the definition of $\tilde{\gamma}$. The left strand is also a constant strand, but its framing makes a half twist from $(0,-1,0)$ to $(0,1,0)$. In $pt$ this twist is in the positive direction, and in $nt$ the twist is in the negative direction.

For any $2$ words $\omega_1,\omega_2$ and any tangle $u\in q\tilde{T}_1(++,++)$, define the cabling $\Delta^{++}_{\omega_1,\omega_2}(u)\in q\tilde{T}_1((\omega_1)(\omega_2),(\omega_1)(\omega_2))$ to be the tangle obtained from $u$ by duplicating the left strand $|\omega_1|$ times along its framing and giving the strands orientations according to the symbols of $\omega_1$, and similarly for the right strand with $\omega_2$.

\begin{proposition}
\label{elliptic_proposition}
There is an elliptic structure relative to $(q\tilde{T}\rightarrow q\tilde{T}_1)$ with:
$$X_{\omega_1,\omega_2}=\Delta^{++}_{\omega_1,\omega_2}(\tilde{x}\cdot pt)$$
$$Y_{\omega_1,\omega_2}=\Delta^{++}_{\omega_1,\omega_2}(\tilde{y}\cdot nt)$$
where $x,y$ are the representatives of the elements of $\pi_1(\mathbb{T})$ depicted in figure \ref{torus}.
\end{proposition}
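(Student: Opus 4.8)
The statement is entirely topological: $X$ and $Y$ are given by honest framed tangles, so the four identities \eqref{elliptic1}--\eqref{elliptic4}, together with naturality, amount to the assertion that certain framed tangles in $\mathbb{T}\times I$ are ambient isotopic. My plan is therefore to verify each claim by an explicit isotopy, keeping careful track of framings. I would begin with the two structural points. First, $\tilde{x}\cdot pt$ and $\tilde{y}\cdot nt$ are invertible in $q\tilde{T}_1(++,++)$: the inverse of $\tilde{x}$ is obtained by traversing the loop $x$ in the opposite direction, and $pt$, $nt$ are inverse half-twists; since the cabling $\Delta^{++}_{\omega_1,\omega_2}$ sends invertible tangles to invertible tangles, $X_{\omega_1,\omega_2}$ and $Y_{\omega_1,\omega_2}$ are automorphisms. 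Second, for naturality I would localize the ``going-around-the-loop'' part of each beak to a thin slab $\mathbb{T}\times[\tfrac12-\varepsilon,\tfrac12+\varepsilon]$, in which the strands are vertical inside $D^2$ and meet the torus only outside $D^2$; any tangle $f\otimes g$ supported in $D^2\times I$ can then be slid vertically through this slab without obstruction, which gives the commutation $\{f\otimes g\}X_{U,V}=X_{U',V'}\{f\otimes g\}$ (and similarly for $Y$) required of a natural automorphism of $\{\cdot\otimes\cdot\}$.

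Next I would treat the cabling identities \eqref{elliptic1} and \eqref{elliptic2}. Here the content is that the cabled left group $UV$ may be sent around the loop $x$ (resp. $y$) one block at a time: first one block goes around while the other stays, then the second block goes around past the returned copy of the first. Each such move forces the two blocks to cross, and the two resulting crossings are exactly the braidings $\{c_{U,V}\otimes id_W\}$ and $\{c_{V,U}\otimes id_W\}$ appearing in the formula, while the associators in $X'_{U,V,W}=\{a^{-1}_{U,V,W}\}X_{U,VW}\{a_{U,V,W}\}$ merely reassociate $(UV)W$ so as to single out the block currently being sent around. The half-twist $pt$ (resp. $nt$) is precisely the framing correction making each beak an element of $q\tilde{T}_1(++,++)$, and I would check that these half-twists are reproduced correctly on both sides; the opposite signs in $X$ and $Y$ reflect the opposite framing behaviour of the loops $x$ and $y$ noted before the proposition.

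For the commutator identity \eqref{elliptic3} the key geometric fact is that $xyx^{-1}y^{-1}$ bounds the fundamental square of $\mathbb{T}$, and the right-hand (constant) strand passes through this square exactly once. Hence, after the commutator $Y_{U,V}X_{U,V}Y_{U,V}^{-1}X_{U,V}^{-1}$, the left group traces a contractible loop that encircles the right group once, i.e. a full meridian; in $q\tilde{T}_1$ this is precisely the double braiding $\{c_{V,U}c_{U,V}\}$. The four half-twists $nt$, $pt$, $nt^{-1}$, $pt^{-1}$ contributed by the four factors cancel, so no residual self-twist survives. Identity \eqref{elliptic4} is the remaining mixed relation; I would verify it by the same method, realizing both sides as the isotopy describing how the $x$- and $y$-loops of $U$ and $V$ slide past one another over a fixed group $W$, so that it reduces to a Yang--Baxter/hexagon-type rearrangement of crossings already justified in $q\tilde{T}$.

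The main obstacle is bookkeeping rather than conceptual: in \eqref{elliptic1}, \eqref{elliptic2} and especially the mixed relation \eqref{elliptic4}, one must match not only the underlying isotopy classes of the loops but also every framing twist and every crossing sign, and confirm that the chosen half-twists $pt$, $nt$ combine to the correct total framing on each side. I expect identity \eqref{elliptic4} to be the most delicate, since it is the one that genuinely intertwines the two beaks with the braiding and thus demands the most careful picture; the commutator identity \eqref{elliptic3}, by contrast, follows cleanly once the fundamental-square picture is in place.
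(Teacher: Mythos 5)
Your outline is correct, but it takes a genuinely different route from the proof the paper actually writes down. You verify the identities \eqref{elliptic1}--\eqref{elliptic4} by direct ambient isotopy in $\mathbb{T}\times I$: localizing the beak to a slab and dragging $f\otimes g$ around the loop for naturality, sending the cabled blocks around $x$ (resp.\ $y$) one at a time for \eqref{elliptic1}--\eqref{elliptic2}, and using the fact that $xyx^{-1}y^{-1}$ bounds the fundamental square, punctured once by the constant right strand, for \eqref{elliptic3}. This is essentially the original argument of Humbert, which the paper cites for the full statement (including universality). The paper's own ``pictorial'' proof, given in Section \ref{section_LMO}, instead rewrites $X_{+,+}$, $Y_{+,+}$ and their inverses as \emph{representing tangles} in $D^2\times I$ --- surgery presentations in which the loops $x$ and $y$ become removed or surgered closed components --- and then verifies the four identities only for $U=V=W=+$ by planar isotopy together with Kirby calculus, chiefly the slam-dunk move of Figure \ref{slam_dunk}. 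What the surgery route buys is that these representing tangles are precisely the input to the extended LMO functor later on, so the verification doubles as the setup for Theorem \ref{theorem_related}; what your route buys is that it avoids Kirby calculus entirely and that you treat invertibility, naturality, and general objects $U,V,W$ explicitly, whereas the paper checks only the generating case and leaves the cabling-compatibility of the identities implicit. Your only real exposure is the framing bookkeeping you already flag --- in particular that in \eqref{elliptic3} the half-twists $pt$, $nt$ are interleaved with the loops rather than adjacent, so their cancellation must be checked against the framing conventions for $\tilde{x}$ and $\tilde{y}$ at top and bottom --- but that is a finite, routine check.
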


The compositions $\tilde{x}\cdot pt$ and $\tilde{y}\cdot nt$ which appear in this proposition are defined by simply putting the first tangle on top of the second. Note that the framings of $\tilde{x}$ and $\tilde{y}$ at the bottom correspond to the framings of $pt$ and $nt$ at the top, and after the compositions we get elements in $q\tilde{T}_1(++,++)$.

This proposition is proved in \cite{Humbert} as a special case of the general concept of genus $g$ structures. Furthermore, it is proved there that $(q\tilde{T}\rightarrow q\tilde{T}_1)$ is universal, in the sense that there is a unique pair of functors from it to any other pair with an elliptic structure, preserving most of its properties.

It is clear that the main ingredients of this elliptic structure are the tangles $\tilde{x}\cdot pt$ and $\tilde{y}\cdot nt$. In \cite{Humbert}, these are the tangles which are represented by ``beaks" in beak diagrams. In section \ref{section_LMO} we will give a different description of those tangles, and use this description to give another, pictorial, proof  of proposition \ref{elliptic_proposition}.

\newpage
\section{Categories of Jacobi Diagrams}
\label{section_jacobi}

In this section we review the definition of several categories of Jacobi diagrams, and the relations between them. Most of those categories are obvious extensions of spaces which appear, in one way or another, in \cite{Cheptea}, \cite{Habiro12} and \cite{Humbert}. In the following sections we will see how those categories are used to define invariants of tangles.

\subsection{Category of Patterns}
\begin{definition}
A \textbf{pattern} $P$ is a compact (but not necassarily closed) oriented $1$-manifold whose boundary $\partial P$ is divided into $2$ ordered sets $\partial^s P$ and $\partial^t P$. To each pattern $P$ we can associate $2$ words $\omega_s(P)$ and $\omega_t(P)$ in the symbols $\{+,-\}$. The words $\omega_s(P)$ and $\omega_t(P)$ encode the orientations of $P$ around the ordered sets $\partial^s P$ and $\partial^t P$: A $+$ ($-$) in the $i$-th place of $\omega_s(P)$ means that the orientation of $P$ near the $i$-th point of $\partial^s P$ is going away from (towards) this point. A $+$ ($-$) in the $i$-th place of $\omega_t(P)$ means that the orientation of $P$ near the $i$-th point of $\partial^t P$ is going towards (away from) this point.

The category of patterns $\mathbb{P}$ is the category whose objects are finite words in the symbols $\{+,-\}$, and for any $2$ such words $\omega_s,\omega_t$ the morphisms set $\mathbb{P}(\omega_s,\omega_t)$ is the set of patterns $P$ such that $\omega_s(P)=\omega_s$ and $\omega_t(P)=\omega_t$. The composition $P_2\cdot P_1$ of $2$ patterns is defined by attaching $\partial^t P_1$ to $\partial^s P_2$. Graphically, we usually draw $\partial^s P$ at the bottom and $\partial^t P$ at the top. The composition is then obtained by putting $P_2$ above $P_1$.
\end{definition}

\paragraph*{Note:} There are obvious functors $q\tilde{T}\rightarrow \mathbb{P}$ and $q\tilde{T}_1\rightarrow\mathbb{P}$ which map an object $\omega$ to itself (forgetting the non-associative structure), and a tangle $u$ to its skeleton. The split $\partial P=\partial^s P\cup \partial^t P$ is determined by whether the boundary point is in $D^2\times\{0\}$ or in $D^2\times\{1\}$.

\subsection{Jacobi Diagrams and the Category $\mathcal{A}$}
Let $P$ be a pattern, and $S$ a set. A \textbf{Jacobi diagram} $D$ over $P$ and $S$ is a uni-trivalent graph whose univalent vertices are either connected to a point in $P$ or labeled by an element of $S$. The trivalent vertices are cyclically oriented. When we draw a Jacobi diagram in a $2$-dimensional plane, we assume the orientations of the trivalent vertices are always counter-clockwise. An example for a Jacobi diagram is given in figure \ref{Jacobi_example}. The pattern $P$ is given by the solid lines, and the uni-trivalent diagram is given by the dashed lines. 

\begin{figure}[ht]

\centering
\includegraphics[width=5cm]{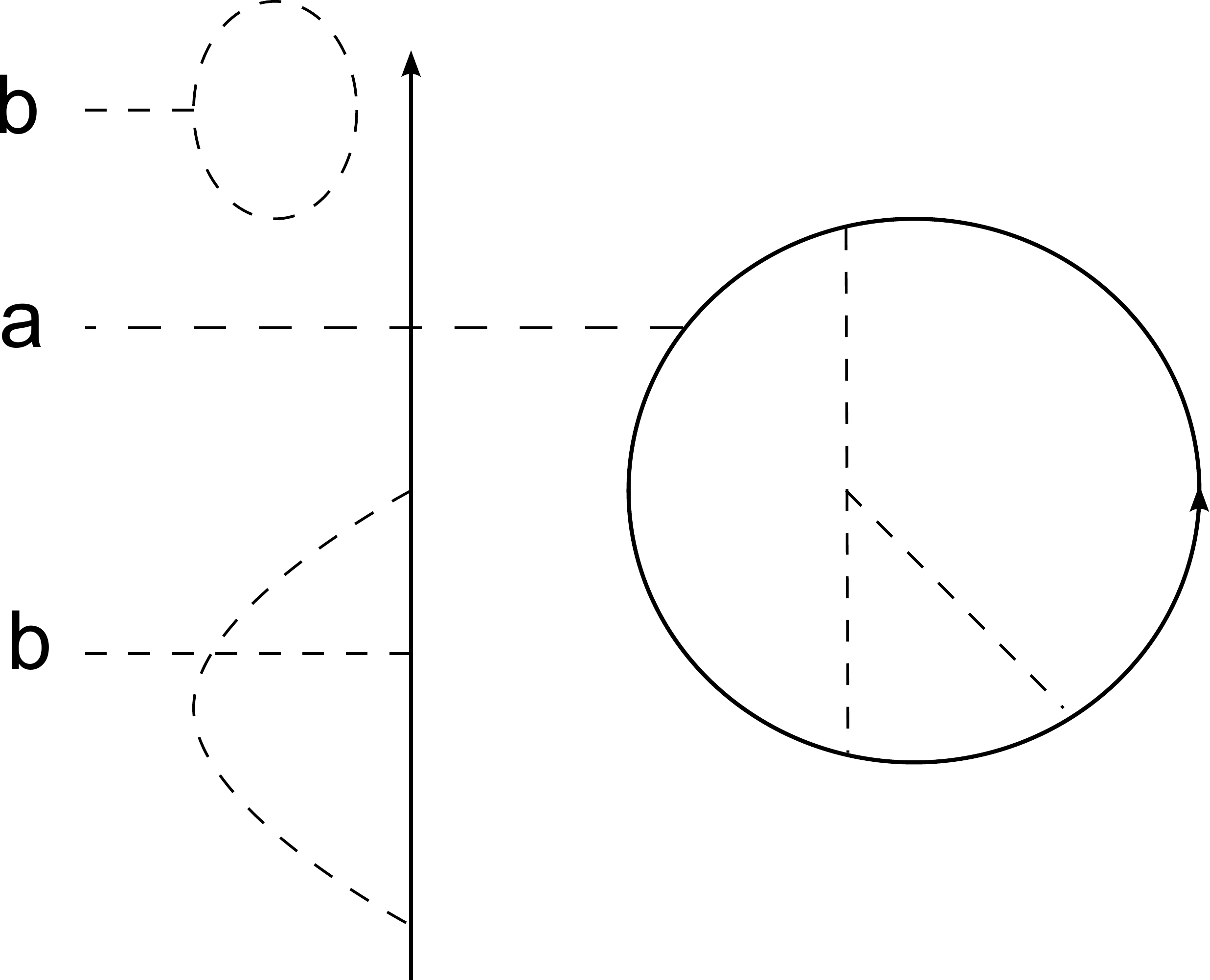}

\caption{An example for a Jacobi diagram}
\label{Jacobi_example}
\end{figure}

The \textbf{degree} of a Jacobi diagram $D$ is defined to be the number of trivalent vertices + the number of univalent vertices on $P$.

The set of all Jacobi diagrams over $P$ and $S$ is denoted $\mathbb{D}(P,S)$. 
\paragraph*{Notation:} Let $A$ be a set with a degree map $A\rightarrow \mathbb{N}$, and $\mathbb{F}$ a field ($\mathbb{F}$ will usually be a field of characteristic $0$, such as $\mathbb{R}$ or $\mathbb{C}$). Then $S_\mathbb{F}(A)$ denotes the degree completion of the vector space spanned by $A$ over the field $\mathbb{F}$.

Let $\mathcal{A}(P,S)$ be the quotient of $S_\mathbb{F}(\mathbb{D}(P,S))$ by the relations $STU$, $IHX$ and $AS$, which are defined by:
$$STU:\mathfig{2}{section2_STU1} = \mathfig{2}{section2_STU2} - \mathfig{2}{section2_STU3}\\[0.5cm]$$

$$IHX:\mathfig{2}{section2_IHX1} = \mathfig{2}{section2_IHX2} - \mathfig{2}{section2_IHX3}\\[0.5cm]$$

$$AS:\mathfig{2}{section2_AS1} = \quad -\mathfig{2}{section2_AS2}\\[0.5cm]$$

At this point we are specifically interested in the spaces $\mathcal{A}(P,\emptyset)$, i.e. spaces of Jacobi diagrams with no labeled vertices. We will denote those spaces by $\mathcal{A}^\partial(P)$. In these spaces the degree of all diagrams is even. It is common to define the degree in those spaces as half the number of vertices, but we will continue to use the above definition of degree, in order to be compatible with other spaces of Jacobi diagrams.

Let $\mathcal{A}^\partial$ be the category defined as follows: The objects of $\mathcal{A}^\partial$ are words in the symbols $\{+,-\}$. For $2$ such words $\omega_s,\omega_t$, the morphisms set $\mathcal{A}^\partial(\omega_s,\omega_t)$ is defined by: $\mathcal{A}^\partial(\omega_s,\omega_t):=\bigcup_{P\in\mathbb{P}(\omega_s,\omega_t)}\mathcal{A}^\partial(P)$. For $a\in \mathcal{A}^\partial(\omega_s,\omega_t)$ and $b\in\mathcal{A}^\partial(\omega_t,\omega_u)$, the composition $b\cdot a$ is obtained by putting $b$ above $a$ and composing the underlying patterns.

Recall the box notation, which is defined in figure \ref{box}. In this figure the vertical lines going through the box can be either solid or dashed. The sign $\varepsilon_i$ is $-1$ if the $i$-th line is a solid line with orientation opposite to the orientation of the box (i.e. the direction which the arrow in the box points to), and $+1$ otherwise.

\begin{figure}[ht]

$$\mathfig{4}{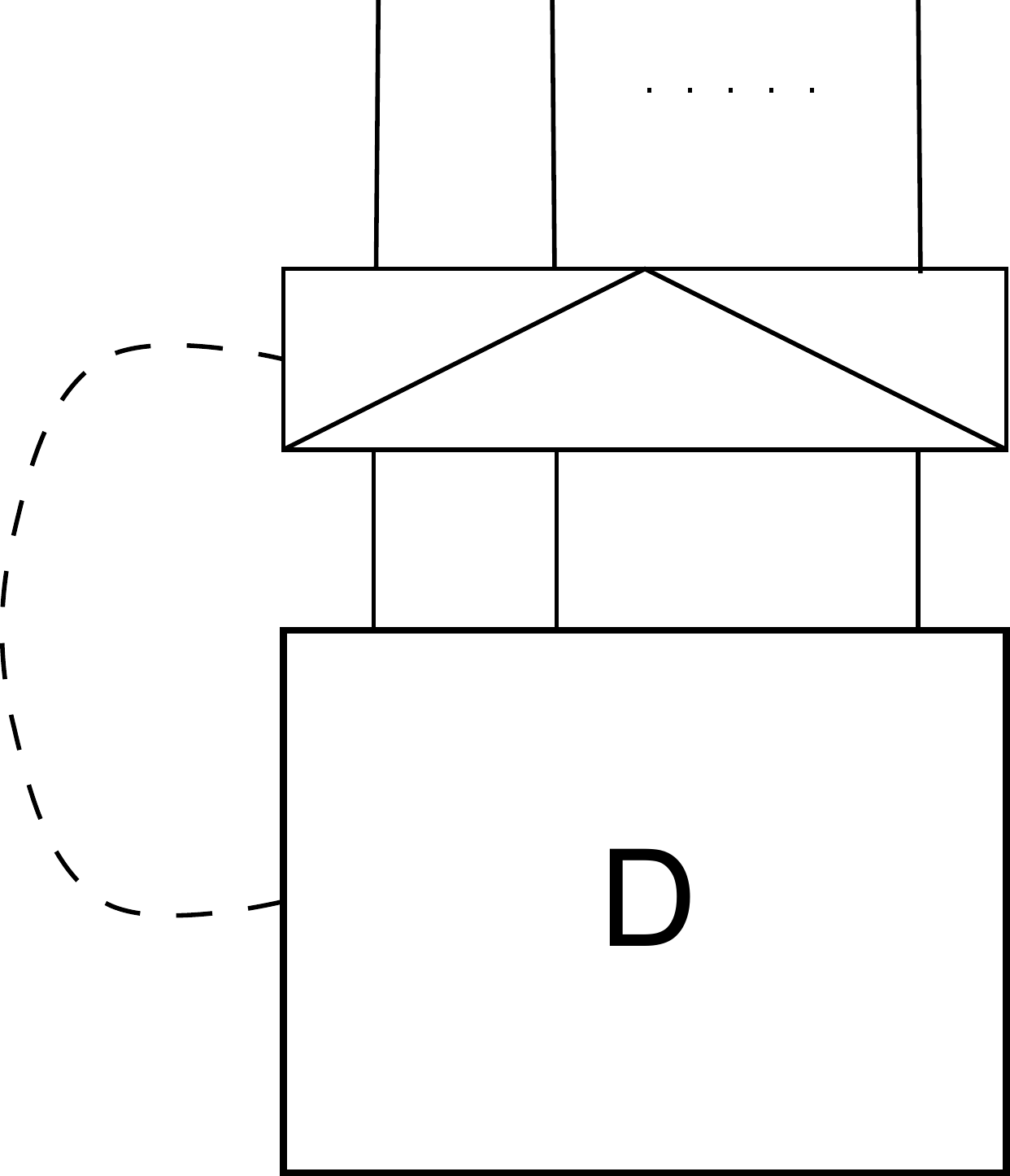} = \sum_i \varepsilon_i \mathfig{4}{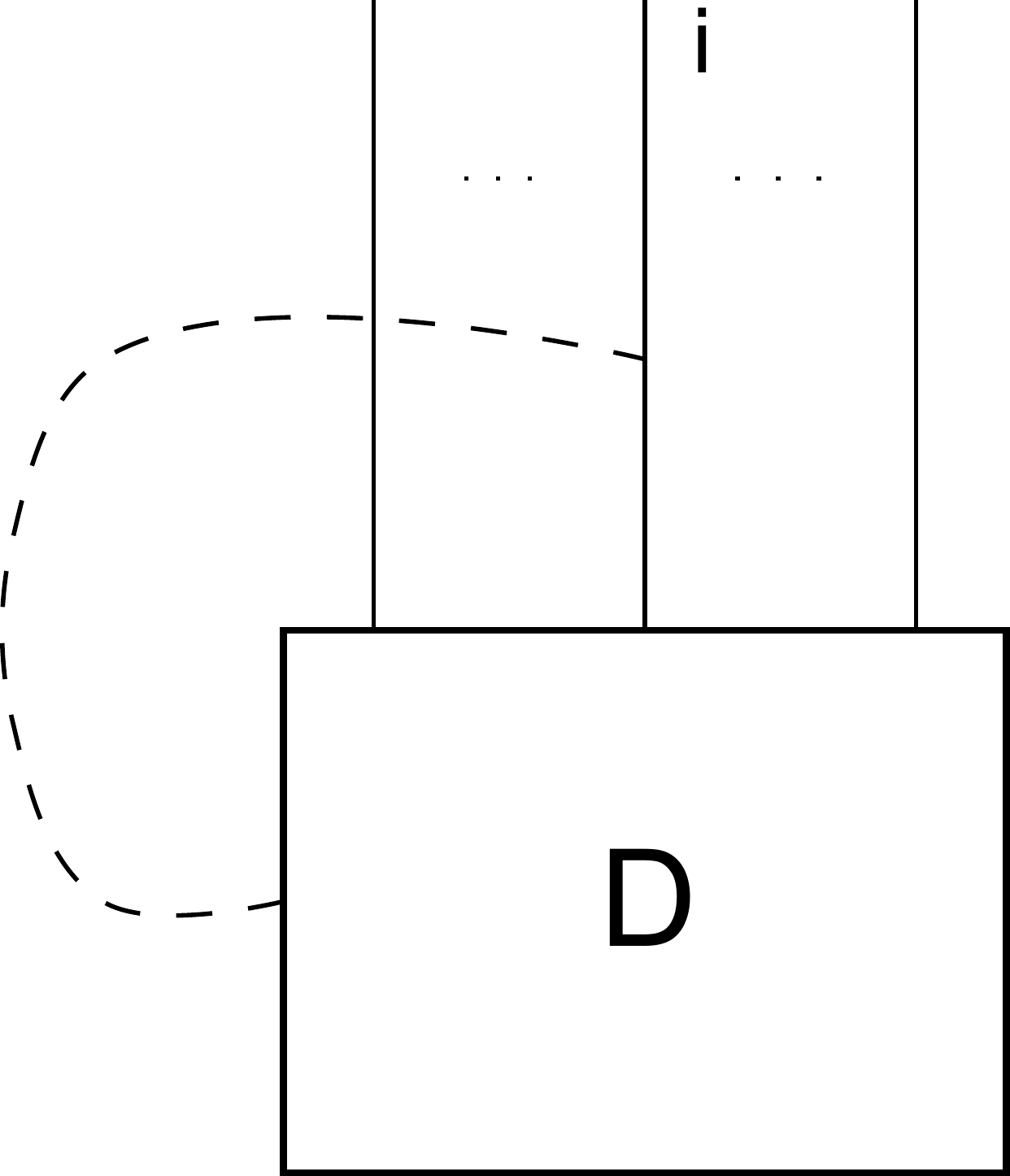}$$
\caption{The box notation}

\label{box}

\end{figure}

In each space $\mathcal{A}^\partial(P)$, we define $\textbf{I}(P)$ to be the subspace generated by all sums of the type given in figure \ref{I}. In this figure we assume that the lines which come out of the upper side of the box are exactly all the ends of the solid lines which lead to the points of $\partial^t(P)$. We call each such sum an $I$ relation.

\label{defA}
\begin{figure}

\centering
\includegraphics[height=4cm]{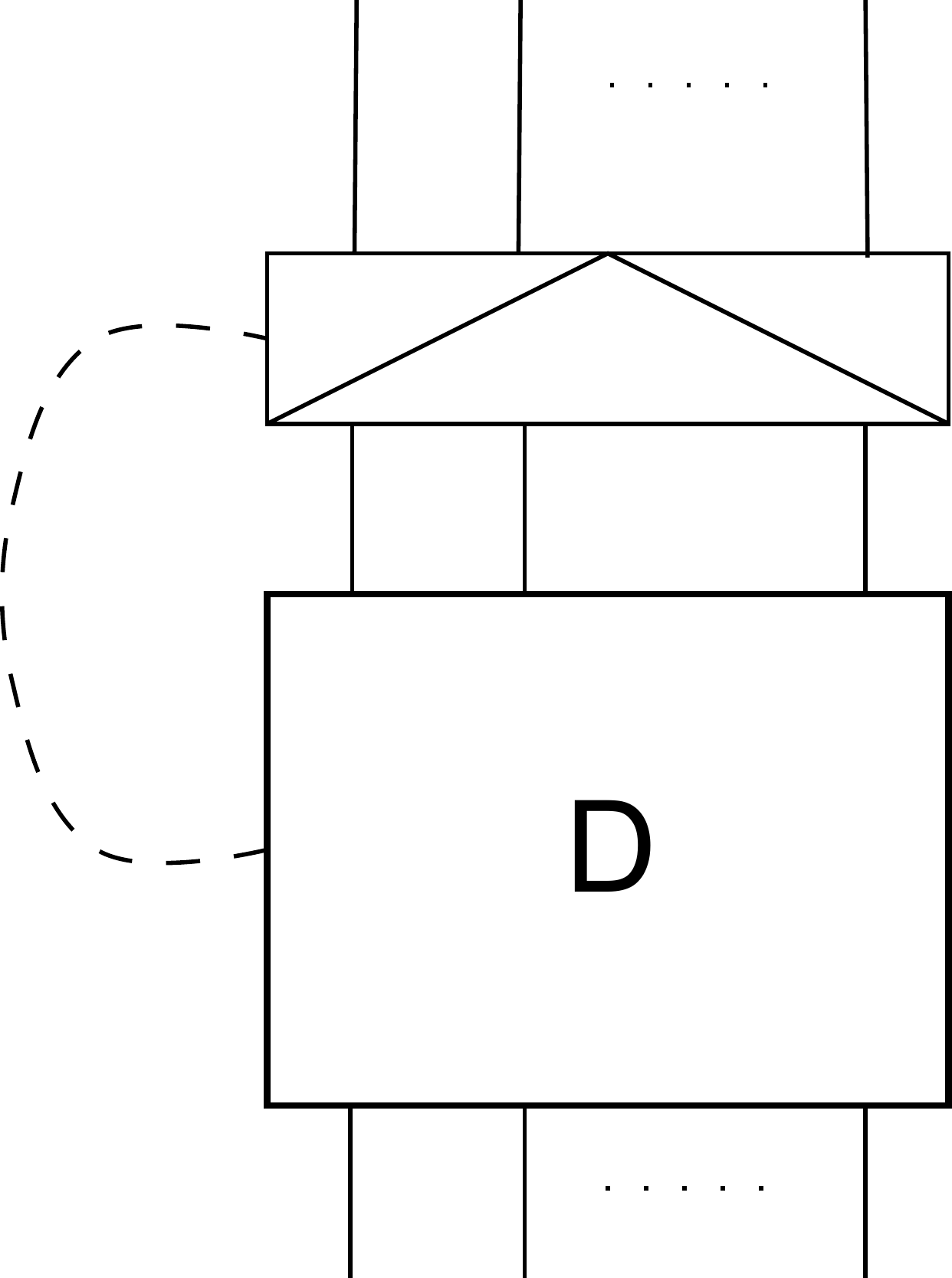}
\caption{An $I$ relation}
\label{I}
\end{figure}

It can be verified that the union of all $\textbf{I}(P)$ is a two-sided ideal of $\mathcal{A}^\partial$ (see \cite{Humbert}, Lemma 1.4.4). Therefore we can define the quotient category $\mathcal{A}^\partial/\textbf{I}$. We denote this category by $\mathcal{A}$.

\paragraph*{Remark:}
As we will see in the following sections, the categories of Jacobi diagrams decorated by $\partial$ are usually the targets of invariants of tangles in cobordisms whose boundary surfaces have one boundary component. The corresponding quotient categories with no decoration are used when the boundary surfaces have no boundary. Specifically, $\mathcal{A}^\partial$ is used to defined invariants of tangles in $D^2\times I$, and $\mathcal{A}$ is used to define invariants of tangles in $S^1\times I$. In the following sections we will be more interested in invariants of tangles in $\mathbb{T}\times I$. The categories $\mathcal{A}^\partial$ and $\mathcal{A}$ will only be used for technical reasons in the way to define those invariants.

We end this section by recalling a known notation (see \cite{Cheptea}, notation 3.13):
\paragraph*{Notation:} Let $\omega$ be a word of length $n$ in the symbols $\{+,-\}$, and $\omega_1,...,\omega_n$ other words in those symbols. The map $\Delta^\omega_{\omega_1,...,\omega_n}:\mathcal{A}^\partial(\omega,\omega)\rightarrow\mathcal{A}^\partial(\omega_1\cdot...\cdot\omega_n,\omega_1\cdot...\cdot\omega_n)$ is the map obtained by applying, for each $1\le i\le n$, the doubling map $\Delta:\mathcal{A}^\partial(\uparrow,\uparrow)$ iterated $|\omega_i|-1$ times on the $i$-th strand, and by applying the orientation-reversal map $S$ to each new strand whose corresponding symbol in $\omega_i$ does not agree with the $i$-th symbol in $\omega$. This map also induces a map on the quotient categories:  $\Delta^\omega_{\omega_1,...,\omega_n}:\mathcal{A}(\omega,\omega)\rightarrow\mathcal{A}(\omega_1\cdot...\cdot\omega_n,\omega_1\cdot...\cdot\omega_n)$

\subsection{Unordered Elliptic Jacobi Diagrams}

Let $\mathbb{D}_1(P)$ denote the set $\mathbb{D}(P,H_1(\mathbb{T}))$, i.e. the set of Jacobi diagrams over the pattern $P$ with labels coming from the first homology of the torus. Let $\mathcal{A}^\partial_1(P)$ be the quotient of $S_\mathbb{F}(\mathbb{D}_1(P))$ by the relations: $STU$, $IHX$, $AS$ and multilinearity. The multilinearity relation is defined as follows:

$$ (au+bv)\mathfigns{1}{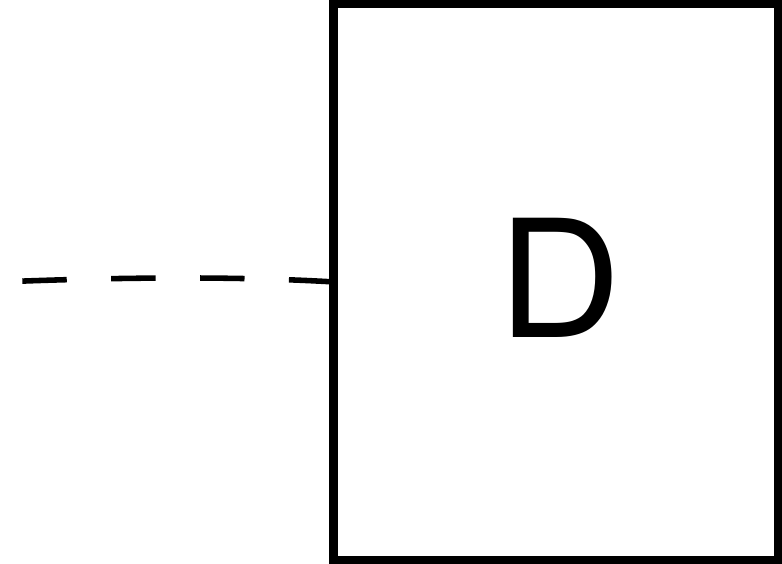}=a\left(u\mathfigns{1}{section2_multilinearity.pdf}\,\right)+b\left(v\mathfigns{1}{section2_multilinearity.pdf}\,\right) \quad \forall a,b\in\mathbb{F}\quad u,v\in H_1(\mathbb{T}) $$

The loops $x,y$ from figure \ref{torus} induce generators of $H_1(\mathbb{T})$, which we also denote by $x,y$. Given this basis (or any other basis), it is easy to see that $\mathcal{A}_1^\partial(P)$ is isomorphic to $\mathcal{A}^\partial(P,\{x,y\})$, and the multilinearity relation is no longer needed. In general it is better to work with the definition which allows all the elements of $H_1(\mathbb{T})$ as labels, because this definition does not depend on a choice of a basis, and also because $\mathcal{A}_1^\partial(P)$ defined this way has a natural action of the symplectic group (see \cite{Habiro09}). However, for our needs it would be more convenient to consider only diagrams with $x$ and $y$ labels, and forget about multilinearity.

\textbf{A strut} in an elliptic Jacobi diagram $D\in\mathbb{D}(P, \{x,y\})$ is a component in the diagram of the form $\begin{array}{c} u\\ \includegraphics[height=0.5cm]{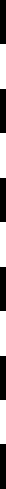}\\v\end{array}$ with $u,v\in\{x,y\}$. $D$ is called \textbf{top-substantial} if it has no struts labeled by $y$ on both vertices. Let $^{ts}\mathcal{A}_1^\partial(P)\subset \mathcal{A}_1^\partial(P)$ be the subspace generated by all the top-substantial diagrams. This restriction will allow us to define a composition of elliptic Jacobi diagrams (see also \cite{Cheptea}, section 3.1).

For $2$ elements $D_1\in^{ts}\mathcal{A}_1^\partial(P_1)$ and $D_2\in^{ts}\mathcal{A}_1^\partial(P_2)$ such that $P_1$ and $P_2$ are composable, we define the composition $D_2\cdot D_1$ to be the sum of all diagrams obtained by putting $D_2$ on top of $D_1$ and attaching \textbf{all} the $y$-labeled vertices of $D_1$ to \textbf{all} the $x$-labeled vertices of $D_2$ (thus, if the number of $y$-labeled vertices of $D_1$ is not equal to the number of $x$-labeled vertices of $D_2$, this sum is empty). This composition is extended linearly to a map $\mathcal{A}_1^\partial(P_2)\times\mathcal{A}_1^\partial(P_1)\rightarrow\mathcal{A}_1^\partial(P_2\cdot P_1)$.

The category $^{ts}\mathcal{A}_1^\partial$ is now defined as the category whose objects are words in the symbols $\{+,-\}$, and for any $2$ such words $\omega_s$ and $\omega_t$, $^{ts}\mathcal{A}_1^\partial(\omega_s,\omega_t):=\bigcup_{P\in\mathbb{P}(\omega_s,\omega_t)}\,^{ts}\mathcal{A}_1^\partial(P)$. The composition is defined as above.

The category $^{ts}\mathcal{A}_1^\partial$ is a monoidal category. The tensor product of two objects $\omega_1$ and $\omega_2$ is the concatenation $\omega_1\omega_2$, and the tensor product of $2$ diagrams $D_1$ and $D_2$ is obtained by putting $D_1$ alongside of $D_2$. In $^{ts}\mathcal{A}_1^\partial(\emptyset)$ this tensor product induces a multiplication. With respect to this multiplication we can define an exponent. In particular we have the identity elements:
$$id_\omega = 
\exp\left(\begin{array}{c} y\\ \includegraphics[height=0.7cm]{section2_strut.pdf}\\x\end{array}\right)
\otimes
\begin{array}{c}\omega \\  \mathfigns{1}{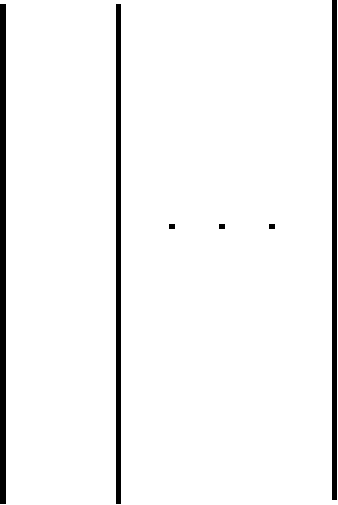}\\ \omega \end{array}$$.

Let $\textbf{I}_1(P)$ be the subspace of $^{ts}\mathcal{A}_1^\partial(P)$ generated by sums of the type shown in figure \ref{I_1}. In this figure we assume that the lines which come out of the upper side of the box are exactly all the ends of the solid lines which lead to the points of $\partial^t(P)$, and all the ends of dashed lines ending with the label $y$. We call each such sum an $I_1$ relation.

\begin{figure}[ht]

\centering
\includegraphics[height=4cm]{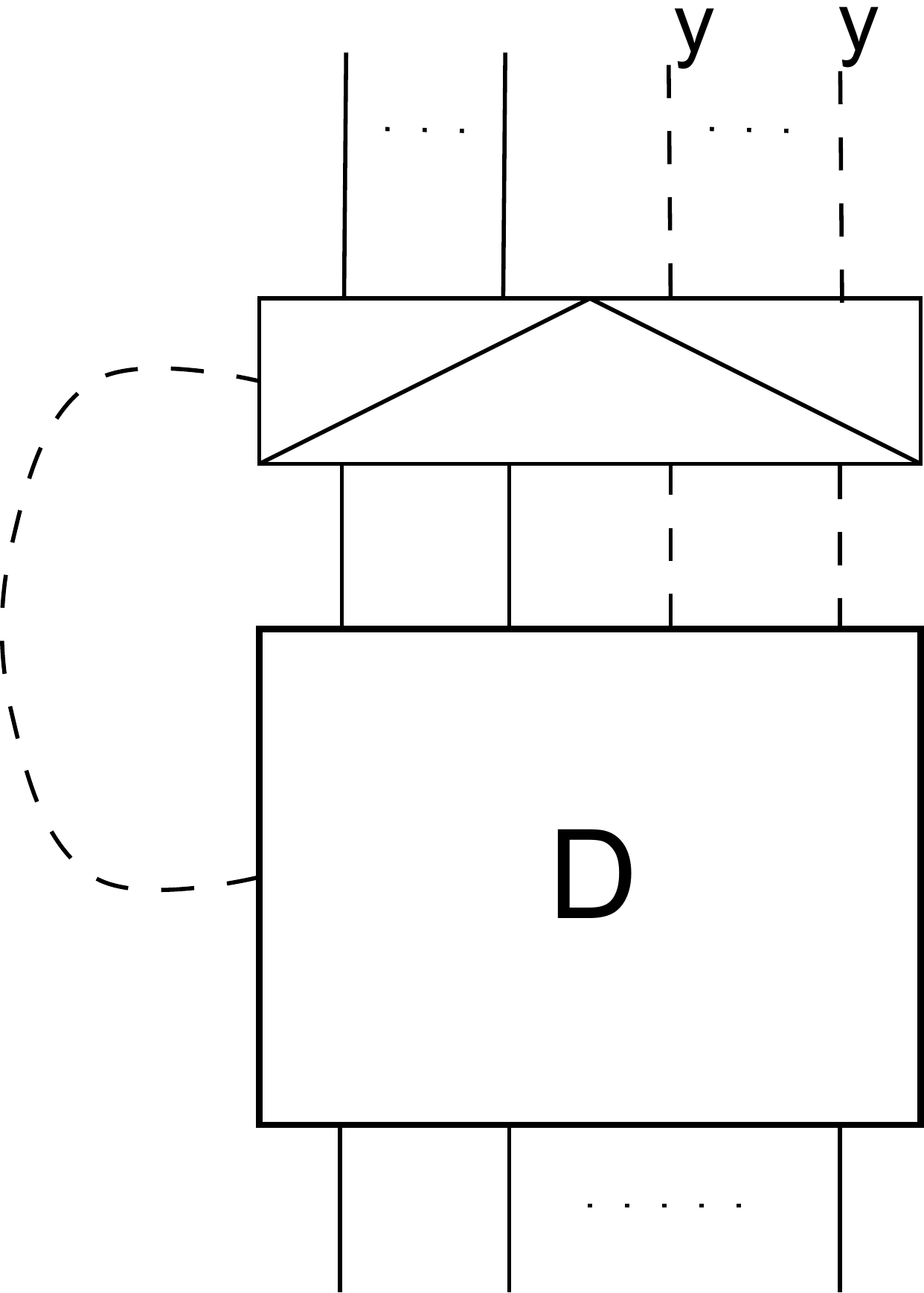}

\caption{An $I_1$ relation}
\label{I_1}
\end{figure}

It may be verified that the union of all $\textbf{I}_1(P)$ is a two-sided ideal of $^{ts}\mathcal{A}_1^\partial$ (by a similar argument to the one we mentioned in the previous section), which we denote by $\textbf{I}_1$. The quotient category $^{ts}\mathcal{A}_1^\partial/\textbf{I}_1$ is denoted $\mathcal{A}_1$. Denote the projection by $\pi:^{ts}\mathcal{A}_1^\partial\rightarrow \mathcal{A}_1$.

\subsection{Unordered Elliptic Jacobi Diagrams with no Struts} 
\label{subsection_no_struts}

Let $\mathbb{D}_1^y(P)\subset \mathbb{D}_1(P)$ be the subset of all diagrams $D$ such that each component of $D$ has at least one trivalent vertex or one vertex on $P$. In other words, $D$ has no struts. Since all the relations of $^{ts}\mathcal{A}_1^\partial(P)$ preserve this subspace, we get a subspace $\mathcal{A}^{\partial y}_1(P)$ of $^{ts}\mathcal{A}_1^\partial(P)$.

We use the spaces $\mathcal{A}^{\partial y}_1(P)$ to define a category $\mathcal{A}^{\partial y}_1$ similarly to the way we defined $\mathcal{A}^{\partial }_1$ but with a different composition. For $2$ elements $D_1\in\mathcal{A}_1^{\partial y}(P_1)$ and $D_2\in\mathcal{A}^{\partial y}_1(P_2)$ such that $P_1$ and $P_2$ are composable, we define the composition $D_2\cdot D_1$ to be the sum of all diagrams obtained by putting $D_2$ on top of $D_1$ and attaching \textbf{some} of the $y$-labeled vertices of $D_1$ to \textbf{some} of the $x$-labeled vertices of $D_2$. This composition is extended linearly to a map $\mathcal{A}_1^{\partial y}(P_2)\times\mathcal{A}_1^{\partial y}(P_1)\rightarrow\mathcal{A}_1^{\partial y}(P_2\cdot P_1)$.

Define maps $j^\partial_P:\mathcal{A}^{\partial y}_1(P)\rightarrow ^{ts}\mathcal{A}_1^\partial(P)$ by $u\mapsto \exp\left(\begin{array}{c} y\\ \includegraphics[height=0.5cm]{section2_strut.pdf}\\x\end{array}\right)
\otimes u$. It is easy to see that $j_P^\partial$ is injective. Also, given $u_1\in\mathcal{A}^{\partial y}(P_1)$ and $u_2\in\mathcal{A}^{\partial y}(P_2)$ with $P_1$ and $P_2$ composable, it may be verified that $j^\partial_{P_2\cdot P_1}(u_2\cdot u_1)=j^\partial_{P_2}(u_2)\cdot j^\partial_{P_1}(u_1)$. So $j^\partial_P$ induce an injective functor $j^\partial:\mathcal{A}^{\partial y}_1\rightarrow ^{ts}\mathcal{A}_1^\partial$.

Let $\textbf{I}^y_1(P)$ be the subspace of $\mathcal{A}_1^{\partial y}(P)$ generated by sums of the type shown in figure \ref{I^y_1}. Again, we assume that the lines which come out of the upper side of the box are exactly all the ends of the solid lines which lead to the points of $\partial^t(P)$, and all the ends of dashed lines ending with the label $y$. We call each such sum an $I_1^y$ relation. 

\begin{figure}[ht]

\centering
$$\mathfig{4}{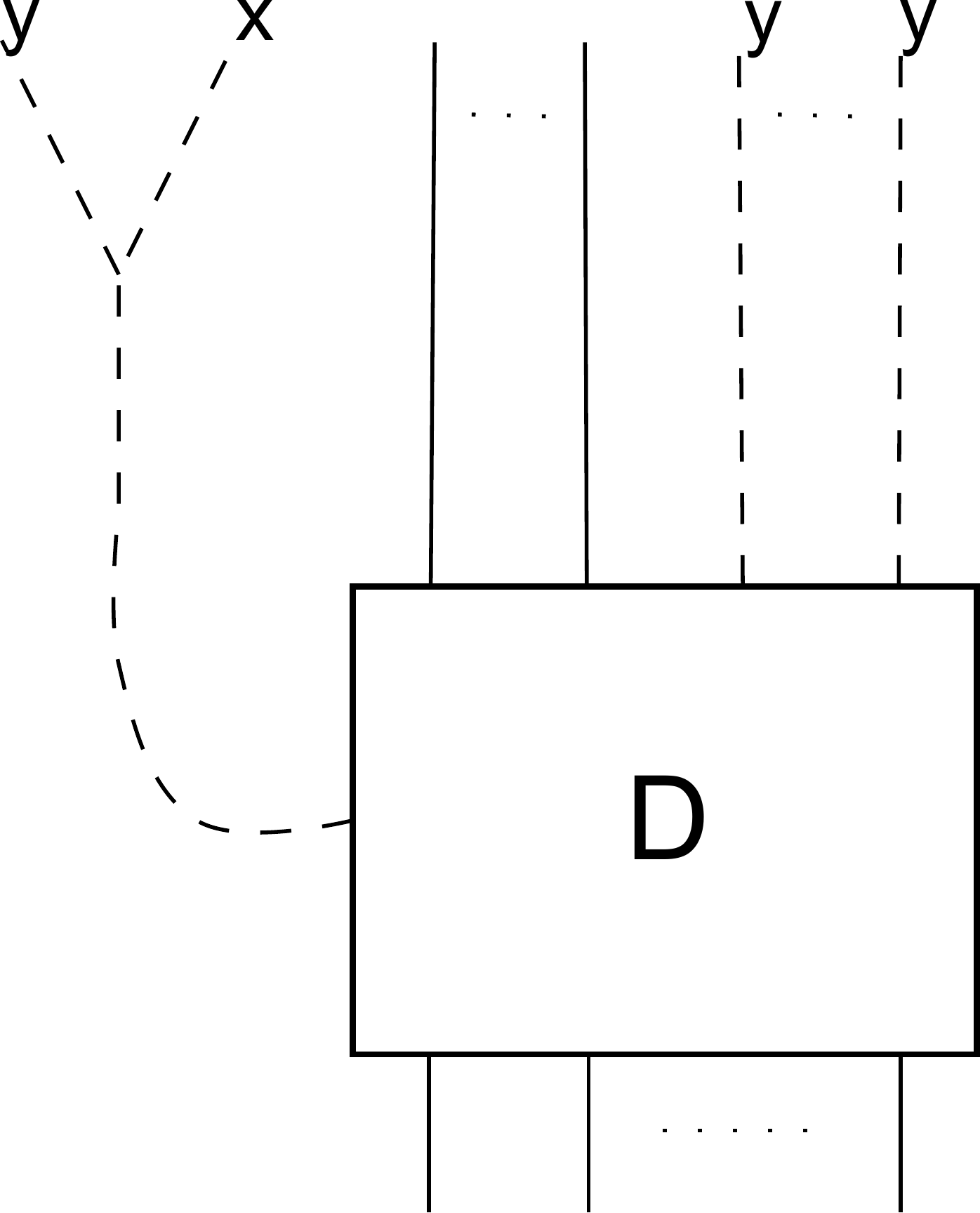} + \mathfig{4}{section2_I_1.pdf}$$

\caption{An $I^y_1$ relation}
\label{I^y_1}
\end{figure}
 
\begin{proposition}
The subspaces $\textbf{I}^y_1(P)$ induce a two-sided ideal of $\mathcal{A}_1^{\partial y}$.
\end{proposition}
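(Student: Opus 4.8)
The plan is to deduce the statement from the already-established fact that the union of the subspaces $\textbf{I}_1(P)$ is a two-sided ideal $\textbf{I}_1$ of ${}^{ts}\mathcal{A}_1^\partial$, by transporting this property along the injective, composition-preserving functor $j^\partial\colon\mathcal{A}_1^{\partial y}\to{}^{ts}\mathcal{A}_1^\partial$ constructed above. The key formal observation is that if $F$ is any functor that is linear on morphism spaces and preserves composition, and $J$ is a two-sided ideal in the target, then the collection of subspaces $F^{-1}(J)=\{u : F(u)\in J\}$ is a two-sided ideal in the source: indeed $F(w\cdot u)=F(w)\cdot F(u)$ lands in $J$ whenever $F(u)$ does, and symmetrically on the other side. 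Applying this with $F=j^\partial$ and $J=\textbf{I}_1$ shows that $(j^\partial)^{-1}(\textbf{I}_1)$ is automatically a two-sided ideal of $\mathcal{A}_1^{\partial y}$, so the whole proposition reduces to the single identity
$$\textbf{I}_1^y(P)=(j^\partial_P)^{-1}\bigl(\textbf{I}_1(P)\bigr)\qquad\text{for every pattern }P.$$

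For the inclusion $\textbf{I}_1^y(P)\subseteq(j^\partial_P)^{-1}(\textbf{I}_1(P))$ I would compute $j^\partial$ of a generating $I_1^y$ relation directly. Recall that $j^\partial(u)=\exp(\text{strut})\otimes u$ adjoins a factor $\exp$ of $y$-$x$ struts, and that an $I_1$ relation on a diagram $D$ is a box distributing a handle over all the solid lines reaching $\partial^t(P)$ together with all $y$-labelled ends of $D$. For $D=j^\partial(u)$ the $y$-labelled ends split into those already present in the strut-free diagram $u$ and those produced by the adjoined struts. The box terms attaching the handle to the first group reassemble, after factoring out the strut exponential, into $j^\partial$ of the second picture in figure \ref{I^y_1} (the $I_1$-type term), while the box terms attaching the handle to the $y$-end of a strut turn that strut into a tripod carrying an extra $x$-leg; using AS and STU these are exactly $j^\partial$ of the first picture in figure \ref{I^y_1}. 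Hence the $I_1$ relation on $j^\partial(u)$ equals $j^\partial$ applied to the full $I_1^y$ relation on $u$, giving $j^\partial(\textbf{I}_1^y(P))\subseteq\textbf{I}_1(P)$. In other words, the correction term in the definition of the $I_1^y$ relation is engineered precisely to absorb the interaction of the box with the strut exponential.

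The reverse inclusion $(j^\partial_P)^{-1}(\textbf{I}_1(P))\subseteq\textbf{I}_1^y(P)$ is the main obstacle. The difficulty is that an element $u\in\mathcal{A}_1^{\partial y}(P)$ with $j^\partial(u)\in\textbf{I}_1(P)$ is only presented as a combination $\sum_k c_k\,(I_1\text{ relation on }D_k)$ of $I_1$ relations on arbitrary top-substantial diagrams $D_k$, which individually need not lie in the image of $j^\partial$; only the total sum has the special strut content $\exp(\text{strut})\otimes u$. To control this I would use that tensoring with $\exp\bigl(\pm(y\text{-}x)\bigr)$ is a vector-space automorphism of ${}^{ts}\mathcal{A}_1^\partial(P)$ (its inverse uses the negated strut, which is still allowed by top-substantiality), introduce the resulting filtration by number of struts, check that the $I_1$ relations are compatible with it, and show that the part of $\textbf{I}_1(P)$ lying in the image $\exp(\text{strut})\otimes\mathcal{A}_1^{\partial y}(P)$ is exactly $j^\partial(\textbf{I}_1^y(P))$. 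Combined with injectivity of $j^\partial$, this yields the identity above and finishes the proof.

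Alternatively, one can bypass $j^\partial$ and argue directly in the manner of \cite{Humbert}, Lemma 1.4.4: compose a generating $I_1^y$ relation at the top of a diagram $v$ with an arbitrary morphism and verify, by sliding the box through the attachment vertices where $y$-ends of $v$ meet $x$-ends of the attached diagram, that the result is again a combination of $I_1^y$ relations. In this formulation the two-term shape of the $I_1^y$ relation is exactly what is needed for the box to pass through an attachment, no preimage computation is required, and the same local STU/AS identity underlies both approaches.
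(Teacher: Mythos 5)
Your primary route has a genuine gap at its central step. Reducing the proposition to the identity $\textbf{I}^y_1(P)=(j^\partial_P)^{-1}\bigl(\textbf{I}_1(P)\bigr)$ is formally sound, and your computation of the forward inclusion $j^\partial_P(\textbf{I}^y_1(P))\subseteq\textbf{I}_1(P)$ (the box interacting with the strut exponential producing exactly the two-term $I^y_1$ relation) is correct and is in fact the reason the paper can later induce the functor $j:\mathcal{A}_1^y\rightarrow\mathcal{A}_1$. But the forward inclusion alone does not make $\textbf{I}^y_1$ an ideal --- a subspace of a preimage of an ideal need not be an ideal --- so everything hangs on the reverse inclusion, which you correctly flag as ``the main obstacle'' and then do not prove. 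That reverse inclusion is equivalent to the injectivity of the induced functor $j$, a strictly stronger and independently nontrivial statement which the paper itself does not establish at this point (it is only asserted later, in a restricted setting, with reference to other work). Your sketch --- filter by number of struts, ``check that the $I_1$ relations are compatible with it, and show that the part of $\textbf{I}_1(P)$ lying in the image of $j^\partial$ is exactly $j^\partial(\textbf{I}^y_1(P))$'' --- restates the claim rather than proving it: an element of $\textbf{I}_1(P)$ is a combination of $I_1$ relations applied to arbitrary top-substantial diagrams, and these do not respect the strut filtration in any evident way, so the key step is exactly as hard after introducing the filtration as before.

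The one-sentence alternative in your last paragraph is the paper's actual proof, and it avoids the problem entirely. The paper composes an arbitrary diagram $D_2$ with the generating relation $I_{xy}+I_{\text{box}}$, expands the composition of $\mathcal{A}_1^{\partial y}$ as a sum over partial matchings $(U_y,U_x,p)$ of $y$-ends of the relation to $x$-ends of $D_2$, and for each fixed matching slides the box of $I_{\text{box}}$ (via STU and IHX) across the glued vertices; the box summands that land on the remaining $x$-labels of $D_2$ are cancelled precisely by the extra attachments $D_2\circ_{(p,l)}I_{xy}$, leaving an $I^y_1$ relation. Your closing remark that ``the two-term shape of the $I^y_1$ relation is exactly what is needed for the box to pass through an attachment'' is the right mechanism, but as written it is a one-line sketch: to make it a proof you must make the sum over partial matchings explicit and exhibit the cancellation, since the composition in $\mathcal{A}_1^{\partial y}$ glues only \emph{some} $y$-ends to \emph{some} $x$-ends and the bookkeeping of which terms cancel which is the entire content of the argument. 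I would recommend promoting that paragraph to the main proof and discarding the $j^\partial$-transport strategy, which cannot be completed without first proving the injectivity of $j$.
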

\begin{proof}
Let the two diagrams of figure \ref{I^y_1} be denoted by $I_{xy}$ and $I_{\text{box}}$, respectively. Let $D_2$ be any diagram composable with $I_{xy}$ and $I_{\text{box}}$. We need to show that $D_2\circ(I_{xy}+I_{\text{box}})$ is in $\textbf{I}^y_1$ (the other order of composition is trivial).

Denote by $A_x$ the set of all $x$-labeled vertices of $D_2$. Let $U_x$ be a subset of $A_x$, $U_y$ a subset of the $y$ labels of $I_{\text{box}}$ with the same size of $U_x$, and $p:U_y\stackrel{\cong}{\longrightarrow}U_x$ a bijection. Denote by $D_2\circ_{p} I_{\text{box}}$ the diagram obtained by attatching the $x$-labels in $U_x$ to the $y$-labels in $U_y$ according to $p$. Similarly define $D_2\circ_{p} I_{xy}$. Also, for any $l\in A_x\setminus U_x$, define $D_2\circ_{(p,l)} I_{xy}$ to be the diagram obtained by attatching the $x$-labels in $U_x$ to the $y$-labels in $U_y$ according to $p$, and also attaching $l$ to the left-most $y$ label in $I_{xy}$. We have:
$$D_2\circ(I_{xy}+I_{\text{box}})=\sum_{(U_y,U_x,p)}\left(D_2\circ_{p} I_{\text{box}}+D_2\circ_{p} I_{xy}+\sum_{l\in A_x\setminus U_x}D_2\circ_{(p,l)} I_{xy}\right)$$
For a given triple $(U_y,U_x,p)$, we will show that the corresponding summand is an $I^y_1$ relation. Indeed, by using IHX and STU relations we can replace the box from $I_{\text{box}}$ by a box over all the top solid lines of $D_2$, all the edges leading to $y$ labels of $D$ and all the labels leading to the remaining $x$-labels of $D_2$. The summands of this box which are near $x$-labels get canceled by the sum $\sum_{l\in A_x\setminus U_x}D_2\circ_{(p,l)} I_{xy}$, and we are left with a new $I^y_1$ relation, as required.
\end{proof}

We denote the quotient category $\mathcal{A}_1^{\partial y}/\textbf{I}^y_1$ by $\mathcal{A}_1^y$, and the projection by $\pi^y:\mathcal{A}_1^{\partial y}\rightarrow \mathcal{A}_1^y$.

For any $u\in \textbf{I}^y_1(P)$ we have $j_P^\partial(u)\in\textbf{I}_1(P)$, so the functor $j^\partial$ induces a functor $j:\mathcal{A}_1^y\rightarrow \mathcal{A}_1$.
\label{defj}

\subsection{Ordered Elliptic Jacobi Diagrams}
\label{subsection_ordered}
In this subsection we define the categories $\mathcal{A}_1^{\partial <}$ and $\mathcal{A}_1^<$, which are isomorphic to $\mathcal{A}_1^{\partial y}$ and $\mathcal{A}_1^y$, respectively. Their definition is, in a sense, more complicated - we take more Jacobi diagrams, and quotient them by more relations. On the other hand, the composition rule in these categories is much simpler.

\textbf{An ordered elliptic Jacobi diagram} over a pattern $P$ is an elliptic Jacobi diagram in $\mathbb{D}_1^y(P)$, with the additional data of a linear order on the labeled vertices. Denote the set of all ordered elliptic Jacobi diagrams over $P$ by $\mathbb{D}_1^<(P)$. In figures we use the convention that a labeled vertex is bigger if it appears higher in the figure.

Let $\mathcal{A}_1^{\partial <}(P)$ be the quotient of $S_\mathbb{F}(\mathbb{D}_1^<(P))$ by the relations: STU, AS, IHX, multilinearity (although we will assume, as above, that the labels are only $x,y$ and there is no need for multilinearity), and STU-like. The STU-like relation is defined as follows:
$$\begin{array}{c} v\\w \end{array}
\mathfigns{1.5}{section2_STU_like1} \quad- \quad
\begin{array}{c} w\\v \end{array}
\mathfigns{1.5}{section2_STU_like2} \quad=\quad
\left<v,w\right>
\mathfigns{1.5}{section2_STU_like3}$$
$\left<\cdot,\cdot\right>$ is the intersection form on $H_1(\mathbb{T})$. Since we will only use the labels $x,y$, all we need to know is that $\left<y,x\right>=1$.

The category $\mathcal{A}_1^{\partial <}$ is defined in a way similar to the previous categories we defined in this section. The composition of $2$ diagrams $D_1,D_2$ is obtained by simply putting $D_2$ on top of $D_1$, and declaring all the labeled vertices of $D_2$ to be bigger than all the labeled vertices of $D_1$. This induces the composition of $\mathcal{A}_1^{\partial <}$ by linearity.

Let $k^\partial:\mathbb{D}_1\rightarrow \mathbb{D}_1^<$ be the map which sends a diagram $D$ to itself and declares all the $x$-labeled vertices to be smaller than all the $y$-labeled vertices. It may be verified that $k$ intertwines the compositions of $\mathcal{A}_1^{\partial y}$ and $\mathcal{A}_1^{\partial <}$, so it induces a functor $k^\partial:\mathcal{A}_1^{\partial y}\rightarrow\mathcal{A}_1^{\partial <}$.
\begin{proposition}
\label{proposition_k}
$k^\partial:\mathcal{A}_1^{\partial y}\rightarrow\mathcal{A}_1^{\partial <}$ is an isomorphism.
\end{proposition}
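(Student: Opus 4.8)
We have two categories built from ordered and unordered elliptic Jacobi diagrams with no struts. The functor $k^\partial$ sends a diagram with $x,y$ labels to itself, and imposes the convention that all $x$-labeled vertices are smaller than all $y$-labeled vertices in the linear order. The claim is that this is an isomorphism of categories. Both categories have the same objects (words in $\{+,-\}$), and $k^\partial$ is the identity on objects, so we need to show that on each morphism space $k^\partial_P : \mathcal{A}_1^{\partial y}(P) \to \mathcal{A}_1^{\partial <}(P)$ is a linear isomorphism.

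The plan is to construct an explicit inverse. First I would define a map $r^\partial : \mathbb{D}_1^<(P) \to \mathcal{A}_1^{\partial y}(P)$ on ordered diagrams by simply forgetting the linear order, sending an ordered diagram to the underlying unordered one. The key point is that this is well-defined on the quotient $\mathcal{A}_1^{\partial <}(P)$: I must check that every defining relation of $\mathcal{A}_1^{\partial <}(P)$ (namely STU, AS, IHX, and crucially STU-like) maps into the relations of $\mathcal{A}_1^{\partial y}(P)$. The STU, AS, and IHX relations clearly forget to the corresponding relations, since they do not involve the ordering data. The only relation that references the order is STU-like, whose right-hand side carries the intersection pairing $\langle v,w\rangle$; under the forgetful map both orderings of the two labeled vertices become the same unordered diagram, so the left-hand side maps to zero, and one needs the right-hand side to also map to zero in $\mathcal{A}_1^{\partial y}(P)$. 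This is exactly where the structure of the unordered space must be used: in $\mathcal{A}_1^{\partial y}$ there is no ordering, so swapping two adjacent labeled vertices is by definition the identity, meaning the difference on the left is already zero, and consistency forces the image of the right-hand side to be accounted for. I expect the correct statement is that the STU-like relation is precisely the obstruction measuring the failure of commuting labels, so forgetting the order sends it to $0=0$.

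Having established that $r^\partial$ is well-defined, I would verify that it is functorial (intertwines compositions) and that $r^\partial \circ k^\partial = \mathrm{id}$ and $k^\partial \circ r^\partial = \mathrm{id}$. The first composite is immediate: applying $k^\partial$ imposes an order, then $r^\partial$ forgets it, returning the original unordered diagram. For the reverse composite $k^\partial \circ r^\partial = \mathrm{id}$ on $\mathcal{A}_1^{\partial <}(P)$, the content is that an \emph{arbitrary} ordered diagram is equal, modulo STU-like, to the specific ordered diagram in which all $x$-labels precede all $y$-labels. This is the substantive step: I would argue that any linear order can be transformed into the standard ``$x$ below $y$'' order by a sequence of adjacent transpositions, and each transposition of two adjacent labeled vertices differs from the swapped version exactly by an STU-like relation. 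Transpositions of two $x$-labels or two $y$-labels contribute $\langle x,x\rangle = \langle y,y\rangle = 0$ and hence commute freely, while moving an $x$ past a $y$ introduces the correction term $\langle y,x\rangle = 1$; one must check these correction terms are consistent with the standard form, i.e. that the resulting element is well-defined independently of the chosen sequence of transpositions.

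**Main obstacle.** The hard part is the well-definedness of the inverse and the bijection at the level of quotients, not the set-theoretic forgetting of order. Concretely, the delicate point is ensuring that the STU-like correction terms (the struts-free diagrams produced when an $x$-label crosses a $y$-label) are exactly reconciled: one must confirm that reordering to the canonical form yields a single well-defined element of $\mathcal{A}_1^{\partial <}(P)$, which amounts to a coherence check that the symmetric-group action generated by the transpositions, twisted by the intersection form, is consistent—this is essentially a Poincaré–Birkhoff–Witt-type normal-form argument. Once the normal form is shown to exist and be unique, bijectivity follows, since $k^\partial$ lands exactly in the canonically ordered diagrams and $r^\partial$ recovers the unordered class. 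I would therefore structure the proof around establishing this normal form, treating the STU-like relation as the single relation that makes the reordering coherent.
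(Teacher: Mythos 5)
Your proposed inverse --- the forgetful map $r^\partial$ sending an ordered diagram to its underlying unordered diagram --- does not descend to the quotient $\mathcal{A}_1^{\partial <}(P)$, and the hope you express at the critical moment (``forgetting the order sends [the STU-like relation] to $0=0$'') is false. Under the forgetful map the two orderings $D_{v<w}$ and $D_{w<v}$ do become the same unordered diagram, so the left-hand side of the STU-like relation maps to $0$; but the right-hand side maps to $\left<v,w\right>$ times the diagram in which the two labeled edges have been joined into a single edge, and this is a nonzero element of $\mathcal{A}_1^{\partial y}(P)$ whenever $\left<v,w\right>=\left<y,x\right>=1$ (for instance it can be a single chord on a strand). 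So the STU-like relation is \emph{not} sent to a relation, and $r^\partial$ is not well defined. Relatedly, your verification of $k^\partial\circ r^\partial=\mathrm{id}$ rests on the claim that an arbitrary ordered diagram equals its canonical reordering modulo STU-like; in the quotient the two orderings \emph{differ} by the correction term $\left<v,w\right>D'$, so they are equal only when that term vanishes.

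The correct inverse, which the paper constructs, is not ``forget the order'' but the rewriting procedure you only sketch in your final paragraph: repeatedly apply STU-like to move each $x$-label below each $y$-label while \emph{keeping} the correction terms, so that an ordered diagram is expressed as a linear combination of canonically ordered diagrams. Your instinct that this is a PBW-type normal-form and coherence problem is right, and the paper formalizes it by filtering $\mathbb{D}_1^<(P)$ by the number $n_{y<x}$ of out-of-order pairs, defining one rewriting step $\varphi^\partial_l$ on each filtration level, and checking by explicit computation that each step sends every relation (STU, AS, IHX, and in particular a second STU-like relation overlapping the pair being swapped) to a relation in the lower filtration level --- this overlap case is the nontrivial coherence check and occupies most of the paper's proof. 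Since the actual inverse is a genuinely different map from the forgetful one, your argument cannot be repaired by a local fix; the well-definedness of the rewriting on all relations is the content that must be supplied.
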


A short proof of this proposition, using an explicit formula for the inverse of $k^\partial$, can be found in \cite{Cheptea}. We will give here a different proof. The idea of the proof is, for any diagram $D\in\mathbb{D}_1^<(P)$, to iteratively use the STU-like relation to reduce the number of pairs of labeled vertices with $y<x$, until we get a representation of $D$ as a linear combination of diagrams from $\mathbb{D}_1^y(P)$. This simple idea is formalized using the language of filtrations and direct limits. In section \ref{section_t} we will use this technique several more times.

\begin{proof}
Since $k^\partial$ is the identity on objects, it is enough to show that for any pattern $P$, $k^\partial:\mathcal{A}_1^{\partial y}(P)\rightarrow\mathcal{A}_1^{\partial <}(P)$ is an isomorphism. For that purpose we will construct an inverse map $\varphi^\partial:\mathcal{A}_1^{\partial <}(P)\rightarrow\mathcal{A}_1^{\partial y}(P)$.

For any diagram $D\in\mathbb{D}_1^<(P)$, define $n_{y<x}(D)$ to be the number of pairs of labeled vertices in $D$ which are labeled by $x$ and $y$, and the $x$ vertex is bigger than the $y$ vertex. Define $(\mathbb{D}_1^<(P))^n:=\{D\in\mathbb{D}_1^<(P)\mid n_{y<x}(D)\le n\}$. The filtration $(\mathbb{D}_1^<(P))^n$ induces a filtration $S_\mathbb{F}((\mathbb{D}_1^<(P))^0)\subset S_\mathbb{F}((\mathbb{D}_1^<(P))^1)\subset S_\mathbb{F}((\mathbb{D}_1^<(P))^2)\subset\cdots$.

Denote by $(\mathcal{A}_1^{\partial <}(P))^n$ the quotient of $S_\mathbb{F}((\mathbb{D}_1^<(P))^n)$ by the STU, AS, IHX and STU-like relations which are contained in $S_\mathbb{F}((\mathbb{D}_1^<(P))^n)$. So the above filtration induces a sequence of maps: 
$$(\mathcal{A}_1^{\partial <}(P))^0\stackrel{k_0^\partial}{\longrightarrow} (\mathcal{A}_1^{\partial <}(P))^1\stackrel{k_1^\partial}{\longrightarrow} (\mathcal{A}_1^{\partial <}(P))^2\longrightarrow\cdots$$.

$(\mathcal{A}_1^{\partial <}(P))^0$ is isomorphic to $\mathcal{A}_1^{\partial y}(P)$, and the direct limit of the sequence is $\mathcal{A}_1^{\partial <}(P)$. The sequence of maps $k_l^\partial$ induces the map $k^\partial:\mathcal{A}_1^{\partial y}(P)\rightarrow\mathcal{A}_1^{\partial <}(P)$.

For $(l>0)$, let $\varphi^\partial_l:S_\mathbb{F}((\mathbb{D}_1^<(P))^l)\rightarrow S_\mathbb{F}((\mathbb{D}_1^<(P))^{l-1})$ be the map defined on a diagram $D$ as follows: If $n_{y<x}(D)<l$, $\varphi^\partial_l(D)=D$. Else, take the highest pair of consecutive vertices labeled $y<x$, and define $\varphi^\partial_l(D)$ by:

$$D:=\begin{array}{c} x\\y \end{array}
\mathfigns{1.5}{section2_k_proof1} \quad \stackrel{\varphi^\partial_l}{\longmapsto} \quad
\begin{array}{c} y\\x \end{array}
\mathfigns{1.5}{section2_k_proof2} \quad-\quad
\mathfigns{1.5}{section2_k_proof3}\\[0.5cm]$$

We claim that $\varphi^\partial_l$ induces a map $\varphi^\partial_l:(\mathcal{A}_1^{\partial <}(P))^l\rightarrow (\mathcal{A}_1^{\partial <}(P))^{l-1}$. Indeed, if $u\in S_\mathbb{F}((\mathbb{D}_1^<(P))^l)$ is an STU, AS or IHX relation, then $\varphi^\partial_l$ sends $u$ to a sum of corresponding relations in $S_\mathbb{F}((\mathbb{D}_1^<(P))^{l-1})$.

Suppose now  
$u=u_1+u_2+u_3=\begin{array}{c} v\\w \end{array}
\mathfigns{1}{section2_proof_STU_like1} \,- \,
\begin{array}{c} w\\v \end{array}
\mathfigns{1}{section2_proof_STU_like2} \,-\,
\left<v,w\right>
\mathfigns{1}{section2_proof_STU_like3}\,$ 
is an STU-like relation. If none of the labels $v,w$ belong to the highest $y<x$ pair in either $u_1$ or $u_2$, then $\varphi^\partial_l$ sends $u$ to a sum of STU-like relation in $S_\mathbb{F}((\mathbb{D}_1^<(P))^{l-1})$. If the pair $v,w$ is the highest $y<x$ pair in $u_i$ ($i=1$ or $i=2$), then by definition $\varphi^\partial_l(u)=0$ if $n_{y<x}(u_i)=l$, and otherwise $\varphi^\partial_l(u)=u$ is again an STU-like relations.

Suppose now that only $v$ or only $w$ belongs to the highest $y<x$ pair, either in $u_1$ or in $u_2$. Assume WLOG that this happens in $u_1$, and assume that $v=y$ is the label belonging to the highest $y<x$ pair. so we have:
$$u=u_1+u_2+u_3=\begin{array}{c} x\\y\\w \end{array}
\mathfigns{1.5}{section2_proof_proof1} \,- \,
\begin{array}{c} x\\w\\y \end{array}
\mathfigns{1.5}{section2_proof_proof2} \,-\,
\left<y,w\right> \begin{array}{c} x\\ \, \\ \,  \end{array}
\mathfigns{1.5}{section2_proof_proof3}\,$$

If $w=x$ then $y<w$ is the highest $y<x$ in $u_2$, and we are back to the previous case. If $w=y$ and $n_{y<x}(u_1)<l$, then $\varphi^\partial_l(u)=u$ is again an STU-like relation. Otherwise we have the following calculation:

$$
\varphi^\partial_l(u)=\varphi_l^\partial\left(\begin{array}{c} x\\y\\y \end{array}
\mathfigns{1.5}{section2_proof_proof1} \,- \,
\begin{array}{c} x\\y\\y \end{array}
\mathfigns{1.5}{section2_proof_proof2} \,\right)=$$

$$=\,\begin{array}{c} y\\x\\y \end{array}
\mathfigns{1.5}{section2_proof_proof21} \,-
\,\begin{array}{c} \, \\ \, \\y \end{array}
\mathfigns{1.5}{section2_proof_proof22} \,-
\,\begin{array}{c} y\\x\\y \end{array}
\mathfigns{1.5}{section2_proof_proof23} \,+ 
\,\begin{array}{c} \, \\y\\ \, \end{array}
\mathfigns{1.5}{section2_proof_proof24} \,\approx $$

$$\approx\,\begin{array}{c} y\\y\\x \end{array}
\mathfigns{1.5}{section2_proof_proof31} \,-
\begin{array}{c} \, \\y\\ \, \end{array}
\mathfigns{1.5}{section2_proof_proof24} \,-
\,\begin{array}{c} \, \\ \, \\y \end{array}
\mathfigns{1.5}{section2_proof_proof22} \,-$$

$$-\,\begin{array}{c} y\\y\\x \end{array}
\mathfigns{1.5}{section2_proof_proof31} \,+ 
\,\begin{array}{c} \, \\ \, \\ y \end{array}
\mathfigns{1.5}{section2_proof_proof22} \,+ 
\begin{array}{c} \, \\y\\ \, \end{array}
\mathfigns{1.5}{section2_proof_proof24}\,=\,0 $$

Note that all the diagrams in this calculation are indeed in $S_\mathbb{F}((\mathbb{D}_1^<(P))^{l-1})$. The case that $w=x$ is the label belonging to the highest $y<x$ pair in $u_1$ is similar. This completes the proof that $\varphi^\partial_l:(\mathcal{A}_1^{\partial <}(P))^l\rightarrow (\mathcal{A}_1^{\partial <}(P))^{l-1}$ is well defined.

$\varphi^\partial_l$ is the inverse of the map $k^\partial_{l-1}:(\mathcal{A}_1^{\partial <}(P))^{l-1}\rightarrow (\mathcal{A}_1^{\partial <}(P))^l$ defined above. Indeed, $\varphi^\partial_l\circ k^\partial_{l-1}=id$ by definition, and $k^\partial_{l-1}\circ \varphi^\partial_l$ sends an element $a\in(\mathcal{A}_1^{\partial <}(P))^l$ to an element equivalent to $a$ by STU-like. Therefore, the family $\{\varphi^\partial_l\}_l$ induces a map $\varphi^\partial:\mathcal{A}_1^{\partial <}(P)\rightarrow\mathcal{A}_1^{\partial y}(P)$ which is the inverse of $k^\partial$.
\end{proof} 

Let $\textbf{I}^<_1(P)$ be the subspace of $\mathcal{A}_1^{\partial <}(P)$ generated by sums of the type shown in figure \ref{I^<_1}. We assume that the lines which come out of the upper side of the box are exactly all the ends of the solid lines which lead to the points of $\partial^t(P)$. We call each such sum an $I_1^<$ relation. The subspaces $\textbf{I}^<_1(P)$ are together a two-sided ideal of $\mathcal{A}_1^{\partial <}$ (\cite{Humbert}, Lemma 2.4.4). We denote the quotient category by $\mathcal{A}_1^<$, and the projection by $\pi^<:\mathcal{A}_1^{\partial <}\rightarrow \mathcal{A}_1^<$.

\begin{figure}[ht]

\centering
$$\mathfig{4}{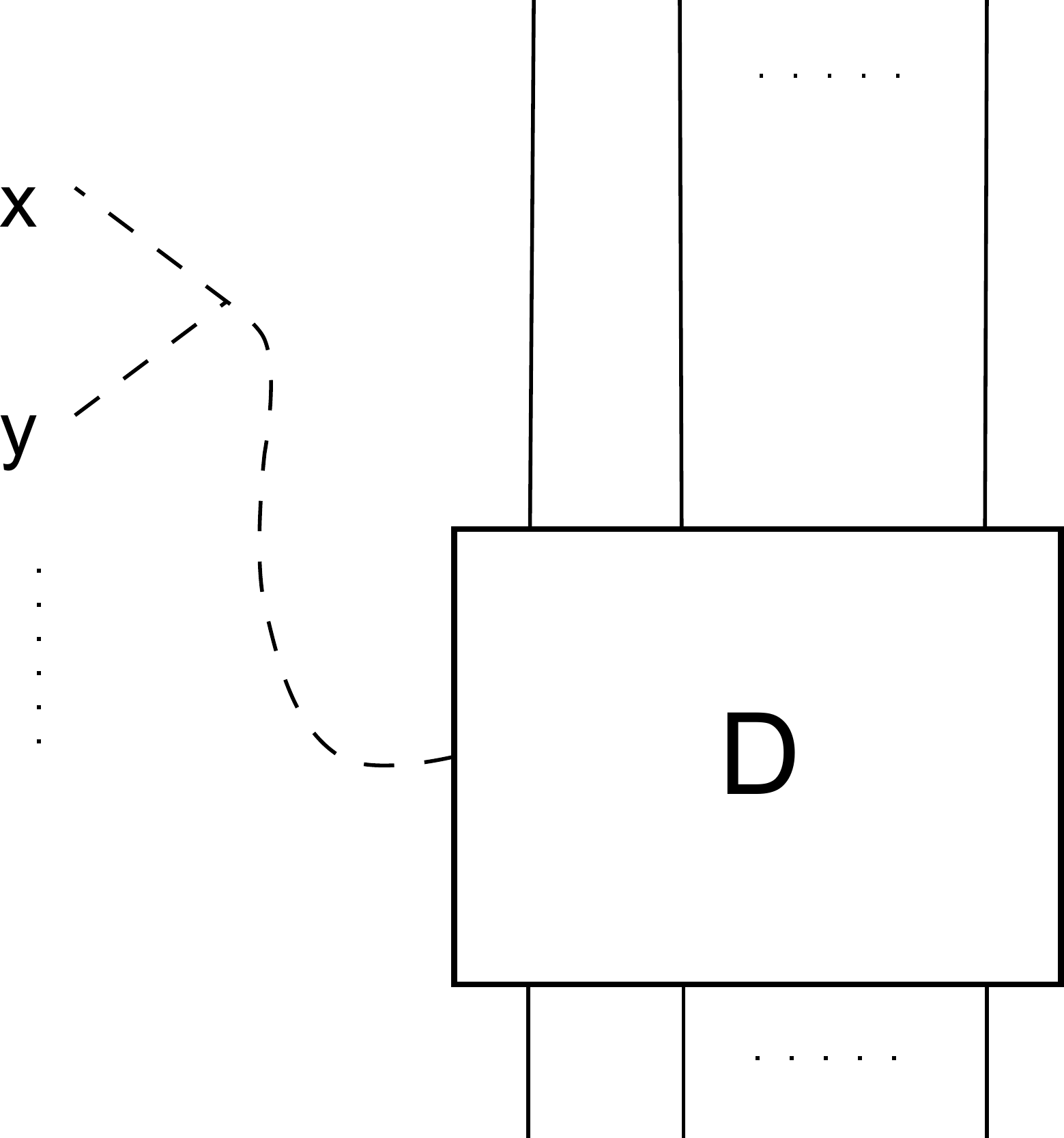} + \mathfig{4}{section2_I.pdf}$$

\caption{An $I^<_1$ relation}
\label{I^<_1}
\end{figure}

Every $I^y_1$ relation is mapped by $k^\partial$ to an $I^<_1$ relation. Indeed, if we take the $I^y_1$ as shown in figure \ref{I^y_1} and apply $k^\partial$ to it, the $x$ label in the first summand will be lower than all the $y$ labels of $D$. If we now use the STU-like relation to put the $x$ label above the $y$ labels of $D$, the extra summands we will get in the process will cancel the part of the box in the second summand which is on the $y$-labeled dashed lines, so we get an $I^<_1$ relation.

The isomorphism $k^\partial$ and its inverse $\varphi^\partial$ restrict to isomorphisms of the ideals $\textbf{I}_1^y$ and $\textbf{I}_1^<$. Therefore we have induced isomorphisms \xymatrix{ \mathcal{A}_1^y \ar[r]^k & \mathcal{A}_1^< \ar[l]^\varphi}.
\label{defk}

\subsection{Categories of Pattern-Connected Diagrams}
In this section we have defined several categories of Jacobi diagrams. All those categories and the maps between them are summarized in the following diagram. Note that the category $\mathcal{A}^\partial$ has an obvious inclusion into all the categories of elliptic diagrams.

\[\xymatrix{
&& \mathcal{A}^\partial\ar[r]\ar[dll]\ar[d]\ar[drr] & \mathcal{A}\\
^{ts}\mathcal{A}_1^\partial\ar[d]^\pi && \mathcal{A}_1^{\partial y}\ar[ll]_{j^\partial}\ar[d]^{\pi^y}\ar[rr]^{k^\partial} && \mathcal{A}_1^{\partial <}\ar[d]^{\pi^<}\ar[ll]\\
\mathcal{A}_1 && \mathcal{A}_1^y\ar[ll]_j\ar[rr]^k && \mathcal{A}_1^<\ar[ll]
}\]

Let $D$ be a Jacobi diagram in any of the sets of Jacobi diagrams defined above. We say that $D$ is \textbf{pattern-connected} if all the non-struts components of $D$ have at least one vertex on the pattern. We denote the subsets of pattern-connected Jacobi diagrams by $\mathbb{D}^p$, $\mathbb{D}_1^p$, $\mathbb{D}_1^{yp}$ and $\mathbb{D}_1^{<p}$. All the relations we saw respect those subsets, so we can define the corresponding categories of pattern-connected diagrams, which fall into the following diagram:

\[\xymatrix{
&& \mathcal{A}^{\partial p}\ar[r]\ar[dll]\ar[d]\ar[drr] & \mathcal{A}^p\\
^{ts}\mathcal{A}_1^{\partial p}\ar[d]^\pi && \mathcal{A}_1^{\partial yp}\ar[ll]_{j^\partial}\ar[d]^{\pi^y}\ar[rr]^{k^\partial} && \mathcal{A}_1^{\partial <p}\ar[d]^{\pi^<}\ar[ll]\\
\mathcal{A}_1^p && \mathcal{A}_1^{yp}\ar[ll]_j\ar[rr]^k && \mathcal{A}_1^{<p}\ar[ll]
}\]
Note that in all the categories of pattern-connected Jacobi diagrams we no longer need the AS and IHX relations, because they are implied by STU.

The category $\mathcal{A}^{\partial p}$ is the category which is denoted by $\textbf{A}$ in \cite{Humbert}, and the category $\mathcal{A}_1^{<p}$ is the category which is denoted by $\textbf{A}_1$ there. In the following sections we will describe $2$ different elliptic structures with respect to $(\mathcal{A}^{\partial p}\rightarrow \mathcal{A}_1^{<p})$.

\newpage
\section{The LMO Functor of Cobordisms with Embedded Tangles}
\label{section_LMO}

The LMO functor was defined by Cheptea, Habiro and Massuyeau (\cite{Cheptea}). It is a functor from the category of Lagrangian cobordisms to a certain category of Jacobi diagrams. In this section we extend the LMO functor to the category of Lagrangian cobordisms with embedded tangles. The extension is quite straight-forward, so most of this section may be seen as a review of CHM's work, with a slight generalization.

On the other hand, our construction will be more restricted than the construction of CHM. They deal with cobordisms between surfaces of any genus, with or without boundary. We will restrict ourselves to closed surfaces of genus 1, which is all we need here. We made this choice for convenience, to make the notation simpler, but the extension to any genus should be obvious.

At the end of this section we show how this extended LMO functor gives rise to an elliptic structure on the categories of Jacobi diagrams introduced in section \ref{section_jacobi}.

\subsection{The Category of Lagrangian Cobordisms with Embedded Tangles}
Recall that $\mathbb{T}$ is the torus $S^1\times S^1$. Denote by $\mathbb{T}^\partial$ a torus with one boundary component. A \textbf{cobordism of $\mathbb{T}^\partial$} is an oriented compact connected $3$-manifold $M$ with an isomorphism $m:\partial(\mathbb{T}^\partial\times[0,1])\stackrel{\cong}{\rightarrow}\partial M$. Similarly, a \textbf{cobordism of $\mathbb{T}$} is an oriented compact connected $3$-manifold $M$ with an isomorphism $m:\partial(\mathbb{T}\times[0,1])\stackrel{\cong}{\rightarrow}\partial M$.

Recall that for any $n\ge 0$ we defined the set of points $b_n$ in $\mathbb{T}$. We can define those sets of points also in  $\mathbb{T}^\partial$. Let $\omega_s$, $\omega_t$ be non-associative words in the symbols $\{+,-\}$, with lengths $|\omega_s|=m$, $|\omega_t|=n$. A \textbf{cobordism with an embedded tangle} of type $(\omega_s,\omega_t)$ (either of $\mathbb{T}^\partial$ or of $\mathbb{T}$) is a cobordism $M$ with an embedded framed oriented tangle $T\subset M$ satisfying $\partial T=m((b_m\times\{0\})\cup(b_n\times\{1\}))$, such that the orientations of $T$ around the boundary points correspond to the words $\omega_s$ and $\omega_t$, and the framings around the boundary points are all parallel to the boundary surfaces and parametrized (via $m$) as $(0,-1,0)$. As in definition \ref{def_tangle} above, we require that the tangles be piece-wise smooth and vertical near the boundary points.

Two cobordisms with tangles $(M_1,m_1,T_1)$ and $(M_2,m_2,T_2)$ are said to be equivalent if there is a homeomorphism $h:M_1\rightarrow M_2$ such that $h\circ m_1=m_2$ and $h(T_1)=T_2$ (including the framing and the orientation of the tangle).

The categories $CT^\partial$ and $CT$ (Cobordisms with Tangles) are defined as follows: The objects are non-associative words in $\{+,-\}$. For any two such words $\omega_s$, $\omega_t$, the set of morphisms $CT^\partial(\omega_s,\omega_t)$ ($CT(\omega_s,\omega_t)$) is the set of all equivalence classes of cobordisms of $\mathbb{T}^\partial$ ($\mathbb{T}$) with embedded tangles of type $(\omega_s,\omega_t)$. The composition is defined by simply putting one cobordism on top of the other.

We now explain how to represent a cobordism with tangle by another tangle embedded in a simpler manifold. In $D^2$ we have the sets of points $b_n$, and we choose $2$ more points $p$ and $q$. Let $\omega_s$, $\omega_t$ be non-associative words in $\{+,-\}$. \textbf{A representing tangle} of type $(\omega_s,\omega_t)$ is a framed oriented tangle $T$ embedded in $D^2\times I$ with the following properties:
\begin{itemize}
\item
The boundary of $T$ is $((b_{|\omega_s|}\cup\{p,q\})\times\{0\})\cup((b_{|\omega_t|}\cup\{p,q\})\times\{1\})$.
\item
There is a component whose boundary is $\{p,q\}\times\{0\}$, denoted by $x$, and there is a component whose boundary is $\{p,q\}\times\{1\}$, denoted by $y$.
\item
The orientations of $T$ around the boundary correspond to the words $(-+)(\omega_s)$ and $(-+)(\omega_t)$. (Note that the convention used in \cite{Cheptea} is opposite to ours, so they represent the orientations at $p$ and $q$ by $(+-)$, instead of $(-+)$.)
\item
We are given a subset $S$ of the closed components of $T$, and say that those components are marked for surgery.
\end{itemize}

Two representing tangles $T_1$ and $T_2$ are said to be equivalent if they can be related by a sequence of ambient isotopies and Kirby moves on the components marked for surgery. This means that we can add to $S$ a trivial closed component with $\pm1$ framing or remove such component from $S$, and we can slide any component over components of $S$.

The category $RT$ (Representing Tangles) is defined as follows: The objects are non-associative words in $\{+,-\}$. For any two such words $\omega_s$ and $\omega_t$, the morphisms set $RT(\omega_s,\omega_t)$ is the set of all equivalence classes of representing tangles of type $(\omega_s,\omega_t)$.

There is a simple operation $\circ:RT(\omega_s,\omega_t)\times RT(\omega_t,\omega_u)\rightarrow RT(\omega_s,\omega_u)$ which takes $2$ composable tangles and simply puts them one on top of the other. But the defintion of the composition in $RT$ is different. Let $T_1\in RT(\omega_s,\omega_t)$, $T_2\in RT(\omega_t,\omega_u)$ be representing tangles with subsets marked for surgery $S_1$ and $S_2$, respectively. We define the composition $T_2\cdot T_1$ to be $T_2\circ T_i(\omega_t)\circ T_1$, where $T_i(\omega)$ is the tangle shown in figure \ref{tangle_T_i}.

\begin{figure}[ht]

\centering
$\begin{array}{c}
p\quad\quad\quad\quad\quad q\quad\quad\quad b_{|\omega|}\\
\includegraphics[height=2 cm]{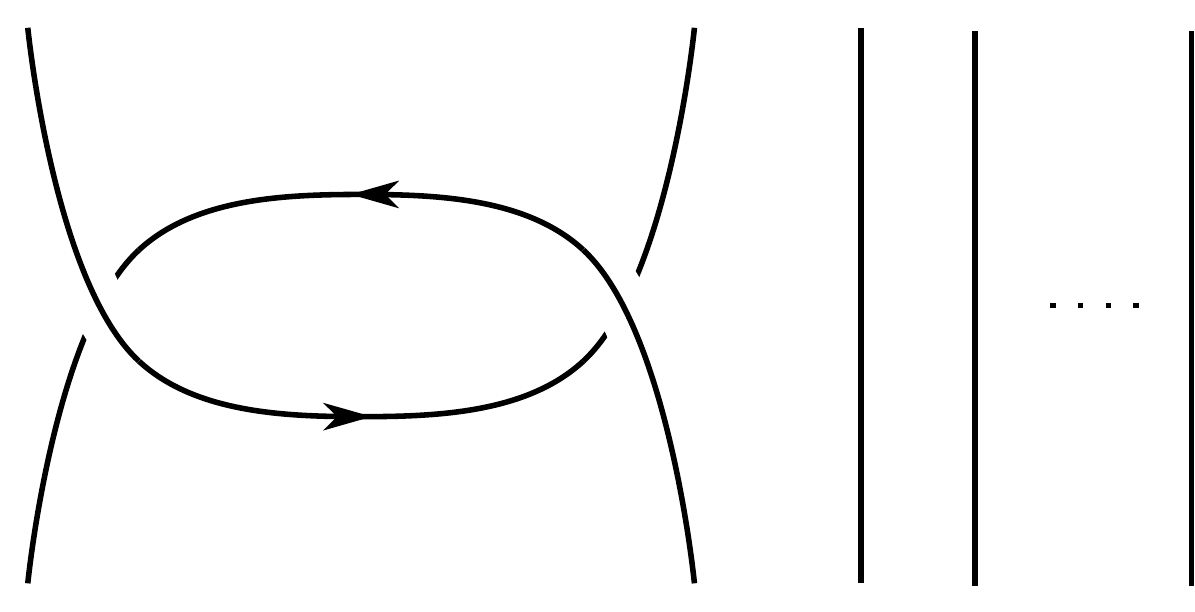}\\
p\quad\quad\quad\quad\quad q\quad\quad\quad b_{|\omega|}\\
\end{array}$

\caption{The tangle $T_i(\omega)$}
\label{tangle_T_i}
\end{figure}

The set of components marked for surgery in $T_2\cdot T_1$ is defined to be the union of $S_1$, $S_2$ and the (now closed) $y$ component of $T_1$ and $x$ component of $T_2$.
The identity in $RT(\omega,\omega)$ is:

\begin{center}
$\begin{array}{c}
p\quad\quad\quad\quad\quad q\quad\quad\quad b_{|\omega|}\\
\includegraphics[height=2 cm]{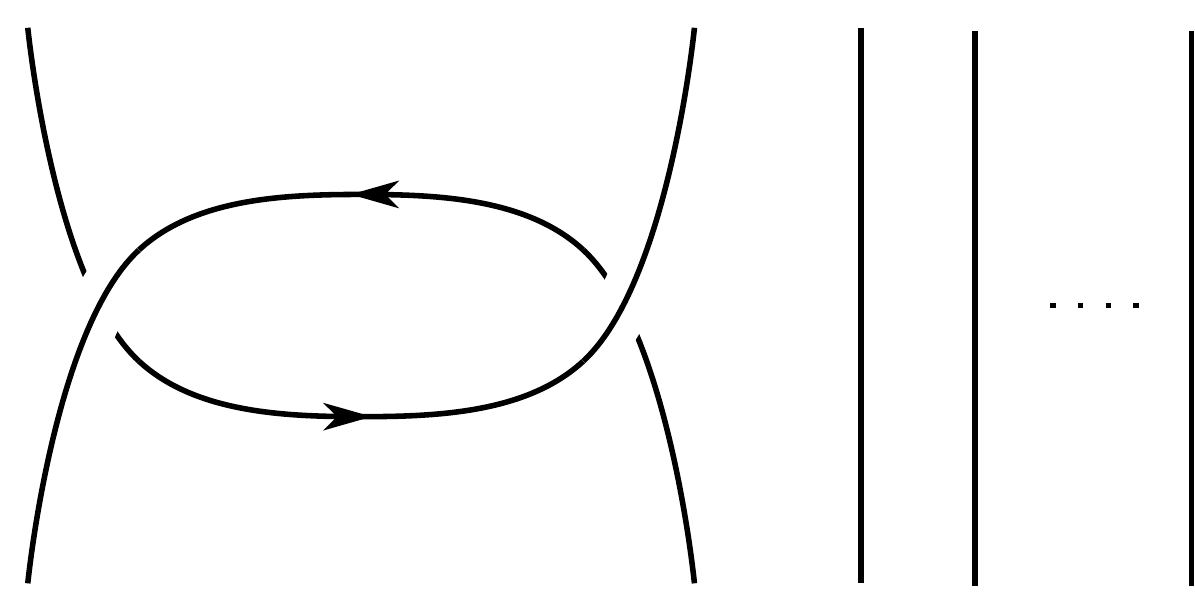}\\
p\quad\quad\quad\quad\quad q\quad\quad\quad b_{|\omega|}\\
\end{array}$
\end{center}

There is a functor $rep:RT\rightarrow CT^\partial$, which is the identity on objects, and for a representing tangle $T$, $rep(T)$ is the cobordism of $\mathbb{T}^\partial$ with embedded tangle obtained by removing a tubular neighborhood of $x,y$ from $D^2\times I$, and performing surgery on the components in $S$. The parametrization $m:\partial(\mathbb{T}^\partial\times[0,1])\rightarrow \partial rep(T)$ is chosen in such a way that the generators $x$ and $y$ of $\mathbb{T}^\partial$ from figure \ref{torus} are mapped to the following elements on the boundary of $rep(T)$:
\begin{minipage}{30pc}
\begin{center}
\includegraphics[width=8cm]{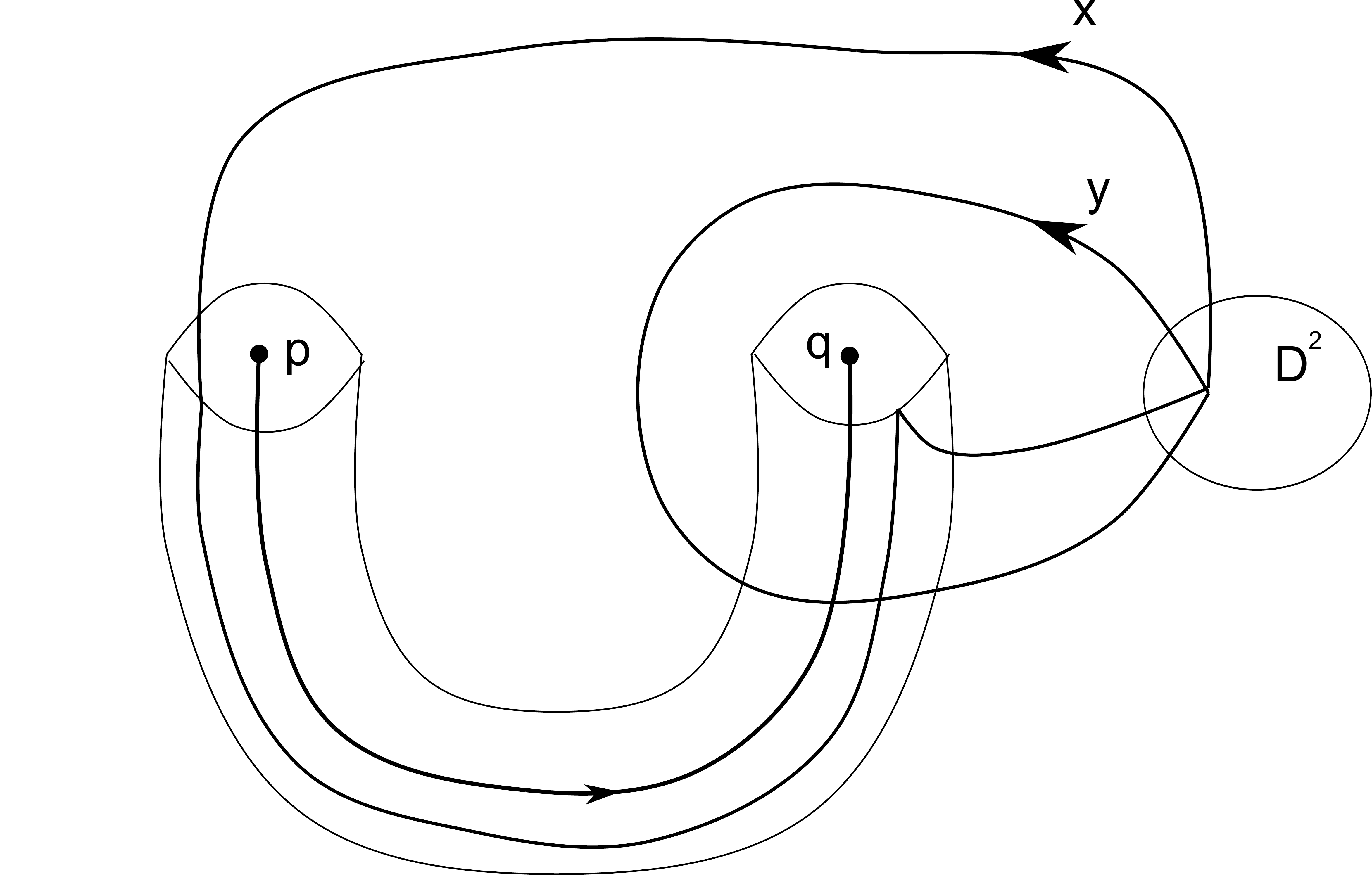}

\includegraphics[width=8cm]{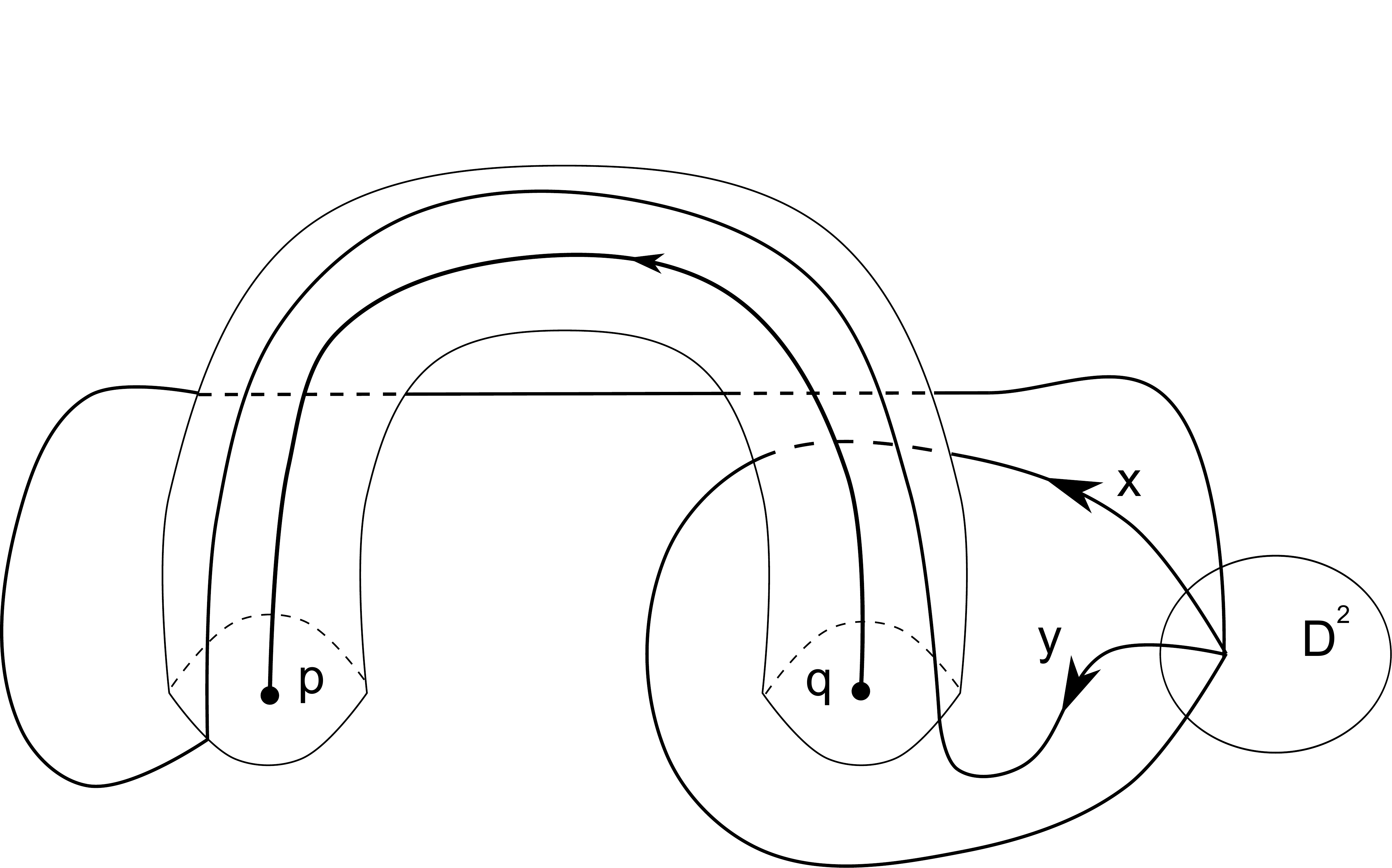}
\end{center}
\end{minipage}

The segments of the paths $x$ and $y$ which are parallel to the removed components of the tangle are determined by the framing of the tangle.

The functor $rep$ is very much related to the functor $D$ from Theorem 2.10 of \cite{Cheptea}. $D$ is a functor from the category of bottom-top tangles in homology cubes to the category of cobordisms. The category of bottom-top tangles in homology cubes, when restricted to tangles with one bottom component and one top component, is isomorphic to the category $RT$ via standard Kirby calculus. Therefore $rep$ factors through $D$, which proves that it is indeed a functor and an isomorphism.

In order to define the LMO functor we need to restrict to the subcategories $LCT^\partial\subset CT^\partial$ and $LCT\subset CT$ of Lagrangian cobordisms with embedded tangles. The exact definition of Lagrangian cobordisms is not important here, and can be found in \cite{Cheptea} (Definition 2.4). For our purposes it is enough to say that the corresponding subcategory $LRT$ of $RT$ is the subcategory of all representing tangles in which the determinant of the linking matrix of $S$ equals $\pm 1$, and the framing of $y$, after performing surgery on $S$, is $0$.

\subsection{Definition of the LMO Functor}
Let $T$ be a tangle in $LRT(\omega_s,\omega_t)$. Denote by $P\in\mathbb{P}(\omega_s,\omega_t)$ the skeleton of $T$. It is decomposed as $P=\{x,y\}\cup S \cup P'$.

Recall that a Drinfel'd associator is an element $\phi(A,B)$ in the exponent of the completed free Lie algebra generated by $A$ and $B$, which satisfies several identities (see, for example, \cite{BarNatan5}, Definition 3.1). We define $Z$ to be a functor from the category of tangles to the category of Jacobi diagrams $\mathcal{A}^\partial$. The tangle $T$ will be mapped by $Z$ to $Z(T)\in\mathcal{A}^\partial(P)$. $Z$ is a variant of the Kontsevich integral of tangles, which is defined over elementary tangles as follows:

$$Z\left(\begin{array}{c} u\,(v\,w)\\ \includegraphics[height=0.8cm]{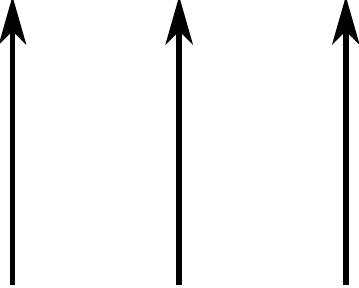}\\ (u\,v)\,w \end{array} \right)=\Delta^{+++}_{uvw}\left(\phi\left(\begin{array}{c} \includegraphics[height=0.8cm]{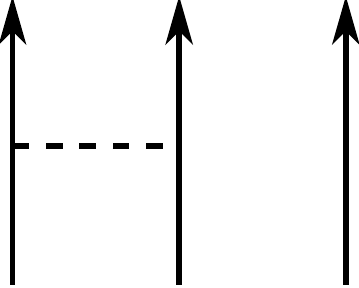}\, ,\, \includegraphics[height=0.8cm]{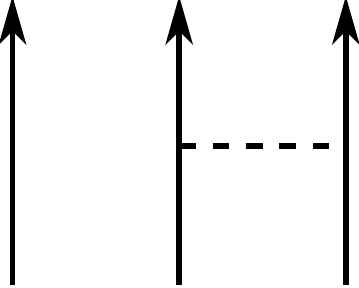} \end{array}\right)\right)$$
$$Z\left(\begin{array}{c} (+\,+)\\ \includegraphics[height=0.8cm]{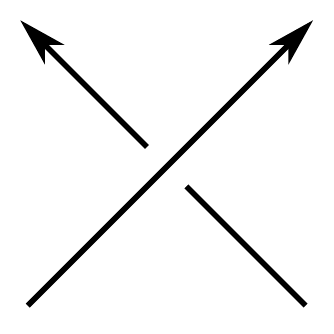}\\ (+\,+)\end{array} \right)=\exp(\frac{1}{2} \raisebox{-0.1cm}{\includegraphics[height=1.5cm]{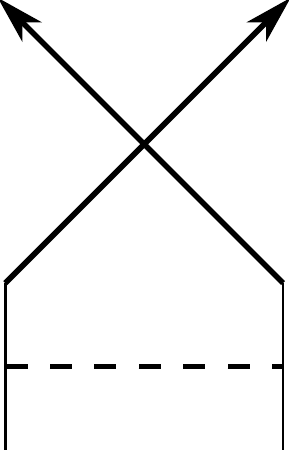}}\,)\quad\quad
Z\left(\begin{array}{c} (+\,+)\\ \includegraphics[height=0.8cm]{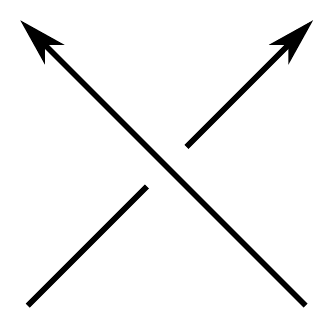}\\ (+\,+)\end{array} \right)=\exp(-\frac{1}{2}\raisebox{-0.1cm}{\includegraphics[height=1.5cm]{section4_jacobi_crossing.pdf}}\,)$$
$$Z\left(\begin{array}{c} \includegraphics[height=0.5cm]{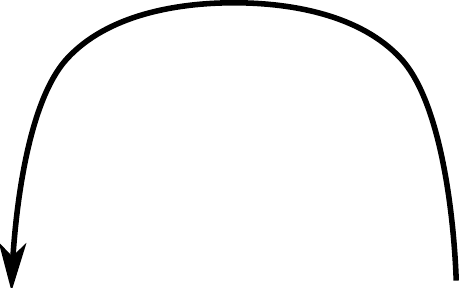}\\ (-\quad+)\end{array} \right)=\,\,\includegraphics[height=0.5cm]{section4_cap.pdf}\quad\quad
Z\left(\begin{array}{c}  (-\quad+)\\ \includegraphics[height=0.5cm]{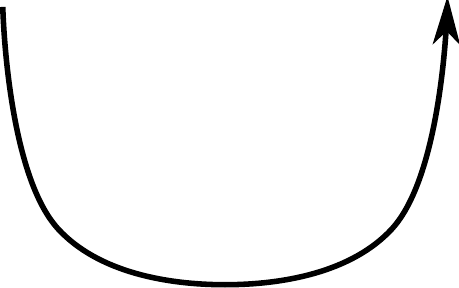}\end{array} \right)=\begin{array}{c}\, \\ \includegraphics[height=0.7cm]{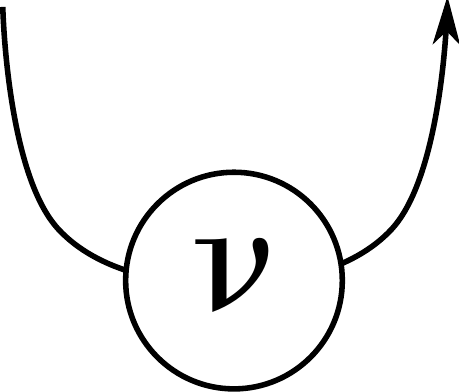}\end{array}$$
where $\nu\in\mathcal{A}^\partial(\uparrow)\cong\mathcal{A}^\partial(\includegraphics[height=0.3cm]{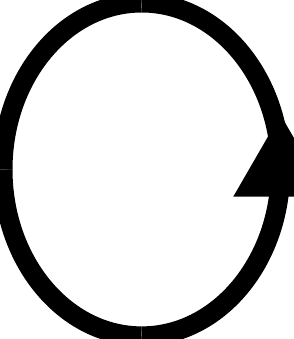})$ is the Kontsevich integral of the unknot with $0$ framing.

Let $Z^{\nu,S}(T)$ be the value obtained from $Z(T)$ by taking the connected sum of each component of $S$ with $\nu$.

In \cite{Cheptea} (after Lemma 4.9), an element $T_g\in\mathcal{A}(\emptyset,\{1^-,...,g^-,1^+,...,g^+\})$ is defined. We will consider  $T_1$ as an element of $\mathcal{A}(\emptyset,\{x,y\})$ via the labels change $1^-\mapsto x$ and $1^+\mapsto y$. For a word $\omega$ in the symbols $\{+,-\}$, let $id_\omega$ be the identity pattern in $\mathbb{P}(\omega,\omega)$, and let $T_1(\omega)\in^{ts}\mathcal{A}_1^\partial(id_\omega)$ be the element obtained by putting $T_1$ alongside the empty pattern $id_\omega$.

The LMO functor $LMO:LCT^\partial\cong LRT\rightarrow ^{ts}\mathcal{A}_1^\partial$ is defined as follows: A non-associative word $\omega$ is mapped to itself, forgetting the non-associative structure. A morphism $T\in LRT(\omega_s,\omega_t)$ is mapped to:
$$LMO(T):=T_1(\omega_t)\cdot\left(U_+^{-\sigma_+(S)}U_-^{-\sigma_-(S)}\int_S \chi^{-1}_{S\cup\{x,y\}}Z^{\nu,S}(T)\right)$$
where:
\begin{itemize}
\item
$\chi_{S'}:\mathcal{A}(P',S')\rightarrow \mathcal{A}^\partial(P'\cup S')$ is the symmetrization map defined in \cite{BarNatan} (section 5.2, in the proof of Theorem 8), applied to the components of $S'$ (which are first considered as labels). For a diagram $D\in\mathcal{A}(P',S')$, $\chi_{S'}(D)$ is the average of all the diagrams which are obtained by putting all the $s$ labeled vertices on the $s$ component of the pattern $S'$ (for all $s\in S'$), in any possible order.
\item
$\int_{S'}:$ is the \r{A}rhus integral on the labels of $S'$, as defined in \cite{BarNatan3} (section 2.1, specifically Definition 2.11).
\item
$U_\pm:=\int\chi^{-1}\left(\nu\# Z(\includegraphics[height=0.3cm]{section4_circle}_\pm)\right)$ with $\includegraphics[height=0.3cm]{section4_circle}_\pm$ being an unknot with framing $\pm1$.
\item
$\sigma_\pm(S)$ are the numbers of positive/negative eigenvalues of $Lk(S)$.
\end{itemize}
For a more detailed account of this construction, and a proof of invariance and functoriality, see \cite{Cheptea}. For our purposes it is almost enough to use this definition as a ``black box". The only important thing to notice is that the integral $\int_S$ commutes with $\chi^{-1}_{\{x,y\}}$, i.e. we have:
$$\int_S\chi^{-1}_{S\cup\{x,y\}}Z^{\nu,S}(T)=\chi^{-1}_{\{x,y\}}\int_S\chi^{-1}_SZ^{\nu,s}(T)$$

In order to define the LMO functor on cobordisms of $\mathbb{T}$ with embedded tangles, all we need to do is choose a representing tangle $T$, map it by $LMO$ to $^{ts}\mathcal{A}_1^\partial$, and then map it to $\mathcal{A}_1$ by the quotient map. Thus we get a functor $LMO:LCT\rightarrow \mathcal{A}_1$. The fact that this functor is well defined is also proved in \cite{Cheptea}, Theorem 6.2.

\subsection{Restrictions of the LMO Functor}
Let $HCT$ be the subcategory of $LCT$ containing only tangles in cobordisms which are homology cylinders. Homology cylinders are cobordisms which are homologically equivalent to the cylinder $\mathbb{T}\times I$ (an exact definition can be found in \cite{Cheptea} Definition 8.1). The LMO functor, when restricted to this category, gives values of the form $\exp\left(\begin{array}{c} y\\ \includegraphics[height=0.7cm]{section2_strut.pdf}\\x\end{array}\right)\otimes a$, where $a$ is a combination of diagrams from $\mathbb{D}_1^y$ (see \cite{Cheptea} section 8.2 and \cite{Habiro12} section 4.2). We can therefore compose the LMO functor with the inverse of the injective functor $j:\mathcal{A}_1^y\rightarrow \mathcal{A}_1$ defined above to get a functor $LMO^y:HCT\rightarrow \mathcal{A}_1^y$. Furthermore, we can also define $LMO^<:HCT\rightarrow \mathcal{A}_1^<$ by $LMO^<:= k\circ LMO^y$, $k$ being the isomorphism of categories defined in section \ref{subsection_ordered}.

If we further restrict $HCT$ to include only tangles in the trivial cylinder $\mathbb{T}\times I$, we get the category $q\tilde{T}_1$ of framed tangles in the thickened torus defined in Section \ref{def_category_tangles}. Restricting the above variants of the LMO functor to this subcategory we get $LMO^y:q\tilde{T}_1\rightarrow \mathcal{A}_1^{yp}$ and $LMO^<:q\tilde{T}_1\rightarrow \mathcal{A}_1^{<p}$.

\subsection{An Elliptic Structure on $\mathcal{A}_1^{<p}$}
In section \ref{def_category_tangles} we defined an elliptic structure on $q\tilde{T}_1$. The key ingredients were the tangles $X_{+,+}=\tilde{x}\cdot pt$ and $Y_{+,+}=\tilde{y}\cdot nt$ in $q\tilde{T}_1(++,++)$. We will now give an explicit description of those tangles via their representing tangles in $RT(++,++)$:

$$X_{+,+}=\quad\raisebox{-4.5cm}{\includegraphics[height=4cm]{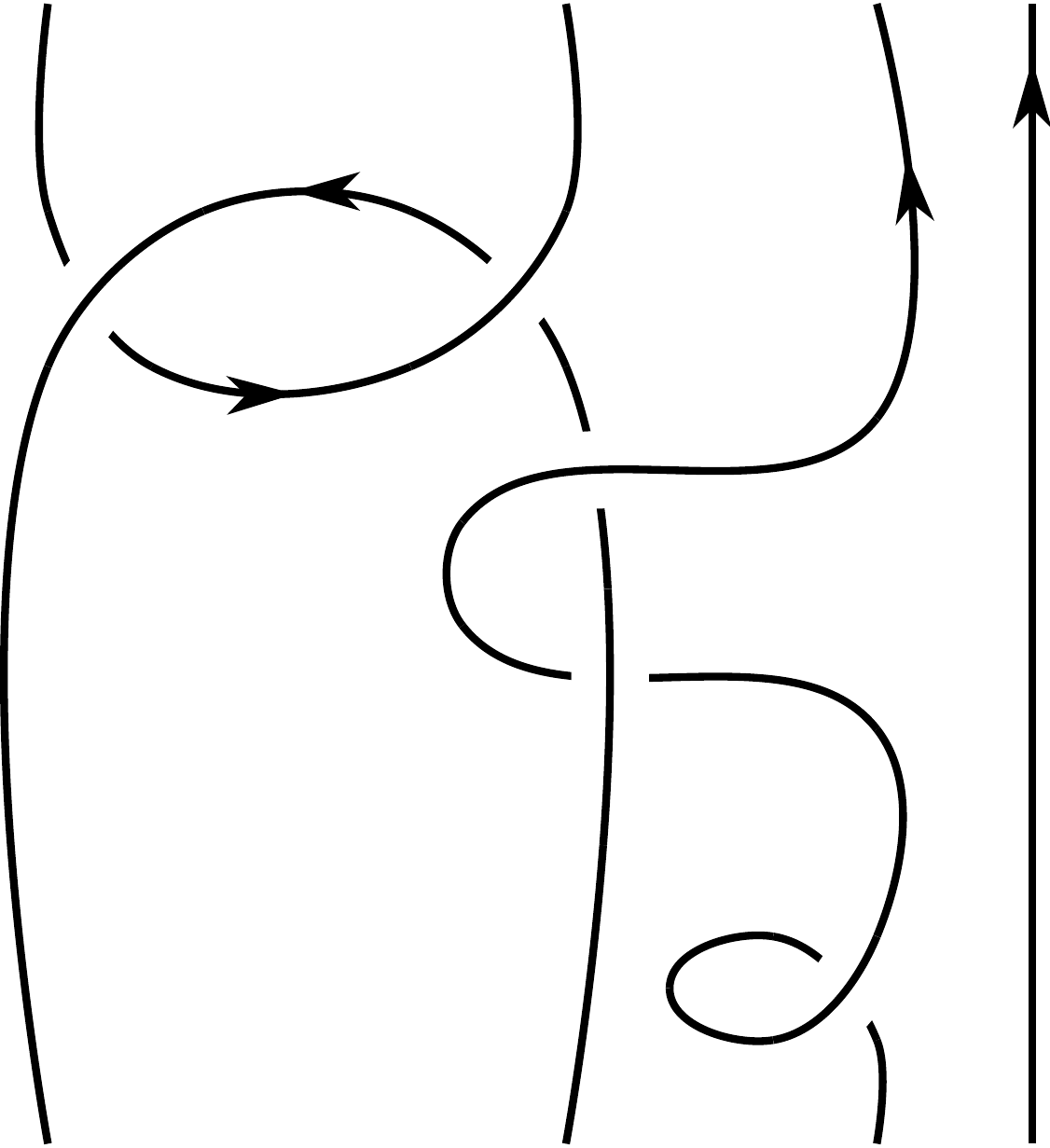}}\quad\quad
Y_{+,+}=\quad\raisebox{-4.5cm}{\includegraphics[height=4cm]{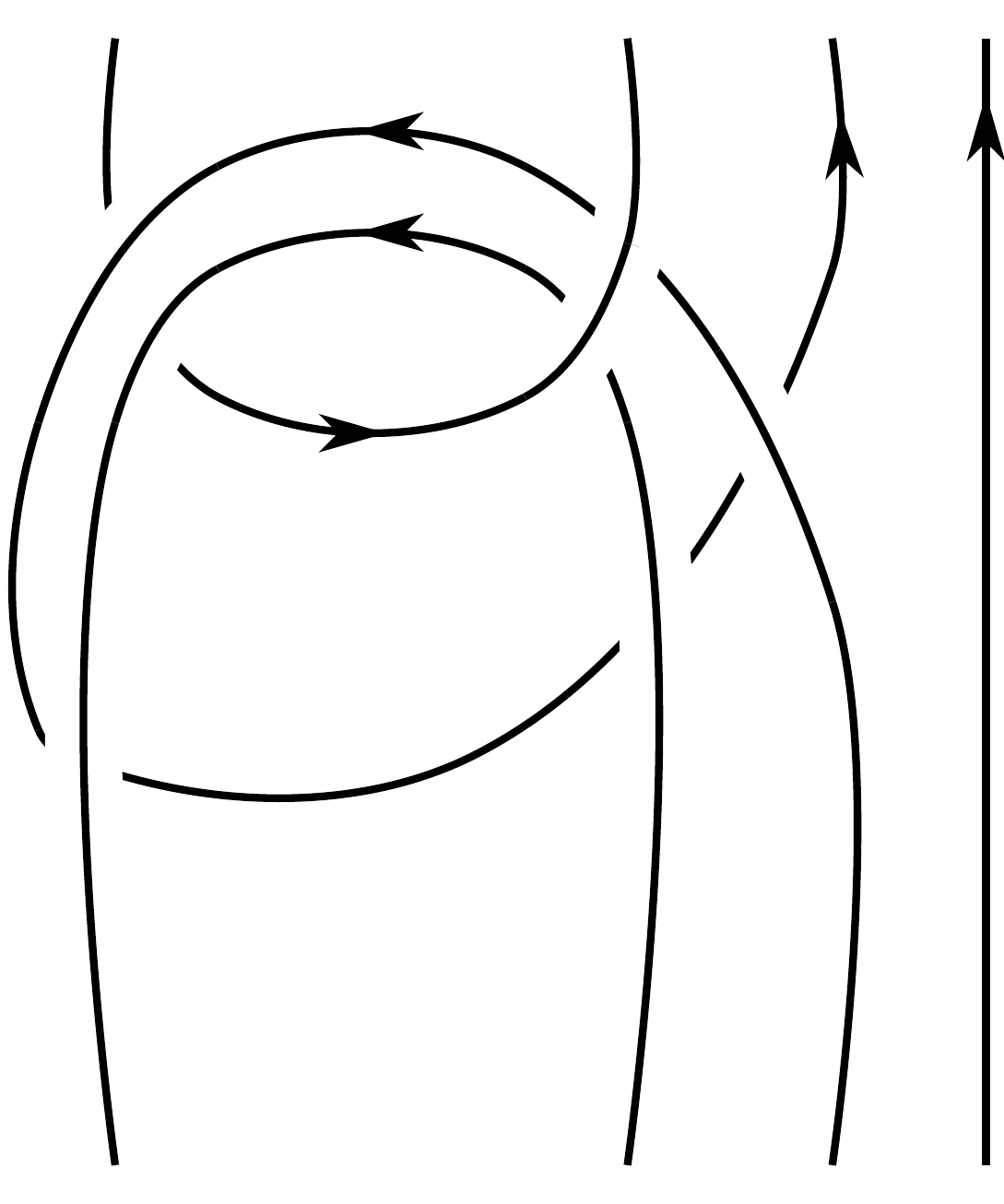}}$$

We also have:
$$X^{-1}_{+,+}=\quad\raisebox{-4.5cm}{\includegraphics[height=4cm]{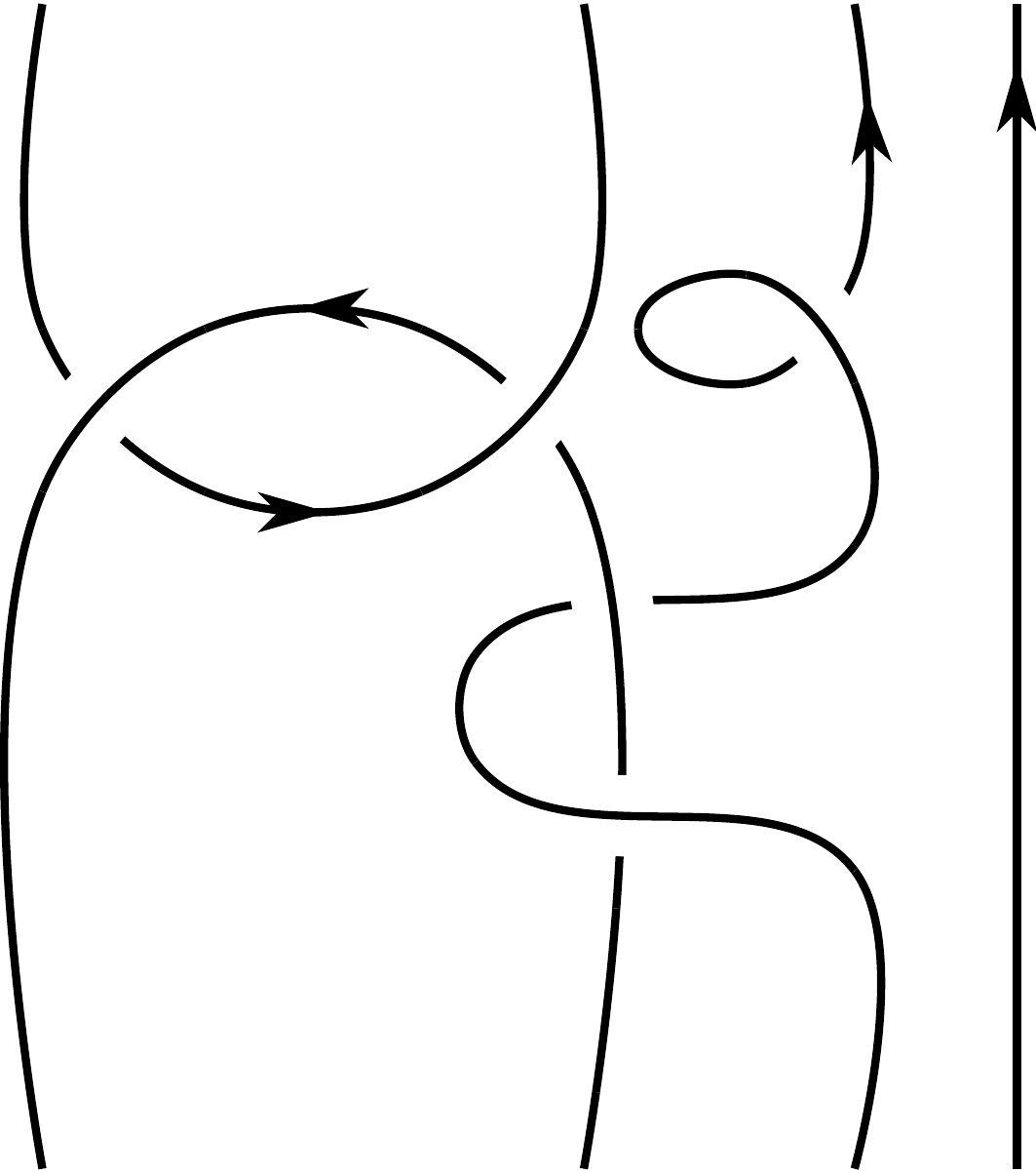}}\quad\quad
Y^{-1}_{+,+}=\quad\raisebox{-4.5cm}{\includegraphics[height=4cm]{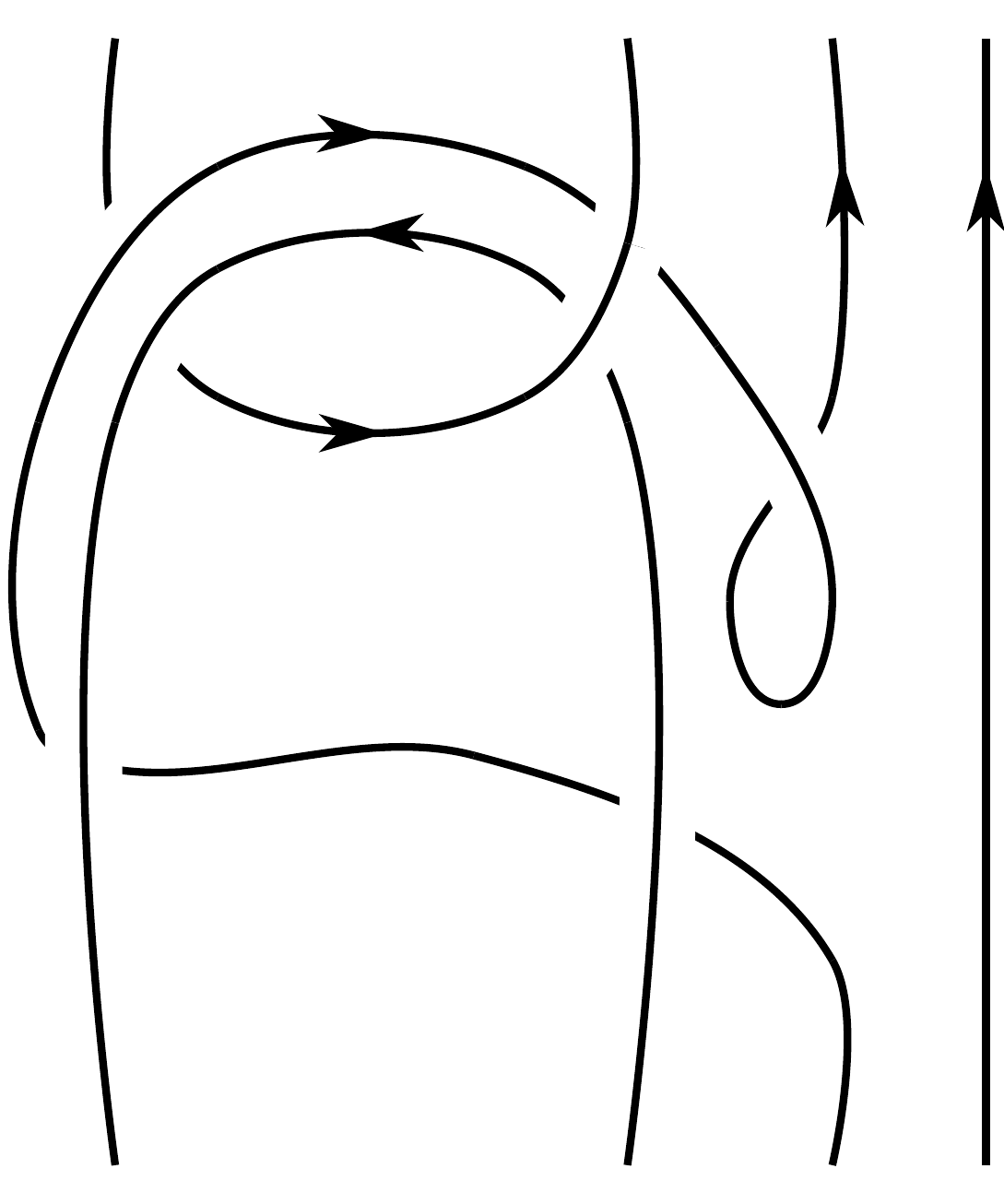}}$$

With this representation at hand we can verify the identities (\ref{elliptic1})-(\ref{elliptic4}) of definition \ref{def_elliptic} (we verify them only for $U=V=W=+$). In this verification we will use several times the ``slam-dunk" move (shown in figure \ref{slam_dunk}), which is implied by the Kirby moves. The equivalences which use this move are marked by a *.
$$(\ref{elliptic1})\quad X_{++,+}\quad=\quad\raisebox{-2.5cm}{\includegraphics[height=4cm]{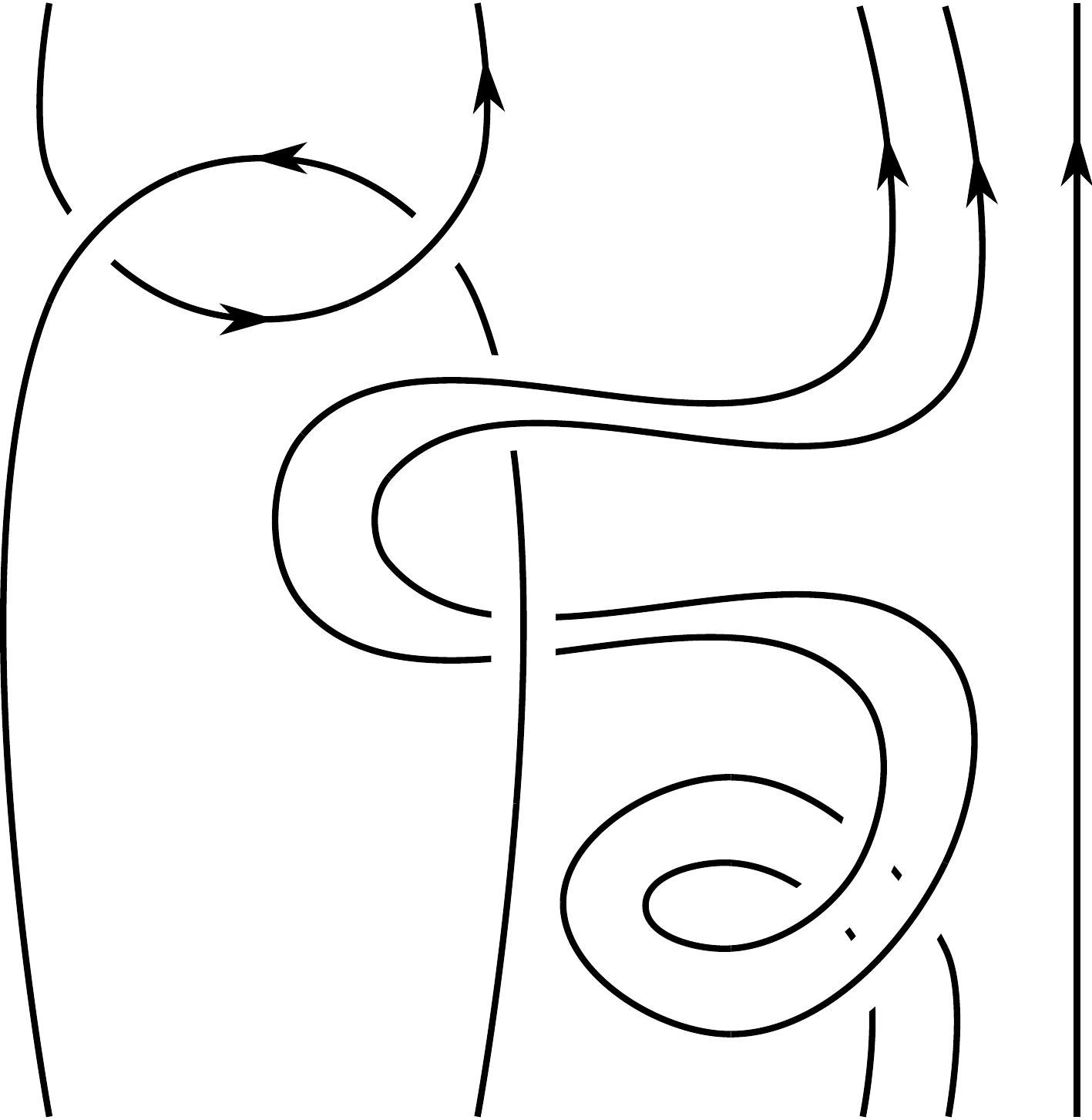}}\quad\approx\quad
\raisebox{-3.9cm}{\includegraphics[height=5.1cm]{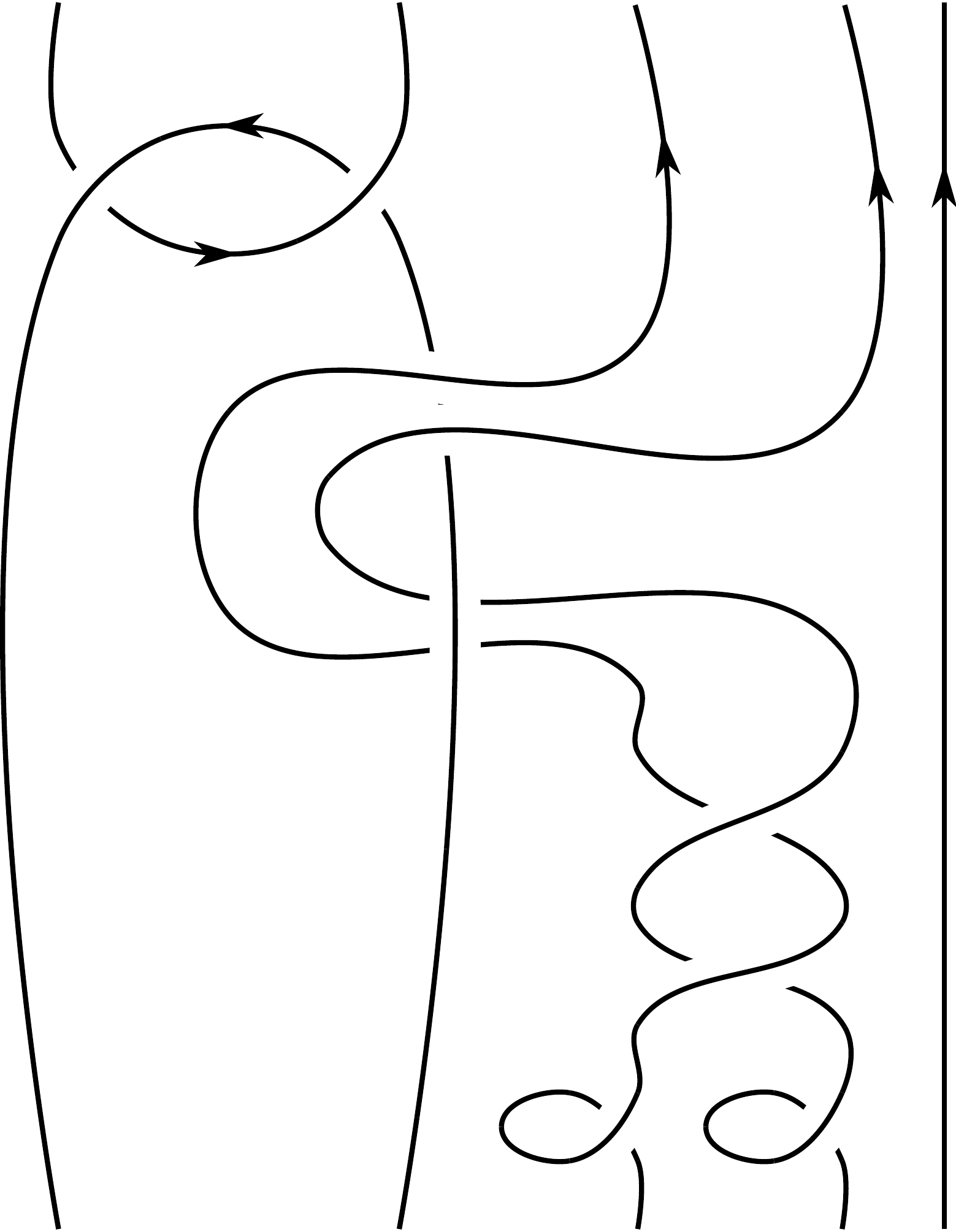}}\quad\approx$$

$$\approx\quad\raisebox{-4cm}{\includegraphics[height=6.4cm]{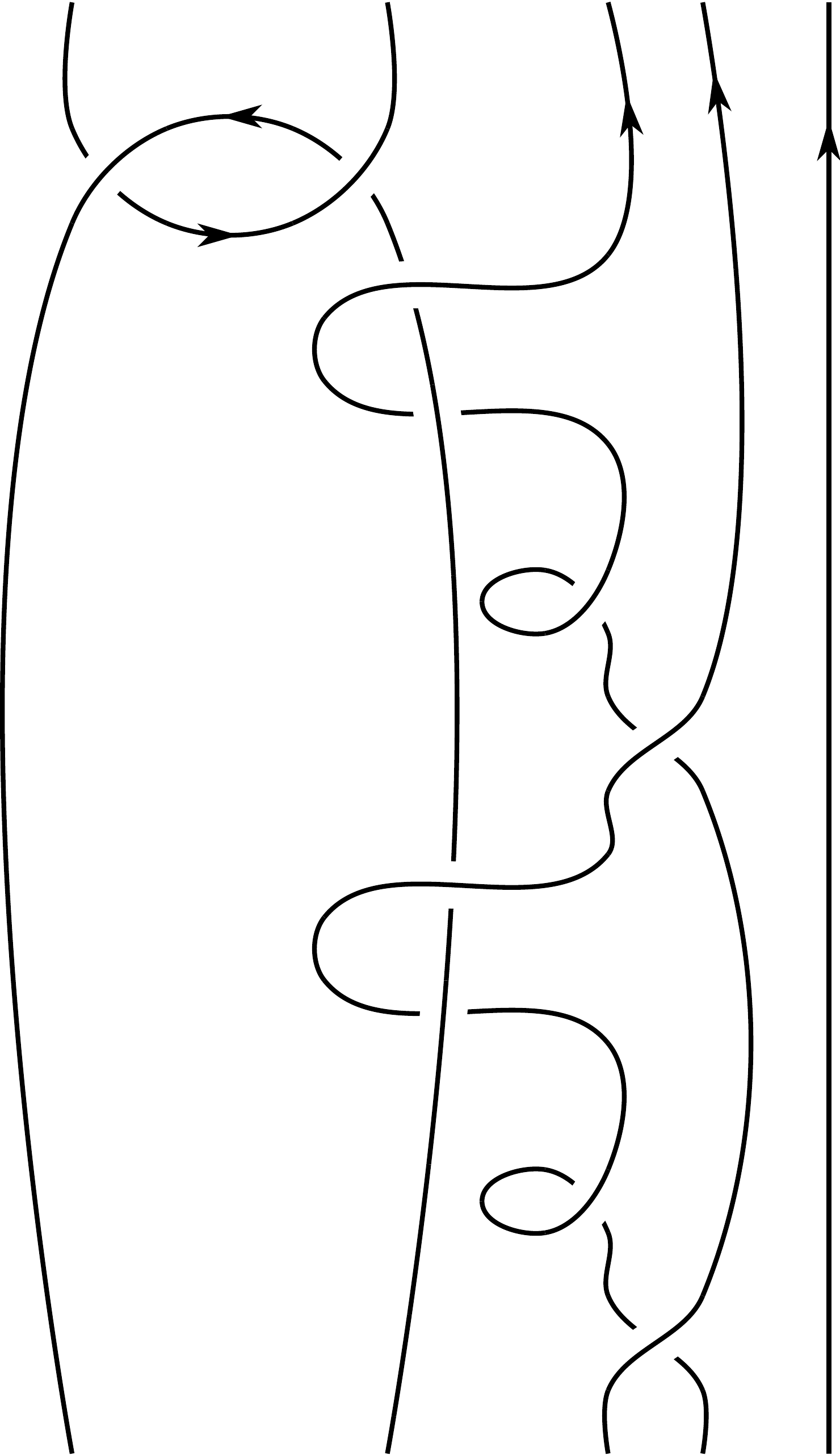}}\quad\stackrel{*}{\approx}\quad
\raisebox{-6cm}{\includegraphics[height=8cm]{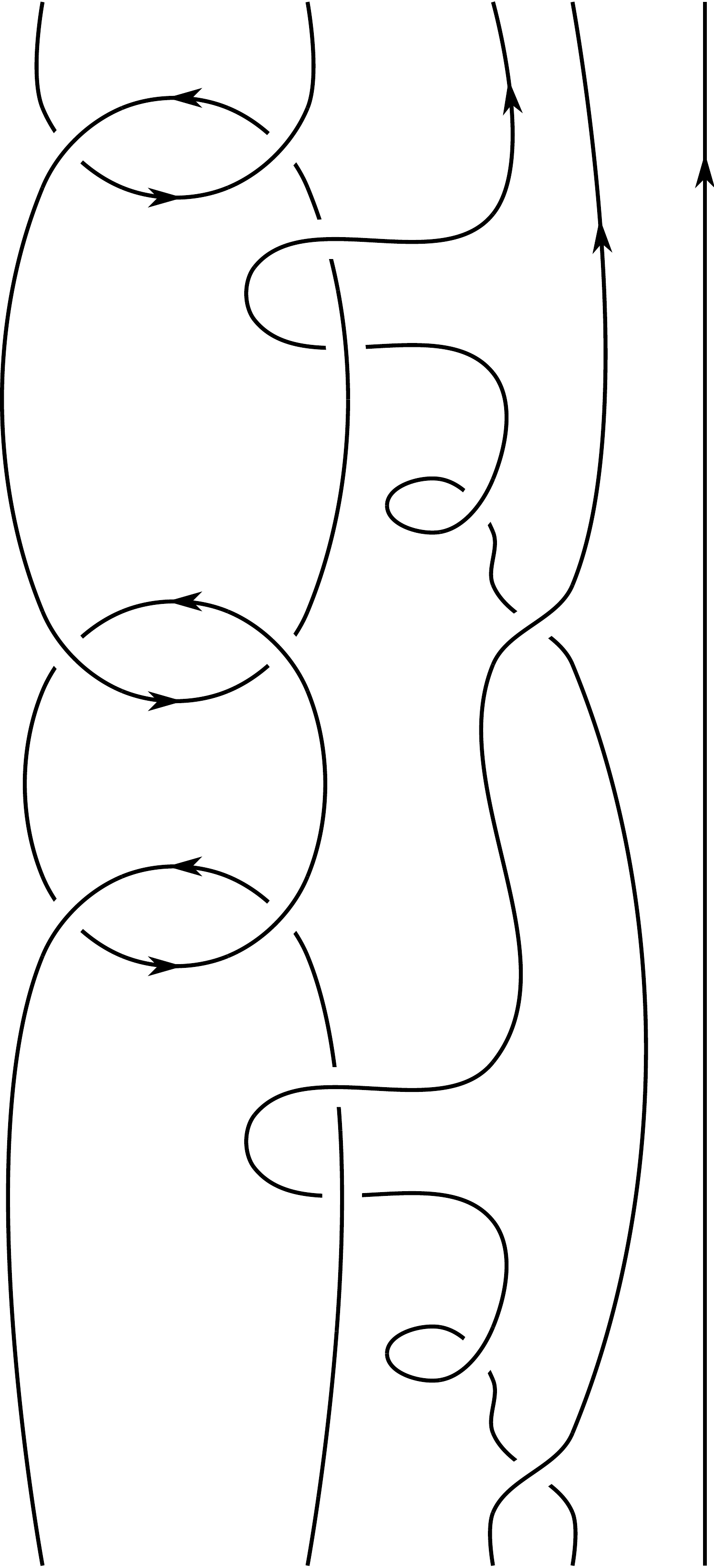}}\quad =\quad$$

$$=\quad X'_{+,++}\{c_{+,+}\otimes id_+\}X'_{+,++}\{c_{+,+}\otimes id_+\}$$

$$(\ref{elliptic2})\quad Y_{++,+}\quad=\quad\raisebox{-2.5cm}{\includegraphics[height=4cm]{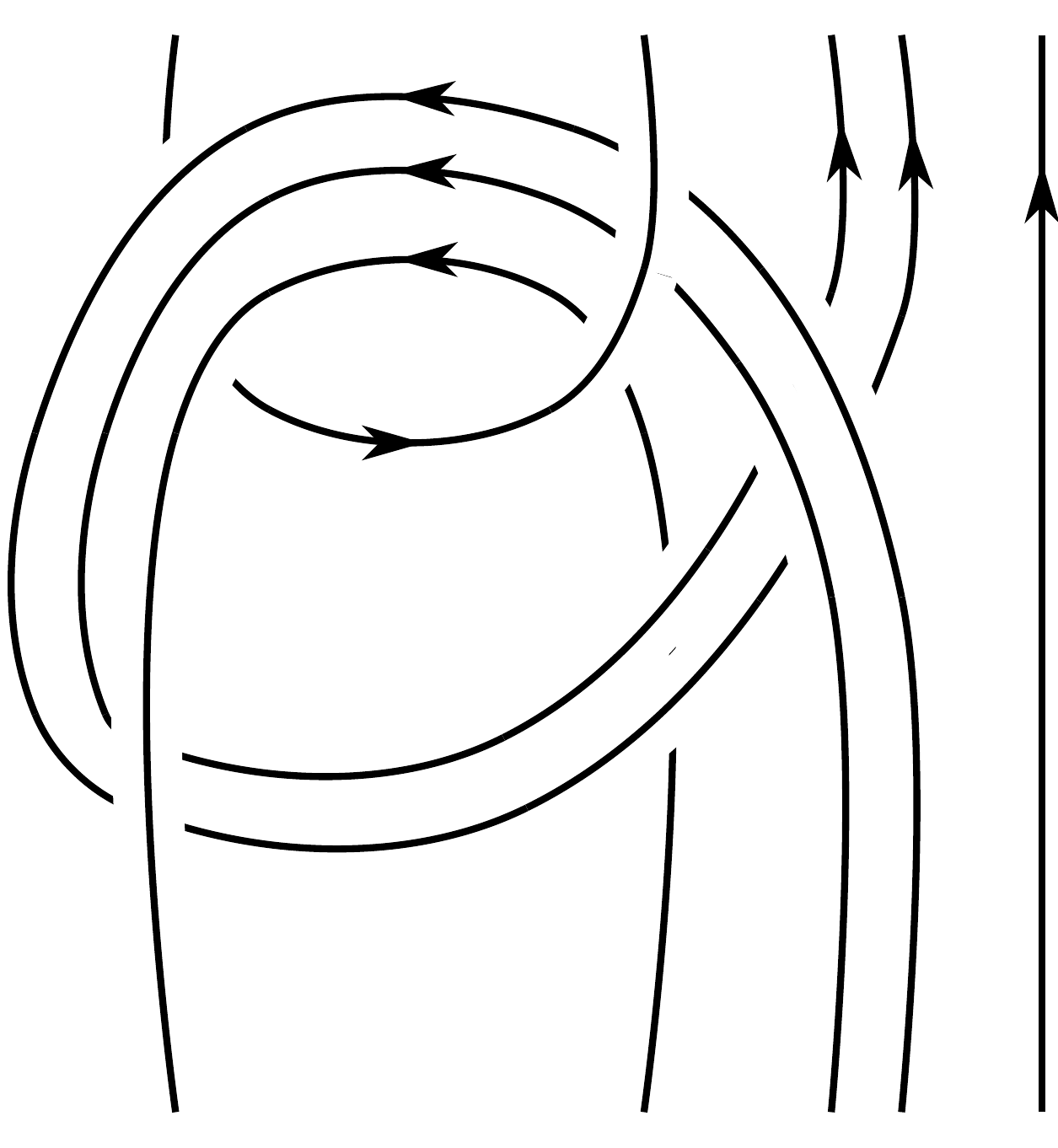}}\quad\approx\quad
\raisebox{-3.9cm}{\includegraphics[height=5.1cm]{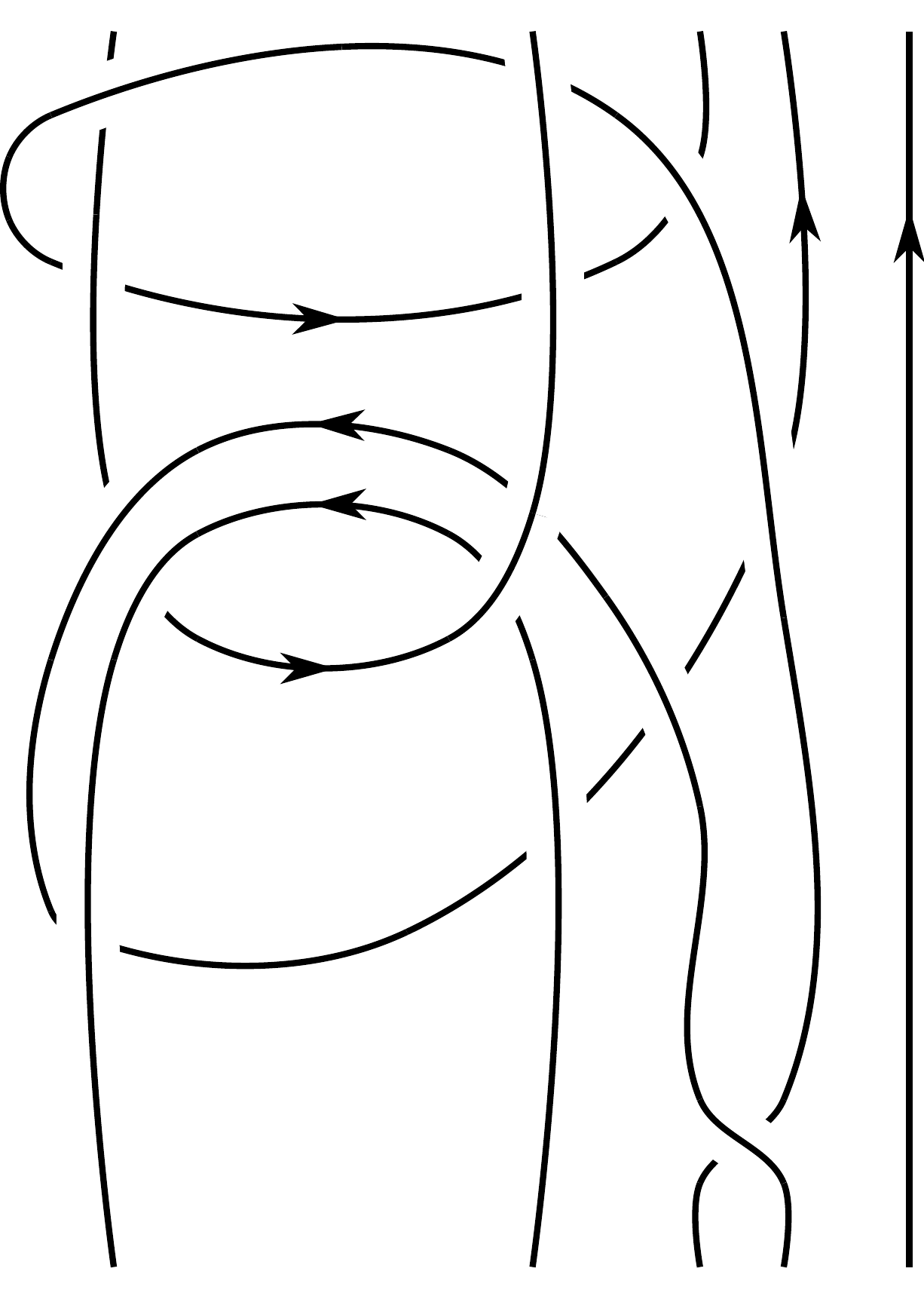}}\quad\stackrel{*}{\approx}$$

$$\stackrel{*}{\approx}\quad\raisebox{-4cm}{\includegraphics[height=6.4cm]{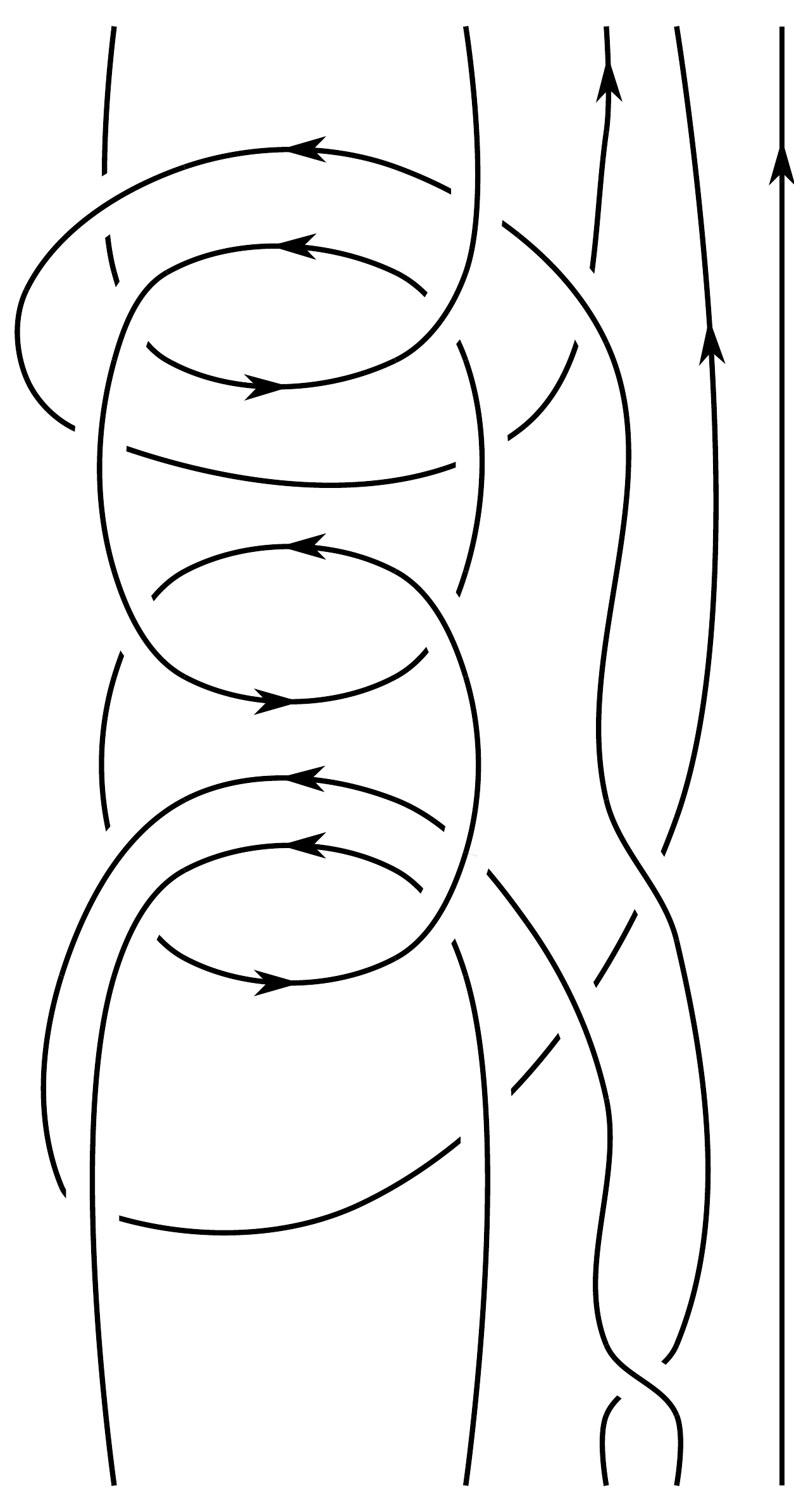}}\quad =\quad
 Y'_{+,++}\{c^{-1}_{+,+}\otimes id_+\}Y'_{+,++}\{c^{-1}_{+,+}\otimes id_+\}$$

$$(\ref{elliptic3})\quad Y_{+,+}X_{+,+}Y^{-1}_{+,+}X^{-1}_{+,+}\quad=$$

$$=\quad\raisebox{-11cm}{\includegraphics[height=12cm]{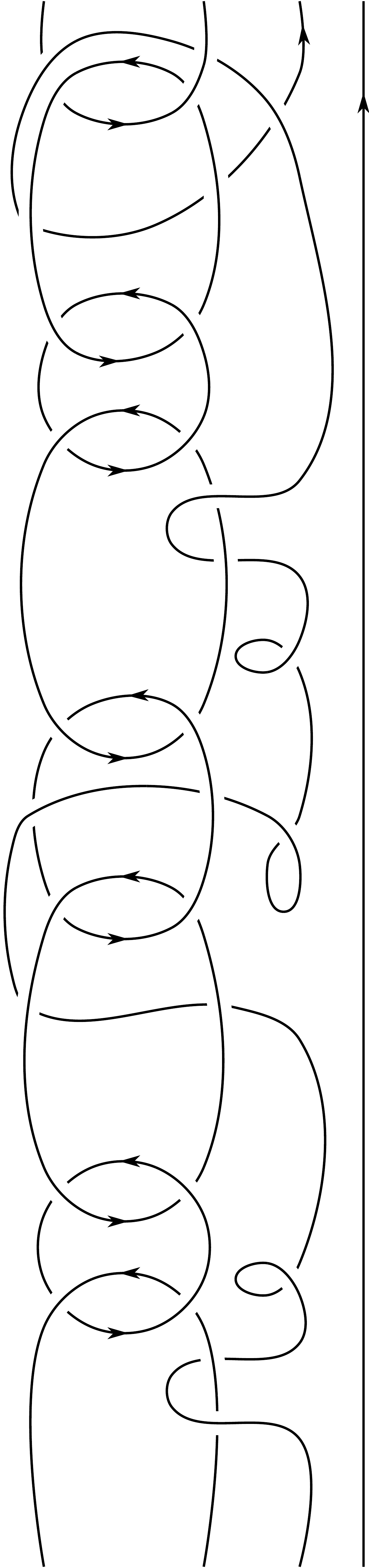}}\quad\stackrel{*}{\approx}\quad
\raisebox{-8cm}{\includegraphics[height=9.6cm]{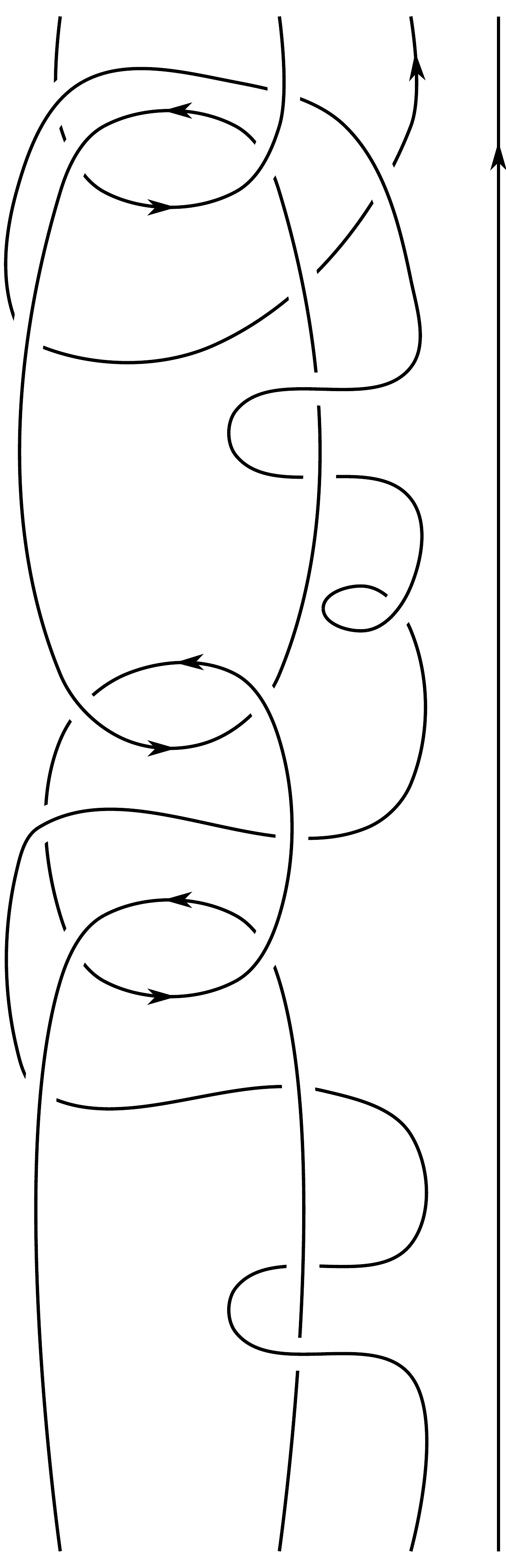}}\quad\stackrel{*}{\approx}
\raisebox{-3cm}{\includegraphics[height=4cm]{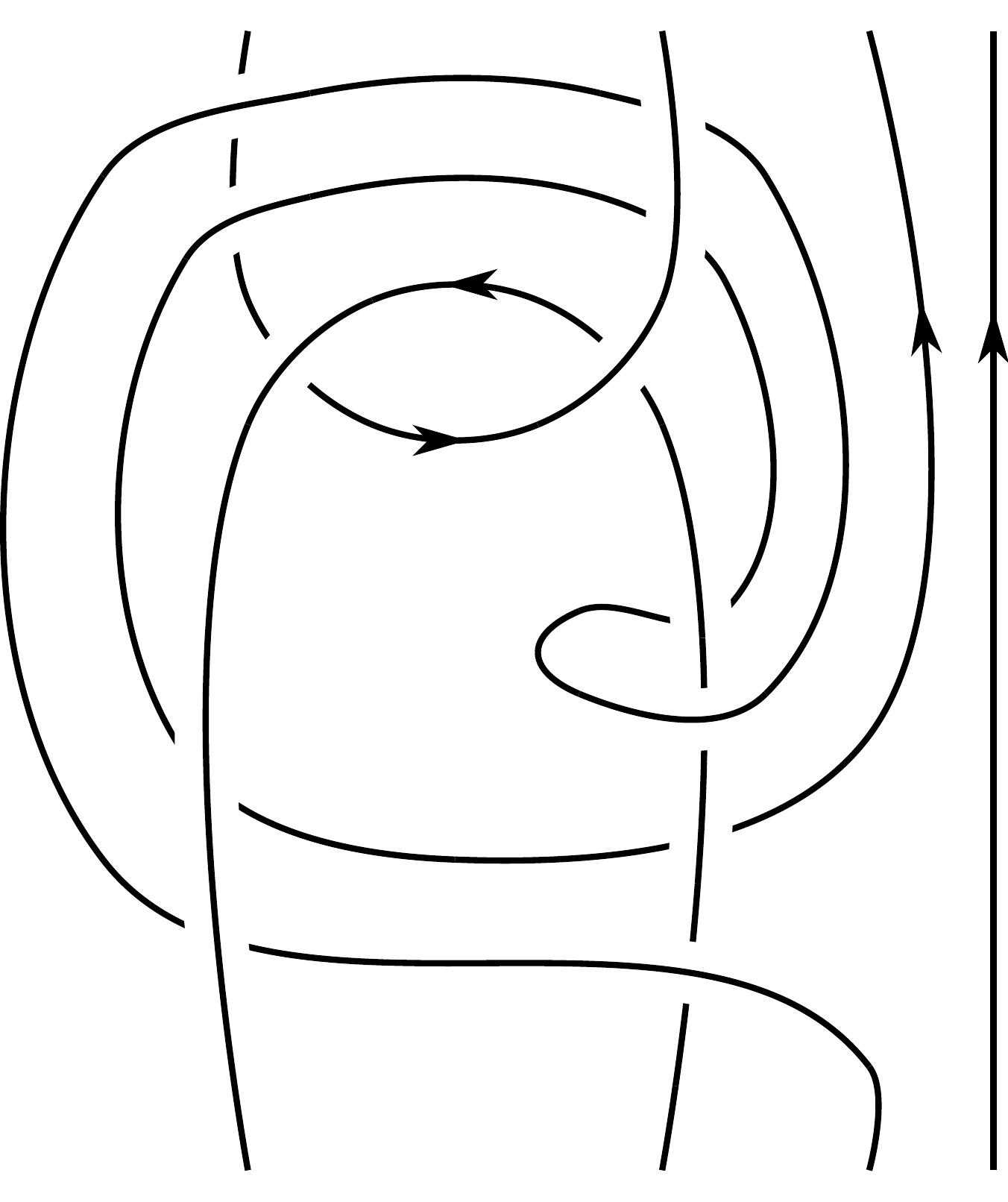}}\quad\approx$$

$$\approx\quad
\raisebox{-3cm}{\includegraphics[height=3.2cm]{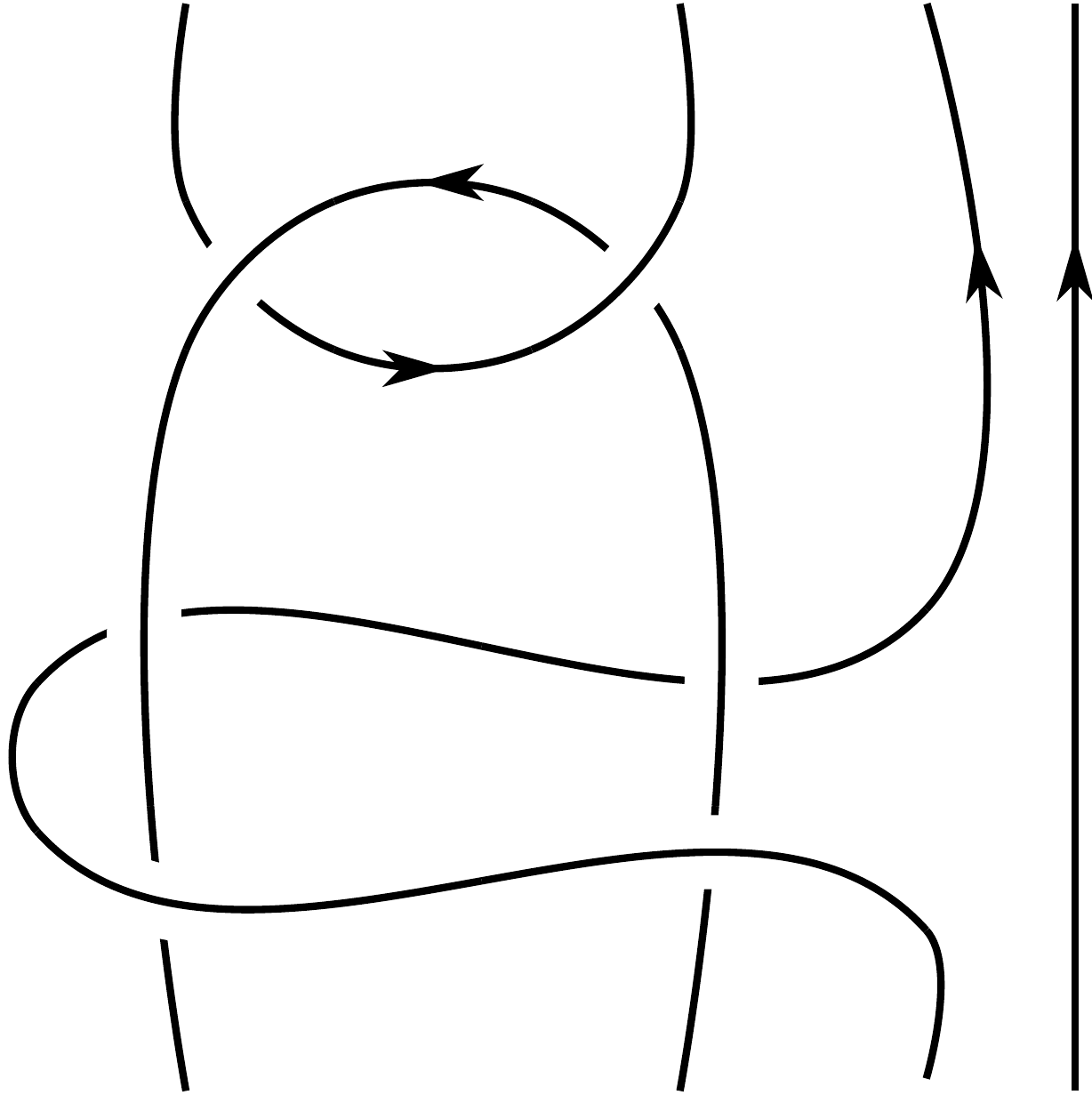}}\quad\approx\quad
\raisebox{-3cm}{\includegraphics[height=3.2cm]{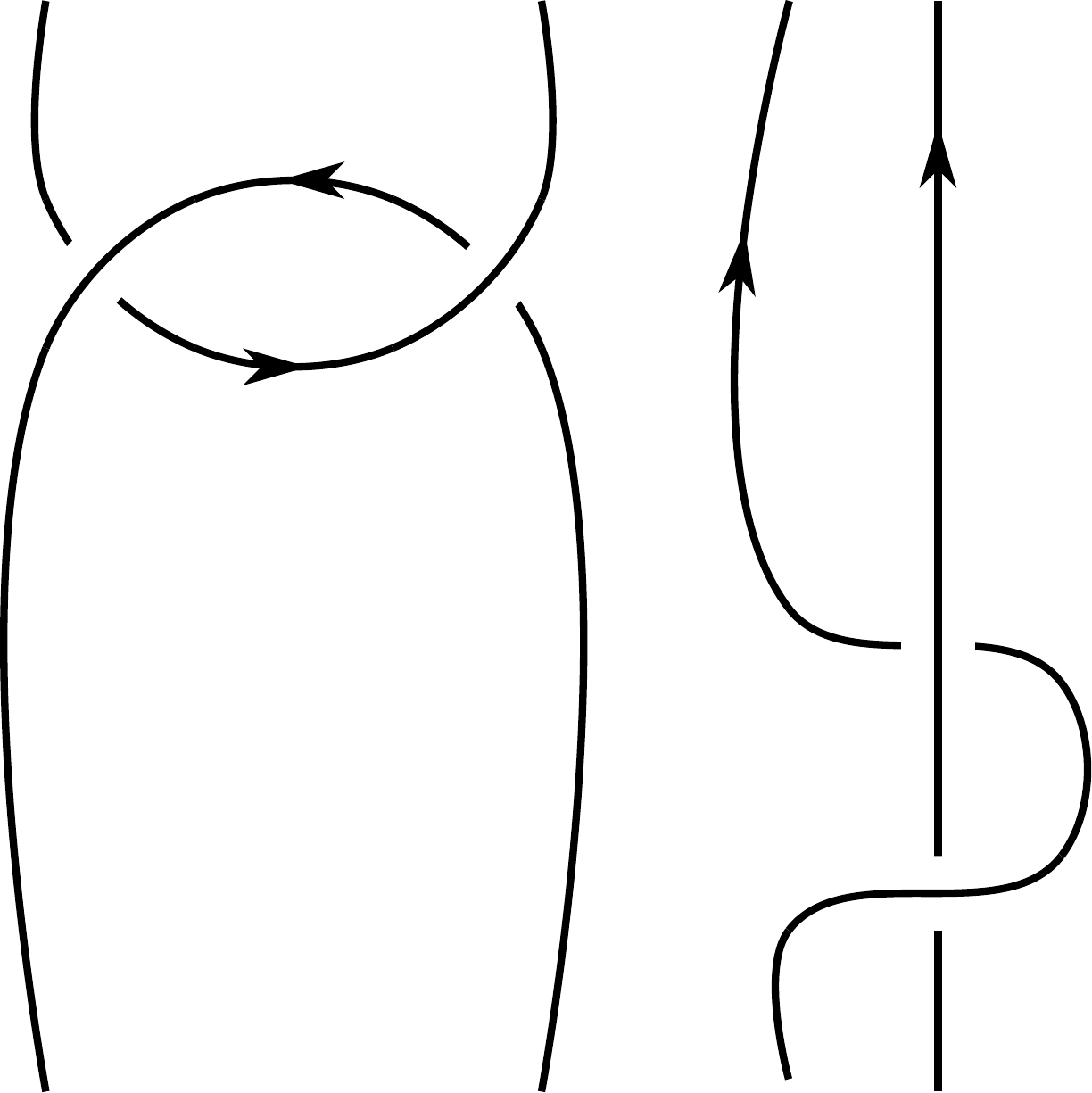}}\quad =\quad \{c_{+,+}c_{+,+}\}$$

$$(\ref{elliptic4})\quad \{c_{+,+}\otimes id_+\}X'_{+,++}\{c^{-1}_{+,+}\otimes id_+\}Y'_{+,++}\quad=$$

$$=\quad\raisebox{-4cm}{\includegraphics[height=6cm]{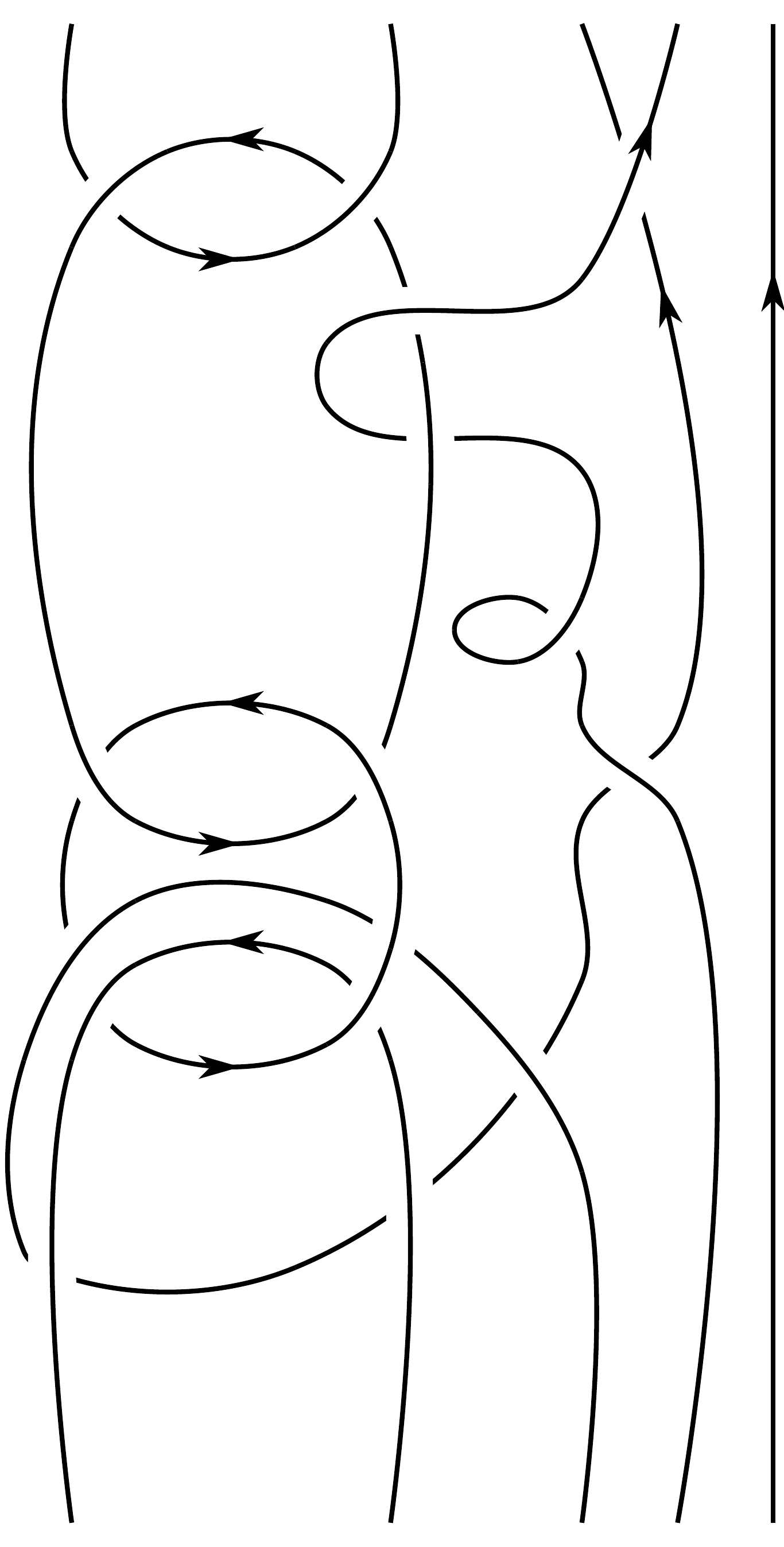}}\quad\stackrel{*}{\approx}\quad
\raisebox{-4cm}{\includegraphics[height=4.8cm]{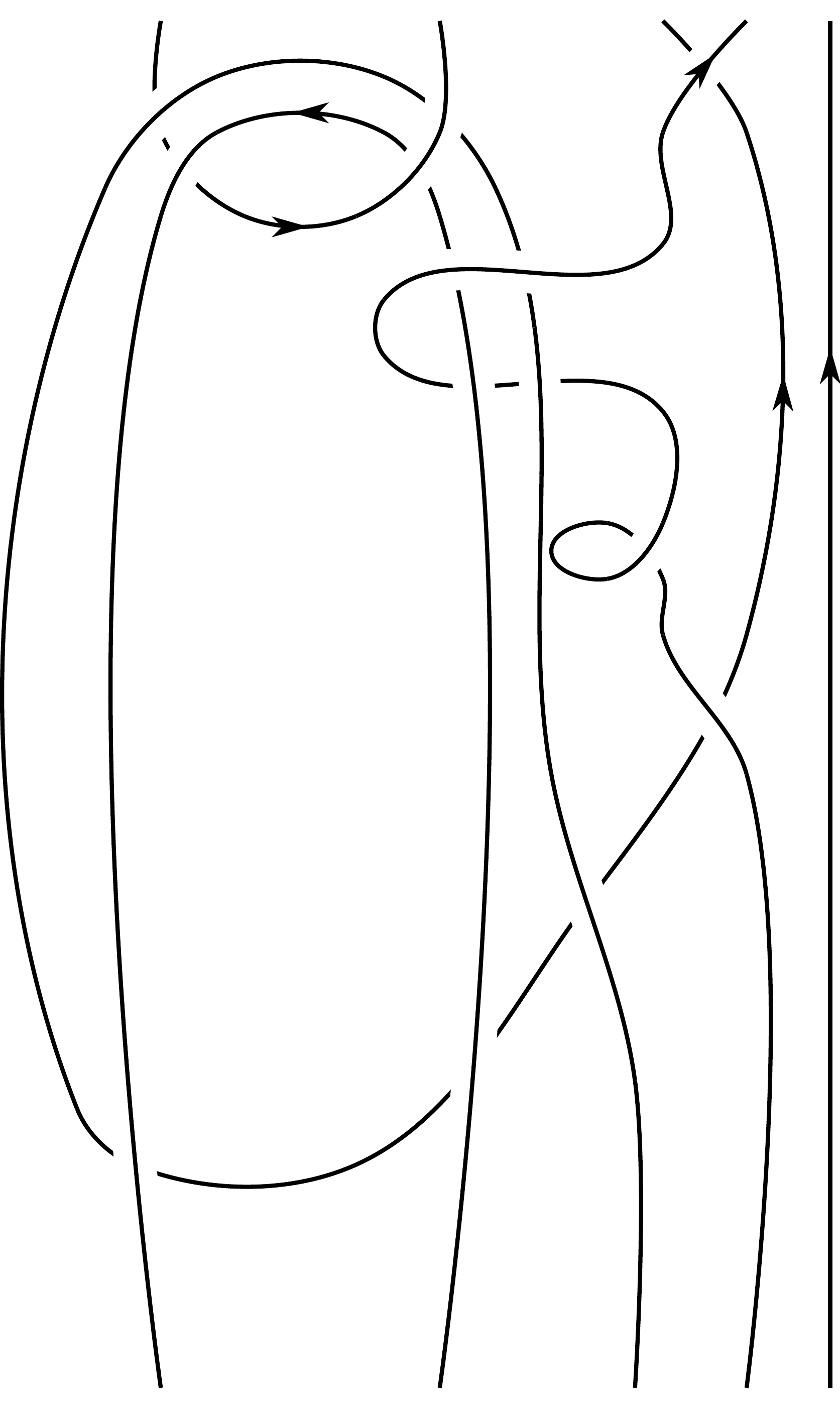}}\quad\approx$$

$$\approx\quad\raisebox{-4cm}{\includegraphics[height=4.8cm]{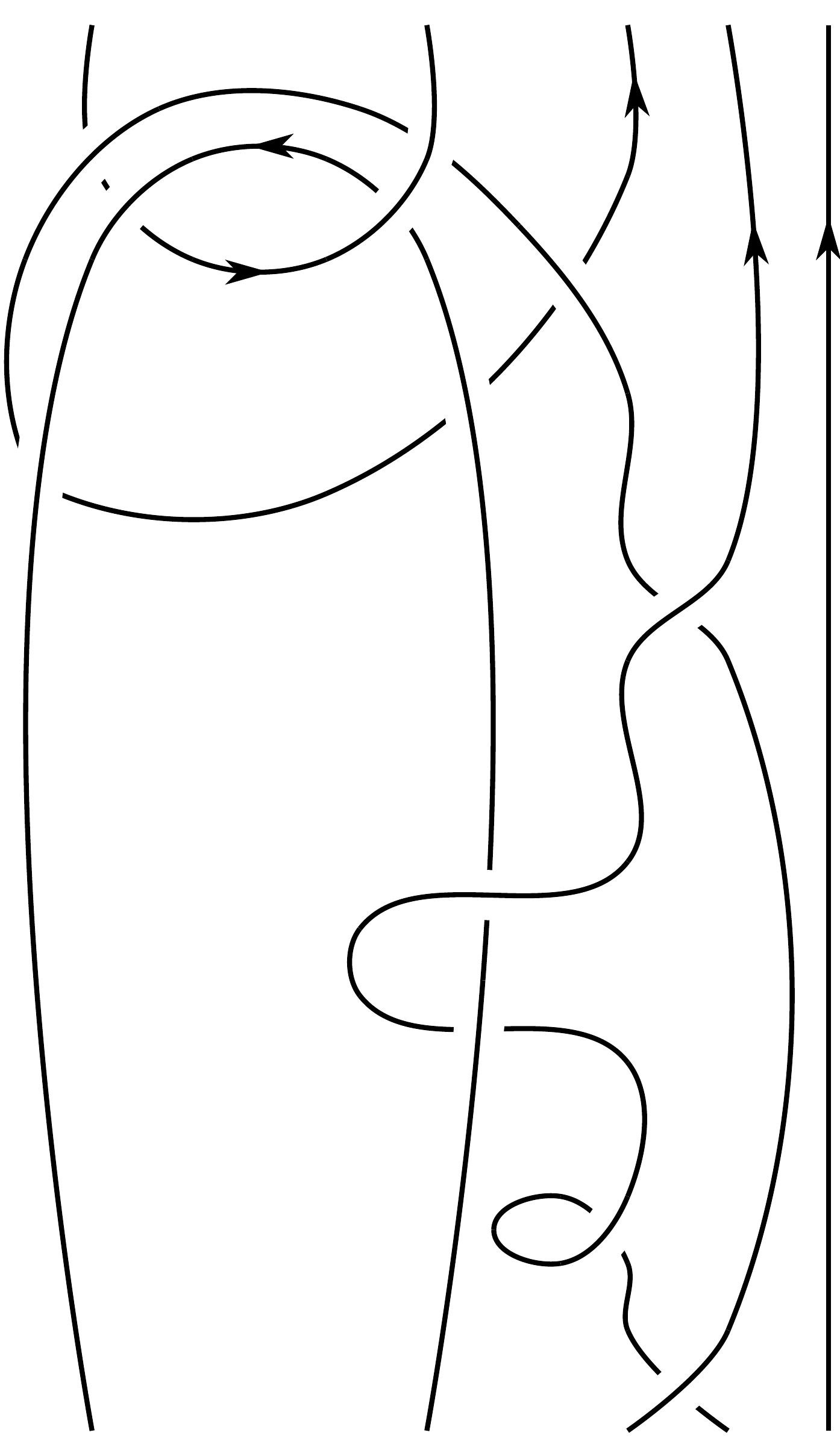}}\quad\stackrel{*}{\approx}\quad
\raisebox{-4cm}{\includegraphics[height=4.8cm]{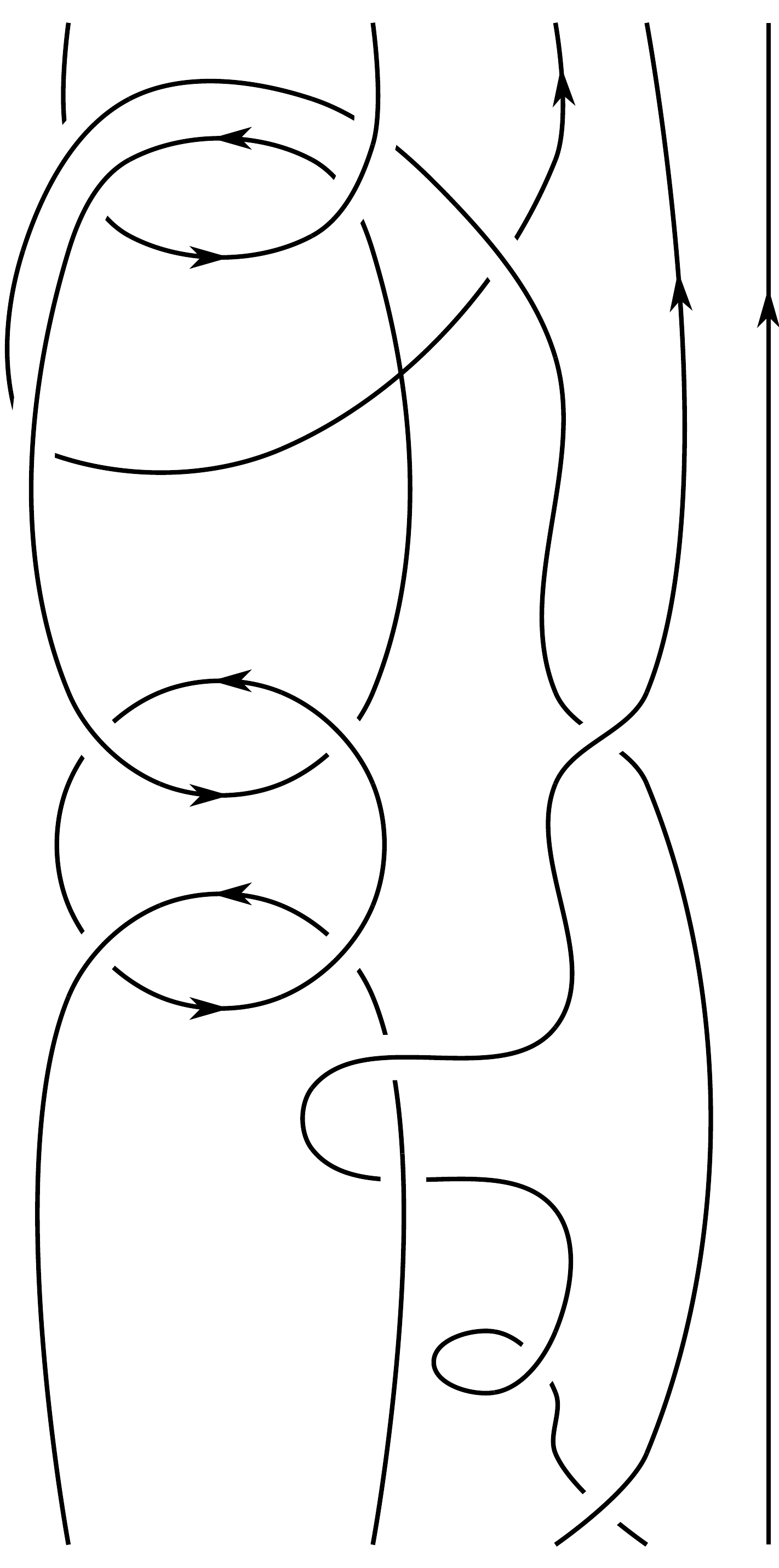}}\quad=$$

$$=\quad Y'_{+,++}\{c_{+,+}\otimes id_+\}X'_{+,++}\{c_{+,+}\otimes id_+\}$$

\begin{figure}[ht]

$$\raisebox{-2cm}{\includegraphics[height=4 cm]{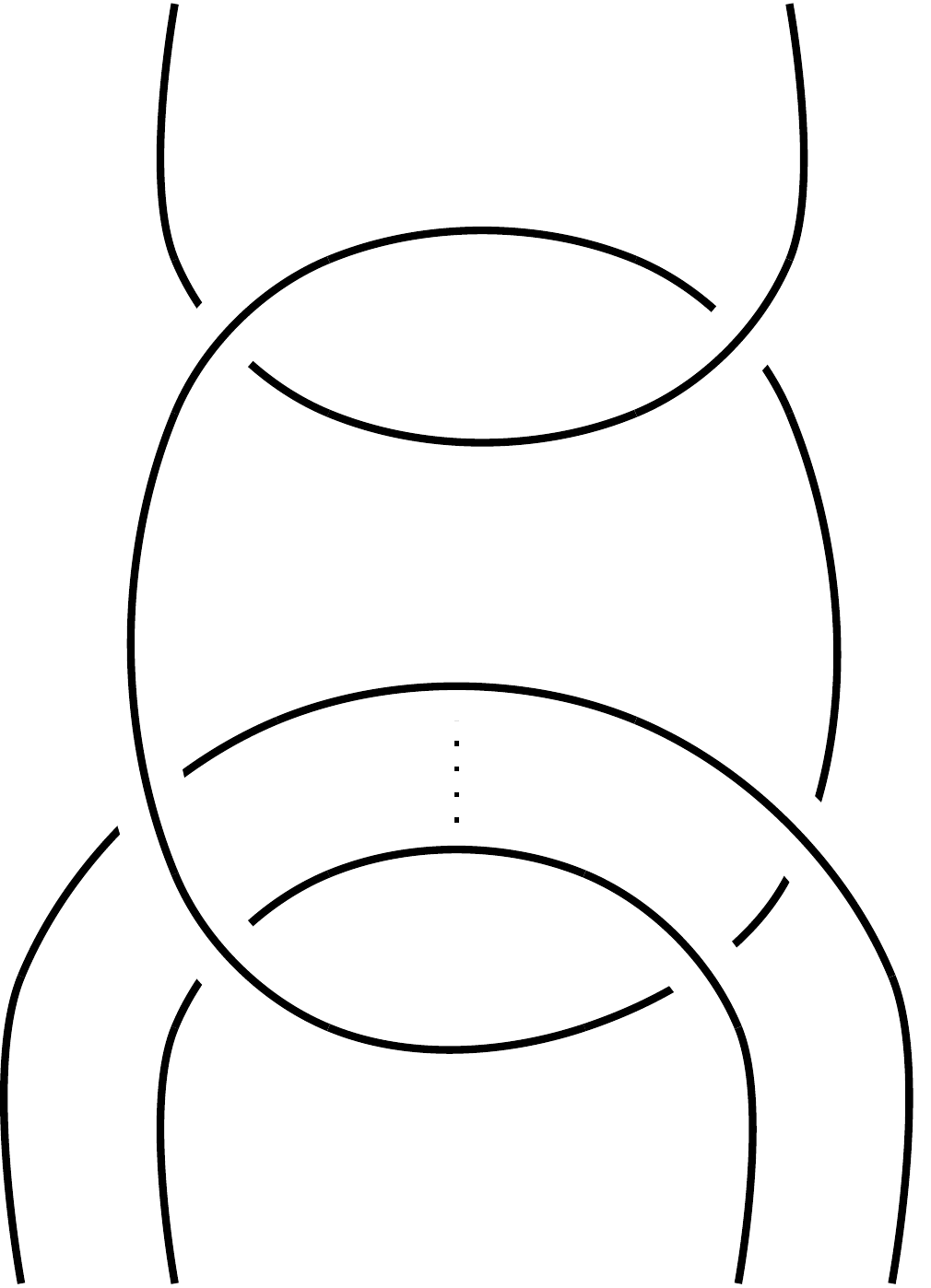}}\quad\approx\quad\raisebox{-2cm}{\includegraphics[height=4 cm]{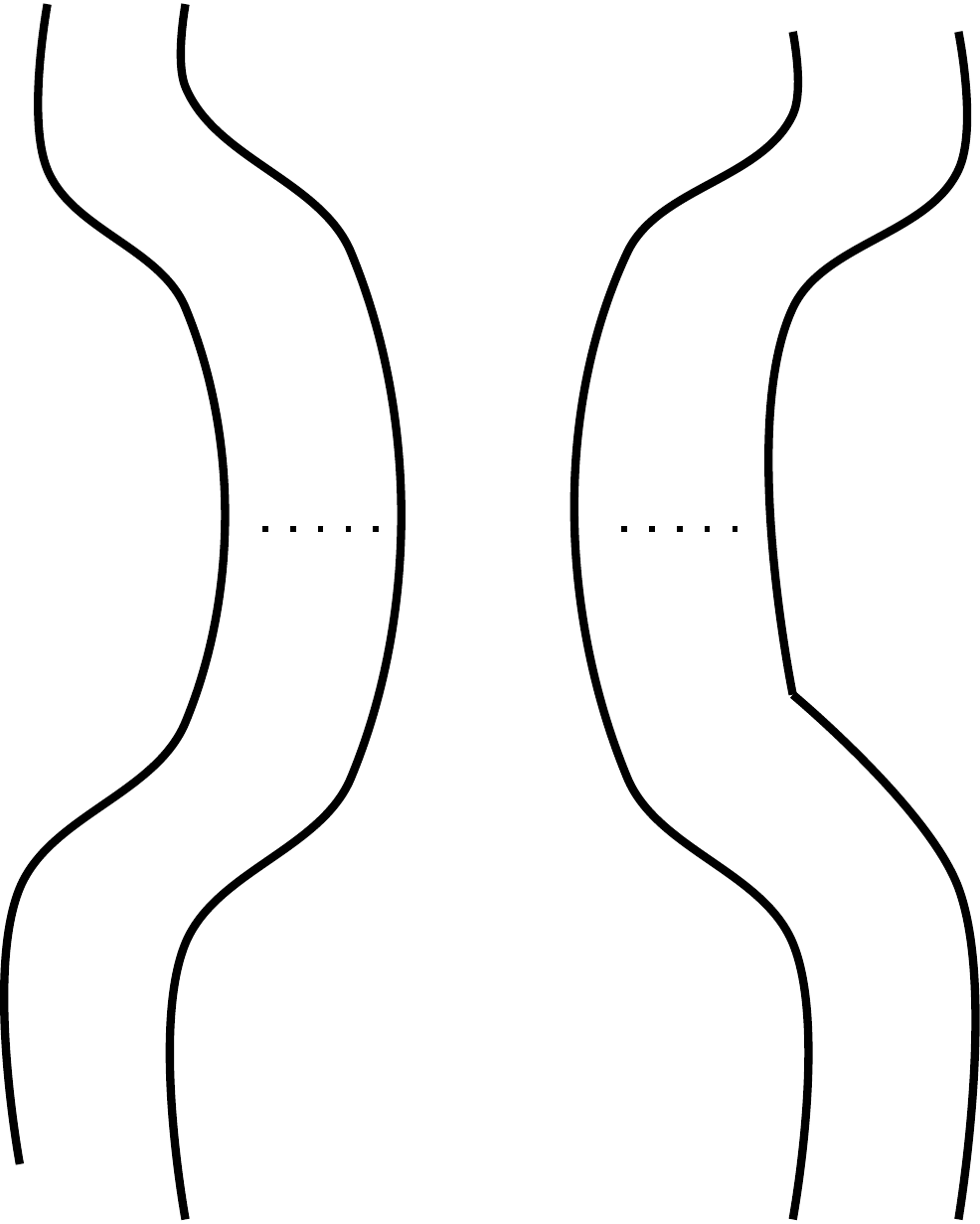}}$$

\caption{The ``slam-dunk" move}
\label{slam_dunk}
\end{figure}

Applying the maps $LMO^y$ and $LMO^<$ to $X_{+,+}$ and $Y_{+,+}$ we get elliptic structures relative to $\mathcal{A}^\partial\rightarrow\mathcal{A}_1^{yp}$ and $\mathcal{A}^\partial\rightarrow\mathcal{A}_1^{<p}$.
\label{elliptic_LMO}

\newpage
\section{The Lie Algebras $\textbf{t}_{1,n}$ and their Embeddings}
\label{section_t}

In the previous section we saw one way to define an elliptic structure relative to $\mathcal{A}^\partial\rightarrow\mathcal{A}_1^{<p}$, via the LMO functor. Another, more explicit, definition of an elliptic structure comes from specifying certain elements in the algebras $U\hat{\textbf{t}}_{1,n}$, and mapping them into $\mathcal{A}_1^{<p}$. In this section we will introduce those algebras and prove some propositions regarding their maps into $\mathcal{A}_1^{<p}$. The actual definition of the elliptic structure will appear in the next section.

\subsection{The Lie Algebras $\textbf{t}_{1,n}$}

\begin{definition} (\cite{Bezrukavnikov})
Let $\textbf{t}_{1,n}$ be the graded Lie algebra generated by $x_i$, $y_i$ ($1\le i\le n$) in degree $1$ and $t_{ij}$ ($1\le i,j\le n$, $i\neq j$) in degree $2$, with the relations:
$$[v_i,w_j]=<v,w>t_{ij}$$
$$[v_i,t_{jk}]=0$$
$$[x_i,y_i]=-\sum_{j\ne i} t_{ij}$$
where $1\le i,j,k\le n$ are distinct indices, $v,w\in\{x,y\}$, and $<\cdot,\cdot>$ is the intersection form of $H_1(\mathbb{T})$ (the symbols $x$ and $y$ are considered to be the generators of $H_1(\mathbb{T})$ from figure \ref{torus}).
$U\hat{\textbf{t}}_{1,n}$ is the degree completion of the universal enveloping algebra of $\textbf{t}_{1,n}$.
\end{definition}

Denote by $\uparrow^n$ the pattern in $\mathbb{P}(\underbrace{++\cdots+}_{\text{$n$ times}},\underbrace{++\cdots+}_{\text{$n$ times}})$ composed of $n$ up-going strands. There is a map $u_n:U\hat{\textbf{t}}_{1,n}\rightarrow\mathcal{A}_1^{<p}(\uparrow^n)$ defined by:
$$v_i\stackrel{u_n}{\longmapsto} \quad v\,\mathfigns{2}{section3_v_i}\quad$$
$$t_{ij}\stackrel{u_n}{\longmapsto}\mathfig{2}{section3_t_ij}$$
It is easily verified that the defining relations of $\textbf{t}_{1,n}$ are mapped to relations in $\mathcal{A}_1^{<p}(\uparrow^n)$. $u_n$ is an algebra homomorphism, when $\mathcal{A}_1^{<p}(\uparrow^n)$ is considered as an algebra via the composition of $\mathcal{A}_1^{<p}$.

In the next section we will be interested in the subalgebra of $U\hat{\textbf{t}}_{1,n}$ generated by $\{x_i,y_i\, |\, 1\le i\le n-1\}$. Denote this subalgebra by $RU\hat{\textbf{t}}_{1,n}$ ($R$ for ``restricted"). We will want to show that for $n=2$ and $n=3$, $u_n |_{RU\hat{\textbf{t}}_{1,n}}$ is injective. In fact we will prove that for those values of $n$, $u_n$ is an isomorphism onto a certain quotient of a restriction of $\mathcal{A}_1^{<p}(\uparrow^n)$, as will be explained in section \ref{section_ordered}. In order to prove this theorem, we will define several spaces of Jacobi diagrams which are generated by less diagrams and less relations than $\mathcal{A}_1^{<p}(\uparrow^n)$. The following diagram summarizes the spaces and maps which we will encounter in this section. Note that we should have added a subscript $n$ to all the maps in order to specify the number of strands, but we omit this subscript to make the notations simpler:
\resizebox{1\textwidth}{!}{
$$\xymatrix{
 & RU\hat{\textbf{t}}_{1,n}\ar[lddd]^{\tilde{u}_f}\ar[rddd]^{\tilde{u}_s}\ar[rrr]\ar[ddd]^{\tilde{u}_o}\ar[rdd]^{u_s}\ar[ddrr]^{u_r} & & & U\hat{\textbf{t}}_{1,n}\ar[dd]^u\\
 \\
 &  & SR\mathcal{A}_1^{<p}(\uparrow^n)\ar[r]^s\ar[d]^{\pi_s} & R\mathcal{A}_1^{<p}(\uparrow^n)\ar[r]^r\ar[l]^\gamma\ar[d]^{\pi_r} & \mathcal{A}_1^{<p}(\uparrow^n)\ar[d]^\pi\\
FOSR\mathcal{A}_1^{<p}(\uparrow^n)\ar[r]^f\ar[uuur]^p & OSR\mathcal{A}_1^{<p}(\uparrow^n)\ar[r]^o\ar[l]^\alpha & SR\mathcal{A}_1^{<p}(\uparrow^n)/H_n\ar[r]^{\tilde{s}}\ar[l]^\beta & R\mathcal{A}_1^{<p}(\uparrow^n)/H_n\ar[r]^{\tilde{r}}\ar[l]^{\tilde{\gamma}} & \mathcal{A}_1^{<p}(\uparrow^n)/H_n 
}$$
}
\label{restrictions_diagram}

Some of the spaces and maps in this diagram are defined for any $n$, while others are defined only for $n=2$ or $n=3$, as will be clear in the following subsections.

\subsection{Restriction to the First $n-1$ Strands}
\label{section_restricted}
Let $R\mathbb{D}_1^{<p}(\uparrow^n)\subset\mathbb{D}_1^{<p}(\uparrow^n)$ be the subset of all diagrams with no vertices on the rightmost strand. Let $R\mathcal{A}_1^{<p}(\uparrow^n)$ be the quotient of $S_\mathbb{F}(R\mathbb{D}_1^{<p}(\uparrow^n))$ by all STU and STU-like relations which are contained in $S_\mathbb{F}(R\mathbb{D}_1^{<p}(\uparrow^n))$. There is an obvious map $r:R\mathcal{A}_1^{<p}(\uparrow^n)\rightarrow \mathcal{A}_1^{<p}(\uparrow^n)$. However, it is not a-priori clear that this map is injective, because in $\mathcal{A}_1^{<p}(\uparrow^n)$ there are $I_1^{<}$ relations, which relate elements from $R\mathbb{D}_1^{<p}(\uparrow^n)$ to elements outside this subset. The injectivitiy of $r$ is the goal of this subsection.

\begin{proposition}
\label{proposition_r}
$r:R\mathcal{A}_1^{<p}(\uparrow^n)\rightarrow \mathcal{A}_1^{<p}(\uparrow^n)$ is injective.
\end{proposition}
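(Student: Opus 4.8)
The plan is to adapt the filtration and direct-limit technique from the proof of Proposition \ref{proposition_k}. Filter $\mathbb{D}_1^{<p}(\uparrow^n)$ by the integer $m(D)$, the number of univalent vertices of $D$ lying on the rightmost (the $n$-th) strand, and let $(\mathcal{A}_1^{<p}(\uparrow^n))^l$ denote the quotient of $S_\mathbb{F}(\{D\mid m(D)\le l\})$ by all STU, STU-like and $I_1^<$ relations contained in it. The key observation is that every $I_1^<$ relation contains the box over all top solid lines, which necessarily places a leg on the rightmost strand; hence no $I_1^<$ relation lies entirely in level $0$. Consequently the bottom space $(\mathcal{A}_1^{<p}(\uparrow^n))^0$ is exactly $R\mathcal{A}_1^{<p}(\uparrow^n)$ (the quotient by STU and STU-like alone), the direct limit of the chain $(\mathcal{A})^0\to(\mathcal{A})^1\to\cdots$ is $\mathcal{A}_1^{<p}(\uparrow^n)$, and $r$ is precisely the canonical map from the bottom into the limit. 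It therefore suffices to prove that each inclusion $\iota_l:(\mathcal{A})^{l-1}\to(\mathcal{A})^l$ is injective, for then the bottom injects into the limit.

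To do this I would construct reduction maps $\psi_l:(\mathcal{A})^l\to(\mathcal{A})^{l-1}$ inverse to $\iota_l$. On a diagram $D$ with $m(D)<l$ set $\psi_l(D)=D$; if $m(D)=l$, take the highest univalent vertex lying on the rightmost strand (nothing lies above it on that strand, so it may be slid freely to the top) and apply the $I_1^<$ relation to the resulting top leg. The box over all $n$ top strands then rewrites the leg sitting on the rightmost strand as minus the $x$-labelled term of the $I_1^<$ relation, minus the legs sitting on the other $n-1$ strand-tops. Every term produced has one fewer vertex on the rightmost strand and so lies in level $l-1$; iterating terminates and never raises $m$. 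On level-$(l-1)$ diagrams $\psi_l$ is the identity, giving $\psi_l\circ\iota_l=\mathrm{id}$, while $\iota_l\circ\psi_l$ differs from the identity only by STU and $I_1^<$ relations, which vanish in $(\mathcal{A})^l$. Thus each $\iota_l$ is an isomorphism and $r$ is injective (indeed an isomorphism).

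The main obstacle is to verify that $\psi_l$ is well defined on the quotient, i.e.\ that it sends each STU, STU-like and $I_1^<$ relation of level $l$ into the span of such relations of level $l-1$. When the distinguished vertices of an STU or STU-like relation are disjoint from the highest rightmost-strand leg this is immediate; when they interact one must check, exactly as in the displayed computation at the end of the proof of Proposition \ref{proposition_k}, that the terms created by the swap cancel in pairs. The genuinely new point is compatibility with the $I_1^<$ relations: applying the leg-pushing procedure to the box term of an $I_1^<$ relation must reproduce, up to sign, the $x$-labelled term of that same relation together with $I_1^<$ relations of lower level, so that the entire relation maps into the desired span. I expect this to reduce to a bookkeeping argument tracking which strand-top each boxed leg lands on, of the same flavour as the cancellation of the box-summands near the $x$-labels in the proof that $\textbf{I}_1^y$ is an ideal. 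Once $\psi_l$ is shown to descend, injectivity of each $\iota_l$, and hence of $r$, follows.
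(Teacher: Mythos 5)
Your overall strategy --- filter by a complexity measure, realize $\mathcal{A}_1^{<p}(\uparrow^n)$ as a direct limit with $R\mathcal{A}_1^{<p}(\uparrow^n)$ at the bottom, and build reduction maps inverse to the inclusions --- is the same technique the paper uses throughout Section \ref{section_t}. But the specific filtration you chose, by the raw number of univalent vertices on the rightmost strand, breaks at the key step. Your reduction $\psi_l$ takes the highest leg on the rightmost strand and applies an $I_1^<$ relation to trade it for an $x$-labelled edge plus legs on the other strands. This is only possible when that leg's component has at least one \emph{other} vertex on the pattern: the $I_1^<$ relation is, by definition, a sum of two diagrams both of which must lie in $\mathbb{D}_1^{<p}$, and if the distinguished leg is the unique pattern vertex of its component, the $x$-labelled summand is either a strut or a component disjoint from the pattern, hence not a legal pattern-connected diagram. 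So no $I_1^<$ relation has such an edge as its distinguished edge, and $\psi_l$ is simply undefined on any diagram whose highest rightmost-strand vertex is of this kind (the paper calls these \emph{lonely} vertices). There is no way to patch this by sending such diagrams elsewhere without destroying compatibility with the STU relations, so the inclusions $\iota_l$ do not admit the retractions you describe. The same observation shows your parenthetical claim that $r$ is an isomorphism is false: a single $x$-labelled edge attached to the rightmost strand (times the identity on the others) is a nonzero element of $\mathcal{A}_1^{<p}(\uparrow^n)$ that cannot be rewritten without vertices on that strand, which is why the paper only records a sufficient condition for membership in the image of $r$ (Remark \ref{remark_restricted_diagram}).

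The paper's proof is built precisely around this obstruction. It distinguishes lonely from non-lonely vertices, takes as normal form the span $L\mathcal{A}_1^{yp}(\uparrow^n)$ of diagrams in which every rightmost-strand vertex is lonely (so lonely legs are \emph{kept}, and only non-lonely ones are pushed off), and filters by the more delicate quantity $n_l(D)$, which also weights each non-lonely vertex by the degrees of the lonely components above it --- this extra weighting is needed because the reduction move must commute a non-lonely leg past lonely components sitting above it on the strand. It also runs the argument in the unordered category $\mathcal{A}_1^{yp}$, where the $I_1^y$ relations are easier to manipulate, and only then transfers to $\mathcal{A}_1^{<p}$ through the isomorphism $k$; since $R\mathbb{D}_1^{<p}(\uparrow^n)$ sits inside the resulting presentation by STU and STU-like relations alone, injectivity of $r$ follows. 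To repair your argument you would have to rebuild it along these lines; as written, the reduction maps do not exist.
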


We will prove this proposition by representing $\mathcal{A}_1^{<p}(\uparrow^n)$ in a different way, which does not involve $I_1^<$ relations. However, we start by finding such representation for $\mathcal{A}_1^{yp}(\uparrow^n)$, and then we will use the isomorphism $k:\mathcal{A}_1^{<p}\rightarrow\mathcal{A}_1^{yp}$ to conclude.

\begin{definition}
Let $D\in \mathbb{D}_1^{yp}(\uparrow^n)$ be a diagram, and let $v$ be a vertex in $D$ on the rightmost strand. We call $v$ a \textbf{lonely vertex} if it belongs to a component of $D$ which does not have more vertices on the pattern.  Otherwise we call it a \textbf{non-lonely vertex}.

Let $L\mathbb{D}_1^{yp}(\uparrow^n)\subset\mathbb{D}_1^{yp}(\uparrow^n)$ be the subset of diagrams in which all the vertices on the rightmost strand are lonely vertices. Let $L\mathcal{A}_1^{yp}(\uparrow^n)$ denote the quotient of $S_\mathbb{F}(L\mathbb{D}_1^{yp}(\uparrow^n))$ by all STU relations contained in it (there are no $I_1^y$ relations in this subspace).
\end{definition}
\begin{proposition}
The obvious map $l:L\mathcal{A}_1^{yp}(\uparrow^n)\rightarrow \mathcal{A}_1^{yp}(\uparrow^n)$ is an isomorphism.
\end{proposition}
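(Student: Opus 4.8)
The plan is to prove this in the same spirit as the proof of Proposition \ref{proposition_k}: set up a filtration on the generating diagrams by a complexity measuring how far a diagram is from $L\mathbb{D}_1^{yp}(\uparrow^n)$, build an explicit inverse to $l$ as a direct limit of reduction maps, and verify that these maps respect the relations. Concretely, for $D\in\mathbb{D}_1^{yp}(\uparrow^n)$ let $m(D)$ be the number of non-lonely vertices lying on the rightmost strand, so that $L\mathbb{D}_1^{yp}(\uparrow^n)$ is exactly $\{D: m(D)=0\}$. Filtering $S_\mathbb{F}(\mathbb{D}_1^{yp}(\uparrow^n))$ by $m$, refined lexicographically by the height of the topmost non-lonely vertex, produces a tower whose bottom level is $L\mathcal{A}_1^{yp}(\uparrow^n)$ and whose direct limit is $\mathcal{A}_1^{yp}(\uparrow^n)$, with $l$ induced by the inclusions. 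Note that the $I_1^y$ relation is genuinely absent from the bottom level, since expanding its box attaches an edge to the rightmost strand and so leaves $S_\mathbb{F}(L\mathbb{D}_1^{yp}(\uparrow^n))$; this is precisely the content that must be recovered.

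For surjectivity I would first show that every generator reduces into $L\mathbb{D}_1^{yp}(\uparrow^n)$. Given $D$ with $m(D)>0$, take its topmost non-lonely vertex $v$ on the rightmost strand and use STU to slide $v$ up to the very top of that strand; the correction terms produced by STU fuse two trivalent vertices and therefore strictly decrease the complexity, so they are absorbed by the inductive hypothesis. Once the relevant dashed edge sits at the top of the rightmost strand, I apply the $I_1^y$ relation of figure \ref{I^y_1}: its box term decomposes as the offending rightmost-strand diagram plus diagrams in which the same edge is attached to another top solid end or to a $y$-label, and solving the relation for the rightmost-strand term expresses it through diagrams of strictly smaller complexity together with the correction $I_{xy}$. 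A point I must check is that $I_{xy}$ itself contributes only diagrams of lower complexity, i.e. that passing to the $x/y$-correction does not reintroduce non-lonely vertices on the rightmost strand; granting this, surjectivity follows by induction on $m$.

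For injectivity, the reduction just described defines, level by level, maps $\psi_l$ inverse to the inclusions, which I assemble into $\psi:\mathcal{A}_1^{yp}(\uparrow^n)\to L\mathcal{A}_1^{yp}(\uparrow^n)$. That $\psi\circ l=\mathrm{id}$ is immediate, since $\psi$ fixes diagrams with $m=0$; that $l\circ\psi=\mathrm{id}$ holds because each reduction step alters a diagram only within its class modulo STU and $I_1^y$. The substance is the well-definedness of $\psi$, namely that it sends every STU relation to an STU relation inside $L\mathcal{A}_1^{yp}(\uparrow^n)$ and sends every $I_1^y$ relation to zero. The vanishing on $I_1^y$ relations is essentially built in, since $\psi$ is defined using exactly that relation to clear the rightmost strand; the delicate verification is compatibility with STU, which requires the same kind of case analysis as in Proposition \ref{proposition_k}, distinguishing whether the two vertices of a given STU relation lie above, below, or at the topmost non-lonely vertex being reduced.

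The main obstacle I anticipate is precisely this well-definedness check. Surjectivity is a fairly clean induction once the $I_1^y$ relation is read as a rewriting rule, but verifying that the reduction is independent of the order in which non-lonely rightmost vertices are cleared — equivalently, that $\psi$ descends through both STU and $I_1^y$ — is where the bookkeeping lives and where subtleties are most likely to hide, in particular in the behaviour of the $I_{xy}$ correction term and of STU relations straddling the vertex currently being reduced.
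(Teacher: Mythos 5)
Your overall strategy coincides with the paper's: filter the generating diagrams by a complexity measuring the failure to lie in $L\mathbb{D}_1^{yp}(\uparrow^n)$, reduce by first using STU to push the topmost non-lonely vertex to the top of the rightmost strand and then using the $I_1^y$ relation to remove it from that strand, and assemble the level-wise inverses into an inverse of $l$. The genuine gap is in your choice of complexity and in the claim that the $I_1^y$ rewriting step strictly decreases it. When you solve the $I_1^y$ relation for the offending rightmost-strand term, among the remaining terms are those in which the displaced edge is attached to a $y$-labelled end of \emph{another} component. If that component itself meets the rightmost strand in a (lonely) vertex $w$, then after the attachment its component acquires the pattern vertices of $v$'s component, so $w$ becomes non-lonely. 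Hence the number of non-lonely vertices on the rightmost strand does not drop (you lose $v$ but gain $w$), and since $w$ may sit arbitrarily far below the top of the strand, the height of the topmost non-lonely vertex can strictly worsen. Concretely, for $n=2$ take $D$ consisting of a chord from strand $1$ to the top of strand $2$ together with two further components, each a single edge from strand $2$ to a $y$-label, placed lower on strand $2$: the term of the $I_1^y$ relation that attaches the chord to the lower of the two $y$-labels has the same count of non-lonely vertices and a new topmost non-lonely vertex strictly farther from the top. So your lexicographic measure is not decreased by your own reduction rule, and the induction does not close.

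This is precisely why the paper's measure $n_l(D)$ is more elaborate: it sums, over the non-lonely vertices $v$, the total degree of the lonely components lying above $v$, and adds the number of non-lonely vertices, so that it explicitly tracks the lonely components with which both reduction moves interact. Your remaining concerns are well placed — the behaviour of the $I_{xy}$ correction and the compatibility of the reduction with STU (the paper's cases A, B1, B2 and C) are indeed where the rest of the work lies, and your plan for those parts matches the paper's. But as stated the termination/surjectivity argument fails at the step above, and repairing it requires replacing your complexity by one (such as the paper's) that also decreases on the $y$-label reattachment terms. A smaller slip: the STU correction term merges two pattern vertices into one and creates a new trivalent vertex (it does not ``fuse two trivalent vertices''), and under your primary count that term does not decrease either — only the secondary height does.
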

\begin{proof}
Let $D\in\mathbb{D}_1^{yp}(\uparrow^n)$. For any non-lonely vertex $v$ of $D$, let $n_l(v)$ be the total degree of all components with a lonely vertex higher than $v$. Let $n_l(D):=(\sum\limits_{\text{$v$ non-lonely}} n_l(v)\,)+\left|\{\text{$v$ non-lonely}\}\right|$. Let $(\mathbb{D}_1^{yp}(\uparrow^n))^m:=\{D\in\mathbb{D}_1^{yp}(\uparrow^n)|n_l(D)\le m\}$. We get a filtration of $\mathbb{D}_1^{yp}(\uparrow^n)$:
$$(\mathbb{D}_1^{yp}(\uparrow^n))^0\subseteq (\mathbb{D}_1^{yp}(\uparrow^n))^1\subseteq (\mathbb{D}_1^{yp}(\uparrow^n))^2\subseteq \cdots$$
which induces the sequence:
$$S_\mathbb{F}((\mathbb{D}_1^{yp}(\uparrow^n))^0)\subseteq S_\mathbb{F}((\mathbb{D}_1^{yp}(\uparrow^n))^1)\subseteq S_\mathbb{F}((\mathbb{D}_1^{yp}(\uparrow^n))^2)\subseteq \cdots$$
Let $L^m$ be the quotient of $S_\mathbb{F}((\mathbb{D}_1^{yp}(\uparrow^n))^m)$ by all STU, IHX and $I_1^y$ relations contained in it. We get a sequence:
$$L^0\stackrel{l_0}{\longrightarrow} L^1\stackrel{l_1}{\longrightarrow}L^2\stackrel{l_2}{\longrightarrow}\cdots$$
$L^0$ is isomorphic to $L\mathcal{A}_1^{yp}(\uparrow^n)$ (the IHX relations in this space are implied by STU, and there are no $I_1^y$ relations). The direct limit of this sequence is $\mathcal{A}_1^{yp}(\uparrow^n)$, and the maps $l_m$ induce the map $l$ at the limit. So, in order to complete the proof it is enough to define an inverse for each $l_m$.

Let $\eta_m:S_\mathbb{F}((\mathbb{D}_1^{yp}(\uparrow^n))^m)\rightarrow S_\mathbb{F}((\mathbb{D}_1^{yp}(\uparrow^n))^{m-1})$ be defined as follows: For $D$ with $n_l(D)<m$, $\eta_m(D)=D$. For $D$ with $n_l(D)=m$, we have $2$ cases. If the highest vertex on the right strand is non-lonely, define $\eta_m(D)$ by:

$$\mathfig{2}{section3_section32_eta_1def11}\stackrel{\eta_m}{\longmapsto}\quad-\mathfig{2}{section3_section32_eta_1def12}-\mathfig{2}{section3_section32_eta_1def13}$$
In these figures we assume that there are no more $y$ labels inside $D'$.

If the highest vertex on the right strand in $D$ is a lonely vertex, denote the highest non-lonely vertex in $D$ by $v$, and define $\eta_m(D)$ by:
$$\mathfig{2.5}{section3_section32_eta_1def21}\stackrel{\eta_m}{\longmapsto}\quad\mathfig{2.5}{section3_section32_eta_1def22}+\mathfig{2.5}{section3_section32_eta_1def23}$$
We claim that $\eta_m$ induces a map $\eta_m:L^m\rightarrow L^{m-1}$. Indeed, if $u\in S_\mathbb{F}(\mathbb{D}_1^{yp}(\uparrow^n))^m)$ is an $I_1^y$ relation then $\eta_m(u)$ is either again an $I_1^y$ relation, or is equal to $0$ by definition of $\eta_m$. If $u$ is an IHX relation, $\eta_m(u)$ is a sum of IHX relations. Suppose now $u$ is an STU relation:
$$u=u_1-u_2-u_3=\mathfig{2.5}{section3_section32_proof_u1}-\mathfig{2.5}{section3_section32_proof_u2}-\mathfig{2.5}{section3_section32_proof_u3}$$
(Note: the labels $\text{v}_i$, $\text{w}_i$ etc. are not part of the diagrams. We write them only to help keep track in the following computations.)

If none of the vertices $\text{v}_i$, $\text{w}_i$ are the highest non-lonely vertices in their respective diagrams or the vertices immediately above the highest non-lonely vertices, then $\eta_m(u)$ is a sum of $STU$ relations. Otherwise, we have to deal with several different cases (in all those cases, we assume that at least one of $u_1,u_2,u_3$ has $n_l(u_i)=m$, because otherwise $\eta_m(u)=u$):

\begin{enumerate}
\item[A.] If $\text{v}_3$ is the highest non-lonely vertex in $u_3$, and either $\text{w}_1$ or $\text{w}_2$ is a lonely vertex, then by definition $\eta_m(u)=0$.
\item[B.] If $\text{v}_3$ is the highest non-lonely vertex in $u_3$ and $\text{w}_1$,$\text{w}_2$ are non-lonely (and therefore also $\text{v}_1$ and $\text{v}_2$), then again we have $2$ cases:
\begin{itemize}

\item[B1.] If $\text{v}_3$ is the highest vertex on the right strand, we have:
$$\eta_m(u)\approx \mathfignum{3}{section3_section32_proof_B1_11}{1} - \mathfignum{3}{section3_section32_proof_B1_12}{2} - \mathfignum{3}{section3_section32_proof_B1_13}{3} +$$

$$+\mathfignum{3}{section3_section32_proof_B1_14}{4} - \mathfignum{3}{section3_section32_proof_B1_15}{5} - \mathfignum{3}{section3_section32_proof_B1_16}{6} \approx$$

$$-\mathfigbox{3}{section3_section32_proof_B1_21}{1} - \mathfigbox{3}{section3_section32_proof_B1_22}{2} + \mathfigbox{3}{section3_section32_proof_B1_23}{3} +$$

$$+\mathfigbox{3}{section3_section32_proof_B1_24}{4} - \mathfigbox{3}{section3_section32_proof_B1_25}{5} - \mathfigbox{3}{section3_section32_proof_B1_24}{6} -$$

$$-\mathfigbox{3}{section3_section32_proof_B1_27}{7} - \mathfigbox{3}{section3_section32_proof_B1_28}{8} + \mathfigbox{3}{section3_section32_proof_B1_22}{9} +$$

$$+\mathfigbox{3}{section3_section32_proof_B1_210}{10} + \mathfigbox{3}{section3_section32_proof_B1_28}{11} - \mathfigbox{3}{section3_section32_proof_B1_212}{12} \approx 0$$

We obtain the first equivalence by replacing \circlenum{1} with \framebox{1} and \framebox{2}, \circlenum{2} with \framebox{3} and \framebox{4}, \circlenum{4} with \framebox{6}, \framebox{7} and \framebox{8}, and \circlenum{5} with \framebox{9}, \framebox{10} and \framebox{11}. The second equivalence follows because: \framebox{1}, \framebox{3} and \framebox{5} are a sum of STU and IHX relations, \framebox{2} cancels \framebox{9}, \framebox{4} cancels \framebox{6}, \framebox{7}, \framebox{10} and \framebox{12} are an IHX relation, and \framebox{8} cancels \framebox{11}.

\item[B2.] If $\text{v}_3$ is not the highest vertex on the right strand, we have:

$$u=u_1-u_2-u_3=\mathfig{4}{section3_section32_proof_B2_11}-\mathfig{4}{section3_section32_proof_B2_12}-\mathfig{4}{section3_section32_proof_B2_13}$$
and
$$\eta_m(u)\approx \mathfig{4}{section3_section32_proof_B2_21} + \mathfig{4}{section3_section32_proof_B2_22} - \mathfig{4}{section3_section32_proof_B2_23} - $$ 

$$- \mathfig{4}{section3_section32_proof_B2_24} - \mathfig{4}{section3_section32_proof_B2_25} - \mathfig{4}{section3_section32_proof_B2_26} \approx$$

$$\approx \mathfignum{4}{section3_section32_proof_B2_21}{1} + \mathfignum{4}{section3_section32_proof_B2_32}{2} + \mathfignum{4}{section3_section32_proof_B2_33}{3} -$$
$$- \mathfignum{4}{section3_section32_proof_B2_23}{4} - \mathfignum{4}{section3_section32_proof_B2_35}{5} - \mathfignum{4}{section3_section32_proof_B2_36}{6} - $$

$$-\mathfignum{4}{section3_section32_proof_B2_37}{7} - \mathfignum{4}{section3_section32_proof_B2_38}{8} - \mathfignum{4}{section3_section32_proof_B2_26}{9} \approx 0 $$

The last equivalence holds because \circlenum{1}, \circlenum{5} and \circlenum{7} are an IHX relation, \circlenum{2}, \circlenum{4} and \circlenum{8} are an IHX relation and \circlenum{3}, \circlenum{6} and \circlenum{9} are an STU relation.

\end{itemize}
\item[C.]
If $\text{v}_3$ is the lonely vertex immediately above the highest non-lonely vertex, then:
$$u=u_1-u_2-u_3=\mathfig{4}{section3_section32_proof_C_11}-\mathfig{4}{section3_section32_proof_C_12}-\mathfig{4}{section3_section32_proof_C_13}$$
and
$$\eta_m(u)\approx \mathfig{4}{section3_section32_proof_C_21} + \mathfig{4}{section3_section32_proof_C_22} - \mathfig{4}{section3_section32_proof_C_23} - $$

$$- \mathfig{4}{section3_section32_proof_C_24} - \mathfig{4}{section3_section32_proof_C_25} - \mathfig{4}{section3_section32_proof_C_26} \approx $$

$$ \approx \mathfigbox{4}{section3_section32_proof_C_31}{1} + \mathfigbox{4}{section3_section32_proof_C_32}{2} + \mathfigbox{4}{section3_section32_proof_C_33}{3} +$$

$$ + \mathfigbox{4}{section3_section32_proof_C_34}{4} - \mathfigbox{4}{section3_section32_proof_C_35}{5} - \mathfigbox{4}{section3_section32_proof_C_36}{6} - $$

$$-\mathfigbox{4}{section3_section32_proof_C_37}{7} - \mathfigbox{4}{section3_section32_proof_C_38}{8} - \mathfigbox{4}{section3_section32_proof_C_25}{9} -$$

$$- \mathfigbox{4}{section3_section32_proof_C_26}{10} \approx 0 $$

The last equivalence holds because \framebox{1}, \framebox{5} and \framebox{9} are an IHX relation, \framebox{2} cancels \framebox{7}, \framebox{3}, cancels \framebox{6}, and \framebox{4}, \framebox{8} and \framebox{10} are an STU relation.

\end{enumerate}

This completes the proof that $\eta_m:L^m\rightarrow L^{m-1}$ is well defined. It is easy to verify that it is the inverse of $l_{m-1}$, which completes the proof that $l$ is an isomorphism.

\end{proof}

We now have a representation of $\mathcal{A}_1^{yp}(\uparrow^n)$ as the quotient of the vector space ${S_\mathbb{F}(L\mathbb{D}_1^{yp}(\uparrow^n))}$ by STU relations alone. Recall the isomorphism $k:\mathcal{A}_1^{yp}(\uparrow^n)\rightarrow \mathcal{A}_1^{<p}(\uparrow^n)$ from section \ref{section_jacobi}. It is easy to see from the proof of proposition \ref{proposition_k} there that $k$ comes from an isomorphism $k:S_\mathbb{F}(\mathbb{D}_1^{yp}(\uparrow^n))\rightarrow S_\mathbb{F}(\mathbb{D}_1^{<p}(\uparrow^n))/_{\text{STU-like}}$. We denote by $LS_\mathbb{F}(\mathbb{D}_1^{<p}(\uparrow^n))$ the pre-image in $S_\mathbb{F}(\mathbb{D}_1^{<p}(\uparrow^n))$ of $k(S_\mathbb{F}(L\mathbb{D}_1^{yp}(\uparrow^n)))$.

Similarly, if $Rel^{yp}\subset S_\mathbb{F}(L\mathbb{D}_1^{yp}(\uparrow^n))$ is the subspace generated by all STU relations, denote by $Rel^{<p}$ the pre-image in $S_\mathbb{F}(\mathbb{D}_1^{<p}(\uparrow^n))$ of $k(Rel^{yp})$. It is easy to see that $Rel^{<p}\subset L S_\mathbb{F}(\mathbb{D}_1^{<p}(\uparrow^n))$ is the subspace generated by all STU and STU-like relations.

Finally we have:
\[\xymatrix{
S_\mathbb{F}(L\mathbb{D}_1^{yp}(\uparrow^n))/_{Rel^{yp}}\ar[r]^k_\simeq\ar[d]_\simeq &
L S_\mathbb{F}(\mathbb{D}_1^{<p}(\uparrow^n))/_{Rel^{<p}}\ar[d]^j \\
\mathcal{A}_1^{yp}(\uparrow^n) \ar[r]^k_\simeq & \mathcal{A}_1^{<p}(\uparrow^n)}\]
This shows that the obvious map $j:L S_\mathbb{F}(\mathbb{D}_1^{<p}(\uparrow^n))/_{Rel^{<p}}\rightarrow \mathcal{A}_1^{<p}(\uparrow^n)$ is actually an isomorphism. Thus we got a presentation of $\mathcal{A}_1^{<p}(\uparrow^n)$ without $I_1^<$ relations.

There is no simple description of $LS_\mathbb{F}(\mathbb{D}_1^{<p}(\uparrow^n))$ (i.e. it is not a span of a set of diagrams). However, it clearly contains $S_\mathbb{F}(R\mathbb{D}_1^{<p}(\uparrow^n))$. Therefore, the map $r:R\mathcal{A}^{<p}(\uparrow^n)\rightarrow\mathcal{A}_1^{<p}(\uparrow^n)$ can be decomposed as:
$$R\mathcal{A}^{<p}(\uparrow^n)\stackrel{r'}{\longrightarrow}LS_\mathbb{F}(\mathbb{D}_1^{<p}(\uparrow^n))/_{Rel^{<p}}\stackrel{j}{\longrightarrow}\mathcal{A}_1^{<p}(\uparrow^n)$$
where $r'$ is clearly injective, and therefore $r$ is also injective. This completes the proof of proposition \ref{proposition_r}.

\begin{remark}
\label{remark_restricted_diagram}
If $a\in\mathcal{A}_1^{<p}(\uparrow^n)$ is represented as a sum of diagrams with the property that each component has at least one vertex on one of the first $n-1$ strands, then $a$ is in the image of $r$.
\end{remark}

Clearly the image $u(RU\hat{\textbf{t}}_{1,n})\subset\mathcal{A}_1^{<p}(\uparrow^n)$ is contained in the image $r(R\mathcal{A}_1^{<p}(\uparrow^n))\subset \mathcal{A}_1^{<p}(\uparrow^n)$. Therefore $u$ induces a map $u_r:RU\hat{\textbf{t}}_{1,n}\rightarrow R\mathcal{A}_1^{<p}(\uparrow^n)$ satisfying $u=r\circ u_r$. To prove that $u|_{RU  \hat{ \textbf{t}_{1,n}}}$ is injective, it is enough to prove that $u_r$ is injective.

\subsection{Restriction to Diagrams with no Trivalent Vertices}
\label{section_simple}
Let $SR\mathbb{D}_1^{<p}(\uparrow^n)$ (Simple Restricted Diagrams) be the subset of $R\mathbb{D}_1^{<p}(\uparrow^n)$ which contains only the diagrams with no trivalent vertices. Let $SR\mathcal{A}_1^{<p}(\uparrow^n)$ be the quotient of $S_\mathbb{F}(SR\mathbb{D}_1^{<p}(\uparrow^n))$ by STU-like and 4T relations. The 4T (four terms) relation is defined as follows:
$$\mathfig{3}{section3_section33_4T1}-\mathfig{3}{section3_section33_4T2}=\mathfig{3}{section3_section33_4T4}-\mathfig{3}{section3_section33_4T3}$$
Note that in $R\mathcal{A}_1^{<p}(\uparrow^n)$ the 4T relation is implied by STU.
\begin{proposition} The obvious map $s:S_\mathbb{F}(SR\mathbb{D}_1^{<p}(\uparrow^n))\rightarrow S_\mathbb{F}(R\mathbb{D}_1^{<p}(\uparrow^n)$ induces an isomorphism $s:SR\mathcal{A}_1^{<p}(\uparrow^n)\rightarrow R\mathcal{A}_1^{<p}(\uparrow^n)$.
\label{proposition_s}
\end{proposition}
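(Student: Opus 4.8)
The plan is to reuse the filtration and direct-limit technique from the proofs of Propositions \ref{proposition_k} and \ref{proposition_r}, this time filtering by the number of trivalent vertices. First observe that $s$ is well defined on the quotients: the target $R\mathcal{A}_1^{<p}(\uparrow^n)$ satisfies STU-like, and it satisfies $4T$ because, once trivalent vertices are allowed, $4T$ is a consequence of $STU$ applied to a diagram with a single internal trivalent vertex. Thus $s$ descends to a map of quotients, and it remains to invert it.

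I would set $(R\mathbb{D}_1^{<p}(\uparrow^n))^m$ to be the diagrams with at most $m$ trivalent vertices, so that $SR\mathbb{D}_1^{<p}(\uparrow^n)=(R\mathbb{D}_1^{<p}(\uparrow^n))^0$, and let $R^m$ be the quotient of $S_\mathbb{F}((R\mathbb{D}_1^{<p}(\uparrow^n))^m)$ by all STU-like relations, all $STU$ relations living at filtration level $\le m$, and -- only for $m=0$ -- the $4T$ relations. For $m\ge 1$ the $4T$ relations are already consequences of the level-$1$ $STU$ relations, so $R^0=SR\mathcal{A}_1^{<p}(\uparrow^n)$, the direct limit of $R^0\to R^1\to\cdots$ is $R\mathcal{A}_1^{<p}(\uparrow^n)$, and the limit of the inclusion-induced maps $s_m:R^{m-1}\to R^m$ is $s$. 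Hence it suffices to invert each $s_m$.

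Define $\rho_m$ on a diagram $D$ by $\rho_m(D)=D$ if $D$ has fewer than $m$ trivalent vertices, and otherwise by $\rho_m(D)=T-U$, the $STU$ resolution of $D$ at a fixed canonical skeleton-adjacent trivalent vertex $t(D)$. Such a vertex exists because pattern-connectedness forces every component carrying a trivalent vertex to reach the pattern along an edge whose far endpoint is trivalent, and the resolution keeps us inside the restricted, pattern-connected diagrams, since the two new legs land near the old one on a strand among the first $n-1$. Granting that $\rho_m$ descends to $R^m$, one has $\rho_m\circ s_m=\mathrm{id}$ (it fixes diagrams with fewer than $m$ trivalent vertices) and $s_m\circ\rho_m=\mathrm{id}$ (because the $STU$ relation at $t(D)$ holds in $R^m$), so each $s_m$ is an isomorphism and the proposition follows.

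The only real content is that $\rho_m$ descends to $R^m$. STU-like relations are handled routinely, since resolving a skeleton-adjacent trivalent vertex preserves the linear order of the labeled vertices, so $\rho_m$ carries an STU-like relation to STU-like relations at level $m-1$ (with a short computation, as in the proof of Proposition \ref{proposition_k}, in the degenerate case where a leg of $t(D)$ is one of the swapped labels). The essential case is a level-$m$ $STU$ relation obtained by resolving some vertex $t'$: since $\rho_m$ resolves at the canonical $t(D)$, I must show $\mathrm{Res}_{t(D)}(D)=\mathrm{Res}_{t'}(D)$ in $R^{m-1}$, i.e. that the one-step resolution is independent of the chosen skeleton-adjacent trivalent vertex. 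When $t(D)$ and $t'$ are disjoint this follows by commuting the two resolutions and using the level-$(m-1)$ $STU$ relations available in $R^{m-1}$; when they share an edge, applying the level-$(m-1)$ $STU$ relations reduces the discrepancy to the difference of the two resolution orders, which is a Jacobi/$IHX$ configuration and hence vanishes in $R^{m-1}$, where $IHX$ is implied by $STU$. For $m=1$ the diagram has a unique trivalent vertex and there is nothing to check; the $4T$ relations enter only through the definition of the base $R^0=SR\mathcal{A}_1^{<p}(\uparrow^n)$, and the well-definedness of $s_1$ uses that $4T$ is a consequence of $STU$. This independence-of-resolution statement is the combinatorial heart of the argument -- the analog for elliptic Jacobi diagrams of Bar-Natan's identification of chord diagrams modulo $4T$ with Jacobi diagrams modulo $STU$ (\cite{BarNatan}) -- and I expect the shared-edge configurations, treated by the kind of explicit multi-term bookkeeping carried out in Section \ref{section_restricted}, to be the main obstacle.
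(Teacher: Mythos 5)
Your overall strategy is the same as the paper's: filter $R\mathbb{D}_1^{<p}(\uparrow^n)$ by the number of trivalent vertices and invert each step of the filtration by a canonical one-step STU resolution. But there is a genuine gap, and it sits exactly at the combinatorial heart you yourself point to. An STU resolution is determined not by a trivalent vertex $t$ alone but by a pair (trivalent vertex, skeleton leg): if $t(D)$ has two legs on the pattern, there are two distinct resolutions at $t(D)$, and your case analysis (``$t(D)$ and $t'$ disjoint'' versus ``$t(D)$ and $t'$ share an edge'') never treats the case $t'=t(D)$ resolved along the other leg. In that case $\rho_m(u)=\mathrm{Res}_{(t,v)}(D)-\mathrm{Res}_{(t,v')}(D)$ is precisely a $4T$-type combination of diagrams with $m-1$ trivalent vertices, and you need it to vanish in $R^{m-1}$. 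This is where your second claim fails: a $4T$ relation among diagrams at the \emph{top} filtration level $m-1$ is not a formal consequence of the STU relations available inside $S_\mathbb{F}((R\mathbb{D}_1^{<p}(\uparrow^n))^{m-1})$ (deriving $4T$ at level $k$ by STU requires a level-$(k+1)$ diagram), so dropping $4T$ from the intermediate quotients $R^m$ for $m\ge 1$ is not justified as stated, and $\rho_m$ does not obviously descend for $m\ge 2$.

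The paper resolves this by keeping the $4T$ relations in every quotient $S^m$ of the filtration, so that the same-vertex-different-leg case of the STU check lands on a defining relation of $S^{m-1}$; the price is that one must then also verify that $\gamma^m$ preserves $4T$ relations, which is the explicit eight-term sum at the end of the paper's proof (a sum of eight $4T$ relations with $24$ of the $32$ summands cancelling in pairs). That computation is the generalization of Theorem 6 of \cite{BarNatan} that this proposition rests on, and in your write-up it has been absorbed into an unproved assertion. Two smaller points: your shared-edge case needs no IHX --- resolutions at two distinct trivalent vertices commute on the nose, even when they share an edge, so the discrepancy is handled by level-$(m-1)$ STU alone, as in the paper's first case; and the canonical choice should be specified as an edge (a pattern vertex together with its adjacent trivalent vertex, e.g.\ the highest such vertex on the leftmost eligible strand), not merely as a trivalent vertex, or $\rho_m$ is not well defined on diagrams.
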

This proposition is a slight generalization of Theorem 6 of \cite{BarNatan}.
\begin{proof}
For a diagram $D\in R\mathbb{D}_1^{<p}(\uparrow^n)$, let $n_{triv}(D)$ be the number of trivalent vertices in $D$. Let $(R\mathbb{D}_1^{<p}(\uparrow^n))^m$ be the subset of $R\mathbb{D}_1^{<p}(\uparrow^n)$ containing all diagrams $D$ with $n_{triv}(D)\le m$. We get a filtration of $S_\mathbb{F}((R\mathbb{D}_1^{<p}(\uparrow^n))$:
$$S_\mathbb{F}((R\mathbb{D}_1^{<p}(\uparrow^n))^0)\subseteq S_\mathbb{F}((R\mathbb{D}_1^{<p}(\uparrow^n))^1)\subseteq S_\mathbb{F}((R\mathbb{D}_1^{<p}(\uparrow^n))^2)\subset\cdots $$
Let $S^m$ be the quotient of $S_\mathbb{F}((R\mathbb{D}_1^{<p}(\uparrow^n))^m)$ by STU, STU-like and 4T relations. We get a sequence $S^0\stackrel{s^0}{\rightarrow} S^1\stackrel{s^1}{\rightarrow} S^2\rightarrow \cdots$. $S^0$ is $SR\mathcal{A}_1^{<p}(\uparrow^n)$, and the direct limit of the sequence is $R\mathcal{A}_1^{<p}(\uparrow^n)$. As in the previous proofs, we need to find an inverse to $s^m$.

Let $\gamma^m:S_\mathbb{F}((R\mathbb{D}_1^{<p}(\uparrow^n))^m)\rightarrow S_\mathbb{F}((R\mathbb{D}_1^{<p}(\uparrow^n))^{m-1})$ be defined as follows: Given a diagram $D\in (SR\mathbb{D}_1^{<p}(\uparrow^n))^m$, let $i(D)$ be the left-most strand with a vertex whose component has trivalent vertices. Let $v(D)$ be the highest such vertex on $i(D)$, and let $w(D)$ be the trivalent vertex sharing an edge with $v(D)$. If $n_{triv}(D)<m$, define $\gamma^m(D)=D$. Otherwise, define $\gamma^m(D)$ by:

$$\mathfig{3}{section3_section33_gamma1}\stackrel{\gamma^m}{\longmapsto}\mathfig{3}{section3_section33_gamma2}-\mathfig{3}{section3_section33_gamma3}$$

We claim that $\gamma^m$ induces a map $\gamma^m:S^m\rightarrow S^{m-1}$. Indeed, if $u$ is an STU-like relation, then $\gamma^m(u)$ is a sum of STU-like relations. 

Let $u=u_1+u_2+u_3$ be an STU relation, and suppose $n_{triv}(u_1)=m$. If the STU relation $u$ does not involve the vertices $v(u_1)$ and $w(u_1)$, then $\gamma^m(u)$ is a sum of STU relations. If the STU relation involves both $v(u_1)$ and $w(u_1)$, then by definition $\gamma^m(u)=0$. And if the STU relation involves $w(u_1)$ but not $v(u_1)$, then $\gamma^m(u)$ is a 4T relation.

Now let $u=u_1+u_2+u_3+u_4$ be a 4T relation. If $u$ does not involve, in any of its summands, the vertex $v(u_i)$, then $\gamma^m(u)$ is a sum of 4T relations. If the 4T relation $u$ involves, in any of its summands, the vertex $v(u_i)$, then $\gamma^m(u)$ is either equal to a sum of 4T relations, or equivalent to it via STU (depending on whether $v(u_i)$ is involved in the 4T relation in all the $u_i$'s or only in $2$ of them). This follows from the fact that the following sum:
$$\mathfig{4}{section3_section33_proof1}-\mathfig{4}{section3_section33_proof2}-\mathfig{4}{section3_section33_proof3}+\mathfig{4}{section3_section33_proof4}+$$
$$+\mathfig{4}{section3_section33_proof5}-\mathfig{4}{section3_section33_proof6}-\mathfig{4}{section3_section33_proof7}+\mathfig{4}{section3_section33_proof8}$$
can be written as a sum of $8$ 4T relations, with $24$ of the $32$ summands cancelling in pairs.

It is easy to verify that $\gamma^m$ is the inverse of $s^{m-1}$, which completes the proof.

\end{proof}

The map $u_r:RU\hat{\textbf{t}}_{1,n}\rightarrow R\mathcal{A}_1^{<p}(\uparrow^n)$ induces a map $u_s:RU\hat{\textbf{t}}_{1,n}\rightarrow SR\mathcal{A}_1^{<p}(\uparrow^n)$ by composing with the isomorphism $\gamma:R\mathcal{A}_1^{<p}(\uparrow^n)\rightarrow SR\mathcal{A}_1^{<p}(\uparrow^n)$. Thus, to prove the injectivity of $u_r$ it is enough to show that $u_s$ is injective.

\subsection{Restriction to Ordered Diagrams}
\label{section_ordered}

From now on we restrict our attention to $n=2$ and $n=3$. We wish to show that $u_s:RU\hat{\textbf{t}}_{1,n}\rightarrow SR\mathcal{A}_1^{<p}(\uparrow^n)$ is injective. In fact we will prove that $u_s$ is an isomorphism onto a quotient of $SR\mathcal{A}_1^{<p}(\uparrow^n)$. But first we need a definition.

\begin{definition}
A diagram in $SR\mathbb{D}_1^{<p}(\uparrow^n)$ may have $2$ kinds of edges: edges with a labeled vertex, which we call \textbf{labeled edges}, and edges with both vertices on the pattern, which we call \textbf{chords}.

Let $H_n\subset SR\mathbb{D}_1^{<p}(\uparrow^n)$ be the subset of all diagrams with a chord which has $2$ vertices on the same strand. ($H$ here stands for ``homotopy" - see \cite{BarNatan2} for an explanation of this notation).
\end{definition}

We will denote by the same notation $H_n$ also the image of $H_n$ in $R\mathcal{A}_1^{<p}(\uparrow^n)$ via $s$, and the image in $\mathcal{A}_1^{<p}(\uparrow^n)$ via $r\circ s$. The projections will be denoted by $\pi_s:SR\mathcal{A}_1^{<p}(\uparrow^n)\rightarrow SR\mathcal{A}_1^{<p}(\uparrow^n)/H_n$, $\pi_r:R\mathcal{A}_1^{<p}(\uparrow^n)\rightarrow R\mathcal{A}_1^{<p}(\uparrow^n)/H_n$ and $\pi:\mathcal{A}_1^{<p}(\uparrow^n)\rightarrow \mathcal{A}_1^{<p}(\uparrow^n)/H_n$. We also denote $\tilde{u}_s:=\pi_s\circ u_s$, $\tilde{u}_r:=\pi_r\circ u_r$ and $\tilde u:= \pi \circ u\vert_{RU  \hat {\textbf{t}_{1,n}}}$. Most of those spaces and maps can be seen in the diagram in section \ref{restrictions_diagram}.

\begin{remark}
If a diagram $a\in\mathcal{A}_1^{<p}(\uparrow^n)$ has the property described in remark \ref{remark_restricted_diagram}, and in addition it has a component with more than one vertex on the same strand, or a component with a loop, then $a$ is in the image of $H_n$ via the isomorphism $r\circ s$ (a diagram with a loop is equivalent to a sum of diagrams in $H_n$ after applying the STU relation to all the trivalent vertices).
\end{remark}
\begin{theorem}
\label{theorem_u_n}
The map $\tilde{u}_s:RU\hat{\textbf{t}}_{1,n}\rightarrow SR\mathcal{A}_1^{<p}(\uparrow^n)/H_n$ is an isomorphism for $n=2,3$.
\end{theorem}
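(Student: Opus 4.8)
The plan is to prove that $\tilde u_s$ is an isomorphism by establishing surjectivity and injectivity separately, treating $\tilde u_s$ as a homomorphism of filtered algebras and ultimately comparing $SR\mathcal{A}_1^{<p}(\uparrow^n)/H_n$ with a Poincar\'e--Birkhoff--Witt model of $RU\hat{\textbf{t}}_{1,n}$. This is the genus-one analogue of the classical identification of horizontal chord diagrams modulo $4T$ with the universal enveloping algebra of the Drinfel'd--Kohno algebra, so under $u_n$ the generators $x_i,y_i,t_{ij}$ of $\textbf{t}_{1,n}$ should correspond to the elementary labeled edges and chords, and the defining relations of $\textbf{t}_{1,n}$ should match the relations STU-like, $4T$ and the homotopy relation $H_n$ of the diagram category.

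For surjectivity I would use that a diagram in $SR\mathbb{D}_1^{<p}(\uparrow^n)$ has no trivalent vertices, so each of its components is a single labeled edge or a single chord. Modulo $H_n$ only chords joining two distinct strands among the first $n-1$ survive, and every labeled edge sits on one of these strands. Reading the diagram from bottom to top, it is then the image under $\tilde u_s$ of the ordered product of the corresponding generators $u_n(x_i)$, $u_n(y_i)$ and $u_n(t_{ij})$; since the composition in $\mathcal{A}_1^{<p}$ is just vertical stacking with the induced order, this product is realized exactly, and $\tilde u_s$ is onto.

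Injectivity is the heart of the matter and the step I expect to be the main obstacle. The plan is to produce a two-sided inverse that sends a simple restricted diagram to the word in $x_i,y_i,t_{ij}$ that it spells out, and to check that this assignment respects every defining relation of the diagram category: STU-like relations for labels on distinct strands must map to $[v_i,w_j]=\langle v,w\rangle t_{ij}$ with $i\ne j$, a $4T$ relation must map to the vanishing $[v_i,t_{jk}]=0$ together with the Jacobi identity for the $t_{ij}$, and the same-strand instances of STU-like together with the homotopy relation $H_n$ must reproduce $[x_i,y_i]=-\sum_{j\ne i}t_{ij}$. Exactly as in the proofs of Propositions \ref{proposition_k}, \ref{proposition_r} and \ref{proposition_s}, I would organize this verification through a filtration (by the number of out-of-order generator pairs, equivalently by passing through the ordered and free intermediate spaces $OSR\mathcal{A}_1^{<p}(\uparrow^n)$ and $FOSR\mathcal{A}_1^{<p}(\uparrow^n)$ of the diagram in Section \ref{restrictions_diagram}) and a direct-limit argument, building the inverse one filtration step at a time and checking compatibility with the relations at each step.

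The delicate point, and the reason the statement is limited to $n=2,3$, is the matching of the homotopy relation with the elliptic relation $[x_i,y_i]=-\sum_{j\ne i}t_{ij}$, whose right-hand side secretly involves the suppressed $n$-th strand through the term $t_{in}$; making this precise relies on the presentation of $\mathcal{A}_1^{<p}(\uparrow^n)$ without $I_1^<$ relations furnished by Proposition \ref{proposition_r}. One must then show that, for $n=2,3$, the relations STU-like, $4T$ and $H_n$ generate nothing beyond the defining relations of $RU\hat{\textbf{t}}_{1,n}$ — concretely, that the infinitesimal-braid (Arnol'd-type) relations among the $t_{ij}$ are already consequences of $4T$, which holds for these small values but is precisely what can fail for larger $n$. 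The final comparison is then a graded-dimension count, using PBW on the source — where for $n=2$ the restricted subalgebra is free on $x_1,y_1$, and for $n=3$ its structure can be written out explicitly — against the basis of ordered diagrams on the target, which together with surjectivity yields the isomorphism.
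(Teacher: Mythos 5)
Your plan coincides with the paper's proof in all essentials: the paper also establishes the theorem by passing through the chain of restricted diagram spaces $SR\mathcal{A}_1^{<p}(\uparrow^n)/H_n \rightarrow OSR\mathcal{A}_1^{<p}(\uparrow^n)\rightarrow FOSR\mathcal{A}_1^{<p}(\uparrow^n)$ via filtration and direct-limit arguments, arrives at a normal form ($\widetilde{FOSR\mathbb{D}_1^{<p}}(\uparrow^3)$) whose diagrams are literally products of the generators $\tilde{u}_f(x_i)$, $\tilde{u}_f(y_i)$, $\tilde{u}_f(t_{12})$, and then checks that the reading-off map $p$ is a two-sided inverse by verifying that the rewriting moves correspond to the relations $-[v_1,t_{12}]=[v_2,t_{12}]$ and $[v_1,w_2]=\langle v,w\rangle t_{12}$ in $U\hat{\textbf{t}}_{1,3}$ (for $n=2$ both sides are simply free on two generators). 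Your closing speculation that the restriction to $n=2,3$ stems from the Arnol'd relations among the $t_{ij}$ failing to follow from $4T$ for larger $n$ is not the operative issue in the paper --- there the obstacle is rather that the explicit normal form and its rewriting system are only worked out when at most one chord type $t_{12}$ occurs --- but this side remark does not affect the validity of your argument in the stated range.
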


This theorem implies that $\tilde{u}_r$ is also an isomorphism, and $\tilde{u}$ is injective.

In order to prove this theorem we will need to restrict $SR\mathbb{D}_1^{<p}(\uparrow^n)$ further. Let $OSR\mathbb{D}_1^{<p}(\uparrow^n)$ (ordered simple restricted diagrams) be the subset of $SR\mathbb{D}_1^{<p}(\uparrow^n)$ containing all diagrams $D$ which are ordered, in the following sense: If $2$ labeled edges have vertices on the same strand, then the order of the labels corresponds to the order of the vertices along the strand. An example for a diagram in $OSR\mathbb{D}_1^{<p}(\uparrow^3)$ is given in figure \ref{ordered_example}.

\begin{figure}[ht]

\centering
\includegraphics[height=4 cm]{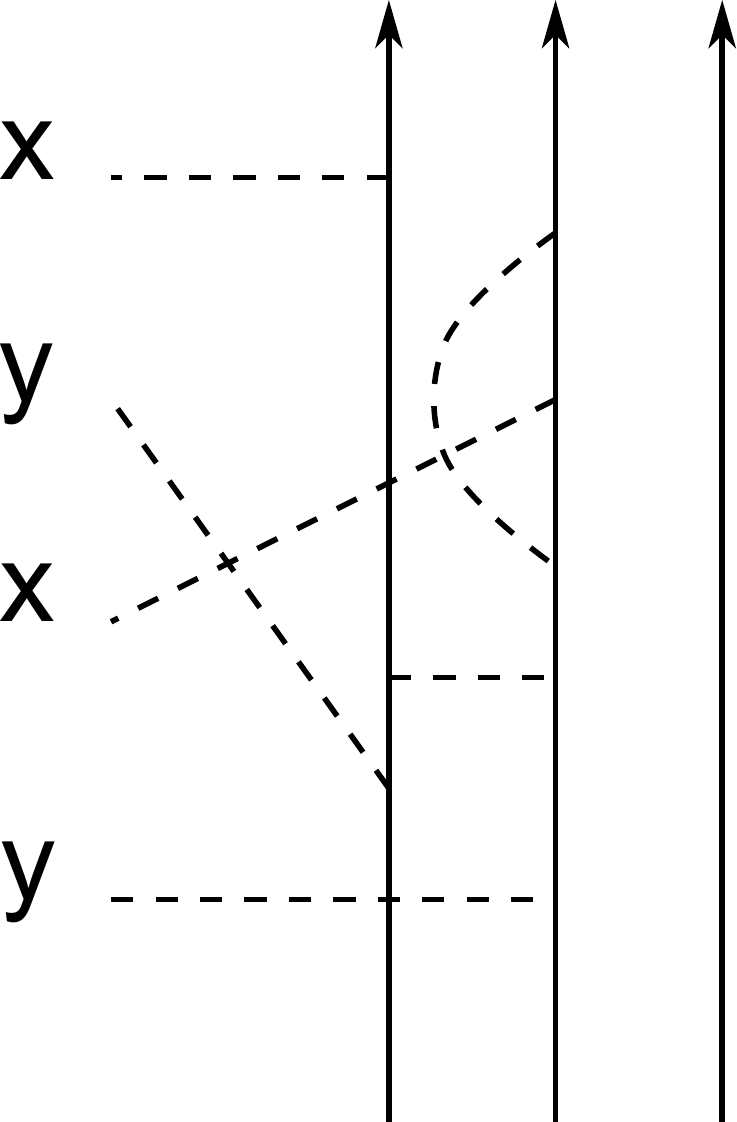}

\caption{An example for a diagram in $OSR\mathbb{D}_1^{<p}(\uparrow^3)$}
\label{ordered_example}
\end{figure}

We define an O4T (ordered 4 terms) relation to be either a 4T relation or the following relation:

$$\mathfig{4.5}{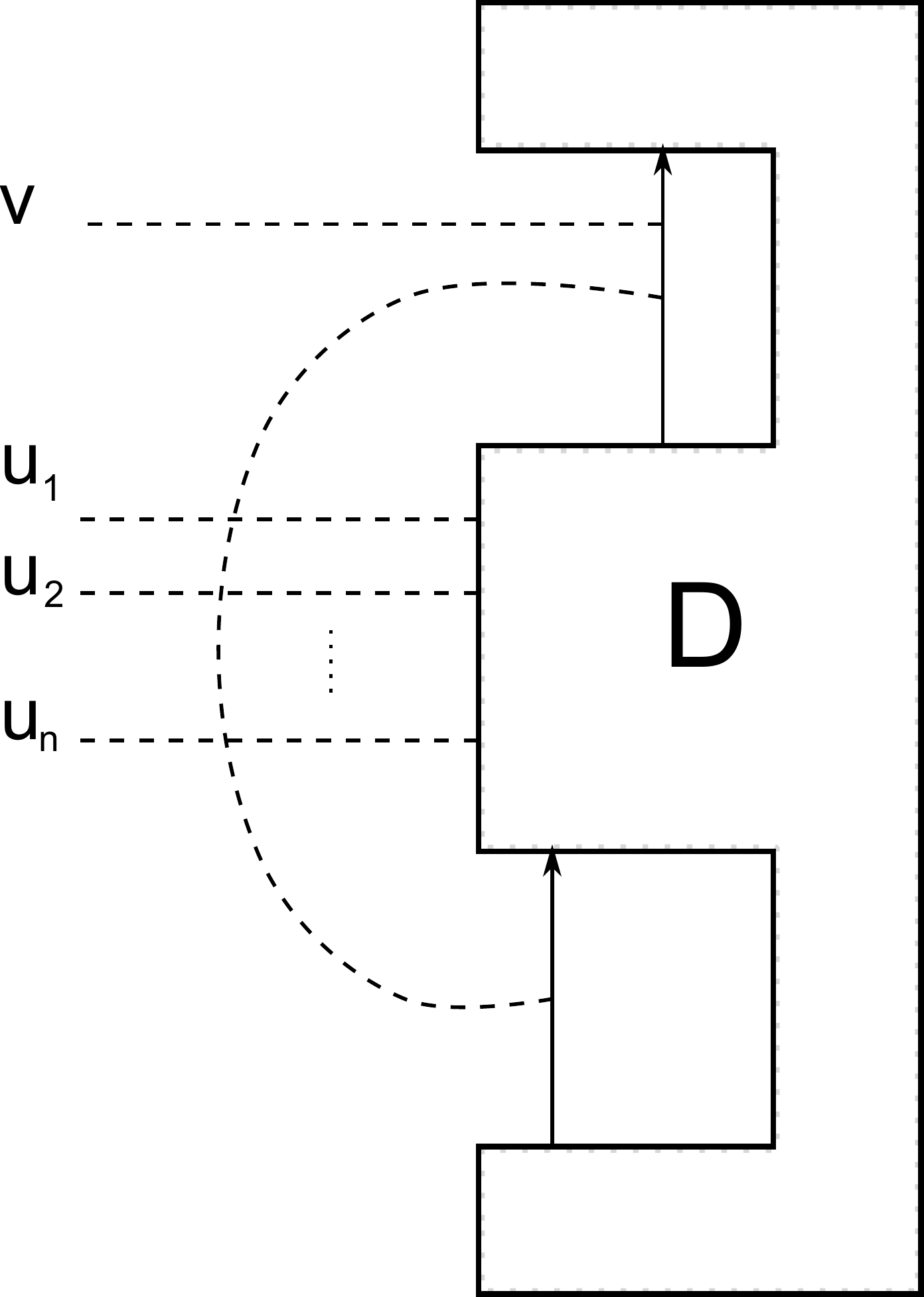}-\mathfig{4.5}{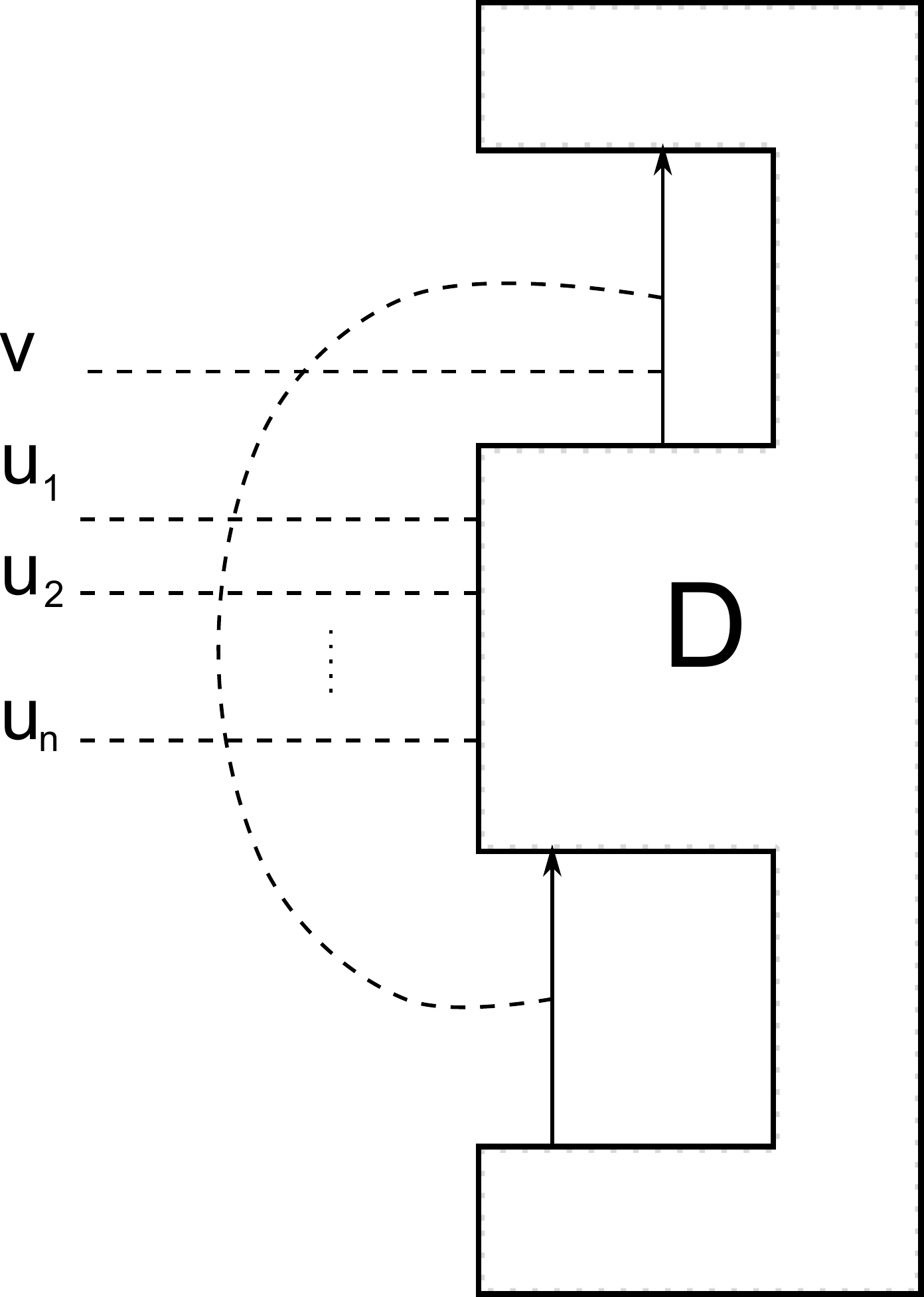}=$$
$$=\mathfig{4.5}{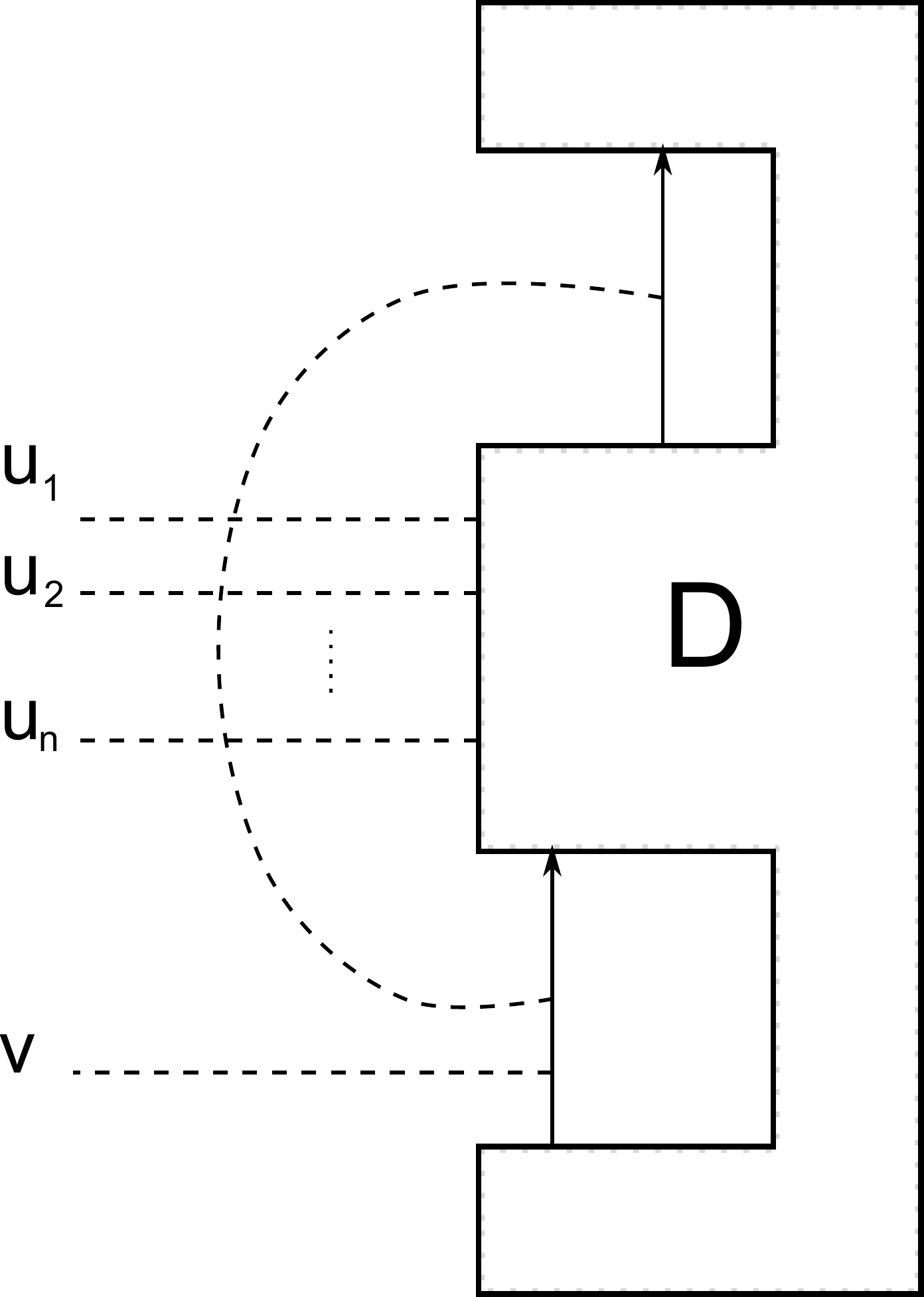}-\mathfig{4.5}{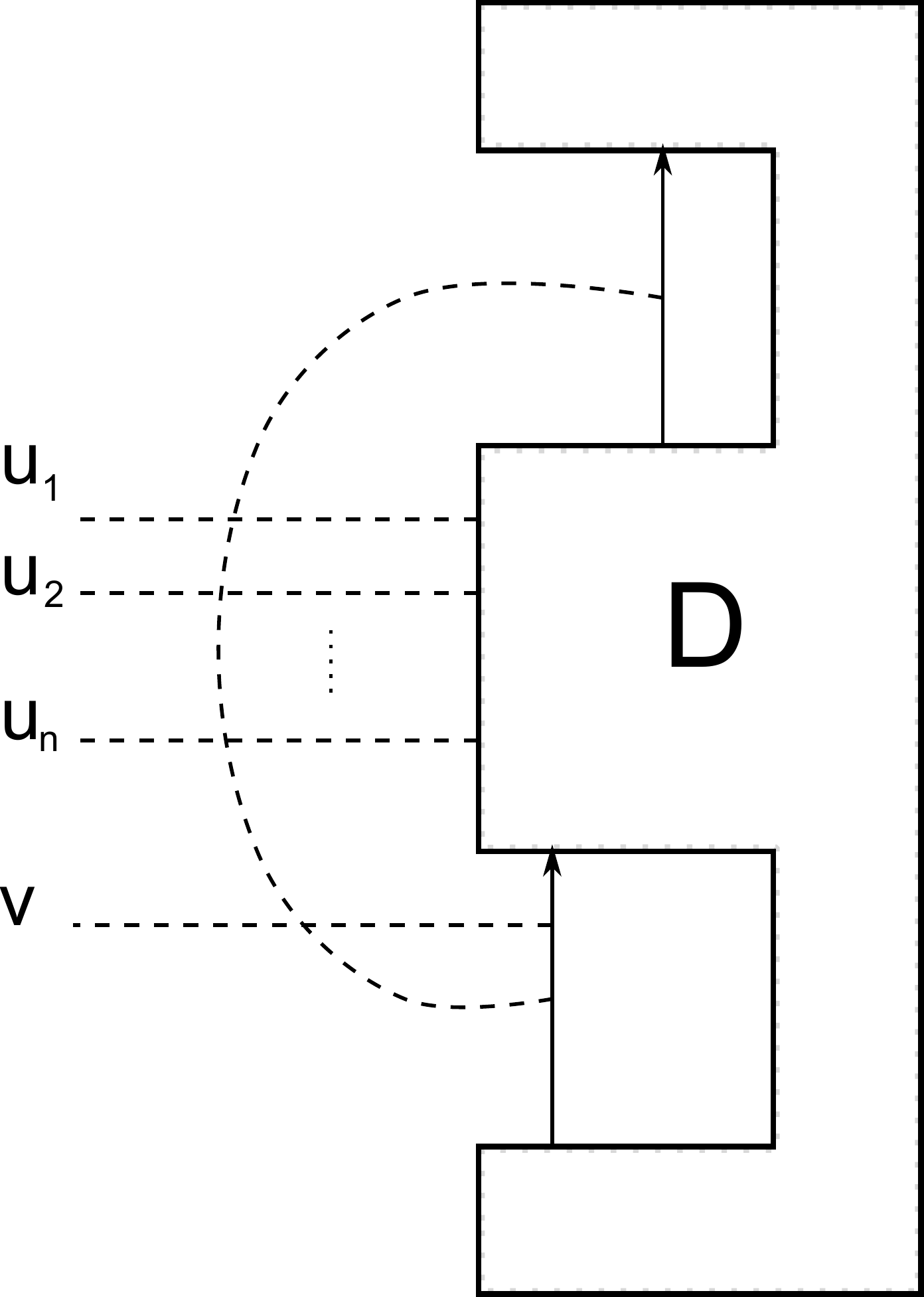}$$

\begin{remark} In $SR\mathcal{A}_1^{<p}(\uparrow^n)/H_n$ ($n=2,3$), we have:
$$u\quad:=\mathfig{4}{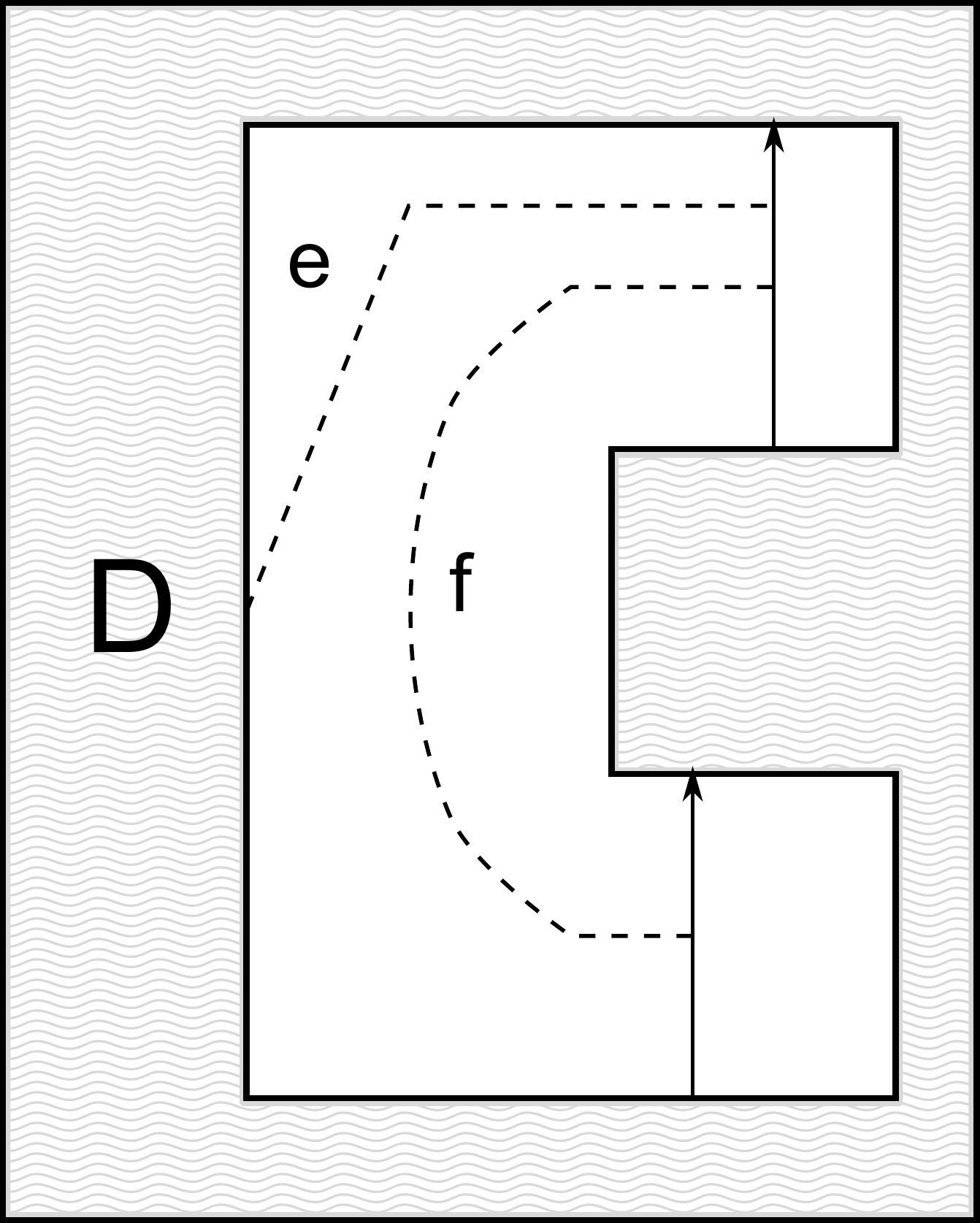} - \mathfig{4}{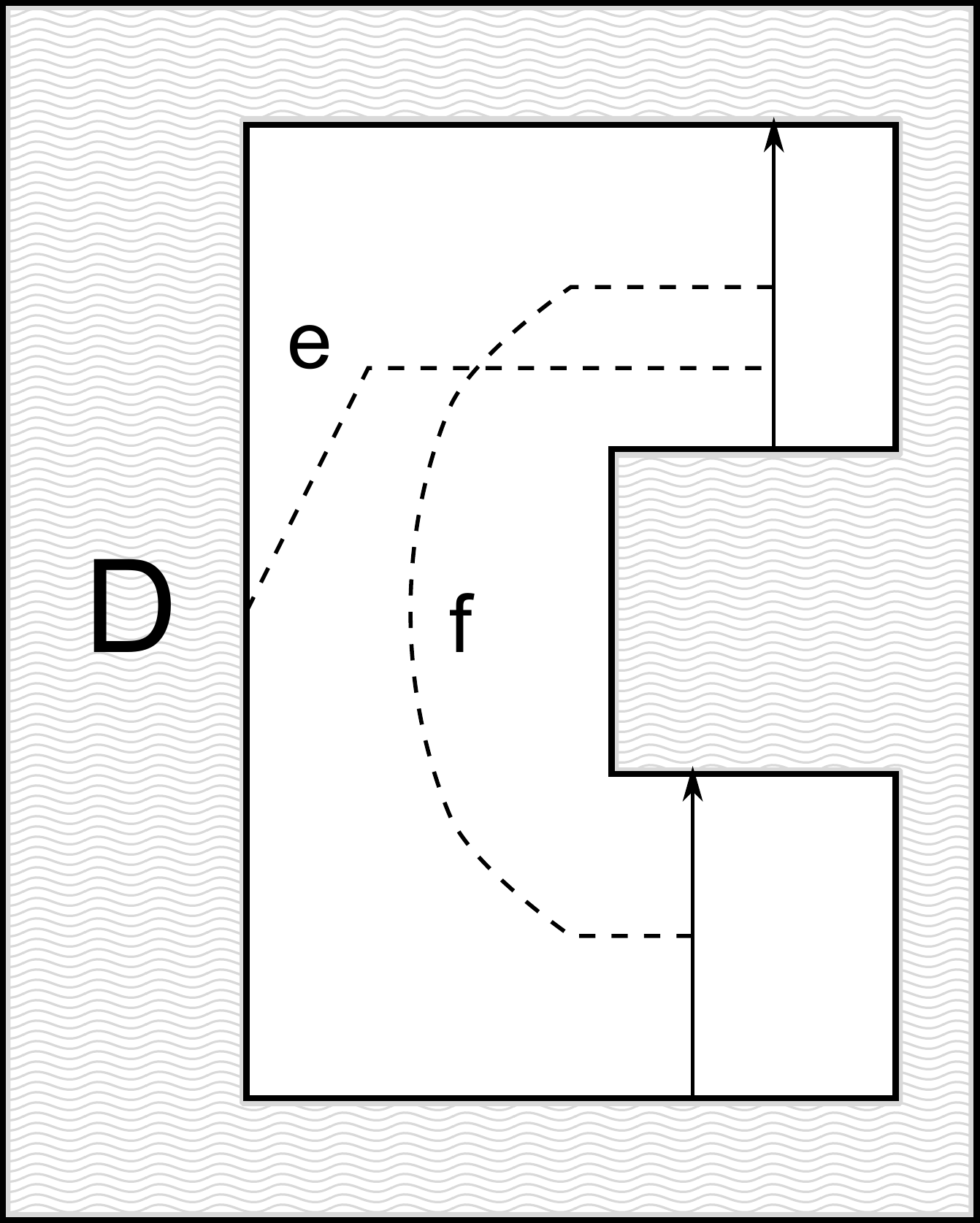}=\quad 0$$
if $\textbf{e}$ is a chord. Indeed, if $\textbf{f}$ has both vertices on the same strand, then by definition $u\in H_n$. Otherwise, suppose without loss of generality that the upper vertex of $\textbf{f}$ in the figures is on strand $1$, and the lower vertex is on strand $2$. If the second vertex of $\textbf{e}$ is on strand $1$, then again $u\in H_n$. Otherwise the other vertex of $\textbf{e}$ is on strand $2$, and $u$ is equivalent via 4T to an element in $H_n$.
\label{remark_H}
\end{remark}

Using this remark we see that in $SR\mathcal{A}_1^{<p}(\uparrow^n)/H_n$, O4T is implied by 4T, STU-like and $H_n$. Let $OSR\mathcal{A}_1^{<p}(\uparrow^n)$ be the quotient of $S_\mathbb{F}(OSR\mathbb{D}_1^{<p}(\uparrow^n))$ by O4T, STU-like and $H_n$. There is an obvious map $o:OSR\mathcal{A}_1^{<p}(\uparrow^n)\rightarrow SR\mathcal{A}_1^{<p}(\uparrow^n)/H_n$.

\begin{proposition}
$o$ is an isomorphism.
\end{proposition}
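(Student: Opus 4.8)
The plan is to construct an explicit two-sided inverse $\beta\colon SR\mathcal{A}_1^{<p}(\uparrow^n)/H_n\to OSR\mathcal{A}_1^{<p}(\uparrow^n)$ by the same filtration and direct-limit technique used in the proofs of Propositions \ref{proposition_k} and \ref{proposition_s}. First I would record that $o$ itself is well defined, which is the easy direction: the $H_n$ and STU-like relations of $OSR\mathcal{A}_1^{<p}(\uparrow^n)$ hold tautologically in $SR\mathcal{A}_1^{<p}(\uparrow^n)/H_n$, while each O4T relation holds there precisely by Remark \ref{remark_H}, which states that O4T is implied by $4T$, STU-like and $H_n$. The whole content of the proposition is therefore the construction of $\beta$ together with $\beta\circ o=\mathrm{id}$ and $o\circ\beta=\mathrm{id}$.

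For a diagram $D\in SR\mathbb{D}_1^{<p}(\uparrow^n)$ let $n_{dis}(D)$ be the number of within-strand inversions: pairs of labeled edges whose feet lie on a common strand but whose positions in the linear order disagree with the order of their feet along that strand. Setting $(SR\mathbb{D}_1^{<p}(\uparrow^n))^m:=\{D\mid n_{dis}(D)\le m\}$ gives a filtration of $S_\mathbb{F}(SR\mathbb{D}_1^{<p}(\uparrow^n))$; writing $O^m$ for the quotient of its $m$-th stage by the STU-like, $4T$ and $H_n$ relations it contains, one has $O^0=OSR\mathcal{A}_1^{<p}(\uparrow^n)$, transition maps $o_m\colon O^m\to O^{m+1}$, and the direct limit $SR\mathcal{A}_1^{<p}(\uparrow^n)/H_n$, with $o$ induced at the limit. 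I would then define $\beta_m$ on a diagram $D$ with $n_{dis}(D)=m$ by repairing one canonical inversion, say the one involving the highest labeled vertex that is out of order with respect to a same-strand edge below it, sorting it by applying STU-like to consecutive labeled vertices. The crucial bookkeeping concerns the two kinds of transposition. Swapping two same-strand labeled vertices is free modulo $H_n$: if their labels agree the pairing $\langle\cdot,\cdot\rangle$ vanishes, and if they are $x,y$ the STU-like correction is a chord with both feet on one strand, hence lies in $H_n$. Swapping two cross-strand labeled vertices produces a chord joining feet on different strands, i.e. a diagram with two fewer labeled vertices; deleting those two feet cannot create inversions and destroys the inversion being repaired, so this term lies in filtration $\le m-1$. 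Consequently every term of $\beta_m(D)$ lands in $O^{m-1}$, and Remark \ref{remark_H} guarantees that the newly created chords impose no ordering constraints of their own.

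It then remains to check that $\beta_m$ descends to $O^m$, i.e. that it carries each STU-like, $4T$ and $H_n$ relation in filtration $m$ to a combination of O4T, STU-like and $H_n$ relations in filtration $m-1$. The $H_n$ and STU-like cases are routine, since the reduction manifestly commutes with a further STU-like move once the relevant vertices have been brought together. The main obstacle, and the reason O4T is introduced at all, is the $4T$ relation: after its four terms are reordered into ordered form the output is no longer a literal $4T$ relation but exactly an O4T relation, up to same-strand chords and up to the cross-strand chord terms, which must be matched using Remark \ref{remark_H} together with the inductive control on filtration degree $<m$. As in the earlier proofs this reduces to a finite case analysis according to which of the four feet participates in the inversion selected by $\beta_m$. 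Granting this, the family $\{\beta_m\}$ passes to the direct limit to yield $\beta$; since each $\beta_m$ is the inverse of the transition map $o_{m-1}$ (it is the identity on ordered diagrams, giving $\beta_m\circ o_{m-1}=\mathrm{id}$, while $o_{m-1}\circ\beta_m=\mathrm{id}$ holds because the two differ only by STU-like and $H_n$ moves), we conclude $\beta\circ o=\mathrm{id}$ and $o\circ\beta=\mathrm{id}$, so $o$ is an isomorphism.
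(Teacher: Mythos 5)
Your strategy coincides with the paper's: filter $S_\mathbb{F}(SR\mathbb{D}_1^{<p}(\uparrow^n))$ by a measure of disorder, realize $SR\mathcal{A}_1^{<p}(\uparrow^n)/H_n$ as the direct limit of the resulting quotients $O^m$, and invert each transition map by an STU-like sorting move, using Remark \ref{remark_H} to absorb the failure of a $4T$ relation to remain a $4T$ relation after reordering. (One small correction before the main point: the stages $O^m$ must be cut out by the O4T relations they contain, not the $4T$ relations, since the second type of O4T relation is not a $4T$ relation and a genuine $4T$ relation need not have all four terms ordered; with your choice $O^0$ is not $OSR\mathcal{A}_1^{<p}(\uparrow^n)$ as defined.)

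The genuine gap is the assertion that every term of $\beta_m(D)$ lies in filtration $m-1$. Your invariant $n_{dis}$ is the raw number of within-strand inversions, and your $\beta_m$ repairs one inversion by a chain of adjacent transpositions. But each time the moving label passes a label whose foot is on the same strand and with which it was previously correctly ordered, a new inversion is created. Take three labeled edges $u$, $v$, $e$ with feet on a single strand in the order $u,v,e$ from bottom to top and labels in the linear order $e<u<v$: the inversions are $(u,e)$ and $(v,e)$, the highest label involved in an inversion is $v$, and dragging $v$ down past $u$ and then past $e$ yields a leading term whose inversions are $(u,v)$ and $(u,e)$ --- still two --- while the intermediate diagram after the first transposition has three. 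So $n_{dis}$ does not strictly decrease, $\beta_m$ does not map into $O^{m-1}$, and the direct-limit argument does not close. This is precisely why the paper weights each unordered pair by its separation, $n_o(D)=\sum_{(e_1,e_2)\ \text{unordered}}\bigl(\#\{\text{labeled vertices between }e_1\text{ and }e_2\}+1\bigr)$, and lets $\beta^m$ perform only a single adjacent transposition at a time: the quantity that decreases is the distance the offending label still has to travel, not the number of offending pairs. To repair your argument you must replace $n_{dis}$ by such a weighted count; you would then also need to carry out the well-definedness check for the STU-like relation whose swapped pair sits immediately at the position targeted by $\beta^m$, which is the one case the paper verifies by an explicit cancellation rather than by inspection.
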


\begin{proof}
Let $D\in SR\mathbb{D}_1^{<p}(\uparrow^n)$, and let $e_1$ and $e_2$ be a pair of unordered labeled edges in $D$. Define $n_o(e_1,e_2):=\#\{\text{labeled vertices between $e_1$ and $e_2$}\}+1$, and $n_o(D):=\sum\limits_{\text{$e_1$,$e_2$ unordered in $D$}}n_o(e_1,e_2)$. $n_o$ induces a filtration:
$$(SR\mathbb{D}_1^{<p}(\uparrow^n))^0\subset (SR\mathbb{D}_1^{<p}(\uparrow^n))^1\subset (SR\mathbb{D}_1^{<p}(\uparrow^n))^2\subset\cdots$$

Let $O^m$ be the quotient of $S_\mathbb{F}((SR\mathbb{D}_1^{<p}(\uparrow^n))^m)$ by O4T, STU-like and $H_n$. We get a sequence: $$O^0\stackrel{o^0}{\longrightarrow}O^1\stackrel{o^1}{\longrightarrow}O^2\stackrel{o^2}{\longrightarrow}\cdots$$

$O^0$ is $OSR\mathcal{A}_1^{<p}(\uparrow^n)$, and the direct limit of the sequence if $SR\mathcal{A}_1^{<p}(\uparrow^n)/H_n$. As usual, we need to find an inverse to $o^m$.

Let $\beta^m:S_\mathbb{F}((SR\mathbb{D}_1^{<p}(\uparrow^n))^m)\rightarrow S_\mathbb{F}((SR\mathbb{D}_1^{<p}(\uparrow^n))^{m-1})$ be defined as follows: if $n_o(D)<m$, $\beta^m(D)=D$. Otherwise, let $v$ be the highest label involved in an unordered pair, and define:

$$\begin{array}{c} v\\w \end{array}
\mathfigns{1.5}{section3_section34_beta_def1} \quad \stackrel{\beta^m}{\longmapsto} \quad
\begin{array}{c} w\\v \end{array}
\mathfigns{1.5}{section3_section34_beta_def2} \quad+\quad
\left<v,u\right>
\mathfigns{1.5}{section3_section34_beta_def3}$$

We claim that $\beta^m$ induces a map $\beta^m:O^m\rightarrow O^{m-1}$. Indeed, if $u\in H_n$ then $\beta^m(u)$ is also in $H_n$. If $u$ is an STU-like relation:
$$u=u_1+u_2+u_3=\begin{array}{c} v\\w \end{array}
\mathfigns{1.5}{section3_section34_beta_def1} \quad - \quad
\begin{array}{c} w\\v \end{array}
\mathfigns{1.5}{section3_section34_beta_def2} \quad-\quad
\left<v,u\right>
\mathfigns{1.5}{section3_section34_beta_def3}$$
there are sevaral cases: If $v$ is the highest label in $u_2$ involved in an unordered pair, or $w$ is the highest label in $u_2$ involved in an unordered pair, then by definition $\beta^m(u)=0$. If the highest label involved in an unordered pair in $u_1$ (and therefore also in $u_2$) is $z$ which is immediately above $v$, then we have:

$$\beta^m(u)=\beta^m\left(\begin{array}{c} z\\v\\w \end{array}\mathfigns{1.5}{section3_section34_proof_proof11}\quad - \quad
\begin{array}{c} z\\w\\v \end{array}\mathfigns{1.5}{section3_section34_proof_proof12}\quad - \quad
<v,w>\cdot\begin{array}{c} z\\ \, \\ \,  \end{array}\mathfigns{1.5}{section3_section34_proof_proof13}\quad\right) \approx$$
$$\approx\quad\begin{array}{c} v\\z\\w \end{array}\mathfigns{1.5}{section3_section34_proof_proof21}\quad + \quad
<z,v>\cdot\begin{array}{c} \, \\ \, \\w \end{array}\mathfigns{1.5}{section3_section34_proof_proof22}\quad - \quad
\begin{array}{c} w\\z\\v \end{array}\mathfigns{1.5}{section3_section34_proof_proof23}\quad - \quad$$
$$-\quad<z,w>\cdot\begin{array}{c} \, \\v\\ \, \end{array}\mathfigns{1.5}{section3_section34_proof_proof24}\quad - \quad
<v,w>\cdot\begin{array}{c} z\\ \, \\ \, \end{array}\mathfigns{1.5}{section3_section34_proof_proof13}\quad \approx \quad
\begin{array}{c} v\\w\\z \end{array}\mathfignumns{1.5}{section3_section34_proof_proof31}{1}\quad + \quad$$
$$+\quad<z,w>\cdot\begin{array}{c} \, \\v\\ \, \end{array}\mathfignumns{1.5}{section3_section34_proof_proof24}{2}\quad + \quad
<z,v>\cdot\begin{array}{c} \, \\ \, \\w \end{array}\mathfignumns{1.5}{section3_section34_proof_proof22}{3}\quad - \quad
\cdot\begin{array}{c} w\\v\\z \end{array}\mathfignumns{1.5}{section3_section34_proof_proof34}{4}\quad - \quad$$
$$-\quad<z,v>\cdot\begin{array}{c} \, \\ \, \\w \end{array}\mathfignumns{1.5}{section3_section34_proof_proof22}{5}\quad - \quad
<z,w>\cdot\begin{array}{c} \, \\v\\ \, \end{array}\mathfignumns{1.5}{section3_section34_proof_proof24}{6}\quad - $$
$$- \quad
<v,w>\cdot\begin{array}{c} z\\ \, \\ \, \end{array}\mathfignumns{1.5}{section3_section34_proof_proof13}{7}\quad \approx \quad 0$$
The last equivalence is true because \circlenum{2} cancels \circlenum{6}, \circlenum{3} cancels \circlenum{5}, and \circlenum{1}, \circlenum{4} and \circlenum{7} are STU-like.

In all other cases, $\beta^m(u)$ is a sum of STU-like relations.

If $u$ is an O4T relation:
$$u=\mathfig{4}{section3_section34_remark1}-\mathfig{4}{section3_section34_remark2}+$$
$$+\mathfig{4}{section3_section34_remark4}-\mathfig{4}{section3_section34_remark3}$$
then again we have to deal with several cases: If $\textbf{e}$ is a chord, then $\beta^m(u)$ is a sum of O4T relations. Similarly, if $\textbf{e}$ is a labeled edge, and its label is not ``touched" by $\beta^m$ in either of the summands of $u$, then again $\beta^m(u)$ is a sum of O4T relations. And if $\textbf{e}$ is labeled and its label is touched by $\beta^m$ in some (or all) of the summands of $u$, then $\beta^m(u)$ is an O4T relation + some terms which are equivalent to $0$ according to remark \ref{remark_H}.

It is easy to see that $\beta^m$ is the inverse of $o^{m-1}$, which completes the proof.
\end{proof}

The map $\tilde{u}_s:RU\hat{\textbf{t}}_{1,3}\rightarrow SR\mathcal{A}_1^{<p}(\uparrow^n)/H_3$ induces a map $\tilde{u}_o:RU\hat{\textbf{t}}_{1,3}\rightarrow OSR\mathcal{A}_1^{<p}(\uparrow^3)$ by composing with the isomorphism $\beta:SR\mathcal{A}_1^{<p}(\uparrow^n)/H_3\rightarrow OSR\mathcal{A}_1^{<p}(\uparrow^3)$. Thus, in order to prove theorem \ref{theorem_u_n} it is enough to prove that $\tilde{u}_o$ is an isomorphism.

For $n=2$ there are no relations in $OSR\mathcal{A}_1^{<p}(\uparrow^n)$, thus it is the free algebra generated by $\tilde{u}_o(x_1)$ and $\tilde{u}_o(y_1)$. $RU\hat{\textbf{t}}_{1,2}$ itself is the free algebra generated by $x_1$ and $y_1$. Therefore, we have completed the proof of theorem \ref{theorem_u_n} for $n=2$. For $n=3$ we will need yet another restriction, which is the content of the next (and final) subsection.

\subsection{Restriction to Fully Ordered Diagrams}
We begin with some notations:

In a diagram $D\in OSR\mathbb{D}_1^{<p}(\uparrow^3)$, each component is an edge. We number the strands from left to right. An edge with one labeled vertex and the other vertex on strand $i$ will be called an $i$ labeled edge. An edge with one vertex on strand $i$ and the other vertex on edge $j$ will be called an $i$-$j$ edge. Note that we will only have $i=1$ or $i=2$.

Let $FOSR\mathbb{D}_1^{<p}(\uparrow^3)\subset OSR\mathbb{D}_1^{<p}(\uparrow^3)$ (fully ordered simple restricted diagrams) be the union of $H_3$ with the subset of all diagrams with the following property: the labels of all $1$ labeled edges are smaller than the labels of all the $2$ labeled edges, and the vertices of all $1$ labeled edges on strand $1$ are lower than the vertices on strand $1$ of all $1$-$2$ edges. Denote by $FOSR\mathcal{A}_1^{<p}(\uparrow^3)$ the quotient of $FOSR\mathbb{D}_1^{<p}(\uparrow^3)$ by STU-like, O4T and $H_3$.

\begin{proposition}
The obvious map $f:FOSR\mathcal{A}_1^{<p}(\uparrow^3)\rightarrow OSR\mathcal{A}_1^{<p}(\uparrow^3)$ is an isomorphism.
\end{proposition}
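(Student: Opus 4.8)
The plan is to prove the proposition with the same filtration-and-direct-limit technique already used for Propositions \ref{proposition_k} and \ref{proposition_s} and for the isomorphism $o$: I would introduce a complexity measuring how far a diagram is from being fully ordered, present $OSR\mathcal{A}_1^{<p}(\uparrow^3)$ as the direct limit of a sequence of quotients whose initial term is $FOSR\mathcal{A}_1^{<p}(\uparrow^3)$, and build a compatible family of inverse maps. The one structural observation that makes this work is that the two defining conditions of \emph{fully ordered} should be repaired in order: first the label-order condition (all $1$ labeled edges below all $2$ labeled edges), then the strand-$1$ position condition (strand-$1$ vertices of $1$ labeled edges below those of $1$-$2$ edges). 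Accordingly I would take the complexity $n_f(D)$ of a diagram $D\in OSR\mathbb{D}_1^{<p}(\uparrow^3)$ to be the pair $(n_1(D),n_2(D))$ ordered lexicographically, where $n_1(D)$ counts the pairs violating the first condition and $n_2(D)$ the pairs violating the second; on $H_3$ we set $n_f=(0,0)$.

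This induces a filtration of $S_\mathbb{F}(OSR\mathbb{D}_1^{<p}(\uparrow^3))$; letting $F^m$ be the quotient of its $m$-th term by the STU-like, O4T and $H_3$ relations it contains, one obtains a sequence $F^0\to F^1\to F^2\to\cdots$ whose initial term $F^0$ is $FOSR\mathcal{A}_1^{<p}(\uparrow^3)$, whose direct limit is $OSR\mathcal{A}_1^{<p}(\uparrow^3)$, and whose transition maps $f^m$ assemble at the limit into $f$. It therefore suffices to invert each $f^{m-1}$. I would define $\alpha^m\colon F^m\to F^{m-1}$ to be the identity on diagrams of complexity below the $m$-th level, and on a diagram attaining that level to repair its topmost offending adjacent pair. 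If $n_1(D)>0$, that pair is a $1$ labeled edge lying immediately above a $2$ labeled edge, and I repair it with the STU-like relation exactly as in the map $\beta^m$ of the proof that $o$ is an isomorphism: the reordered diagram and the correction term (which joins the two labels into a $1$-$2$ chord, weighted by their intersection number) both have strictly smaller $n_1$. If $n_1(D)=0$ but $n_2(D)>0$, the offending pair is the strand-$1$ endpoint of a $1$-$2$ edge lying immediately below that of a $1$ labeled edge, and I exchange them using an O4T relation; the remaining terms of that relation are either of strictly smaller $n_2$ or are chords with both endpoints on a single strand, hence lie in $H_3$.

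As in the earlier proofs, the substance of the argument is to check that $\alpha^m$ descends to the quotient, i.e. that it carries every STU-like, O4T and $H_3$ relation supported in the $m$-th filtration level to a combination of such relations modulo $H_3$, and then that $\alpha^m$ is a two-sided inverse of $f^{m-1}$ (one composite is the identity by construction, and the other differs from the identity only by STU-like and O4T terms). Passing to the direct limit then produces the inverse $\alpha$ of $f$.

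The hard part will be the well-definedness verification, which is genuinely more delicate than in Proposition \ref{proposition_s} or the proof for $o$ because two repair moves of different types are now in play. The crux is to control their interaction: one must confirm that applying $\alpha^m$ to the three terms of an STU-like relation or the four terms of an O4T relation---especially when the labels or endpoints being reordered coincide with the pair that $\alpha^m$ singles out---never regenerates a term at the $m$-th level, and that every correction chord produced either cancels in pairs or is a same-strand chord killed in $H_3$. The lexicographic choice of $n_f$ is exactly what keeps the two phases from interfering: a label swap lowers $n_1$ and so dominates any incidental change to $n_2$, while a position exchange leaves $n_1=0$ untouched. Once this bookkeeping is carried out, the inversion of $f^{m-1}$ and the passage to the limit are routine.
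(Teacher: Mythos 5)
Your proposal is essentially the paper's own proof: the paper likewise filters $S_\mathbb{F}(OSR\mathbb{D}_1^{<p}(\uparrow^3))$ by a complexity $n_f$, presents $OSR\mathcal{A}_1^{<p}(\uparrow^3)$ as the direct limit of the resulting quotients $F^m$ with $F^0\cong FOSR\mathcal{A}_1^{<p}(\uparrow^3)$, and inverts each transition map by repairing the topmost offending pair --- an STU-like swap when a $1$ labeled edge's label sits immediately above a $2$ labeled edge's label, and an O4T exchange when its strand-$1$ vertex sits immediately above that of a $1$-$2$ edge. The only organizational difference is that the paper uses a single integer-valued $n_f$ (a weighted sum over unordered pairs of both kinds) and selects the repair move per edge, with the label defect taking priority at the chosen edge, rather than your two global phases ordered lexicographically; both bookkeepings can be made to work, but note that your index set is then $\omega^2$ rather than $\mathbb{N}$, so the ``sequence'' $F^0\to F^1\to\cdots$ should really be organized as two nested $\mathbb{N}$-indexed limit arguments or handled by transfinite induction.

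One assertion in your sketch is wrong as stated, and it is precisely the point the paper has to flag: the correction terms of the O4T relation used in the position-exchange step are neither same-strand chords in $H_3$ nor automatically harmless for $n_1$. They are the diagrams in which the strand-$1$ vertex of the $1$ labeled edge has been moved onto strand $2$, so that edge becomes a $2$ labeled edge; unless its label is relocated in the linear order (the paper places it immediately below the lowest $2$ labeled edge above the relevant strand-$2$ vertex, or at the top if there is none), the resulting diagram is not even ordered, and the new $2$ label sits below other $1$ labels, so $n_1$ becomes positive and your lexicographic complexity fails to decrease. With that prescription added, and with the deferred well-definedness computations actually carried out (they form the bulk of the paper's proof, in particular the case where both defects occur at the chosen edge simultaneously), your argument goes through.
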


\paragraph{Note:} There is no obvious multiplication in $FOSR\mathcal{A}_1^{<p}(\uparrow^3)$. The content of the proposition is that $f$ is an isomorphism of vector spaces. After we show that $f$ is indeed an isomorphism, it will induce a multiplication on $FOSR\mathcal{A}_1^{<p}(\uparrow^3)$ by pulling back the multiplication of $OSR\mathcal{A}_1^{<p}(\uparrow^3)$.

\begin{proof}
Let $D\in OSR\mathbb{D}_1^{<p}(\uparrow^3)$ be a diagram not in $H_3$. A pair of edges $e_1$, $e_2$ is an unordered pair if $e_1$ is a $1$ labeled edge and $e_2$ is either a $2$ labeled edge with a smaller label or a $1$-$2$ edge with a lower vertex on strand $1$. In the first case define $n_f(e_1,e_2):=\#\{\text{labeled vertices between $e_1$ and $e_2$}\}+1$, and in the second case define $n_f(e_1,e_2):=\#\{\text{vertices on strand $1$ between $e_1$ and $e_2$}\}+1$. Define $n_f(D):=\sum\limits_{\text{$e_1$,$e_2$ unordered}}n_f(e_1,e_2)$.

Let $(OSR\mathbb{D}_1^{<p}(\uparrow^3))^m$ be the union of $H_3$ and all diagrams $D$ with $n_f(D)\le m$. Let $F^m$ be the quotient of $S_\mathbb{F}((OSR\mathbb{D}_1^{<p}(\uparrow^3))^m)$ by STU-like, O4T and $H_3$. We get a sequence: $$F^0\stackrel{f^0}{\longrightarrow} F^1\stackrel{f^1}{\longrightarrow} F^2\stackrel{f^2}{\longrightarrow}\cdots$$
The direct limit of the sequence is $OSR\mathcal{A}_1^{<p}(\uparrow^3)$, and $F^0$ is isomorphic to $FOSR\mathcal{A}_1^{<p}(\uparrow^3)$. Therefore, what we need to do, as usual, is to find an inverse to $f^m$.

Let $\alpha^m:S_\mathbb{F}((OSR\mathbb{D}_1^{<p}(\uparrow^3))^m)\rightarrow S_\mathbb{F}((OSR\mathbb{D}_1^{<p}(\uparrow^3))^{m-1})$ be defined as follows: If $D\in H_3$ or $n_f(D)<m$, $\alpha^m(D)=D$. Else, find the highest $1$ labeled edge in $D$ such that either of the following holds:
\begin{enumerate}
\item[A.] The labeled vertex immediately below it belongs to a $2$ labeled edge.
\item[B.] The vertex immediately below it on strand $1$ belongs to a $1$-$2$ edge.
\end{enumerate}
Denote this edge by $e$. If A. applies to $e$, define:
$$\mathfig{3}{section3_section35_alpha11}\stackrel{\alpha^m}{\longmapsto}\mathfig{3}{section3_section35_alpha12}+\quad <v,w> \mathfigns{3}{section3_section35_alpha13}$$
If only B. applies to $e$, define:
$$\mathfig{3}{section3_section35_alpha21}\stackrel{\alpha^m}{\longmapsto}$$
$$\mathfig{3}{section3_section35_alpha22}+\mathfignum{3}{section3_section35_alpha23}{*}-\mathfignum{3}{section3_section35_alpha24}{*}$$
In the last $2$ summands (marked by \circlenum{*}) we should specify the location of the label $v$ in the linear order of the labels in $D$. This is determined as follows: If there is a $2$ labeled edge above the vertex of $f$, locate $v$ as the label immediately below it. Otherwise, locate $v$ as the highest vertex. This choice guaranties that $\alpha^m(D)$ is indeed in $OSR\mathcal{A}_1^{<p}(\uparrow^3)$.

We claim that $\alpha^m$ induces a map $\alpha^m:F^m\rightarrow F^{m-1}$. Indeed, assume $u$ is an STU-like relation. Since we are in $OSR\mathcal{A}_1^{<p}(\uparrow^3)$, we must have: $$u=u_1+u_2+u_3=$$
$$=\mathfig{3}{section3_section35_alpha11}-\mathfig{3}{section3_section35_alpha12}-\quad <v,w> \mathfigns{3}{section3_section35_alpha13}$$

We have $n_f(u_1)>n_f(u_2),n_f(u_3)$. Assume $n_f(u_1)=m$. If $v$ is the highest label with properties A. or B. then by definition $\alpha^m(u)=0$. Otherwise $\alpha^m(u)$ is equivalent to a sum of STU-like relations.

Assume now that $u$ is an O4T relation:
$$u=u_1+u_2+u_3+u_4=$$
$$=\mathfig{3}{section3_section35_alpha21}-\mathfig{3}{section3_section35_alpha22}+$$ $$+\mathfig{3}{section3_section35_alpha24}-\mathfig{3}{section3_section35_alpha23}$$

Clearly we have $n_f(u_1)>n_f(u_2)$ and $n_f(u_3)>n_f(u_4)$. Therefore, potentially we might have $n_f(u_i)=m$ only for $i=1$ and $i=3$. Assume first that only $n_f(u_1)=m$. If $v$ is not the highest label in $u_1$ with properties A. or B., then $\alpha^m(u)$ is equivalent to an O4T relation. If $v$ is the highest such label and only property B. applies to it, then by definition $\alpha^m(u)=0$. And if property A. also applies to it, then we have:

$$u=\mathfig{3.5}{section3_section35_proof11}-\mathfig{3.5}{section3_section35_proof12}+\mathfig{3.5}{section3_section35_proof13}-$$ 

$$ -\mathfig{3.5}{section3_section35_proof14}\stackrel{\alpha^m}{\longmapsto}\mathfignum{3.5}{section3_section35_proof21}{1}+\quad<v,w>\mathfignumns{3.5}{section3_section35_proof22}{2}\quad - $$
 
$$ - \mathfignum{3.5}{section3_section35_proof23}{3} - <v,w>\mathfignumns{3.5}{section3_section35_proof24}{4}\quad+\mathfignum{3.5}{section3_section35_proof25}{5}-$$
$$-\mathfignum{3.5}{section3_section35_proof26}{6}\approx 0 $$
The last equivalence is true because \circlenum{1}, \circlenum{3}, \circlenum{5} and \circlenum{6} are an O4T relation, and \circlenum{2} and \circlenum{4} are equivalent to 0 by remark \ref{remark_H}. Note that in some of the diagrams involved in this calculation we might actually have a different order of the labels or a different order along strand $2$, but this does not affect the actual calculation.

If only $n_f(u_3)=m$ the argument is similar. If $n_f(u_1)=n_f(u_3)=m$ then the computation is more complicated, since we need to consider several possibilities, but the principles of the calculation are the same, and we leave it to the reader.

It is easy to see that $\alpha^m$ is the inverse of $f^{m-1}$, thus we have completed the proof.
\end{proof}

The map $\tilde{u}_o:RU\hat{\textbf{t}}_{1,n}\rightarrow OSR\mathcal{A}_1^{<p}(\uparrow^n)$ induces a map $\tilde{u}_f:RU\hat{\textbf{t}}_{1,n}\rightarrow FOSR\mathcal{A}_1^{<p}(\uparrow^n)$ by composing with the isomorphism $\alpha:OSR\mathcal{A}_1^{<p}(\uparrow^n)\rightarrow FOSR\mathcal{A}_1^{<p}(\uparrow^n)$. Thus, in order to prove theorem \ref{theorem_u_n} it is enough to prove that $\tilde{u}_f$ is an isomorphism.

In $S_\mathbb{F}(FOSR\mathbb{D}_1^{<p}(\uparrow^3))$ there are no STU-like relations. The only O4T relations involve two $1$-$2$ edges, and modulo $H_3$ they reduce to the relation:
$$\mathfig{3.5}{section3_section35_proof22}=\mathfig{3.5}{section3_section35_proof24}$$.

According to this observation we may further restrict $FOSR\mathbb{D}_1^{<p}(\uparrow^3)$. Let $\widetilde{FOSR\mathbb{D}_1^{<p}}(\uparrow^3)$ be the subset of $FOSR\mathbb{D}_1^{<p}(\uparrow^3)$ containing all diagrams in which the $1$ labeled edges are lower than any other edge (as before), and the vertices of all the $1$-$2$ edges have the same order on both strands. $FOSR\mathcal{A}_1^{<p}(\uparrow^3)$ will then be isomorphic to the span of $\widetilde{FOSR\mathbb{D}_1^{<p}}(\uparrow^3)\cup H_3$ quotiented by $H_3$, which is simply $S_\mathbb{F}(\widetilde{FOSR\mathbb{D}_1^{<p}}(\uparrow^3))$. We denote this space by $\widetilde{FOSR\mathcal{A}_1^{<p}}(\uparrow^3)$.

Each diagram in $\widetilde{FOSR\mathcal{A}_1^{<p}}(\uparrow^3)$ is a product of the elements $\tilde{u}_f(x_i)$, $\tilde{u}_f(y_i)$ ($i=1,2$) and $\tilde{u}_f(t_{12})$. Therefore, there is an obvious map $\widetilde{FOSR\mathcal{A}_1^{<p}}(\uparrow^3)\rightarrow RU\hat{\textbf{t}}_{1,3}$. We denote by $p$ the composition $p:FOSR\mathcal{A}_1^{<p}(\uparrow^3)\stackrel{\cong}{\rightarrow}\widetilde{FOSR\mathcal{A}_1^{<p}}(\uparrow^3)\rightarrow RU\hat{\textbf{t}}_{1,3}$. We claim that $p$ is an inverse to $\tilde{u}_f$.

Indeed, $\tilde{u}_f\circ p$ is clearly the identity. As for $p\circ \tilde{u}_f$, we need to show that for any $D\in RU\hat{\textbf{t}}_{1,3}$, the image $p\circ\alpha\circ\tilde{u}_o(D)$ in $RU\hat{\textbf{t}}_{1,3}$ is equivalent to $D$ via the relations of $U\hat{\textbf{t}}_{1,3}$. The only map in this composition which actually changes the underlying diagram of $D$ is $\alpha$. Following the definition of the maps $\alpha^m$ (which compose $\alpha$) shows that all we need is to verify the following relations in $U\hat{\textbf{t}}_{1,3}$:
$$-[v_1,t_{12}]=[v_2,t_{12}]$$
$$[v_1,w_2]=<v,w>t_{12}$$
Those relations indeed hold in $U\hat{\textbf{t}}_{1,3}$ (see \cite{Humbert} Definition 2.1.1 and Lemma 2.1.2). This completes the proof of theorem \ref{theorem_u_n}.

\newpage\section{Elliptic Associators and the LMO Functor}
\label{section_elliptic}
In this section we introduce the concept of elliptic associators and the specific associator defined in \cite{Calaque} and \cite{Enriquez}. We then study the relation between this elliptic associator and the elliptic structure relative to $\mathcal{A}^\partial\rightarrow\mathcal{A}_1^{<p}$ induced by the LMO functor, which we described in section \ref{section_LMO}.

\subsection{Elliptic Associators}
\begin{definition} (\cite{Humbert})
Let $\hat{f}(A,B)$ be the completed free Lie algebra generated by $A$ and $B$. Let $\phi\in\exp(\hat{f}(A,B))$ be a Drinfel'd associator. A pair $X(A,B), Y(A,B)\in\exp(\hat{f}(A,B))$ is called \textbf{an elliptic associator} with respect to $\phi$ if it satisfies the following identity in $U\hat{\textbf{t}}_{1,2}$:
\begin{equation}
\label{associator_identity1}
Y(x_1,y_1)X(x_1,y_1)Y^{-1}(x_1,y_1)X^{-1}(x_1,y_1)=\exp(t_{12})
\end{equation}
and the following $3$ identities in $U\hat{\textbf{t}}_{1,3}$:
\begin{multline}
\label{associator_identity2}
X(x_1+x_2,y_1+y_2)=\\
\phi(t_{12},t_{23})^{-1}X(x_1,y_1)\phi(t_{12},t_{23})\exp(t_{12}/2)\cdot\\
\cdot\phi(t_{12},t_{13})^{-1}X(x_2,y_2)\phi(t_{12},t_{13})\exp(t_{12}/2)
\end{multline}
\begin{multline}
\label{associator_identity3}
Y(x_1+x_2,y_1+y_2)=\\
\phi(t_{12},t_{23})^{-1}Y(x_1,y_1)\phi(t_{12},t_{23})\exp(-t_{12}/2)\cdot\\
\cdot\phi(t_{12},t_{13})^{-1}Y(x_2,y_2)\phi(t_{12},t_{13})\exp(-t_{12}/2)
\end{multline}
\begin{multline}
\label{associator_identity4}
\phi(t_{12},t_{23})^{-1}Y(x_1,y_1)\phi(t_{12},t_{23})\exp(t_{12}/2)\phi(t_{12},t_{13})^{-1}\cdot\\
\cdot X(x_2,y_2)\phi(t_{12},t_{13})\exp(t_{12}/2)=\exp(t_{12}/2)\phi(t_{12},t_{13})^{-1}\cdot\\ \cdot X(x_2,y_2)\phi(t_{12},t_{13})\exp(-t_{12}/2)\phi(t_{12},t_{23})^{-1}Y(x_1,y_1)\phi(t_{12},t_{23})
\end{multline}

\end{definition}

If $X(A,B),Y(A,B)$ is an elliptic associator, then it is easy to see that $\Delta^{++}_{\omega_1,\omega_2}(u_2(X(x_1,y_1)))$ and $\Delta^{++}_{\omega_1,\omega_2}(u_2(Y(x_1,y_1)))$ define an elliptic structure relative to $\mathcal{A}^\partial\rightarrow\mathcal{A}_1^{<p}$.

We will now describe a specific elliptic associator, which was defined in \cite{Calaque} and \cite{Enriquez}.
\paragraph*{Notations:} In the completed Lie algebra $\hat{f}(A,B)$, denote:
$$T:=[B,A]$$
$$\tilde{A}:=\frac{\text{ad} B}{e^{\text{ad}B}-1}(A)=A-\frac{1}{2}[B,A]+\frac{1}{12}[B,[B,A]]+\cdots$$
\paragraph*{Note:} The coefficients which appear in this expansion are the Bernoulli numbers, which are denoted by $B_i$.
\begin{definition}
Given a Drinfel'd associator $\phi$, let $e(\phi)=(X_\phi,Y_\phi)$ be defined by:
$$X_\phi(A,B)=\phi(\tilde{A},T)\exp(\tilde{A})\phi(\tilde{A},T)^{-1}$$
$$Y_\phi(A,B)=\exp(T/2)\phi(-\tilde{A}-T,T)\exp(B)\phi(\tilde{A},T)^{-1}$$
\end{definition}
\begin{theorem}
\label{theorem_elliptic}
$e(\phi)$ is an elliptic associator relative to $\phi$.
\end{theorem}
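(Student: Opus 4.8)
\section*{Proof proposal}

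The plan is to deduce the four associator identities (\ref{associator_identity1})--(\ref{associator_identity4}) from the \emph{geometric} elliptic structure already produced by the LMO functor in Section \ref{elliptic_LMO}, transporting them into $U\hat{\textbf{t}}_{1,n}$ by means of the injectivity established in Theorem \ref{theorem_u_n}. Write $\xi:=LMO^<(X_{+,+})$ and $\eta:=LMO^<(Y_{+,+})$ in $\mathcal{A}_1^{<p}(\uparrow^2)$. By the verification in Section \ref{elliptic_LMO}, the pair $(\xi,\eta)$, together with its cablings, is an elliptic structure relative to $\mathcal{A}^\partial\rightarrow\mathcal{A}_1^{<p}$, i.e.\ it satisfies (\ref{elliptic1})--(\ref{elliptic4}) \emph{on the nose} in $\mathcal{A}_1^{<p}$. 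Since $H_n$ is a two-sided ideal for the composition product, these identities persist after applying the projection $\pi:\mathcal{A}_1^{<p}(\uparrow^n)\rightarrow\mathcal{A}_1^{<p}(\uparrow^n)/H_n$, and $\tilde{u}_n=\pi\circ u_n$ remains an algebra homomorphism.

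The essential input (the main theorem relating $e(\phi)$ to the extended LMO functor) is the identification, modulo the homotopy relation $H_n$, of the LMO values of the beaks with the images under $u_n$ of Enriquez's data,
$$\pi(LMO^<(X_{+,+}))=\tilde{u}_2(X_\phi(x_1,y_1)),\qquad \pi(LMO^<(Y_{+,+}))=\tilde{u}_2(Y_\phi(x_1,y_1)),$$
together with the standard behaviour of $Z$ on the ribbon morphisms: $\{c_{+,+}\}\mapsto\tilde{u}(\exp(t_{12}/2))$ and $\{a_{U,V,W}\}\mapsto\tilde{u}(\phi(t_{12},t_{23}))$, with the appropriate indices after cabling. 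Under cabling of the left strand the beak carrying the homology class $x$ (resp.\ $y$) splits into strands whose classes add, so that $X_{UV,W}$ and $X_{U,VW}$ map to $\tilde{u}_3(X_\phi(x_1+x_2,y_1+y_2))$ and $\tilde{u}_3(X_\phi(x_1,y_1))$ respectively, and likewise for $Y$.

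Granting this identification the proof is a term-by-term matching. Projecting (\ref{elliptic1})--(\ref{elliptic4}) to $\mathcal{A}_1^{<p}(\uparrow^n)/H_n$ and substituting the values above turns them into the images under $\tilde{u}_n$ of the associator identities, with the dictionary (\ref{elliptic3})$\leftrightarrow$(\ref{associator_identity1}) in the case $n=2$ (using $\{c_{+,+}c_{+,+}\}\mapsto\tilde{u}(\exp(t_{12}))$), and (\ref{elliptic1})$\leftrightarrow$(\ref{associator_identity2}), (\ref{elliptic2})$\leftrightarrow$(\ref{associator_identity3}), (\ref{elliptic4})$\leftrightarrow$(\ref{associator_identity4}) in the case $n=3$. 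All elements occurring on both sides lie in the restricted subalgebra $RU\hat{\textbf{t}}_{1,n}$: the $x_i,y_i$ are there by definition, and each $t_{ij}$ is expressible through them (e.g.\ $t_{12}=[x_1,y_2]$ and $t_{13}=-[x_1,y_1+y_2]$ in $\textbf{t}_{1,3}$), so that $\phi(t_{12},t_{23})$, $\exp(\pm t_{12}/2)$ and $X_\phi(x_2,y_2)$ all belong to $RU\hat{\textbf{t}}_{1,n}$. By Theorem \ref{theorem_u_n} the map $\tilde{u}$ is injective for $n=2,3$, so each projected equality lifts to the corresponding equality in $U\hat{\textbf{t}}_{1,n}$, yielding exactly (\ref{associator_identity1})--(\ref{associator_identity4}).

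The main obstacle is the identification invoked in the second paragraph, i.e.\ the main theorem itself: computing $LMO^<$ on the explicit representing tangles of the beaks from Section \ref{elliptic_LMO} --- which passes through the \r{A}rhus integral $\int_S$, the symmetrization maps $\chi^{-1}$ and the normalizations $U_\pm$ --- and showing that the outcome agrees with $u_2(X_\phi)$ and $u_2(Y_\phi)$ \emph{after} killing the diagrams in $H_n$. The homotopy relation is precisely what is needed to absorb the struts and self-chords created by the integration, and checking that what survives reproduces the Bernoulli-number expansion defining $\tilde{A}$ and the associator factors in Enriquez's formulas is the substantive computation; once it is in hand, the deduction above is purely formal.
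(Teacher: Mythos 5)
Your overall strategy is the same as the paper's: push the geometric identities (\ref{elliptic1})--(\ref{elliptic4}) for the beaks through $LMO^<$, project modulo $H_n$, identify the result with the images of Enriquez's data via Theorem \ref{theorem_related}, and lift back along the injection $\tilde{u}$ of Theorem \ref{theorem_u_n}. That part is sound, and it does dispose of identities (\ref{associator_identity1}) and (\ref{associator_identity4}), as well as the right-hand sides of (\ref{associator_identity2}) and (\ref{associator_identity3}) (where only the \emph{right} strand of the beak is cabled, so the $H_2$ defect, which lives on the left strand, stays inside $H_3$).

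The genuine gap is your sentence ``Under cabling of the left strand the beak carrying the homology class $x$ splits into strands whose classes add, so that $X_{UV,W}$ maps to $\tilde{u}_3(X_\phi(x_1+x_2,y_1+y_2))$.'' This is not formal, and it is where essentially all of the work in the paper's proof of this theorem lives. Theorem \ref{theorem_related} only gives $LMO^<(X_{+,+})=u_2(X_\phi)+h$ with $h\in H_2$, and $h\neq 0$ (Remark \ref{remark_not_equal}). Applying the cabling $\Delta^{++}_{++,+}$ to a diagram of $H_2$ --- a chord with both endpoints on the doubled strand --- produces four terms, two of which have one endpoint on each of the two new strands and hence do \emph{not} lie in $H_3$. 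So $\Delta^{++}_{++,+}(h)\not\subset H_3$ a priori, and the equality $\pi_{H_3}\circ LMO^<(X_{++,+})=\tilde{u}_3(X_\phi(x_1+x_2,y_1+y_2))$ does not follow from Theorem \ref{theorem_related} plus ``classes add.'' The paper's proof goes back to the explicit slice-by-slice decomposition of $X_{+,+}$ and $Y_{+,+}$, isolates every contribution to the $H_2$ part (the Bernoulli tails of Lemma \ref{lemma_chi}, the cup/cap slices, the associators $\phi(t_{35}-t_{45},t_{56})$ and $\phi(-t_{34},-t_{45})^{-1}$, and $C_{-+}$), and checks for each one either that its image under $\Delta^{++}_{++,+}$ lands in $H_3$ (using parity of the Bernoulli numbers, STU/IHX manipulations, and the structure of iterated commutators) or that the surviving exponential factors such as $\exp(t_{34})$, $\exp(-t_{56})$, $\exp(t_{45})$ cancel in pairs because they commute with everything in between. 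You correctly flag Theorem \ref{theorem_related} as the substantive computation, but you locate the entire difficulty there; the compatibility of the $H$-quotient with left-strand cabling is a second, independent computation that your proposal assumes away.
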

A proof of this theorem is given in \cite{Enriquez} (Proposition 3.8, see also \cite{Calaque} Proposition 5.3). Our goal in this section is to give a different proof of this theorem, based on the following theorem, which relates $e(\phi)$ to the elliptic structure relative to $\mathcal{A}^\partial\rightarrow\mathcal{A}_1^{<p}$ defined in section \ref{elliptic_LMO} via the LMO functor. Note that $X_\phi(x_1,y_1)$ and $Y_\phi(x_1,y_1)$ both belong to $RU\hat{\textbf{t}}_{1,2}$.
\begin{theorem}
\label{theorem_related}
$$\tilde{u}_2(X_\phi(x_1,y_1))=LMO^<(X_{+,+})\in\mathcal{A}_1^{<p}(\uparrow^2)/H_2$$
$$\tilde{u}_2(Y_\phi(x_1,y_1))=LMO^<(Y_{+,+})\in\mathcal{A}_1^{<p}(\uparrow^2)/H_2$$
\end{theorem}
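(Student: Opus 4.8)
The plan is to compute both sides of each identity explicitly in $\mathcal{A}_1^{<p}(\uparrow^2)$ and then compare their images modulo $H_2$. The essential leverage is Theorem \ref{theorem_u_n}: since $\tilde{u}_2\colon RU\hat{\textbf{t}}_{1,2}\to SR\mathcal{A}_1^{<p}(\uparrow^2)/H_2$ is an isomorphism, it is injective into $\mathcal{A}_1^{<p}(\uparrow^2)/H_2$, so it suffices to compute the projections of $LMO^<(X_{+,+})$ and $LMO^<(Y_{+,+})$ and recognize them, piece by piece, as $\tilde u_2$ of the diagrams representing the algebra elements $X_\phi(x_1,y_1)$ and $Y_\phi(x_1,y_1)$. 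A first simplification comes from the homotopy relation itself: $H_2$ kills every chord with both endpoints on the same strand, so the self-linking contributions carried by the twist factors in $X_{+,+}=\tilde x\cdot pt$ and $Y_{+,+}=\tilde y\cdot nt$ become trivial in the quotient. Thus modulo $H_2$ the computation reduces to the LMO values of the pure winding beaks $\tilde x$ and $\tilde y$.

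First I would unwind the definition of $LMO^<$ on the explicit representing tangles of $X_{+,+}$ and $Y_{+,+}$ recorded in section \ref{elliptic_LMO}. By functoriality of the Kontsevich integral $Z$, the value on each representing tangle decomposes over its elementary pieces: Drinfel'd associators contribute copies of $\phi$, the crossings contribute one-chord exponentials, and the cups and caps contribute the factor $\nu$. One then applies the symmetrization $\chi^{-1}$ and the \r{A}rhus integral $\int_S$ over the surgery components, using the commutation $\int_S\chi^{-1}_{S\cup\{x,y\}}=\chi^{-1}_{\{x,y\}}\int_S\chi^{-1}_S$ to keep the $x,y$ legs separate from the integrated legs. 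The geometric content is that integrating out the surgery component realizing the $x$-handle (resp. $y$-handle) glues the winding strand to the handle it encircles, converting the once-around winding into a single exponential factor on strand $1$ and producing the conjugating associators in the positions dictated by the representing tangle. I expect this to output, modulo $H_2$, diagrams all of whose labels are $x$ or $y$ on strand $1$ together with chords between strands $1$ and $2$, i.e.\ exactly the diagrams in the image of $u_2$.

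The delicate point, and the main obstacle, is to explain the precise exponential factors: why winding around the $x$-handle yields $\exp(\tilde A)$ with the Bernoulli correction $\tilde A=\frac{\mathrm{ad}\,B}{e^{\mathrm{ad}\,B}-1}(A)$, whereas winding around the $y$-handle yields the uncorrected $\exp(B)$. This asymmetry is not superficial; it reflects the distinguished role of the $y$-direction in the LMO construction, where $y$ is the surgered/integrated generator and its framing is normalized to $0$. I would trace the Bernoulli series to the Gaussian part of $\int_S$: the self-linking of the winding strand with the handle it encircles contributes a generating function in $\mathrm{ad}\,B$, and combined with the $0$-framing normalization this is exactly $\frac{\mathrm{ad}\,B}{e^{\mathrm{ad}\,B}-1}$. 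The two beaks must therefore be treated separately, and care is needed to check that all spurious self-chords created during the gluing indeed lie in $H_2$ and so disappear in the quotient.

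Finally I would identify the resulting diagrammatic expressions with $\tilde u_2$ of the closed formulas for $X_\phi$ and $Y_\phi$. Using $u_2(x_1)$ and $u_2(y_1)$ for the labeled edges on strand $1$ and $u_2(t_{12})$ for the $1$--$2$ chord, together with $T=t_{12}=[y_1,x_1]$ in $RU\hat{\textbf{t}}_{1,2}$, the conjugation pattern $\phi(\tilde A,T)\exp(\tilde A)\phi(\tilde A,T)^{-1}$ and its $Y$-analogue should match the computed LMO values factor by factor. Because $\tilde u_2$ is injective by Theorem \ref{theorem_u_n}, this factor-by-factor agreement is equivalent to the asserted equalities in $\mathcal{A}_1^{<p}(\uparrow^2)/H_2$, completing the proof.
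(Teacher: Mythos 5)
Your outline follows the same route as the paper: write down the explicit representing tangles for $X_{+,+}$ and $Y_{+,+}$, evaluate $Z$ slice by slice, discard everything landing in $H_2$, and recognize the result as $\tilde u_2$ of the closed formulas. But the step you yourself identify as ``the delicate point, and the main obstacle'' --- the origin of the Bernoulli correction $\tilde A=\frac{\mathrm{ad}\,B}{e^{\mathrm{ad}\,B}-1}(A)$ --- is resolved in the paper by a mechanism different from the one you propose, and your proposed mechanism would not produce it. The Bernoulli series does \emph{not} come from the Gaussian part of the \r{A}rhus integral $\int_S$ nor from the $0$-framing normalization of $y$; for these beaks there is essentially nothing to integrate at that stage. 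It comes from the symmetrization map $\chi^{-1}_x$: after composing the slices, the two left strands of the representing tangle close up into a single component labelled $x$ carrying the $y$--$x$ strut contributed by $T_1(++)$ together with the legs of the associators and of $\exp(t_{12})$, and converting this \emph{ordered-on-the-circle} presentation into symmetric $x$-labels is governed by the element $\lambda(a,b;r)$ of Cheptea--Habiro--Massuyeau, whose coefficients are Baker--Campbell--Hausdorff coefficients with $r(b,\dots,b)=B_n$ (Dynkin's formula). This is Lemma \ref{lemma_chi}; the companion Lemma \ref{lemma_u} identifies the resulting sum with $\tilde u_2(\tilde x_1)$ modulo $H_2$. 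The asymmetry between $x$ and $y$ is thus combinatorial, not surgical: $x$ is the component that gets closed up and symmetrized against the strut, while $y$ survives as a single labelled leg and needs no correction.

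Two further ingredients your plan omits would also be needed to make the slice-by-slice computation close up. First, the collapse of most associator contributions rests on the observation that $\phi(t_{12},t_{23})=1$ in $\mathcal{A}(\uparrow^3)$ via an $I$ relation (Lemma \ref{lemma0}); without it the product of slices does not reduce to the single conjugation pattern $\phi(\tilde A,T)\exp(\tilde A)\phi(\tilde A,T)^{-1}$. Second, the $Y$ case is genuinely harder: the paper computes $LMO^<(Y^{-1}_{+,+})$ and inverts at the end, must track the surviving inter-strand factors $\exp(\pm t/2)$ (which are \emph{not} killed by $H_2$ and account for the prefactor $\exp(T/2)$ in $Y_\phi$), and needs a separate argument that the cap element $C_{-+}$ is trivial modulo $H_2$. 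Your blanket reduction ``modulo $H_2$ everything self-linking dies, so only the pure winding matters'' is too coarse to capture these surviving terms. Finally, a small remark: injectivity of $\tilde u_2$ (Theorem \ref{theorem_u_n}) is not actually needed for this theorem, since one proves the two sides equal by direct computation; it is needed later, to pull the elliptic-structure identities back to $RU\hat{\textbf{t}}_{1,n}$.
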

Theorem \ref{theorem_related} would not hold if we replace $\tilde{u}_2$ by $u_2$ (see Remark \ref{remark_not_equal} below). Therefore, the elliptic structure relative to $\mathcal{A}^\partial\rightarrow\mathcal{A}_1^{<p}$ which is induced by the LMO functor is not the same elliptic structure which is induced by $e(\phi)$. This theorem says that among all the elliptic structures that come from elliptic associators, the elliptic structure induced by $e(\phi)$ is, in a sense, the ``closest" to the one induced by the LMO functor.

\begin{remark}
Using the same techniques one might be able to define associators for higher genus, by pulling back the value of the LMO functor on the right tangles.
\end{remark}
The rest of this section is dedicated to proving theorems \ref{theorem_elliptic} and \ref{theorem_related}.

\subsection{Proof of Theorem \ref{theorem_related}}
We begin with several lemmas. Here and in the following proofs we denote by $t_{ij}\in\mathcal{A}(\uparrow^n,S)$ the diagram with a single edge connecting the $i$ strand and the $j$ strand.
\begin{lemma}
\label{lemma0}
Given a word $\omega$ of length $3$ and words $\omega_1$, $\omega_2$ and $\omega_3$ in $\{+,-\}$, $\Delta^\omega_{\omega_1,\omega_2,\omega_3}\phi(t_{12},t_{23})=1$ (i.e. the empty diagram) in $\mathcal{A}(\uparrow^{|\omega_1|+|\omega_2|+|\omega_3|})$ (as defined in section \ref{defA}).
\end{lemma}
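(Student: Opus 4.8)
The plan is to reduce the statement to the uncabled, three-strand case and then exploit the $I$-relations that define $\mathcal{A}=\mathcal{A}^\partial/\mathbf I$. First I would use that the cabling map $\Delta^\omega_{\omega_1,\omega_2,\omega_3}$ is a well-defined functor on the quotient categories (as noted after its definition in Section \ref{defA}); in particular it is an algebra homomorphism on endomorphism spaces and it carries the identity morphism, i.e. the empty diagram, to the identity morphism. Hence it suffices to prove the stronger strand-free statement that $\phi(t_{12},t_{23})=1$ already in $\mathcal{A}(\omega,\omega)$ for the length-$3$ word $\omega$: applying $\Delta^\omega_{\omega_1,\omega_2,\omega_3}$ to this equality then gives $\Delta^\omega_{\omega_1,\omega_2,\omega_3}\phi(t_{12},t_{23})=\Delta^\omega_{\omega_1,\omega_2,\omega_3}(1)=1$, which is the Lemma. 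The argument below works for any orientation word, so for notational clarity I treat $\omega=\,\uparrow^3$, where all the signs $\varepsilon_k$ in the box notation equal $+1$.

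Working in $\mathcal{A}(\uparrow^3)$, write $t_{ij}$ for the single chord joining strands $i,j$ and $c_{ii}$ for the single chord with both endpoints on strand $i$. The key step is to extract three degree-$2$ relations from the $I$-relation of figure \ref{I}. Since a single chord is the only thing on the strands, either of its endpoints can be pushed to the very top; forming the box over all three top strands then gives, modulo $\mathbf I$,
\begin{equation*}
c_{11}+t_{12}+t_{13}=0,\qquad c_{22}+t_{12}+t_{23}=0,\qquad c_{33}+t_{13}+t_{23}=0 .
\end{equation*}
Solving this linear system expresses each $t_{ij}$ as a combination of the self-chords $c_{11},c_{22},c_{33}$; explicitly $t_{12}=\tfrac12(c_{33}-c_{11}-c_{22})$ and $t_{23}=\tfrac12(c_{11}-c_{22}-c_{33})$.

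To finish I would note that self-chords on distinct strands have disjoint support, hence slide past one another with no STU-corrections, so the subalgebra of $\mathcal{A}(\uparrow^3)$ generated by $c_{11},c_{22},c_{33}$ is commutative. Therefore $t_{12}$ and $t_{23}$ commute, i.e. $[t_{12},t_{23}]=0$. Because a Drinfel'd associator is group-like with no linear term, $\log\phi$ is a Lie series in $t_{12},t_{23}$ of degree $\ge 2$, and every such element lies in the Lie ideal generated by $[t_{12},t_{23}]$; evaluating in an algebra where this bracket vanishes forces $\log\phi(t_{12},t_{23})=0$, whence $\phi(t_{12},t_{23})=1$.

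I expect the main obstacle to be the second paragraph: justifying rigorously that the $I$-relation produces exactly the three displayed relations, in particular that the self-chord term $c_{ii}$ genuinely appears when the swept endpoint is reattached to its own strand, and tracking the orientation signs $\varepsilon_k$ in the general $\omega$. The remaining ingredients—commutativity of the self-chord subalgebra and the triviality of $\phi$ on commuting arguments—are routine, the latter being a standard consequence of group-likeness and the normalization of a Drinfel'd associator.
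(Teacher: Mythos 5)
Your proof is correct and takes essentially the same route as the paper's: reduce to showing $\phi(t_{12},t_{23})=1$ in $\mathcal{A}(\uparrow^3)$, use the $I$-relation to force the two arguments of $\phi$ to commute, and conclude because an associator has no linear term and is therefore trivial on commuting arguments. The only (cosmetic) difference is that the paper uses a single $I$-relation, $t_{12}=-c_{22}-t_{23}$, plus an STU argument that the self-chord $c_{22}$ commutes with $t_{23}$, whereas you invoke all three $I$-relations to rewrite both $t_{12}$ and $t_{23}$ inside the visibly commutative subalgebra generated by the self-chords $c_{11},c_{22},c_{33}$.
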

\begin{proof}
It is enough to show that $\phi(t_{12},t_{23})=1$ in $\mathcal{A}(\uparrow^3)$. Indeed, using an $I$ relation we get: $$\phi(t_{12},t_{23})=\phi\left(\,\raisebox{-0.3cm}{\includegraphics[height=0.7cm]{section4_t_12.pdf}}\, ,\,\raisebox{-0.3cm}{\includegraphics[height=0.7cm]{section4_t_23.pdf}}\,\right)=\phi\left(-\,\raisebox{-0.3cm}{\includegraphics[height=0.7cm]{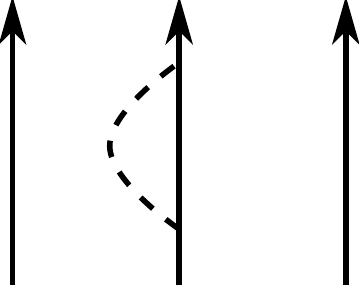}}-\,\raisebox{-0.3cm}{\,\includegraphics[height=0.7cm]{section4_t_23.pdf}}\,,\,\raisebox{-0.3cm}{\includegraphics[height=0.7cm]{section4_t_23.pdf}}\,\right)=1$$
because both $\,\raisebox{-0.3cm}{\includegraphics[height=0.7cm]{section5_t_22.pdf}}\,$ and $\,\raisebox{-0.3cm}{\includegraphics[height=0.7cm]{section4_t_23.pdf}}\,$ commute with $\,\raisebox{-0.3cm}{\includegraphics[height=0.7cm]{section4_t_23.pdf}}\,$ (using the STU relation).
\end{proof}

\begin{lemma}
\label{lemma_chi}
Assume that in the pattern $\uparrow^{n+1}$ the left strand is labeled $x$, and let $a\in\mathcal{A}(\uparrow^{n+1},\{y\})$. Recall the map $j:\mathcal{A}_1^y\rightarrow \mathcal{A}_1$ defined in section \ref{defj}, and the map $k:\mathcal{A}_1^y \longrightarrow\mathcal{A}_1^< $ defined in section \ref{defk}. Then we have $\chi^{-1}_x\left(\exp\left(y\raisebox{-0.3cm}{\includegraphics[height=0.7cm]{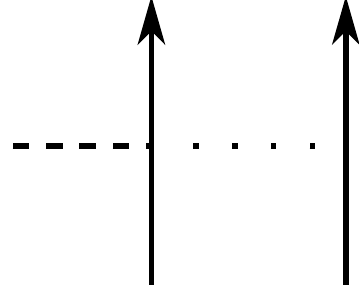}}\right)\cdot a\right)\in Im(j)\subset ^{ts}\mathcal{A}_1(\uparrow^n)$, and $k\circ j^{-1}\circ\chi^{-1}_x\left(\exp\left(y\raisebox{-0.3cm}{\includegraphics[height=0.7cm]{section5_y_1}}\right)\cdot a\right)$ is the element obtained from $a$ by replacing each vertex on $x$ by the following sum:
$$\mathfig{4}{section5_a_lemma}\longmapsto\sum_{i=0}^{\infty}B_i\mathfig{4}{section5_a_lemma2}$$
where $B_i$ are the Bernoulli numbers.
\end{lemma}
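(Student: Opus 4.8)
The plan is to unwind $\chi_x^{-1}$ from its definition as the inverse of the symmetrization (averaging) map, and to exploit the fact that $\exp\left(y\text{-}x\text{ strut}\right)$ is group-like along the left ($x$-labeled) strand. First I would record that the $y$-struts sitting on the $x$-strand all commute with one another: sliding one $y$-strut past another and applying STU produces a strut whose two far ends are joined at a trivalent vertex, i.e. a diagram representing $[y,y]$, which vanishes by AS. Hence $\exp\left(y\text{-}x\right)$ is a genuine group-like decoration of the $x$-strand, and the only struts present after applying $\chi_x^{-1}$ are the $y$-$x$ struts coming from this exponential (the legs of $a$ meet the $x$-strand at vertices of non-strut components). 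This already shows that $\chi_x^{-1}\left(\exp\left(y\text{-}x\right)\cdot a\right)$ has the form $\exp\left(y\text{-}x\right)\cdot(\text{strut-free})$, hence lies in $\operatorname{Im}(j)$, so that $j^{-1}$ is defined on it.

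The heart of the argument is the single-leg computation. I would isolate one vertex of $a$ on the $x$-strand, freeze the rest of $a$, and solve $\chi_x(w)=\exp\left(y\text{-}x\right)\cdot\ell$ for the desymmetrized $w$, where $\ell$ denotes the single leg. Writing the unknown as $\exp\left(y\text{-}x\text{ labels}\right)\cdot\sum_{i\ge 0}c_i\,\ell_i$, with $\ell_i$ the leg carrying $i$ extra $y$-legs grafted on by the iterated bracket $(\text{ad}\,y)^i$, I would apply the averaging map $\chi_x$ and use STU to sweep $\ell$ through the symmetric $y$-strut background. Each pass of $\ell$ across a $y$-strut contributes one application of $\text{ad}\,y$, and the combinatorics of averaging over the positions of $\ell$ among the struts assembles the operator $\frac{e^{\text{ad}\,y}-1}{\text{ad}\,y}$ acting on $\sum_i c_i\,\ell_i$. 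Demanding that this equal $\ell$ forces $\sum_i c_i z^i=\frac{z}{e^z-1}=\sum_i \frac{B_i}{i!}z^i$, which is exactly the Bernoulli generating function; after passing to the box/symmetric convention of figure \ref{box} used in the target diagram, the prefactor becomes $B_i$, giving the stated replacement.

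To pass from one leg to all the legs of $a$ on the $x$-strand, I would use that the strut generator $y$ is primitive and the background group-like, so that the desymmetrization factors as an independent shift on each leg: the commutators produced when two legs of $a$ are swept past one another become unordered $x$-labels under $\chi_x^{-1}$ and contribute nothing beyond the symmetric product, while the only surviving corrections are the $y$-graftings onto each individual leg. Finally, applying $j^{-1}$ removes the $\exp\left(y\text{-}x\right)$ factor and $k$ rewrites the symmetric $x$- and $y$-labels in the ordered convention of $\mathcal{A}_1^{<}$, leaving precisely the vertex-by-vertex Bernoulli replacement claimed. I expect the main obstacle to be the single-leg generating-function identity together with the bookkeeping that upgrades it to the multi-leg statement: one must check carefully that the cross-interactions among the legs of $a$ really do disappear in the symmetric (label) picture and do not generate extra corrections, which is where the group-likeness of the background and the passage to $\mathcal{A}_1^{<}$ via $k$ are essential.
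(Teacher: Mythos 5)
Your proposal is correct in substance but takes a genuinely different route from the paper. The paper does not solve for $\chi_x^{-1}$ by an ansatz-and-match-coefficients argument; instead it imports the element $\lambda(a,b;r)$ from Cheptea--Habiro--Massuyeau, whose coefficients $r(u_1,\dots,u_n)$ are already known to be the Baker--Campbell--Hausdorff coefficients, and reads off $r(b,\dots,b)=B_n$ from Dynkin's formula. It then peels the vertices on the $x$-strand off one at a time from the top, each application of the $\langle\,\cdot\,,\lambda\rangle$ pairing contributing one factor $\sum_{k}B_k(\operatorname{ad}y)^k$ to the corresponding leg, and the multi-leg bookkeeping is carried entirely by the iterated gluing maps $\psi_{x_{(i)},\tilde x_{(i)}}$, whose combined effect is described explicitly at the end of the proof. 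Your approach buys self-containedness and makes the appearance of $z/(e^z-1)$ transparent, where the paper treats it as a citation; what the paper's recursion buys is precisely the step you flag as the main obstacle, namely the claim that cross-interactions between distinct legs of $a$ produce no corrections beyond the independent per-leg Bernoulli replacement. In your write-up that claim is asserted via ``primitivity of the background,'' and as stated it is not quite a proof: desymmetrizing two legs of $a$ on the same strand does produce commutator cross-terms, and one must check that after passing to unordered labels and applying $k$ these terms are exactly absorbed (they are, but this is the content of the paper's ``careful examination'' of the recursive gluing, not an automatic consequence of group-likeness). You should also reconcile normalizations explicitly: the paper's $B_i$ are the coefficients of $(\operatorname{ad}y)^i$ in $\tfrac{\operatorname{ad}y}{e^{\operatorname{ad}y}-1}$ (i.e.\ $B_i/i!$ in the standard convention), which is consistent with your generating-function identity once the averaging factorials are accounted for, but your phrase ``the prefactor becomes $B_i$'' elides exactly the $i!$ that the symmetric-convention passage must supply.
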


\begin{proof}
Recall the element $\lambda(a,b;r)\in\mathcal{A}(\emptyset,\{a,b,r\})$ defined in \cite{Cheptea} by:
$$\lambda(a,b;r)=\chi^{-1}_r\left(\mathfig{3}{section5_lambda_def1}\right)=$$
$$=\boldsymbol{\exp_\coprod}\left(\begin{array}{c} b\\ \includegraphics[height=1.3cm]{section2_strut.pdf}\\r\end{array} +\sum_{\begin{array}{c} n\ge0\\u_1,...,u_n\in\{a,b\}\end{array}} r(u_1,...,u_n)\mathfig{2.5}{section5_lambda_def2}\right)$$
where $r(u_1,...,u_n)$ are some coefficients determined by the Baker-Campbell-Hausdorff formula. In particular we have $r(\underbrace{b,...,b}_{\text{n times}})=B_n$ (follows from \cite{Dynkin}, formula (12)).

$\lambda(a,b;r)$ has the following property: If $D\in\mathbb{D}(P\cup\{\uparrow_r\},S)$ is of the following form:
$$D=\mathfig{4}{section5_proof_lemma1}$$
then we have:
$$\chi^{-1}_r(D)=\left<\chi^{-1}_{r_1,r_1}\left(\mathfig{4}{section5_proof_lemma2}\right),\lambda(r_1,r_2;r)\right>\raisebox{-0.5cm}{$r_1,r_2$}$$
where the form $<D_1,D_2>_{r_1,r_2}$ is defined on diagrams $D_1,D_2$ to be the sum of all ways to glue all vertices labeled by $r_1$ in $D_1$ to all vertices labeled $r_1$ in $D_2$ and all vertices labeled by $r_2$ in $D_1$ to all vertices labeled $r_2$ in $D_2$.

Now, it is enough to prove the theorem for $a$ which is a diagram. Suppose we have $m\ge1$ vertices on $x$ (the case $m=0$ is trivial), and denote $x_{(m)}=x$.

$$\chi^{-1}_{x_{(m)}}\left(\mathfig{3.5}{section5_proof_lemma3}\right)=$$
$$=\left<\chi^{-1}_{x_m,x_{(m-1)}}\left(\mathfig{4}{section5_proof_lemma4}\right),\lambda(x_m,x_{(m-1)};x_{(m)})\right>\raisebox{-0.5cm}{$x_m,x_{(m-1)}$}=$$
$$=\psi_{x_{(m-1)},\tilde{x}_{(m-1)}}\left(\sum_{k_m\ge 0}B_{k_m}\cdot \chi^{-1}_{x_{(m-1)}}\left(
\exp\left(\begin{array}{c} \tilde{x}_{(m-1)}\\ \includegraphics[height=0.7cm]{section2_strut.pdf}\\x_{(m)}\end{array}\right)\cdot
\mathfig{5}{section5_proof_lemma5}\right)\right)$$
where $\psi_{z,\tilde{z}}(D)$ for a diagram $D$ which contains the labels $z$ and $\tilde{z}$ is defined to be the sum of all ways to glue all vertices labeled $z$ to all vertices labeled $\tilde{z}$.

Repeating this process recursively we get:
$$\chi^{-1}_{x_{(m)}}\left(\mathfig{3.5}{section5_proof_lemma3}\right)=$$
$$=\psi_{x_{(m-1)},\tilde{x}_{(m-1)}}\circ\cdots\circ\psi_{x_{(0)},\tilde{x}_{(0)}}(\sum_{k_1,...,k_m\ge 0}B_{k_1}\cdots B_{k_m}$$
$$\exp\left(\begin{array}{c} \tilde{x}_{(m-1)}\\ \includegraphics[height=0.7cm]{section2_strut.pdf}\\x_{(m)}\end{array}\right)\cdots
\exp\left(\begin{array}{c} \tilde{x}_{(0)}\\ \includegraphics[height=0.7cm]{section2_strut.pdf}\\x_{(1)}\end{array}\right)\cdot
\exp\left(\begin{array}{c} y\\ \includegraphics[height=0.7cm]{section2_strut.pdf}\\x_{(0)}\end{array}\right)
\cdot
\mathfignum{5}{section5_proof_lemma6}{*})$$

For a specific choice of $k_1,...,k_m$, we wish to describe the element we get by applying \linebreak $\psi_{x_{(m-1)},\tilde{x}_{(m-1)}}\circ\cdots\circ\psi_{x_{(0)},\tilde{x}_{(0)}}$ to the corresponding summand. A careful examination shows that the element we get is a product of $\exp\left(\begin{array}{c} y\\ \includegraphics[height=0.7cm]{section2_strut.pdf}\\x_{(m)}\end{array}\right)$ with the sum of all the diagrams which can be produced from \circlenum{*} by the following process:
\begin{enumerate}
\item
Change all the $\tilde{x}_{(0)}$ labels to $y$.
\item
For $i=1,...,m-1$:
\begin{itemize}
\item
Attach some of the $x_{(i)}$ labels to some of the $\tilde{x}_{(i)}$ labels.
\item
Change all the remaining $x_{(i)}$ labels to $x_{(i+1)}$.
\item
Change all the remaining $\tilde{x}_{(i)}$ labels to $y$
\end{itemize}
\end{enumerate}

This sum can be described more shortly as the sum of all ways to glue some of the $x_{(i)}$ labels to $\tilde{x}_{(j)}$ labels with $j\ge i$, and then change all the remaining $x_{(i)}$ labels to $x_{(m)}=x$ and all the remaining $\tilde{x}_{(i)}$ labels to $y$.

The result of the above calculation clearly belongs to the image of $j$. It is not difficult to see that $j^{-1}\circ\chi^{-1}_x\left(\exp\left(y\raisebox{-0.3cm}{\includegraphics[height=0.7cm]{section5_y_1}}\right)\cdot D\right)$ has a simpler presentation when mapped by $k$ to $\mathcal{A}_1^{<}(\uparrow^n)$, as:
$$\sum_{k_1,...,k_m\ge 0}B_{k_1}\cdots B_{k_m}\mathfig{5}{section5_proof_lemma7}$$
Thus we have completed the proof of the lemma.

\end{proof}

\begin{lemma}
Recall the notation $\tilde{A}:=\frac{\text{ad} B}{e^{\text{ad}B}-1}(A)$ in $\hat{f}(A,B)$. Similarly we have in $U\hat{\textbf{t}}_{1,2}$: $\tilde{x}_1=\cfrac{\text{ad}\,y_1}{e^{\text{ad}\,y_1}-1}(x_1)$. Then we have:
$$\tilde{u}_2(\tilde{x}_1)\approx\sum_{i=0}^{\infty}B_i\mathfig{3}{section5_lemma3_proof1}\mod H_2$$
\label{lemma_u}
\end{lemma}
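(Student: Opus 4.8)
The plan is to turn the statement into a computation in the single‑strand diagram algebra, using two inputs that are already available: that $u_2$ is an algebra homomorphism, and that $\frac{z}{e^z-1}=\sum_{i\ge 0}\frac{B_i}{i!}z^i$. First I would expand the definition of $\tilde{x}_1$: since the generating function above is exactly the one producing the coefficients $1,-\tfrac12,\tfrac1{12},\dots$, we have $\tilde{x}_1=\sum_{i\ge 0}\frac{B_i}{i!}(\text{ad}\,y_1)^i(x_1)$ in $U\hat{\textbf{t}}_{1,2}$, which is just the expansion written in the statement. Applying the homomorphism $u_2$ and projecting modulo $H_2$ gives $\tilde{u}_2(\tilde{x}_1)=\sum_{i\ge 0}\frac{B_i}{i!}\,(\text{ad}\,Y)^i(X)\bmod H_2$, where $X=u_2(x_1)$ and $Y=u_2(y_1)$ are the single $x$‑ and $y$‑labelled legs on the first strand, and $\text{ad}$ is taken in the composition algebra of $\mathcal{A}_1^{<p}(\uparrow^2)$. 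Thus the lemma becomes a purely diagrammatic identity about iterated commutators of two legs.

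Next I would evaluate $(\text{ad}\,Y)^i(X)$ modulo $H_2$ by induction on $i$. The base case $i=0$ is the bare $x$‑leg, which is the $i=0$ term on the right. For the inductive step I would compute $[Y,-]$ with the STU relation: sliding the top $y$‑leg of a stacked diagram down past the labelled vertices it meets grafts it, through a trivalent vertex, onto the edges it crosses. The key simplification is that every term in which the new $y$‑edge closes up against a vertex lying on the same strand is a diagram containing a chord with both endpoints on one strand, hence lies in $H_2$ and dies under the projection; what survives is exactly the diagram in which one further $y$‑leg has been attached to the growing ``comb''. Iterating, $(\text{ad}\,Y)^i(X)$ reduces modulo $H_2$ to a multiple of the $i$‑legged comb displayed in the statement (an $x$‑leg carrying $i$ attached $y$‑legs), the same combs that arise on the LMO side in Lemma~\ref{lemma_chi}. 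I would also spell out the cases $i=0,1,2$ by hand, both as a check and to fix the orientation/sign conventions, confirming in particular that the $i=1$ term is the chord between the two strands and the $i=2$ term the expected tripod.

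The hard part will be the bookkeeping of this multiplicity, and it is the only genuinely delicate point. For the stated coefficient to be $B_i$ rather than $\frac{B_i}{i!}$, the reduction must produce precisely $(\text{ad}\,Y)^i(X)\equiv i!\cdot(\text{comb}_i)\bmod H_2$, so that the $i!$ cancels the $\frac1{i!}$ coming from the Bernoulli expansion. To establish this I would expand the nested bracket as $(\text{ad}\,Y)^i(X)=\sum_{j}(-1)^j\binom{i}{j}Y^{\,i-j}XY^{\,j}$ and use that the $y$‑legs commute modulo $H_2$ — their STU‑like swap byproduct carries the factor $\langle y,y\rangle=0$ — so that the order in which the $i$ successive legs are attached is immaterial and all orderings collapse onto the same comb; carefully tracking the binomial signs in this collapse is what yields the clean factor $i!$. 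Once $(\text{ad}\,Y)^i(X)\equiv i!\cdot(\text{comb}_i)$ is in hand, substituting back gives $\tilde{u}_2(\tilde{x}_1)=\sum_i\frac{B_i}{i!}\,i!\,(\text{comb}_i)=\sum_i B_i\,(\text{comb}_i)\bmod H_2$, which is the assertion.
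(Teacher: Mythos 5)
Your overall strategy is the paper's: expand $\tilde{x}_1$ into iterated brackets, push through the algebra homomorphism $u_2$, and reduce $(\mathrm{ad}\,Y)^i(X)$ diagrammatically by induction, discarding same-strand chords modulo $H_2$. But the quantitative heart of your argument, the claim $(\mathrm{ad}\,Y)^i(X)\equiv i!\cdot(\mathrm{comb}_i)\bmod H_2$, is false; the correct identity has coefficient $1$, which is exactly what the paper proves by induction ($u_2$ of the $i$-fold bracket equals the $i$-legged comb plus a single $H_2$ term). To see that the coefficient must be $1$, resolve the trivalent vertices of $\mathrm{comb}_i$ by STU starting from the strand: each step writes $\mathrm{comb}_j$ as $Y\cdot\mathrm{comb}_{j-1}-\mathrm{comb}_{j-1}\cdot Y$ up to $H_2$ (the only label-order corrections carry either $\langle y,y\rangle=0$ or a gluing of the new $y$-leg to the $x$-labelled vertex, which lands in $H_2$), so $\mathrm{comb}_i$ literally expands to the iterated commutator $[Y,[Y,\dots[Y,X]\dots]]=\sum_j(-1)^j\binom{i}{j}Y^{i-j}XY^{j}$ modulo $H_2$. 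For $i=2$ this reads $\mathrm{comb}_2\equiv Y^2X-2YXY+XY^2=(\mathrm{ad}\,Y)^2(X)$, not half of it.

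The justification you give for the $i!$ cannot be repaired. Modulo $H_2$ the relevant algebra is free on the two legs $X$ and $Y$ (this is Theorem \ref{theorem_u_n} for $n=2$), so the monomials $Y^{i-j}XY^{j}$ are linearly independent and do not ``collapse onto the same comb''; their coefficients sum to $\sum_j(-1)^j\binom{i}{j}=0$, so no collapse could produce a factor $i!$ in any case. The $\langle y,y\rangle=0$ you invoke governs the STU-like swap of two labels in the linear order, not the STU swap of two leg positions on the strand, whose byproduct is a nonzero $y$--$y$ tripod that is not in $H_2$; so $y$-legs do not commute with the rest of the diagram in the sense you need. Finally, no factor of $i!$ is wanted: in the paper's notation (fixed in the paragraph preceding the definition of $e(\phi)$) $B_i$ denotes the coefficient of $(\mathrm{ad}\,B)^i(A)$ in the expansion $\tilde{A}=A-\frac12[B,A]+\frac1{12}[B,[B,A]]+\cdots$, i.e.\ the $i$-th Taylor coefficient of $z/(e^z-1)$, and this is also the coefficient produced on the LMO side in Lemma \ref{lemma_chi}; with the standard normalization of Bernoulli numbers your final formula would be off by $i!$ in every degree $i\ge 2$ and would break the comparison in Theorem \ref{theorem_related}.
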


\begin{proof}
$\tilde{x}_1=\sum_{i=0}^{\infty}B_i[\underbrace{y_1[\cdots[y_1}_{\text{i times}},x_1]\cdots]]$ by definition. We will prove by induction on $i\ge1$ the following identity, which will imply the lemma:
$$u_2([\underbrace{y_1[\cdots[y_1}_{\text{i times}},x_1]\cdots]])=\mathfig{2.5}{section5_lemma3_proof1}+\mathfig{2.5}{section5_lemma3_proof2}$$

For $i=1$ we have:
$$u_2([y_1,x_1])=\mathfig{2}{section5_lemma3_proof2_1}-\mathfig{2}{section5_lemma3_proof2_2}=\mathfig{2}{section5_lemma3_proof2_1}-$$
$$-\mathfig{2}{section5_lemma3_proof2_3}+\mathfig{2}{section5_lemma3_proof2_3}-\mathfig{2}{section5_lemma3_proof2_2}=$$
$$=\mathfig{2}{section5_lemma3_proof2_5}+\mathfig{2}{section5_lemma3_proof2_6}$$

Assume we proved the lemma for $i_0$. Then for $i_0+1$:

$$u_2([\underbrace{y_1[\cdots[y_1}_{\text{$i_0+1$ times}},x_1]\cdots]])=\mathfig{3}{section5_lemma3_proof3_1}-\mathfig{3}{section5_lemma3_proof3_2}+$$
$$\overbrace{\mathfig{3}{section5_lemma3_proof3_3}-\mathfig{3}{section5_lemma3_proof3_4}}^{=0}=\mathfig{3}{section5_lemma3_proof3_1}-$$
$$-\mathfig{3}{section5_lemma3_proof3_5}+\mathfig{3}{section5_lemma3_proof3_5}-\mathfig{3}{section5_lemma3_proof3_2}=$$
$$=\mathfig{2}{section5_lemma3_proof3_6}+\mathfig{2}{section5_lemma3_proof3_7}$$

\end{proof}
We are now ready to prove theorem \ref{theorem_related}.

\begin{proof}[Proof of theorem \ref{theorem_related}]
We represent the tangle $X_{+,+}$ as follows:
$$\mathfig{7}{section5_theorem_related_X}$$

We will calculate $\chi_x(LMO(X_{+,+}))\in\mathcal{A}(\uparrow_x\uparrow^2,\{y\})$. Inside $\mathcal{A}(\uparrow_x\uparrow^2,\{y\})$ we have the subspace $H_2$ which is spanned by all diagrams with a component which has more than one vertex on the second strand from the right, or has a loop. For an element $a\in H_2\subset \mathcal{A}(\uparrow_x\uparrow^2,\{y\})$ which is mapped by $\chi^{-1}_x$ to the image of $j:\mathcal{A}_1^{yp}\rightarrow \mathcal{A}_1^p$, we have $k\circ j^{-1}\circ \chi^{-1}_x(a)\in H_2\subset\mathcal{A}_1^{<p}(\uparrow^n)$.

$\chi_x(T_1(++))\circ\chi^{-1}_y\circ Z\left(\mathfig{2}{section5_theorem_related_X7}\right)=$

$\quad=\quad  \mathfigns{2.4}{y_extra}$

$Z\left(\mathfig{1}{section5_theorem_related_X6}\right)=1\quad\text{by lemma \ref{lemma0}}$

$Z\left(\mathfig{1}{section5_theorem_related_X5}\right)=\phi(t_{23},t_{34})$

$Z\left(\mathfig{2}{section5_theorem_related_X4}\right)=\exp(t_{23})$

$Z\left(\mathfig{1}{section5_theorem_related_X3}\right)=\phi(t_{23},t_{34})^{-1}$

$Z\left(\mathfig{1}{section5_theorem_related_X2}\right)=1\quad\text{by lemma \ref{lemma0}}$

$Z\left(\mathfig{1}{section5_theorem_related_X1}\right)\approx 1 \mod H_2$

Putting it all together we get only $3$ strands, since the $2$ left strands are connected at the top, and become one strand labeled by $x$. Note that in all the the above diagrams we have no vertex on the left strand, therefore after the composition the $y$-labeled edge is at the top of the $x$ strand. Hence we get the following element of $\mathcal{A}(\uparrow_x\uparrow^2,\{y\})$:
$$\chi_x(LMO(X_{+,+}))\approx \exp\left(\quad y\mathfigns{1}{section5_theorem_related_y}\quad\right)\cdot \phi(t_{12},t_{23})\cdot \exp(t_{12}) \cdot \phi(t_{12},t_{23})^{-1}\mod H_2$$

The proof of the theorem for $X_{+,+}$ is now completed by Lemmas \ref{lemma_chi} and \ref{lemma_u}.

For $Y_{+,+}$ we find it easier to carry out the calculation on $Y^{-1}_{+,+}$. We use the following presentation:
$$\mathfig{10}{Y_tangle_alternative}$$

In the following calculation, note that in some of the slices we get elements which by themselves are not equivalent to $1\,\mod H_2$, but they become equivalent to $1$ after composing all the diagrams.

First we need to calcualte:
$$\chi_x(T_1(++))\circ\chi^{-1}_y\circ Z\left(\mathfig{2}{section5_theorem_related_Y11}\right)$$
According to Lemma 5.5 of \cite{Cheptea}, this can be computed as:
$$\mathfig{4}{Y_from_lemma}$$
where $C_{-+}$ is $Z(\mathfig{0.9}{double_cap})\in\mathcal{A}(\mathfig{0.9}{double_cap_tangle}\cong \mathcal{A}(\uparrow\uparrow)$. We will show at the end of the proof that $C_{++}\equiv 1 \mod H_2$. Therefore:

$\chi_x(T_1(++))\circ\chi^{-1}_y\circ Z\left(\mathfig{2}{section5_theorem_related_Y11}\right)=$

$\quad=\exp\left(\quad y\mathfigns{1}{section5_theorem_related_yx}\quad\right)\exp\left(-\quad y\mathfigns{1}{section5_theorem_related_y_2}\quad\right)$

$Z\left(\mathfig{1}{Y_crosses}\right) = \exp(-t_{12}/2)\exp(t_{34}/2)$. Since those exponents commute with the box coming from the above tangle, they cancel each other and do not contribute to the final expression.

$Z\left(\mathfig{1}{section5_theorem_related_Y10}\right)= 1 \quad\text{by lemma \ref{lemma0}}$

$Z\left(\mathfig{1}{section5_theorem_related_Y9}\right)=\phi(t_{35}-t_{45},t_{56})\approx$

$\quad\approx\phi(t_{35},t_{56})\mod H_2\quad\text{(after composition)}$

$Z\left(\mathfig{1}{section5_theorem_related_Y8}\right)=\phi(-t_{34},-t_{45})^{-1}\approx 1\mod H_2\quad\text{(after composition)}$

$Z\left(\mathfig{1}{section5_theorem_related_Y7}\right)\approx 1 \mod H_2$  
 
$Z\left(\mathfig{1}{section5_theorem_related_Y6}\right)=1\quad\text{by lemma \ref{lemma0}}$

$Z\left(\mathfig{1}{section5_theorem_related_Y5}\right)=\exp\left(\frac{1}{2}\mathfig{1}{section5_theorem_related_box}\right)=\quad\text{(using an $I$ relation)}$

$\quad=\exp\left(-\frac{1}{2}\mathfig{1}{section5_theorem_related_chord}\right)\approx 1\mod H_2$

$Z\left(\mathfig{1}{section5_theorem_related_Y4}\right)=1\quad\text{by lemma \ref{lemma0}}$

$Z\left(\mathfig{1}{section5_theorem_related_Y3}\right)=\phi(t_{23},t_{34})^{-1}=\phi(-t_{24}-t_{34},t_{34})^{-1}$

$\quad\text{(because $t_{23}+t_{24}+t_{34}$ commutes with $t_{23}$ and $t_{34}$)}$

$Z\left(\mathfig{1}{section5_theorem_related_Y2}\right)= 1 \quad\text{by lemma \ref{lemma0}}$

$Z\left(\mathfig{1}{section5_theorem_related_Y1}\right)=\exp(-t_{34}/2)$

Putting it all together we get the following element of $\mathcal{A}(\uparrow_x\uparrow^2,\{y\})$:

$$\chi_x(LMO(Y^{-1}_{+,+}))\approx\exp\left(\quad y\mathfigns{1}{section5_theorem_related_y}\quad\right)\cdot$$
$$\cdot\phi(t_{12},t_{23})\exp\left(-\quad y\mathfigns{1}{section5_theorem_related_y_2}\quad\right)\phi(-t_{12}-t_{23},t_{23})^{-1}\exp(-t_{23}/2)\quad\mod H_2$$

Note that when we travel along strand $x$ we encounter $\exp\left(\quad y\mathfigns{1}{section5_theorem_related_yx}\quad\right)$ after the associator $\phi(t_{12},t_{23})$, whereas when we travel along the second strand we encounter $\exp\left(-\quad y\mathfigns{1}{section5_theorem_related_y_2}\quad\right)$ before this associator.

By Lemmas \ref{lemma_chi} and \ref{lemma_u} we get:
$$LMO^<(Y^{-1}_{+,+})=\tilde{u}_2(\phi(\tilde{x}_1,t)\exp(-y_1)\phi(-\tilde{x}_1-t,t)^{-1}\exp(-t/2))$$
Therefore:
$$LMO^<(Y_{+,+})=\tilde{u}_2(\exp(t/2)\phi(-\tilde{x}_1-t,t)\exp(y_1)\phi(\tilde{x}_1,t)^{-1})$$
which is the expression we wanted to get.

In order to complete the proof we only need to show that $C_{-+}\approx 1 \mod H_2$. $C_{-+}$ is the composition of $\mathfig{1}{double_cap_tangle}$ with $\phi(-t_{23},-t_{34})\phi(-t_{12},-t_{23}+t_{24})\in\mathcal{A}(\downarrow\uparrow\downarrow\uparrow)$.
$t_{23}$ is in $H_2$ (after the composition), therefore we are left with $\phi(-t_{12},t_{24})$. This can be written as the exponent of a sum of iterated commutators, where the innermost commutator is:
$$\mathfig{2}{Y_commutator_1}-\mathfig{2}{Y_commutator_2}$$
Using several STU and IHX relations we can ``transfer'' the nodes on the rightmost strand to nodes on the leftmost strand, in the price of adding many more diagrams in which this node is transferred to the lower capped strand. But all those extra diagrams are in $H_2$. Therefore we are left with the commutator:
$$\mathfig{2}{Y_commutator_3}-\mathfig{2}{Y_commutator_4}$$
which, by another STU relation (on the left strand), is also in $H_2$. Thus we have completed the proof.

\end{proof}

\begin{remark}
\label{remark_not_equal}
Looking at the above calculation we see why the $H$ relations were needed. For example, in $LMO^<(X_{+,+})\in\mathcal{A}_1^{<p}(\uparrow^2)$ we have all the high degree components of $Z\left(\mathfig{0.6}{section5_theorem_related_X1}\right)$ which do not vanish (with the exception of the $2$ degree component which vanishes, see the proof of Theorem \ref{theorem_elliptic} in the next section), and are not in the image of the map $u_2$, so they do not come from the elliptic associator $e(\phi)$.
\end{remark}

\subsection{Proof of Theorem \ref{theorem_elliptic}}
In this section we use the relation we found between $e(\phi)$ and the LMO functor to give an alternative proof that $e(\phi)$ is an elliptic associator. In fact, most of the proof follows from the fact that $LMO^<(X_{+,+})$ and $LMO^<(Y_{+,+})$ define an elliptic structure on $\mathcal{A}_1^{<p}(\uparrow^n)$.

Indeed, identities (\ref{elliptic3}) and (\ref{elliptic4}) hold for $X_{+,+}$ and $Y_{+,+}$ in $q\tilde{T}_1$. Applying $LMO^<$ to both sides and taking the quotient with $H_2$ (for \ref{elliptic3}) and $H_3$ (for \ref{elliptic4}) we get the same identities for $LMO^<(X_{+,+})$ and $LMO^<(Y_{+,+})$ in $R\mathcal{A}_1^{<p}(\uparrow^2)/H_2$ and $R\mathcal{A}_1^{<p}(\uparrow^3)/H_3$. Pulling them back to $RU\hat{\textbf{t}}_{1,2}\subset U\hat{\textbf{t}}_{1,2}$ and $RU\hat{\textbf{t}}_{1,3}\subset U\hat{\textbf{t}}_{1,3}$ by $\tilde{u}_{r,2}^{-1}$ and $\tilde{u}_{r,3}^{-1}$, and using theorem \ref{theorem_related}, we get identities (\ref{associator_identity1}) and (\ref{associator_identity4}) for $X_\phi$ and $Y_\phi$.

Similarly, identities (\ref{elliptic1}) and (\ref{elliptic2}) hold for $X_{+,+}$ and $Y_{+,+}$ in $q\tilde{T}_1$. We can now again apply $LMO^<$ to both sides, take the quotient with $H_3$ and pull back to $RU\hat{\textbf{t}}_{1,3}\subset U\hat{\textbf{t}}_{1,3}$ by $\tilde{u}_{r,3}^{-1}$. Theorem \ref{theorem_related} shows that at the right hand sides we get the right hand sides of identities (\ref{associator_identity2}) and (\ref{associator_identity3}) for $X_\phi$ and $Y_\phi$. However, we still need to show that the left hand sides are equal. More explicitly, we need to show that $\tilde{u}_{r,3}^{-1}\circ LMO^<(Z_{++,+})=Z_\phi(x_1+x_2,y_1+y_2)$ for $Z=X$ and $Z=Y$. This will be the content of the following proof.

\begin{proof}[Proof of theorem \ref{theorem_elliptic}]

Let $\Delta_1:RU\hat{\textbf{t}}_{1,2}\rightarrow RU\hat{\textbf{t}}_{1,3}$ be the map defined by : $v_1\mapsto v_1+v_2$ and $v_2\mapsto v_3$ ($v=x$ or $v=y$). As explained above, we need to calculate $\tilde{u}_{r,3}^{-1}\circ LMO^<(Z_{++,+})$ for $Z=X,Y$, and show that they are equal to $\Delta_1\circ \tilde{u}_{r,2}^{-1} LMO^<(Z_{+,+})$. We will use the same decomposition of $X_{+,+}$ and $Y_{+,+}$ to simple tangles as we used above. For most of those simple tangles $T$ we can show that:
\begin{equation}
\label{T_identity}
\tilde{u}_{r,3}^{-1}\circ LMO^<(\Delta^{++}_{++,+}(T))=\Delta_1\circ \tilde{u}_{r,2}^{-1}(LMO^<(T))
\end{equation}
In the few cases where this identity does not hold, the extra components we get will eventually cancel each other.

For a tangle $T$ with no cups or caps, we can use the identity $LMO^<(\Delta^{++}_{++,+}(T))=\Delta^{++}_{++,+}\circ LMO^<(T)$. Decompose $LMO^<(T)$ as $LMO^<(T)_u+LMO^<(T)_H$, where $LMO^<(T)_u$ is in the image of $u_2$, and $LMO^<(T)_H$ is in $H_2$. Clearly, $\tilde{u}_{r,3}^{-1}\circ\Delta^{++}_{++,+}(LMO^<(T)_u)=\Delta_1\circ \tilde{u}_{r,2}^{-1}(LMO^<(T)_u)$. Therefore, in order to prove identity (\ref{T_identity}) for such $T$, it is enough to show that $\Delta^{++}_{++,+}(LMO^<(T)_H)$ is in $H_3$.

In the calculation of the $LMO^<$ functor of $X_{+,+}$ and $Y_{+,+}$ we encountered several contributions to the $H_2$ part. First, according to lemma \ref{lemma_chi}, each edge with a vertex on the $x$ strand, after applying $k\circ j^{-1}\circ\chi^{-1}_x$, became:
\begin{equation}
\sum_{i=0}^{\infty}B_i\mathfig{3}{section5_a_lemma2}
\label{sum}
\end{equation}
The $H_2$ part of this sum is, according to lemma \ref{lemma_u}: $\sum_{i=1}^\infty B_i\mathfig{2}{section5_section53_tilde1}$. For $i$ even we have:

$$\Delta^{++}_{++,+}\left(\mathfig{2}{section5_section53_tilde1}\right)=\mathfig{2}{section5_section53_tilde2}+\mathfig{2}{section5_section53_tilde3}+$$
$$+\mathfig{2}{section5_section53_tilde4}+\mathfig{2}{section5_section53_tilde5}=$$
$$=\mathfig{2}{section5_section53_tilde2}+\mathfig{2}{section5_section53_tilde3}\in H_3$$

For $i$ odd, the only non-zero coefficient in the sum (\ref{sum}) is $B_1=-\frac{1}{2}$. When this sum appears in an associator, the $-\frac{1}{2}\mathfig{1}{section5_lemma3_proof2_6}$ summand cancels, because it commutes with everything else.

In $LMO^<(X_{+,+})$ the sum (\ref{sum}) appears twice inside an associator, so the $H_2$ part of those tangle-parts indeed maps to $H_3$. There is also one occurance of this sum which appears inside an exponent. Therefore we are left with $\exp(-\frac{1}{2}\mathfig{1}{section5_lemma3_proof2_6})$ which is in $H_2$, but is not mapped by $\Delta^{++}_{++,+}$ to $H_3$. However, we will immediately see that this part eventually cancels.

In $LMO^<(X_{+,+})$ we also have $Z\left(\mathfig{1}{section5_section53_loop}\right)$ which has a cup and a cap. We need to calculate $Z\left(\Delta^{++}_{++,+}\left(\mathfig{1}{section5_section53_loop}\right)\right)=Z\left(\mathfig{2}{twist}\right)$. This can be written as $\exp\left(t_{34}+u\right)$, where $u$ is in $H_3$. So we are left with $\exp\left(t_{34}\right)$. But this part cancels with $\exp\left(-t_{34}\right)$ coming from applying $\Delta^{++}_{++,+}$ to the exponent of the sum (\ref{sum}) (because they commute with everything in between). This concludes the proof for $LMO^<(X_{+,+})$.

In $LMO^<(Y_{+,+})$, all the occurences of the sum (\ref{sum}) are in associators, so their $H_2$ part is mapped to $H_3$. But there are several more contributions to the $H_2$ part. First, we had:

$Z\left(\mathfig{1}{section5_theorem_related_Y5}\right)=\exp\left(-\frac{1}{2}\mathfig{1}{section5_theorem_related_chord}\right)$

Applying $\Delta^{++}_{++,+}$ to $\exp\left(-\frac{1}{2}\mathfig{1}{section5_theorem_related_chord}\right)$ we get $\exp(-t_{56})$ which is not in $H_3$. However, we will soon see that this exponent cancels with another exponent.

Another contribution to the $H_2$ part comes from $Z\left(\mathfig{1}{section5_theorem_related_Y7}\right)$. So we need to calculate $Z(\Delta^{++}_{++,+}\left(\mathfig{1}{section5_theorem_related_Y7}\right)=Z\left(\mathfig{1.5}{twist2}\right)$. This is equal to $\exp(t_{45}+u)$ with $u\in H_3$. So we are left with $\exp(t_{45})$ which is not in $H_3$. But this cancels out with the above $\exp(-t{56})$ (because they commute with everything in between).

We will now deal with the $H_2$ parts which come from: $Z\left(\mathfig{1}{section5_theorem_related_Y9}\right)$ and $Z\left(\mathfig{1}{section5_theorem_related_Y8}\right)$, which are:

$$\phi\left(\mathfig{1}{section5_section53_t_13}-\mathfig{1}{section5_section53_t_23},\mathfig{1}{section5_section53_t_34}\right)\cdot$$
$$\phi^{-1}\left(\mathfig{1}{section5_section53_t_12},\mathfig{1}{section5_section53_t_23}\right)\cdot\left(\mathfig{1}{section5_section53_cup}\right)$$

(For convenience we ignored the $2$ left strands.)

$\phi^{-1}\left(\mathfig{1}{section5_section53_t_12},\mathfig{1}{section5_section53_t_23}\right)$ is an exponent of a sum of iterated commutators in $\mathfig{1}{section5_section53_t_12}$ and $\mathfig{1}{section5_section53_t_23}$. The innermost commutator in each of those iterated commutators is $\left[\mathfig{1}{section5_section53_t_12},\mathfig{1}{section5_section53_t_23}\right]$, so it is enough to show that applying $\Delta^{++}_{++,+}$ to this commutator multiplied by $\left(\mathfig{1}{section5_section53_cup}\right)$ is in $H_3$. (Note that we apply $\Delta^{++}_{++,+}$ to the right strands. The left strand will disappear when we apply $\chi^{-1}_x$.) And indeed:

$$\Delta^{++}_{++,+}\left(\mathfig{2}{section5_section53_proof1_1}-\mathfig{2}{section5_section53_proof1_2}\right)=$$
$$=
\underbrace{\mathfig{2}{section5_section53_proof1_3}+\mathfig{2}{section5_section53_proof1_4}}_{\in H_3}-$$
$$-\underbrace{\mathfig{2}{section5_section53_proof1_5}-\mathfig{2}{section5_section53_proof1_6}}_{\in H_3}+\mathfig{2}{section5_section53_proof1_7}-\mathfig{2}{section5_section53_proof1_8}+$$
$$+\underbrace{\mathfig{2}{section5_section53_proof1_9}-\mathfig{2}{section5_section53_proof1_10}}_{\text{canceling}}+\underbrace{\mathfig{2}{section5_section53_proof1_11}-\mathfig{2}{section5_section53_proof1_12}}_{\text{canceling}}+$$
$$+\mathfig{2}{section5_section53_proof1_13}-\mathfig{2}{section5_section53_proof1_14}\approx\mathfig{2}{section5_section53_proof1_15}+\mathfig{2}{section5_section53_proof1_16}=0$$

We are left with summands in which the empty diagram of: $$\phi^{-1}\left(\mathfig{1}{section5_section53_t_12},\mathfig{1}{section5_section53_t_23}\right)\cdot\left(\mathfig{1}{section5_section53_cup}\right)$$ is multiplied by the $H_2$ part of: $$\phi\left(\mathfig{1}{section5_section53_t_13}-\mathfig{1}{section5_section53_t_23},\mathfig{1}{section5_section53_t_34}\right)$$
This associator is an exponent of a sum of iterated commutators of $\mathfig{1}{section5_section53_t_13}$, $\mathfig{1}{section5_section53_t_23}$ and $\mathfig{1}{section5_section53_t_34}$. If $\mathfig{1}{section5_section53_t_23}$ does not appear in this commutator, it does not contribute to the $H_2$ part. So we may consider only commutators in which $\mathfig{1}{section5_section53_t_23}$ appears. We may assume the commutator is one sided. It is also enough to consider commutators of the type $\left[\mathfig{1}{section5_section53_t_23},u\right]$ where $u$ is a commutator in $\mathfig{1}{section5_section53_t_13}$ and $\mathfig{1}{section5_section53_t_34}$ (because all the commutators we consider here have this type of commutator as their inner part). So we need to consider elements of the following form:
$$\mathfig{2}{section5_section53_proof2_1}-\mathfig{2}{section5_section53_proof2_2}$$
where $u$ has no vertices on the down-going strand $\downarrow$. It is easy to prove (by induction) that by applying $\Delta$ to the strands $2$ and $3$ of $u$ we get $u_1+u_2$, where $u_1$ is obtained by putting all the vertices of strand $3$ (from the left) in $\uparrow\downarrow\uparrow\uparrow$ on strand $4$ of $\uparrow\downarrow\downarrow\uparrow\uparrow\uparrow$, and $u_2$ is obtained by putting all the vertices of strand $3$ in $\uparrow\downarrow\uparrow\uparrow$ on strand $5$ of $\uparrow\downarrow\downarrow\uparrow\uparrow\uparrow$. So we have:

$$\Delta^{++}_{++,+}\left(\mathfig{2}{section5_section53_proof2_1}-\mathfig{2}{section5_section53_proof2_2}\right)=$$
$$=\underbrace{\mathfig{2}{section5_section53_proof2_3}-\mathfig{2}{section5_section53_proof2_4}+\mathfig{2}{section5_section53_proof2_5}-\mathfig{2}{section5_section53_proof2_6}}_{\in H_3}+$$
$$+\underbrace{\mathfig{2}{section5_section53_proof2_7}-\mathfig{2}{section5_section53_proof2_8}}_{\text{canceling}}+\underbrace{\mathfig{2}{section5_section53_proof2_9}-\mathfig{2}{section5_section53_proof2_10}}_{\text{canceling}}+$$
$$+\mathfig{2}{section5_section53_proof2_11}-\mathfig{2}{section5_section53_proof2_12}+\mathfig{2}{section5_section53_proof2_13}-\mathfig{2}{section5_section53_proof2_14}$$
$u_1$ and $u_2$ can be written as a sum of connected diagrams. The diagrams in this sum which have more than $1$ vertex on strand $4$ (for $u_1$) or on strand $5$ (for $u_2$) are already in $H_3$. For diagrams with only one vertex on those strands, the above sum equals $0$ via the STU relation.

At last we have to deal with the $H_2$ part coming from $C_{-+}$. Most of the summands in $C_{-+}$ are mapped to $H_3$ by the exact same argument we have just seen. We only need to show that $\Delta^{++}_{++,+}\left(\mathfig{1}{double_cap_tangle}\circ \phi(-t_{12},t_{24})\right)$ is in $H_3$.

$\phi(-t_{12},t_{24})$ is an exponent of a linear combination of commutators in $t_{12}$ and $t_{24}$. In each summand of the exponent, one of the commutators is closest to the caps at the top. This commutator can be written as a one-component diagram with only one vertex on strand $2$. So after applying $\Delta^{++}_{++,+}$ we get two copies of this diagram, each with a vertex on each copy of strand $2$. The same argument from the end of the proof of theorem \ref{theorem_related}, which showed that $C_{-+}$ is in $H_2$, shows now that each of these copies is in $H_3$. This completes the proof for $LMO^<(Y_{+,+})$, and hence the proof of theorem \ref{theorem_elliptic}.

\end{proof}

\end{onehalfspace}
\end{document}